\numberwithin{equation}{section}
\theoremstyle{plain}
\newtheorem{theorem}{Theorem}[section]
\newtheorem{lemma}[theorem]{Lemma}
\newtheorem{proposition}[theorem]{Proposition}
\newtheorem{corollary}[theorem]{Corollary}
\theoremstyle{definition}
\newtheorem{convention}[theorem]{Convention}
\newtheorem{definition}[theorem]{Definition}
\newtheorem{exampleth}[theorem]{Example}
\newenvironment{example}{\begin{exampleth}}{\hfill $\diamond$\\ \end{exampleth}}
\theoremstyle{remark}
\newtheorem{remark}[theorem]{Remark}
\newcommand{\tightcdot}{\!\cdot\!}
\newcommand \QQ {\mathbb{Q}} 
\newcommand \ZZ {\mathbb{Z}}
\newcommand \CC {\mathbb{C}}
\newcommand \RR {\mathbb{R}}
\renewcommand{\AA}{\zeta}
\DeclareMathOperator{\Spec}{\operatorname{Spec}}
\DeclareMathOperator{\conv}{\operatorname{conv}}
\DeclareMathOperator{\mult}{\operatorname{m}}
\DeclareMathOperator{\vol}{\operatorname{vol}}
\DeclareMathOperator{\trop}{trop}
\DeclareMathOperator{\an}{an}
\DeclareMathOperator{\id}{id}
\DeclareMathOperator{\outerEdge}{out}
\DeclareMathOperator{\intEdge}{int}
\DeclareMathOperator{\Idd}{Id}
\newcommand \cP {\mathcal{P}}
\newcommand \ww {\omega}
\DeclareMathOperator{\init}{in}
\newcommand{\sigmaint}{\sigma^{\circ}}
\DeclareMathOperator {\val}{val}
\DeclareMathOperator {\Star}{\ensuremath{Star}}
\DeclareMathOperator {\Trop}{Trop}
\newcommand{\PS}{\CC\{\!\{t
\}\!\}}
\newcommand{\PSN}{\CC(\!(t^{1/N})\!)}
\newcommand{\PSR}{\CC\{\!\{t^{\RR}\}\!\}}
\title{How to repair tropicalizations of plane curves using modifications}
\author
{
Maria Angelica Cueto
}
\address{
Mathematics Department, 
Columbia University, 
2990 Broadway, 
New York, NY 10027, USA.
}
\email{macueto@math.columbia.edu}
\author[Hannah Markwig]{Hannah Markwig${}^{\S}$}
\address{Universit\"at des Saarlandes, Fachr.
  Mathematik, Postfach 151150, 66041 Saarbr\"ucken, Germany.}
\email{hannah@math.uni-sb.de}
\thanks{${\S}$ \emph{Corresponding author}}
\keywords{Tropical geometry, tropical modifications, Berkovich spaces,
  elliptic curves, discriminants}
\subjclass[2010]{14T05, 51M20, 14H52, 14G22}
\begin{document}

\begin{abstract} Tropical geometry is sensitive to embeddings of
  algebraic varieties inside toric varieties.
  The purpose of this paper is to advertise tropical modifications as
  a tool to locally repair bad embeddings of plane curves, allowing
  the re-embedded tropical curve to better reflect the geometry of the input
  curve. Our approach is based on the close connection between
  analytic curves (in the sense of Berkovich) and tropical curves. 
We investigate the effect of these tropical modifications on
  the tropicalization map defined on the analytification of the given curve. 

  Our study is motivated by the case of plane elliptic cubics, where
  good embeddings are characterized in terms of the $j$-invariant.
  Given a plane elliptic cubic whose tropicalization contains a cycle,
  we present an effective algorithm, based on non-Archimedean methods,
  to linearly re-embed the curve in dimension 4 so that its
  tropicalization reflects the $j$-invariant.  We give an alternative
  elementary proof of this result by interpreting the initial terms of
  the $A$-discriminant of the defining equation as a local
  discriminant in the Newton subdivision.
  \end{abstract}
 \maketitle

  \section{Introduction}\label{sec:intro}

Tropical geometry is a piecewise-linear shadow of algebraic geometry that
preserves important geometric invariants.
Often, we can derive statements about  algebraic varieties by means
of these (easier) combinatorial objects.  One general difficulty in
this approach is that the tropicalization strongly depends on the
embedding of the algebraic variety. Thus, the task of finding a
suitable embedding or  repairing a given ``bad'' embedding to obtain
a nicer tropicalization becomes essential for many applications.  The
purpose of this paper is to advertise tropical modifications as a tool
to locally repair embeddings of plane curves, as suggested by
%
Mikhalkin in his ICM 2006 lecture~\cite{Mi06}.

An important and motivating example is the case of elliptic curves. 
In \cite[Example 3.15]{Mi06}, Mikhalkin proposed the
cycle length of a tropical plane elliptic cubic to be the tropical
counterpart of the classical $j$-invariant.  Inspired by this remark and using
Gr\"obner fan techniques, Katz, Markwig and the second author proved
that when the elliptic cubic is defined over the Puiseux series field, the
valuation of the $j$-invariant is generically reflected on the cycle
length of the tropical curve~\cite{KMM07}. For special choices
of coefficients, this length can be shorter than expected. These
non-generic situations have a very explicit characterization. First,
the cycle in the tropical curve must contain a vertex of valency at least
four, and second, the initial form of the discriminant of the cubic
must vanish. Thus, in the case of plane  elliptic cubics, or more
generally, for elliptic curves embedded smoothly into a toric surface, 
the question of what constitutes a good embedding from the tropical
perspective has a precise answer: the cycle length should reflect the negative valuation of the $j$-invariant.

One of the main contributions of the present paper is an algorithm that
recursively repairs bad embeddings when the tropical plane elliptic 
cubic contains a cycle. The power of
Algorithm~\ref{alg:repairElliptic} lies in its simplicity: it only
uses linear tropical modifications of the plane, and linear
re-embeddings of the original curve. Furthermore, this result is achieved in dimension 4.  This approach has an additional
advantage. Rather than drastically changing the polyhedral structure
of the input tropical curve, it keeps its relevant features. It only
adds missing edges and changes tropical multiplicities. The output tropical
curve has the expected cycle length.  We view this as a possibility to
``locally repair'' the problematic initial embedding.

The case of elliptic curves suggests itself as a playground for
uncovering the deep connections between Berkovich's theory,
tropicalizations, and re-embeddings.  More concretely, let $X$ be a
smooth elliptic curve and $\mathscr{X}$ be a semistable regular model
of $X$ over a discrete valuation ring. Let us assume that $X$ has bad
reduction. Then, the minimal Berkovich skeleton of the complete
analytic curve $\widehat{X}^{\an}$ is homotopic to a circle and it can
be obtained from the dual graph of the special fiber of $\mathscr{X}$.
Foundational work of Baker, Payne and Rabinoff proves that when the
embedding induces a faithful tropicalization on the cycle, the length
appearing in the minimal Berkovich skeleton induced by its canonical
metric equals the corresponding lattice length in the tropicalization
of $X$~\cite[Section 6]{BPR11}.  Notably, \cite[Section 7]{BPR11}
provides examples where the cycle in a tropicalization of a smoothly
embedded elliptic curve is shorter, or longer, than the negative
valuation of the $j$-invariant. Good embeddings of elliptic curves
with bad reduction are those where the minimal skeleton of the
complete analytic curve is reflected in the associated tropical curve.

Characterizing good embeddings of curves in terms of their minimal
Berkovich skeleta has one clear advantage compared to the study of
tropicalizations: it is intrinsic to the curve.  Work of Payne shows
that the Berkovich space $X^{\an}$ is the limit of all
tropicalizations of $X$ with respect to closed embeddings into
quasiprojective toric varieties (see~\cite[Theorem 4.2]{Pay09}). We
view $X^{\an}$ as a topological object incorporating all choices of
embeddings.

After the investigation by Baker, Payne and Rabinoff~\cite{BPR11}, the
meaning of suitable embeddings of curves for tropicalization purposes
becomes precise: they should induce \emph{faithful
  tropicalizations}. That is, the corresponding tropical curve must be
realized as a closed subset of $X^{\an}$, and this identification
should preserve both metric structures. Faithful tropicalizations of
Mumfurd curves of genus 2 have been recently studied by Wagner
in~\cite{Till14}. In the case of plane elliptic cubics, we can
reinterpret the main result of~\cite{KMM07} in the language of
Berkovich's theory by saying that the tropicalization to a $3$-valent
cubic is always faithful on the cycle. This follows from the fact that
all edges in a tropical cubic with a cycle have multiplicity $1$,
see~\cite[Theorem 6.24 and 6.25]{BPR11}.

Two natural questions arise from the previous discussion. First, can
we effectively construct embeddings of a given curve that induce
faithful tropicalizations? Can we do so without computing a minimal
Berkovich skeleton of the complete curve?  Again, the case of elliptic
cubics is a fantastic playground for exploring this question, since
the faithfulness on its cycle can easily be characterized in terms of
the $j$-invariant. Following this approach, Chan and Sturmfels
described a procedure to put any given plane elliptic cubic with bad
reduction into honeycomb form~\cite{CS13}. The honeycomb form is
$3$-valent and has edges of multiplicity $1$, hence it induces a
faithful tropicalization and the cycle has the expected
length. Although running in exact arithmetic, their method involves
the resolution of a univariate degree 6 equation. Each solution is
expressed as a Laurent series in the sixth root of the multiplicative
inverse of the $j$-invariant. The solution is constructed recursively,
one term at a time.  This re-embedding completely alters the structure
of the original tropical curve.

In contrast, our approach allows us to give a positive and effective
answer to the questions above.  Our algorithm to repair embeddings of
plane elliptic cubics relies on methods we develop in
Section~\ref{sec:repa-trop} for arbitrary plane
curves. Theorems~\ref{thm:redVertex} and~\ref{thm:fatEdge} allow us to
locally repair certain embeddings of curves using a linear tropical
modification of the plane.  They should be viewed as a partial answer
to the questions above. They hold under certain constraints imposed by
the local topology of the input tropical curve.  Nonetheless, these
two technical results suffice to completely handle the case of plane
elliptic cubics.  As a byproduct, we enrich Payne's result
\cite[Theorem 4.2]{Pay09} for plane elliptic cubics connecting the
Berkovich space to the limit of all tropicalizations by a concrete
procedure that gives the desired tropically faithful embedding using
only linear tropical modifications of the plane.  Our experiments in
Section~\ref{sec:experiments} suggest that the techniques introduced
in this paper may be extended to other combinatorial types of tropical
curves, although new ideas will be required to generalize
Theorems~\ref{thm:redVertex} and~\ref{thm:fatEdge}.

We have mentioned already that $A$-discriminants of cubic polynomials
play a key role when studying the $j$-invariant of a plane elliptic
cubic. In the same spirit, Theorems~\ref{thm:redVertex} and~\ref{thm:fatEdge} also involve
``local discriminants'' associated to certain maximal cells in the
Newton subdivision of the input plane
curve. In Section~\ref{sec:repa-ellipt-cubics} we derive
Algorithm~\ref{alg:repairElliptic} in an elementary fashion, by relating the global discriminant of the cubic to the local
discriminants mentioned above. This is the content of
Corollary~\ref{cor:initialDiscriminant}. Theorem~\ref{thm:factorizationFormula}  provides a factorization
formula for initial forms of discriminants of planar configurations. We expect this result to
have further applications besides Algorithm~\ref{alg:repairElliptic}.

\smallskip

The rest of the paper is organized as follows. In
Section~\ref{sec:trop-modif-line}, we introduce notation and discuss
background on tropicalizations, modifications and linear
re-embeddings.  In Lemma~\ref{lem:ModifViaProjections}, we
characterize linear re-embeddings of plane curve induced by tropical
modifications along straight lines in terms of charts and coordinates
changes of $\RR^2$.  Thus, we can visualize the repaired embeddings by
means of collections of tropical plane curves. In
Sections~\ref{sec:berk-skel-curv} and~\ref{sec:discriminants} we
discuss preliminaries involving Berkovich skeleta and
$A$-discriminants, which play a prominent role in our study.

In Section~\ref{sec:repa-trop}, we present our two main technical
tools to locally repair embeddings of smooth plane curves by linear
re-embeddings. Our proof builds upon Berkovich's theory,
$A$-discriminants of plane configurations, and
Lemma~\ref{lem:ModifViaProjections}. By using linear tropical
modifications and coordinate changes of $\RR^2$, the tropical
re-embedded curve will faithfully represent a subgraph of a skeleton
of the analytic curve induced by its set of punctures.

In Section~\ref{sec:trop-ellipt-curv}, we focus our attention on plane
elliptic curves and present Algorithm~\ref{alg:repairElliptic}.  We
provide two independent proofs of its correctness.  The first one
relies on the techniques developed in Section~\ref{sec:repa-trop} and
is discussed in Section~\ref{sec:repair-elliptic}. The second one is
elementary: it is based purely on discriminants of plane
configurations.  We present it in
Section~\ref{sec:repa-ellipt-cubics}. The main result of this section
is Theorem~\ref{thm:factorizationFormula}, which relates global and
local discriminants of planar point configurations.

In Section~\ref{sec:experiments} we provide several experimental
evidence to support the use of our repairing techniques in examples
that are not covered by Theorems~\ref{thm:redVertex}
and~\ref{thm:fatEdge}. We view this last section as a motivation to
further study this topic.

\section{Preliminaries and Motivation}\label{sec:prel-motiv}
We work with ideals $I$ defining irreducible subvarieties of a torus
and denote their \emph{tropicalizations} by $\Trop(I)$ (see
e.g.~\cite{MS09}).  Throughout this paper we often work with complete
curves and their minimal Berkovich skeleta. For this reason, we always
consider our ideals inside honest polynomial rings rather than Laurent
polynomial rings. For tropicalization, we consider the intersection of
the curve with the algebraic torus in the given embedding.

For concrete computations, we fix the field of \emph{generalized
  Puiseux-series} $\PSR$, with valuation taking a series to its
leading exponent. For tropicalizations, we use the negative of the
valuation, i.e.\ the \emph{max-convention}. We denote our algebraic
coordinates by $x,y,z$, whereas we indicate the tropical coordinates
by $X,Y,Z$. We use analogous conventions in higher dimensions.

\subsection{Tropical modifications and linear re-embeddings}\label{sec:trop-modif-line}
Tropical modifications appeared in \cite{Mi06} and have since then
found several interesting applications, e.g.\ \cite{AR07,ABBR13,
  BLdM11, Sha10}.  Here, we concentrate on modifications of the plane
$\RR^2$ along linear divisors.

Let $F=\max\{A,B+ X,C+ Y\}$ be a linear tropical polynomial with $A,
B$ and $C$ in $\mathbb{T}\RR:=\RR\cup\{-\infty\}$. The graph of $F$
considered as a function on $\RR^2$ consists of at most three linear
pieces. At each break line, we attach two-dimensional cells spanned in
addition by the vector $(0,0,-1)$ (see e.g.\ \cite[Construction
3.3]{AR07}). We assign multiplicity $1$ to each cell and obtain a
balanced fan in $\RR^3$. It is called the \emph{modification of
  $\RR^2$ along $F$}.

Let $f=a+bx+cy\in \PSR[x,y]$ be a lift of $F$, i.e.\ $-\val(a)=A$,
$-\val(b)=B$ and $-\val(c)=C$.  We fix an irreducible polynomial $g\in
\PSR[x,y]$ defining a curve in the torus $(\PSR^*)^2$. The
tropicalization of $I_{g,f}=\langle g, z-f\rangle\subset \PSR[x,y,z]$
is a tropical curve in the modification of $\RR^2$ along $F$. We call
it the \emph{linear re-embedding} of the tropical curve $\Trop(g)$
\emph{with respect to $f$}.

For almost all lifts $f$, the linear re-embedding coincides with the
modification of $\Trop(g)$ along $F$, i.e.~we only bend $\Trop(g)$ so
that it fits on the graph of $F$ and attach some downward ends.
However, for some choices of lifts $f$, the part of $\Trop(I_{g,f})$
in the cells of the modification attached to the graph of $F$ contains
more attractive features.  We are most interested in these special
linear re-embeddings. The following example illustrates this
phenomenon.

\begin{example} We fix a plane elliptic cubic defined by
\begin{align*}
  g(x,y)=&-t^2x^3+t^{200}x^2y+(t^2+t^4)xy^2+t^{14}y^3+(-3t^3-t^{200})x^2+(t^3+t^5-t^6+t^{12}-t^{202})y^2\\
&+(1+2t^{201})xy
  +(-3t^4+t^{200}-2t^{201})x+(t+t^2+t^{202})y+(2t^2-t^5+t^{201}-t^{202}).
\end{align*}
We aim to modify the tropical curve $\Trop(g)$ along the vertical line
$X=-1$ in $\RR^2$. This line corresponds to a tropical polynomial
$F=\max\{-1,X\}$. Its lifting $f$ is of the form $f=x+\AA t$ where
$\AA\in \PS$ has valuation 0. The tropicalization $\Trop(I_{g,f})$
depends only on the initial coefficient of $\AA$. Indeed, unless this
coefficient is one, this tropical curve coincides with the
modification of $\Trop(g)$ along $F$.
Figure~\ref{fig:OriginalEllipticModification} shows the special linear
re-embedding when $\AA =1$.
\end{example}

\begin{figure}[tb]
  \centering
\includegraphics[scale=0.31]{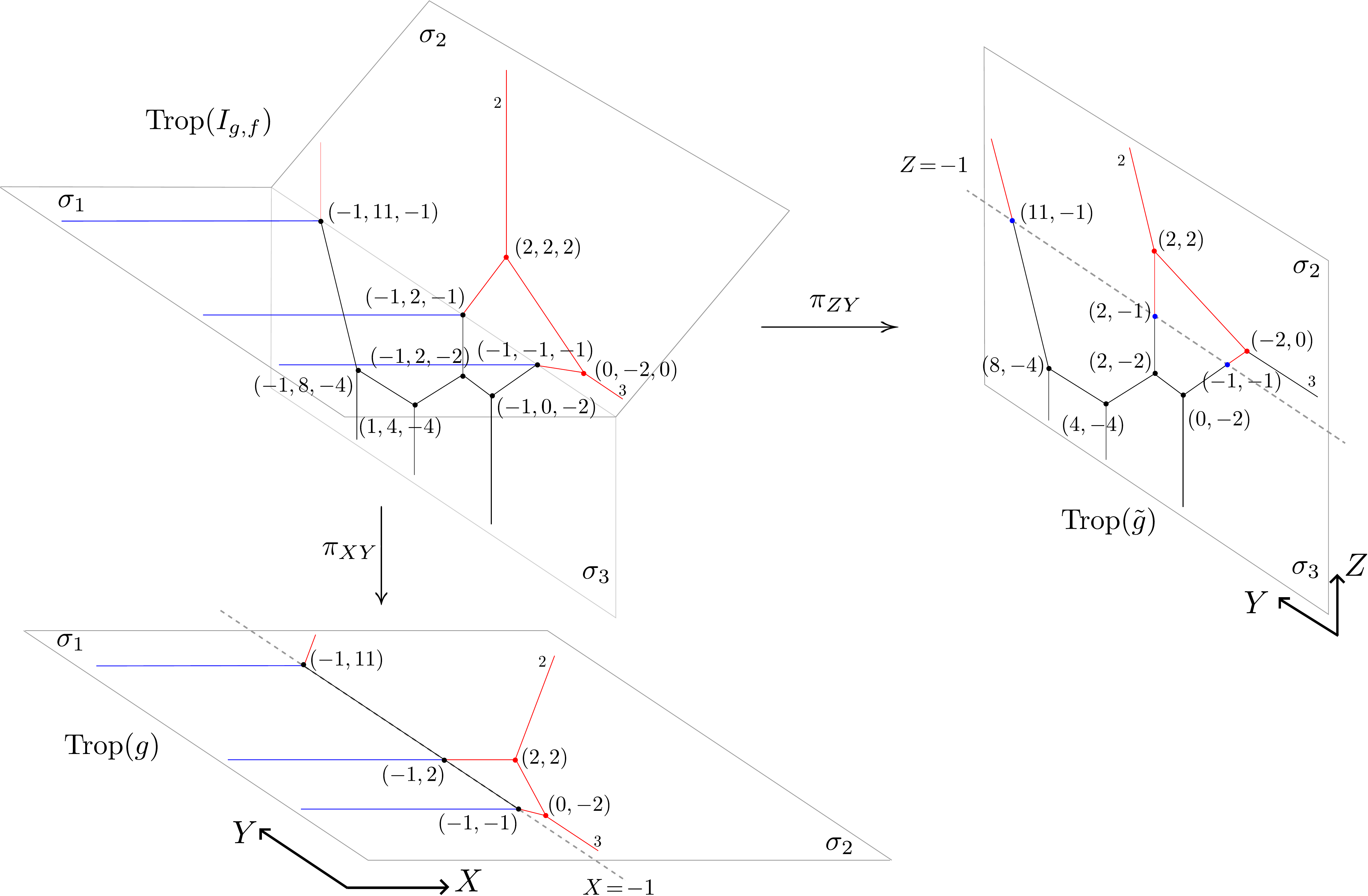}


\caption{A special re-embedding of the tropical curve $\Trop(g)$ with
  respect to $f=x+t$ together with the projections $\pi_{XY}$ and
  $\pi_{ZY}$. The original curve is drawn at the bottom, whereas a new
  curve $\Trop(\tilde{g})$ appears on the right. The central picture
  shows the tropical curve $\Trop(I_{g,f})$ in the modified
  plane.}\label{fig:OriginalEllipticModification}
  \end{figure}
  Our main focus in Section~\ref{sec:repa-trop} will be on
  modifications of $\RR^2$ along vertical lines.  These modifications
  are induced by tropical polynomials of the form $F=\max\{X,l\}$,
  with $l\in \QQ$.  Their liftings are of the form $f=x+\AA t^{-l}$
  where $\AA\in \PS$ has valuation 0. As we see in
  Figure~\ref{fig:OriginalEllipticModification}, the modified plane contains three
  maximal cells:
\[
\sigma_1=\{X\leq l, Z=l\}, \quad \sigma_2=\{X\geq l, Z=X\}, \quad and \quad
\sigma_3=\{X=l, Z\leq l\}. 
\]
By construction, $\sigma_3$ is the unique cell of the modification of $\RR^2$
attached to the graph of $F$. We let $\sigmaint_i$ denote the relative
interior of the cell $\sigma_i$, for $i=1, 2, 3$.

We describe $\Trop(I_{g,f})$ by means of two projections:
\begin{enumerate}
\item the projection $\pi_{XY}$ to the coordinates $(X,Y)$ produces the
  original curve $\Trop(g)$,
\item the projection $\pi_{ZY}$ gives a new tropical plane curve
  $\Trop(\tilde{g})$ inside the cells $\sigma_2$ and $\sigma_3$, where
  $\tilde{g}=g(z-\AA\,t^{-l},y)$. The polynomial $\tilde{g}$ generates the
elimination ideal $I_{g,f}\cap \PS[y,z]$.
\end{enumerate}
Notice that the projection $\pi_{XZ}$ gives no information about $\Trop(I_{g,f})$ since it maps any
tropical curve to the tropical line with vertex $(l,l)$.
The following lemma explains how to reconstruct the curve
$\Trop(I_{g,f})$ inside this modified plane using
the two relevant projections above.

\begin{lemma}\label{lem:ModifViaProjections}
  The linear re-embedding $\Trop(I_{g,x+\AA t^{-l}})$ in the
  modification of $\RR^2$ along the linear tropical polynomial
  $F=\max\{X, l\}$ is completely determined by the two tropical plane
  curves $\Trop(g)$ and $\Trop(\tilde{g})$, where
  $\tilde{g}(z,y)=g(z-\AA \,t^{-l},y)$. In particular, the vertices of
  $\Trop(I_{g, x+\AA t^{-l}})$ along the line $\{X=Z=l\}$ are 
  the endpoints of the connected components of $( \Trop(g)\times \RR)\cap
 (\RR\times  \Trop(\tilde{g}))\cap\{X=Z=l\}$.
\end{lemma}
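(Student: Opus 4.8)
The plan is to translate the geometry of the modified plane into equations and to exploit the two projections $\pi_{XY}$ and $\pi_{ZY}$ described above. First I would recall that, by definition, $\Trop(I_{g,f})$ is a one-dimensional balanced polyhedral complex sitting inside the modification of $\RR^2$ along $F=\max\{X,l\}$, which is covered by the three maximal cells $\sigma_1,\sigma_2,\sigma_3$. On $\sigmaint_1$ we have $Z=l$ identically, so the coordinate $Z$ carries no information and $\pi_{XY}$ restricted there is injective; hence $\Trop(I_{g,f})\cap\sigmaint_1$ is cut out by $\Trop(g)\cap\{X<l\}$, lifted by $Z=l$. Dually, on $\sigmaint_2$ we have $Z=X$, so both $\pi_{XY}$ and $\pi_{ZY}$ are injective and $\Trop(I_{g,f})\cap\sigmaint_2$ is the common graph, i.e.\ the set of $(X,Y,X)$ with $(X,Y)\in\Trop(g)$, $X>l$, which equals the set with $(X,Y)\in\Trop(\tilde g)$ since $\tilde g(z,y)=g(z-\AA t^{-l},y)$ and subtracting a series of valuation $-l$ from $x$ does not change the tropicalization on the locus $X>l=-\val(\AA t^{-l})$ (the leading term of $z$ dominates). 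On $\sigmaint_3$ we have $X=l$, so now $Z$ is the free coordinate and $\pi_{ZY}$ is injective there; thus $\Trop(I_{g,f})\cap\sigmaint_3$ is cut out by $\Trop(\tilde g)\cap\{Z<l\}$, lifted by $X=l$. Since $\pi_{XY}$ produces $\Trop(g)$ and $\pi_{ZY}$ produces $\Trop(\tilde g)$ on the whole re-embedded curve (this is exactly the content of items (1) and (2) preceding the lemma, together with the elimination-ideal statement), these three local descriptions show that every point of $\Trop(I_{g,f})$ lying in the interior of some $\sigma_i$ is determined by the pair $(\Trop(g),\Trop(\tilde g))$.

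Next I would handle the ridge $\{X=Z=l\}$, which is where the three cells meet and where a priori the complex could fail to be reconstructed from the projections alone, since along this line both $\pi_{XY}$ and $\pi_{ZY}$ collapse a direction. A point $p=(l,Y_0,l)$ lies on $\Trop(I_{g,f})$ if and only if the initial ideal $\init_w(I_{g,f})$ contains no monomial, where $w$ corresponds to $p$; I would analyze this via the two elimination ideals. On the one hand $\pi_{XY}(p)=(l,Y_0)\in\Trop(g)$, so $(l,Y_0)\in\Trop(g)$; on the other hand $\pi_{ZY}(p)=(l,Y_0)\in\Trop(\tilde g)$. Conversely, given a point $(l,Y_0)$ that lies in $\Trop(g)\cap\Trop(\tilde g)$ and is an endpoint of a connected component of the one-dimensional set $(\Trop(g)\times\RR)\cap(\RR\times\Trop(\tilde g))\cap\{X=Z=l\}$, I would argue that this component is precisely a segment of $\Trop(I_{g,f})$ inside $\sigma_3$ (where $X=l$, $Z\le l$, so the defining equations reduce to those of $\tilde g$ together with the constraint $X=l$ coming from $\Trop(g)$), and its endpoint on the line $\{Z=l\}$ is a genuine vertex of the re-embedded curve because the local balancing condition forces edges to emanate into the neighboring cells $\sigma_1$ or $\sigma_2$. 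This identifies the vertices of $\Trop(I_{g,x+\AA t^{-l}})$ along $\{X=Z=l\}$ with the claimed endpoints.

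The main obstacle, I expect, is the last step: making rigorous the passage between the set-theoretic intersection $(\Trop(g)\times\RR)\cap(\RR\times\Trop(\tilde g))\cap\{X=Z=l\}$ and the actual tropical curve $\Trop(I_{g,f})$ on the cell $\sigma_3$. The subtlety is that $\sigma_3$ is the cell attached to the graph of $F$ along its break line, so the defining ideal $I_{g,f}$ restricted there involves the relation $z-f=0$ in a degenerate way, and one must check that no spurious components appear and no genuine component is missed -- i.e.\ that the Fundamental Theorem of Tropical Geometry is being applied on the correct torus chart for $\sigma_3$, namely the chart with coordinates $(y,z)$ and the constraint coming from $-\val(x)\le l$. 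I would dispatch this by using that $\tilde g$ generates the elimination ideal $I_{g,f}\cap\PS[y,z]$ (stated in item (2) above), so that $\pi_{ZY}(\Trop(I_{g,f}))=\Trop(\tilde g)$ exactly, and combining it with the analogous fact for $\pi_{XY}$; the finitely many ridge points where the reconstruction is genuinely ambiguous are then pinned down by the connectedness/endpoint bookkeeping, together with balancing. Everything else is the routine cell-by-cell check sketched above.
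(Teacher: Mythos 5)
Your cell-by-cell reconstruction via $\pi_{XY}$ and $\pi_{ZY}$ matches the paper's strategy for the underlying point set, and you correctly identify the ridge $\{X=Z=l\}$ as the delicate locus. However, the proposal leaves a genuine gap: a tropical curve is a weighted polyhedral complex, so the lemma's phrase ``completely determined'' includes the edge multiplicities, and your plan never addresses them. The paper closes this with the Sturmfels--Tevelev push-forward formula for multiplicities in its non-constant-coefficient form, \cite[Theorem 8.4]{BPR11}: by the unique-lifting property you also invoke, multiplicities of edges of $\Trop(I_{g,f})$ whose relative interior lies in $\sigmaint_1$, $\sigmaint_2$ or $\sigmaint_3$ are read off from the projected edges in $\Trop(g)$ or $\Trop(\tilde{g})$, and the multiplicity of an edge lying on $\{X=Z=l\}$ is then determined by comparing the preimage multiplicities in the two charts, with multiplicity zero interpreted as a phantom edge. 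It is exactly this comparison --- not local balancing, which is an equation constraining multiplicities rather than a formula producing them --- that decides which segments of $(\Trop(g)\times\RR)\cap(\RR\times\Trop(\tilde{g}))\cap\{X=Z=l\}$ actually lie in $\Trop(I_{g,f})$ and hence that the vertices on the ridge are exactly the endpoints of the connected components of that set. Your appeal to ``connectedness/endpoint bookkeeping, together with balancing'' names the right question but does not supply the tool that answers it.

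There is also a small geometric slip in the ridge discussion: the connected components of $(\Trop(g)\times\RR)\cap(\RR\times\Trop(\tilde{g}))\cap\{X=Z=l\}$ are segments inside the line $\{X=Z=l\}$ itself (the common face $\sigma_1\cap\sigma_2\cap\sigma_3$), running in the $Y$-direction with $X$ and $Z$ both fixed at $l$. You describe such a component as ``a segment of $\Trop(I_{g,f})$ inside $\sigma_3$'' with ``its endpoint on the line $\{Z=l\}$,'' which reads like a segment moving into $\sigmaint_3$ along the $Z$-direction; getting this straight matters because the ridge is visible in both projections, which is precisely why the multiplicity comparison above is available there.
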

\begin{proof} First, we fix a point $(X,Y,Z)$ with either $X\neq l$ or
  $Z\neq l$, thus in the relative interior of one of the cells
  $\sigma_i$, for $i=1,2,3$. We claim that $(X,Y,Z)$ belongs to
  $\Trop(I_{g,x+\AA t^{-l}})$ if and only if one of the following two
  conditions hold:
\[ Z= \max\{X,l\}\text{ and }(X,Y)\in \Trop(g), \quad \text{ or }
 \quad X= \max\{Z,l\}\text{ and }(Z,Y)\in \Trop(\tilde{g}).
\]
  The first implication follows directly from the Fundamental theorem
  of tropical algebraic geometry (see e.g.~\cite[Theorem 3.2.5]{MS09}) and the fact that $g,
  \tilde{g}, \pm (z-(x+\AA t^{-l})) \in I_{g,x+\AA
    t^{-l}}$. For the converse, 
we use the same result to lift a  point  $(Z,Y)\in
  \Trop(\tilde{g})\cap \QQ^2$ with $Z\neq l$ to a unique point
 $(X,Y,Z)$ in $\Trop(I_{g,x+\AA
    t^{-l}})\cap(\sigmaint_2\cup \sigmaint_3)$. This
  point satisfies $X=\max\{Z,l\}$.   Analogously, any point  $(X,Y) \in \Trop(g)\cap \QQ^2$ lifts uniquely to a point
  $(X,Y,Z)$ in $\Trop(I_{g,x+\AA
    t^{-l}})\cap(\sigmaint_1\cup\sigmaint_2)$,  where $Z=\max\{X,l\}$.

  It follows that the set of points in $\Trop(I_{g,x+\AA t^{-l}})$
  outside the line $\{X=Z=l\}$ is completely determined by the two
  projections $\Trop(g)$ and $\Trop(\tilde{g})$. It remains to prove
  that we can also detect the tropical multiplicities and all points
  in $\Trop(I_{g,x+\AA t^{-l}})
  $ from these two projections. To see this, notice first that the
  multiplicities of all edges of $\Trop(I_{g,x+\AA t^{-l}})$ whose
  relative interior lies in $\sigmaint_i$ for $i=1,2,3$ coincide with
  the corresponding multiplicities of the projected edges in
  $\Trop(g)$ or $\Trop(\tilde{g})$, respectively. This follows from
  the unique lifting property discussed above and the generalized
  push-forward formula for multiplicities of Sturmfels-Tevelev in the
  non-constant coefficients case
  ~\cite[Theorem~8.4]{BPR11}. We can also compute the multiplicity of
  an edge on the line $\{X=Z=l\}$ by comparing the multiplicities of
  the preimages of the edge in the two charts. An edge of multiplicity
  zero should be interpreted as a phantom edge. This concludes our
  proof.
\end{proof}

Using the previous result we can visualize the modification of $\RR^2$
along a vertical line and the effect of the linear re-embedding on the
tropical curve $\Trop(g)$ by means of the two relevant projections.
The colors and cell labels on the projections and the modified plane
in Figure~\ref{fig:OriginalEllipticModification} indicate the nature
of the fibers of each projection. The dashed line on each projection
represents the image of the vertical line used to modify $\RR^2$.  We
keep these conventions throughout this paper.

By Lemma~\ref{lem:ModifViaProjections} we know that the features of
$\Trop(I_{g,x+\AA t^{-1}})\cap \sigmaint_3$ are encoded in the
polynomial $\tilde{g}(z,y):=g(z-\AA \,t^{-l}, y)$.  For special values
of $\AA$, the Newton subdivision of $\tilde{g}$ is unexpected and
yields an interesting behavior in $\Trop(I_{g,x+\AA t^{-l}})\cap
\sigmaint_3$. We observe this phenomenon in
Figure~\ref{fig:OriginalEllipticModification}: the cycle on the
tropical curve $\Trop(g)$ was placed to the right of the vertical line
$X=-1$, but in $\Trop(\tilde{g})$ this cycle has been prolonged and
its leftmost vertical edge has been pushed from the line $Z=-1$ to the
line $Z=-2$. This example illustrates the general principle described
in the title of this paper. We discuss it further in
Section~\ref{sec:repa-trop}.

As we mention earlier, our goal is to use linear tropical
modifications to repair embeddings of plane curves. Let $J\subset
\PS[z_1,z_2,z_3,\ldots, z_r]$  be a linear ideal defining a plane in $\PS^{r}$. We re-embed the curve $g(z_1,z_2)$ via the ideal $ g + J$. As in Lemma~\ref{lem:ModifViaProjections}, we can construct $\Trop(g+J)$ from suitable 2-dimensional projections. 

In order to do so, we find generators of $J$ adapted to a fixed 2-cell
$\sigma$ of $\Trop(J)$. We let $(Z_i,Z_j)$ be the local coordinates of
$\sigma$. Then, the corresponding variables $z_i,z_j$ must be linearly
independent on $J$ and we can find unique polynomials $f_k\in
\PS[z_i,z_j]$ for $k\neq i,j$ such that
\begin{equation}
J=\langle  z_k-f_k, k\neq i,j\rangle.
\label{eq:coordsJ}
 \end{equation}

\begin{proposition}\label{pr:projectionsJ}
  Le $\ww\in \Trop(J)$ and fix a two-dimensional cell $\sigma$ of
  $\Trop(J)$ with local coordinates $(Z_i,Z_j)$ containing $\ww$.
  Then, the ideal $\init_{\ww}(g+J)/\init_{\ww}(J)\subset \CC[z_1^{\pm}, \ldots,
  z_r^{\pm}]/\init_{\ww}(J)$ is isomorphic to the localization
  $\init_{(\ww_i,\ww_j)}(\tilde{g}(z_i,z_j)) [S^{-1}]\subset
  \CC[z_i^{\pm}, z_j^{\pm}][S^{-1}]$, where $\tilde{g}(z_i,z_j)=( g +
  J) \cap \PS[z_i,z_j]$ and $S$ is the the multiplicatively closed set
  generated by all $\init_{(\ww_i,\ww_j)}(f_k)$, $k\neq i,j$.
\end{proposition}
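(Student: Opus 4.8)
The plan is to reduce the statement to a direct computation with initial ideals, using the explicit presentation \eqref{eq:coordsJ} of $J$ adapted to the chosen cell $\sigma$. First I would observe that since $z_i,z_j$ are linearly independent on $J$ and $J=\langle z_k-f_k : k\neq i,j\rangle$ with $f_k\in\PS[z_i,z_j]$ linear, the quotient ring $\PS[z_1^{\pm},\dots,z_r^{\pm}]/J$ is isomorphic to a localization of $\PS[z_i^{\pm},z_j^{\pm}]$: the generators $z_k-f_k$ allow us to eliminate every variable $z_k$ with $k\neq i,j$, at the cost of inverting each $f_k$ (so that $z_k=f_k$ remains a unit). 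Concretely, the elimination ideal $(g+J)\cap\PS[z_i,z_j]$ is generated by a single polynomial $\tilde g(z_i,z_j)$ obtained by substituting $z_k\mapsto f_k$ into $g$ and clearing denominators; this is exactly the two-variable projection appearing in the statement, and it is the analogue of $\tilde g$ in Lemma~\ref{lem:ModifViaProjections}.

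Next I would pass to initial forms with respect to $\ww$. The key tool is that taking initial ideals commutes with passing to the quotient by a \emph{linear} ideal whose weight vector $\ww$ lies in $\Trop(J)$: concretely $\init_\ww(g+J)/\init_\ww(J)$ is computed inside $\CC[z_1^{\pm},\dots,z_r^{\pm}]/\init_\ww(J)$, and $\init_\ww(J)=\langle z_k-\init_{(\ww_i,\ww_j)}(f_k): k\neq i,j\rangle$ precisely because each $f_k$ is linear and the cell $\sigma$ records which monomials of $f_k$ are $\ww$-dominant (the local coordinates $(Z_i,Z_j)$ parametrize $\sigma$, so $\init_{(\ww_i,\ww_j)}(f_k)$ is again linear, nonzero, and defines the $\CC$-linear relation $z_k=\init_{(\ww_i,\ww_j)}(f_k)$). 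Using these relations to eliminate $z_k$ for $k\neq i,j$ identifies $\CC[z_1^{\pm},\dots,z_r^{\pm}]/\init_\ww(J)$ with the localization $\CC[z_i^{\pm},z_j^{\pm}][S^{-1}]$, where $S$ is generated by the $\init_{(\ww_i,\ww_j)}(f_k)$ — exactly the localization in the statement. Under this identification, the image of $g$ is the image of $g$ with $z_k$ replaced by $\init_{(\ww_i,\ww_j)}(f_k)$; I would then check that this coincides with $\init_{(\ww_i,\ww_j)}(\tilde g)$ up to a unit in $S$, which follows because clearing denominators in $\tilde g$ only multiplies by a product of (powers of) the $f_k$'s, whose initial forms lie in $S$.

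The main obstacle I anticipate is the bookkeeping around the clearing-of-denominators step: one must verify that the substitution $z_k\mapsto f_k$ followed by clearing denominators, and then taking $\ww$-initial forms, produces the \emph{same} Laurent polynomial (modulo units in $S$) as first taking $\init_\ww$ of the generators of $g+J$ and then eliminating. This requires knowing that no unexpected cancellation occurs when forming $\init_{(\ww_i,\ww_j)}(\tilde g)$ — equivalently, that the $\ww$-weight of the denominator one clears is exactly the sum of the $(\ww_i,\ww_j)$-weights of the relevant $f_k$'s, with no drop. This is where the hypothesis $\ww\in\sigma\subset\Trop(J)$ is essential: it guarantees that each $f_k$ has a well-defined $\ww$-initial form that is a single (or balanced) term pattern compatible with the cell, so the denominator's initial form is genuinely the product of the $\init_{(\ww_i,\ww_j)}(f_k)$ and lies in $S$. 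Once this compatibility is established, the isomorphism of ideals is immediate from the ring isomorphism above, completing the proof.
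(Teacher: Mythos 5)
Your setup of the ring isomorphism $\varphi\colon \CC[z_1^{\pm},\dots,z_r^{\pm}]/\init_{\ww}J\to\CC[z_i^{\pm},z_j^{\pm}][S^{-1}]$ follows the paper exactly, as does your use of $\ww\in\sigma\subset\Trop(J)$ to conclude that $\init_{\ww}(z_k-f_k)=z_k-\init_{\ww}(f_k)$ and hence that $\init_{\ww}J$ is the linear ideal $\langle z_k-\init_{\ww}(f_k):k\neq i,j\rangle$. However, there is a genuine gap at the end: you assert that ``the isomorphism of ideals is immediate from the ring isomorphism above.'' It is not. The ring isomorphism only tells you that the quotient ideal $\init_{\ww}(g+J)/\init_{\ww}J$ corresponds to \emph{some} ideal of $\CC[z_i^{\pm},z_j^{\pm}][S^{-1}]$ containing $\init_{\ww}(\tilde g)$; the content of the proposition is the reverse inclusion, namely that $\init_{\ww}(\tilde g)$ alone \emph{generates} this quotient ideal. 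Since initial forms of a generating set of $g+J$ need not generate $\init_{\ww}(g+J)$, this requires an argument. The paper proves it by taking an arbitrary $\ww$-homogeneous $h\in\init_{\ww}(g+J)\cap\CC[z_i,z_j]$, invoking \cite[Lemma 2.12]{FJT07} to realize $h$ as $\init_{\ww}(f)$ for some $f\in g+J$, writing $f=p\tilde g+\sum_k q_k(z_k-f_k)$, and using the fact that $\init_{\ww}(z_k-f_k)$ involves $z_k$ together with an induction on $r$ to force $f=p\tilde g$.

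Separately, the obstacle you flag — a ``clearing-of-denominators'' step and the need for ``no unexpected cancellation'' in forming $\init_{(\ww_i,\ww_j)}(\tilde g)$ — is a red herring. The $f_k$ lie in $\PS[z_i,z_j]$ (they are polynomials, indeed linear), so $\tilde g$ is obtained by honest polynomial substitution and there are no denominators to clear. More importantly, cancellations \emph{can and do} occur when forming $\tilde g$ (indeed, the special liftings studied throughout the paper are exactly those where they occur), but the paper's proof never compares $\init_{\ww}(\tilde g)$ with a naive substitution of $\init_{\ww}(g)$; it works with $\tilde g$ directly. Your proposal tries to establish an equality between these two quantities ``up to a unit in $S$,'' which in general is false and in any case does not substitute for the generating argument above.
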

\begin{proof} 
  To simplify notation, we consider all initial ideals in the
  statement defined by $\ww$, rather than by the projection
  $(\ww_i,\ww_j)$.  By definition, $ \init_{\ww}(g),
  \init_{\ww}(\tilde{g}) \subset \init_{\ww} (g+J)$.

  For each $k\neq i,j$ we write $f_k:= a_kz_i + b_k z_j + c_k$ for
  suitable $a_k,b_k,c_k\in \PS$.  In order to prove the statement, we
  study the interplay of $a_k,b_k,c_k$ with $\ww\in \QQ^r$.
  By~\eqref{eq:coordsJ}, any point $z$ in the plane defined by $J$
  with $-\val(z)=\ww$ is uniquely determined by its $(i,j)$
  coordinates.  Since $\ww\in \sigma\subset \Trop(J)$, the fundamental
  theorem of tropical algebraic geometry ensures that
\begin{equation}
\ww_k=
\max\{-\val(a_k)+\ww_i, -\val(b_k)+w_j, -\val(c_k)\}\quad \text{ for all }\quad k\neq
i,j.\label{eq:initialFormsJ}
\end{equation}
Hence, $\init_{\ww}(z_k-f_k)=z_k -\init_{\ww}(f_k)$ and we conclude
that $ \CC[z_i^{\pm}, z_j^{\pm}]\cap \init_{\ww} J =0$ because $\dim
J=\dim (\init_{\ww} J)=2$. Therefore, the generators
from~\eqref{eq:coordsJ} give a basis to compute $\init_{\ww}J$, i.e.\
$\init_{\ww}J=\langle z_k-\init_{\ww}(f_k): k\neq i,j\rangle$, In
particular, all elements of $S$ are units in $\CC[z_1^{\pm},\ldots,
z_r^{\pm}]/\init_{\ww}J$.

As a consequence, we construct an isomorphism $\varphi\colon
\CC[z_1^{\pm}, \ldots, z_r^{\pm}]/\init_{\ww}J \to \CC[z_i^{\pm 1},
z_j^{\pm}] [S^{-1}]$ by
\begin{equation}
  \label{eq:mapLocalCoord}
  \varphi(\overline{z_i})=z_i, \; \varphi(\overline{z_j})=z_j ,\; \text{ and }\; \varphi(\overline{z_k})=\init_{\ww}(f_k) \text{ for all }k\neq i,j.
\end{equation}
This map induces an isomorphism between the ideals
$(\init_{\ww}(\tilde{g})+\init_{\ww}J)/\init_{\ww}J$ and
$\init_{\ww}(\tilde{g})[S^{-1}]$.  To prove the statement, we show
that $\init_{\ww}(\tilde{g})$ generates the quotient ideal
$\init_{\ww}(g+J)/\init_{\ww}J$.  

Recall that $z_i,z_j$ and the elements of $S$ are units in the domain
of $\varphi$. We pick a $\ww$-homogeneous polynomial $h\in
\init_{\ww}(g+J)\cap \CC[z_i, z_j]$ and show that $h\in \langle
\init_{\ww}\tilde{g}\rangle $.  By~\cite[Lemma 2.12]{FJT07}, we know
that $h$ is the initial form of an element $f\in g+J$. We write
$f:=p(z_i,z_j)\tilde{g} + \sum_{k\neq i,j} q_k(z_k-f_k)$.  Since
$\init_{\ww}(z_k-f_k)$ contains $z_k$ in its support but $h\in
\CC[z_i,z_j]$, an easy induction on $r$ ensures that
$f=p\tilde{g}$. Thus,
$\init_{\ww}(f)=\init_{\ww}(h)\init_{\ww}(\tilde{g})$, as we wanted to
show. This concludes our proof.
\end{proof}

\subsection{Berkovich skeleta of curves and faithful tropicalization}\label{sec:berk-skel-curv}

In this section, we outline the required background on Berkovich
analytic curves, their skeleta and their relationship with
tropicalizations of curves. For the sake of brevity and simplicity, we
restrict our exposition to the topological aspects of analytic
curves. These features are captured by skeleta of curves. We follow
the approach developed by Baker, Payne and Rabinoff
in~\cite{BPR11,BPRContempMath}.

Let $K$ be an algebraically closed, complete non-Archimedean valued
field $K$ with absolute value $|\,.\,| =\exp(-\val(\,.\,))$. Our main
example of interest is $K=\CC\{\!\{t^{\RR}\}\!\}$, i.e.\ the field of
generalized Puiseux series. Given an algebraic curve $C$ defined over
$K$ we let $C^{\an}$ denote its analytification. The analytification
$A^{\an}$ of an affine curve $\Spec(A)$ is the space of multiplicative
seminorms $\|\; \|\colon A\to \RR_{\geq 0}$ that satisfy the
non-Archimedean triangle inequality $\|f+g\|\leq \max\{\|f\|, \|g\|\}$
and extend the absolute value on $K$. Its topology is the coarsest one
such that all evaluation maps $\operatorname{ev}_f\colon A^{\an}\to
\RR_{\geq 0}$ $\|\cdot\| \mapsto \|f\|$ are continuous for $f\in A$.
The analytification $C^{\an}$ of a general curve $C$ is glued from the
analytification of an affine open cover.  It can be shown that
$C^{\an}$ possesses a piecewise linear structure and it is locally
modeled on an $\RR$-tree~\cite[$\S 5.8$]{BPRContempMath}. The
$K$-points of $C$ are embedded as a subset of the leaves of this
tree. The complement of the set of leaves carries a canonical metric
given by shortest paths.

In~\cite{Berk1990}, Berkovich introduced the notion of \emph{skeleta}
of an analytic space as suitable polyhedral subsets that capture the
topology of the whole space.  They are constructed from semistable
formal models. Equivalently, they can be defined by means of
semistable vertex sets $V$ of $C^{\an}$~\cite[$\S 1.2$,
Theorem~1.3]{BPRContempMath}. They have the structure of a finite
metric graph with vertex set $V$. We denote them by $\Sigma(C,V)$. For
any choice of $V$, there exists a deformation retract
\begin{equation}
\tau_{\Sigma(C,V)}\colon C^{\an}\twoheadrightarrow \Sigma(C,V)\label{eq:retraction}
\end{equation}
(see~\cite{BPRContempMath,Berkovich_LocContractI}). Semistable vertex
sets form a poset under inclusion and induce refinement of the
corresponding
skeleta~\cite[Proposition~3.13(1)]{BPRContempMath}. 

\begin{definition}
  We say $\Sigma(C,V)$ is a \emph{minimal skeleton} of $C^{\an}$ if
  $V$ is minimal.
\end{definition}
Such minimal skeletons exist by~\cite[$\S 4.16$]{BPRContempMath}. The
Stable reduction theorem ensures that if the Euler characteristic of
$C$ is at most 0, then there is a unique set-theoretic minimal
skeleton of $C^{\an}$~\cite[Theorem 4.22]{BPRContempMath}. This is the
case when $C$ is smooth and non-rational. In this situation, we write
$\Sigma(C)$, or $\Sigma(I)$ whenever $C$ is defined by the ideal $I$.

From now on, let us assume that $C$ is a smooth connected algebraic
curve over $K$ and let $\widehat{C}$ denote its smooth completion. Let
$D=\widehat{C}\smallsetminus C$ be its set of punctures. These
punctures are contained in distinct connected components of
$\widehat{C}^{\an}\smallsetminus V$. By construction, a semistable
vertex set $V$ of $C^{\an}$ is also a semistable vertex set of
$\widehat{C}^{\an}$. In particular,
by~\cite[Proposition~3.13]{BPRContempMath} we know that
$\Sigma(\widehat{C},V)\subset \Sigma(C,V)$.  The closure of
$\Sigma(C,V)$ in $\widehat{C}^{\an}$ equals $\Sigma(C,V)\cup D$. We
call it the \emph{extended skeleton} of $\widehat{C}^{\an}$ with
respect to $V$ and the punctures $D$ and we denote it by
$\widehat{\Sigma}(V,D)$. Whenever the minimal skeleton of
$\widehat{C}^{\an}$ is unique, as in Example~\ref{ex:EllipticSkeleta}
below, the extended skeleton depends solely on the set of
punctures. Following the previous notation, when the smooth,
non-rational curve $\widehat{C}$ is defined by an ideal $I$, we write
$\widehat{\Sigma}(I)$ for the complete extended skeleton.

\begin{example} [Elliptic curves]\label{ex:EllipticSkeleta}
  Let $C$ be a smooth elliptic curve defined over $K$. If
  $\widehat{C}$ has good reduction, then the minimal skeleton of
  $\widehat{C}^{\an}$ is a point. If $\widehat{C}$ has bad reduction,
  then the minimal skeleton of $\widehat{C}$ is homeomorphic to a
  circle: its corresponding semistable vertex set is a point~\cite[$\S 7.1$]{BPR11}. Larger
  semistable vertex sets $V$ will yield larger skeleta obtained from
  $\Sigma(\widehat{C})$ by attaching finite trees to this circle along
  points in $V\cap \Sigma(\widehat{C})$.
\end{example}
From the previous discussion, it is clear that skeleta of analytic
curves share many properties with tropicalizations of algebraic
curves.  Their interplay was studied in depth by Baker, Payne and
Rabinoff in~\cite{BPR11}. As we next discuss, the precise relationship
is captured by the tropicalization map and Thuillier's non-Archimedean Poincar\'e-Lelong
formula~\cite[Theorem 5.15]{BPRContempMath}.

Let $C\subset (K^*)^n$ be an embedded curve, and fix a basis
$\{y_1,\ldots, y_n\}$ of the character lattice of the torus. Let
$f_i\in K(C)$ be the image of $y_i$ for $i=1,\ldots, n$. The
tropicalization map $\trop\colon C(K)\to \RR^n$ given by $x \mapsto
(\log(|f_1(x)|), \ldots, \log(|f_n(x)|)$ extends naturally to a
continuous map
  \begin{equation} \label{eq:tropMapFull}
\trop\colon  C^{\an} \to \RR^n \qquad \|\cdot\| \mapsto (\log (\|f_1\|), \ldots, \log(\|f_n\|)).
\end{equation}
The image of this map is precisely the tropical curve
$\Trop(C)$~\cite[$\S 3$]{Gubler13}. Given any semistable vertex set $V$ of
$C^{\an}$, the map~\eqref{eq:tropMapFull} factors through the retraction
$\tau_{\Sigma(C,V)}$ by~\cite[Theorem 5.15 (1)]{BPRContempMath}. In particular, the resulting map 
\begin{equation}
\trop\colon
\Sigma(C,V)\twoheadrightarrow \Trop(C)\label{eq:tropMap}
\end{equation}
 is a
surjection.  This last map will be our main focus of interest.

By the Poincar\'e-Lelong formula~\cite[Theorem 5.15]{BPRContempMath},
the maps $\trop$ from~\eqref{eq:tropMapFull} and~\eqref{eq:tropMap}
are piecewise affine, with integer slopes. Furthermore, they are
affine on each edge of the skeleton $\Sigma(C,V)$.  The stretching
factor on each edge is known as its \emph{relative multiplicity}.  If
an edge $e$ gets contracted to a single point in $\Trop(C)$, we set
$m_{\operatorname{rel}}(e)=0$. The map $\trop$ is \emph{harmonic},
i.e.\ the image of every point in $C^{\an}$ and $\Sigma(C,V)$ under
$\trop$ is \emph{balanced} in the following sense: only finitely many
edges in the star of a point $x$ in $C^{\an}$ (resp.~$\Sigma(C,V)$)
are not contracted by $\trop$, and these edges satisfy the identity
\begin{equation}
\sum_{e \in T_x} d_e\trop(x)= 0\label{eq:balancing}.
\end{equation}
Here, $T_x$ denote the tangent directions of $x$, i.e.\ the nontrivial
geodesic segments starting at $x$, up to equivalence at $x$ (as
in~\cite[$\S 5.11$]{BPRContempMath}). The outgoing slope $d_e\trop(x)$
is $0$ if $\trop$ contracts $e$ and it equals
$m_{\operatorname{rel}}(e)$ times the primitive direction of the edge
$e'$ of $\Trop(C)$ that contains the (possibly unbounded) segment
$\trop(e)$.

By refining the polyhedral structure of $\Trop(C)$ we may assume that
the map from~\eqref{eq:tropMap} is a morphism of 1-dimensional
complexes. The balancing condition yields the following identity
between tropical and relative multiplicities, as
in~\cite[Proposition~4.24]{BPR11}:
\begin{equation}
  m_{\Trop}(e')=\sum_{\substack{ e\in \Sigma(C,V)\\ \trop(e)=e'}}m_{\operatorname{rel}}(e).\label{eq:tropMultVsRelMult}
\end{equation}

By~\cite[Proposition~4.24]{BPR11}, this formula can also be used to
relate tropical and relative multiplicities of vertices $e'$ on
tropical curves and vertices $e$ of skeleta of analytic curves, when
the map $\trop$ from~\eqref{eq:tropMap} is a morphism of 1-dimensional
complexes.  As in the case of edges, the tropical multiplicity of a
vertex $\ww$ of $\Trop(C)$ counts the number of irreducible components
(with multiplicities) in the initial degenerations of the input ideal
defining $I$ with respect $\ww$. Rather than giving the precise
definition for the relative multiplicity of a vertex $v$ in
$\Sigma(C,V)$, we present two of its crucial properties, as
in~\cite[Corollary 6.12]{BPR11}. Namely, $m_{\operatorname{rel}}(v)$
is a non-negative integer and $m_{\operatorname{rel}}(v)>0$ if and
only if $v$ belongs to an edge of $\Sigma(C,V)$ mapping
homeomorphically onto its image via $\trop$.

\begin{definition} Consider a skeleton $\Sigma(C,V)$ of $C^{\an}$ and
  a finite subgraph $\Gamma$ on it.  We say a closed embedding
  $C\hookrightarrow (K^*)^n$ \emph{faithfully represents} $\Gamma$ if
  $\trop$ maps $\Gamma$ homeomorphically and isometrically onto its
  image in $\RR^n$.
\end{definition}

Using embeddings of curves in proper toric varieties $Y_{\Delta}$ that
meet the dense torus, we can extend the previous definition to
complete curves. We consider those toric varieties $Y_{\Delta}$ for
which the morphism $\widehat{C} \to Y_{\Delta}$ is a closed immersion
and use the \emph{extended tropicalization maps} from~\cite{Pay09},
obtained by gluing the previous constructions on each toric strata
along open inclusions, with the convention that $\log(0):=-\infty$.

  We say that $\trop\colon \widehat{C}^{\an} \to \Trop(\widehat{C})$
  is \emph{faithful} if it faithfully represents a skeleton of
  $\widehat{C}$.
%
  By definition, a faithful tropicalization of $C$ restricts to a
  homeomorphism from a suitable skeleton of $C^{\an}$ to a subgraph of
  the tropical curve $\Trop(C)$.  Thus, constructing an embedding of
  the given curve that yields such a homeomorphism can be viewed as a
  first step towards a faithful tropicalization of curves. Relative
  multiplicities on edges and the isometric requirements should be
  address in a second step.  

  In Section~\ref{sec:trop-ellipt-curv}, we focus our attention on
  tropical faithfulness of plane elliptic cubics with bad reduction,
  embedded in $(K^*)^2$ or in a surface in $(K^*)^n$. Their
  completions admit a closed embedding $\widehat{C}\hookrightarrow
  \mathbb{P}^{n-1}$.  The minimal skeleton $\Sigma$ of
  $\widehat{C}^{\an}$ lies in $C^{\an}$ and is homeomorphic to a
  circle. Our goal is to find a linear re-embedding of a given curve
  that faithfully represents $\Sigma$.

We first discuss how to detect non-closed embeddings of skeleta by looking at the tropical curve.
\begin{definition}
  Let $\Trop(g)$ be a tropicalization of the plane curve defined by
  $g$, and $v$ a vertex of $\Trop(g)$.  We say that $v$ is
  \emph{locally reducible} if the star of $v$ in $\Trop(g)$ is a
  reducible $1$-dimensional complex that is balanced at $v$, i.e.\ if
  it can be written as the union of two non-zero complexes with
  multiplicities that are balanced $v$.  In particular, if
  $\Star_{\Trop(g)}(v)$ is the union of $s$ edges $e_1,\ldots, e_s$
  adjacent to $v$ with multiplicities $m_1,\ldots, m_s$, we can find
  $\tilde{m}_1,\ldots, \tilde{m}_s$ with $0\leq \tilde{m}_i\leq m_i$
  for all $i$ such that the resulting complex with multiplicities is
  balanced at $v$, contains an edge of positive multiplicity and it
  does not agree with $\Star_{\Trop(g)}(v)$ as complexes with
  multiplicities.
\end{definition}

\begin{lemma}\label{lem:nonHomeo} 
  Consider a non-rational smooth curve $C$ defined by an ideal $I$ and
  let $\widehat{\Sigma}(I)$ be extended skeleton defined with respect
  to the set of punctures $D_I$.  Assume that $\trop\colon
  \widehat{\Sigma}(I)\smallsetminus D_I\to \Trop(I)$ is not a closed
  embedding.  Then one of the following conditions hold:
\begin{enumerate}[(1)]
\item $\Trop(I)$ has an edge of higher multiplicity, or a locally reducible vertex $v$ with
  $m_{\Trop}(v)\geq 2$;
\item $\Trop(I)$ faithfully represents a unique subgraph $\Gamma$ of $\widehat{\Sigma}(I)\smallsetminus D_I$. 
\end{enumerate}
\end{lemma}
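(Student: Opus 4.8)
The plan is to analyze the tropicalization map $\trop\colon \widehat{\Sigma}(I)\smallsetminus D_I \to \Trop(I)$ using the structural results on relative multiplicities recalled above, and to argue by cases according to whether the failure of being a closed embedding is caused by some edge or some vertex. First I would recall that by the Poincar\'e--Lelong formula the map $\trop$ is piecewise affine with integer slopes, affine on each edge of the skeleton, and harmonic; moreover, after refining the polyhedral structure of $\Trop(I)$ we may assume that $\trop$ is a morphism of $1$-dimensional complexes. In particular each edge $e'$ of $\Trop(I)$ satisfies the identity~\eqref{eq:tropMultVsRelMult}, $m_{\Trop}(e')=\sum_{\trop(e)=e'}m_{\operatorname{rel}}(e)$, summing over edges $e$ of $\widehat{\Sigma}(I)$, and similarly for vertices. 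Since $\widehat{\Sigma}(I)$ is a finite metric graph and $C$ is smooth, the relative multiplicity of a vertex $v$ is a non-negative integer, positive precisely when $v$ lies on an edge mapping homeomorphically onto its image.

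Next I would observe that if $\trop$ fails to be a closed embedding, then either it fails to be injective, or it fails to be a homeomorphism onto its image, or it fails to be an isometry (i.e.\ to have relative multiplicity $1$ on every edge it does not contract). In the first case, two distinct points of $\widehat{\Sigma}(I)\smallsetminus D_I$ map to the same point $\ww$ of $\Trop(I)$. If $\ww$ is a relative interior point of an edge of $\Trop(I)$, then by~\eqref{eq:tropMultVsRelMult} at least two edges of the skeleton map onto this edge with positive relative multiplicity, forcing $m_{\Trop}(e')\geq 2$; this gives alternative~(1). If instead $\ww$ is a vertex, then the fibre over $\ww$ contains either several skeleton vertices, or a skeleton vertex together with edge-interior points, so by the vertex analogue of~\eqref{eq:tropMultVsRelMult} we again get $m_{\Trop}(\ww)\geq 2$, and the star of $\ww$ in $\Trop(I)$ is the image under a harmonic non-injective map, hence decomposes as the balanced union of the images of the stars of the preimages, so $\ww$ is locally reducible; this again gives~(1). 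In the remaining case $\trop$ is injective but not isometric, so some edge $e$ with $m_{\operatorname{rel}}(e)\geq 2$ does not get contracted; by~\eqref{eq:tropMultVsRelMult} its image edge $e'$ has $m_{\Trop}(e')\geq 2$, giving~(1) once more. Finally, if none of these pathologies occurs --- i.e.\ $\trop$ restricted to the locus it does not contract is injective, has relative multiplicity $1$ there, and the contracted locus is controlled --- then $\trop$ restricts to a homeomorphic and isometric embedding of the maximal subgraph $\Gamma\subset \widehat{\Sigma}(I)\smallsetminus D_I$ on which no edge is contracted; uniqueness of $\Gamma$ follows since an edge is contracted if and only if its relative multiplicity is $0$, a property intrinsic to $\trop$. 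This yields alternative~(2).

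The main obstacle I anticipate is the bookkeeping at vertices: one must ensure that a failure of injectivity at a vertex $\ww$ genuinely produces either a locally reducible vertex with $m_{\Trop}(\ww)\geq 2$ or an edge of higher multiplicity, rather than some degenerate configuration. The key point is the harmonicity/balancing identity~\eqref{eq:balancing}: the pushforward of the local balanced fan at each preimage of $\ww$ is balanced at $\ww$, and summing these pushforwards (with the correct relative multiplicities) recovers $\Star_{\Trop(I)}(\ww)$ with its tropical multiplicities by~\eqref{eq:tropMultVsRelMult}; if there is more than one summand, or a single summand with some relative multiplicity $\geq 2$, this realizes $\ww$ as a locally reducible vertex of tropical multiplicity at least $2$, while the degenerate situation where a single preimage with all relative multiplicities $1$ still fails to be a local homeomorphism is excluded by the characterization of $m_{\operatorname{rel}}(v)>0$ recalled from~\cite[Corollary 6.12]{BPR11}. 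Care is also needed to phrase ``closed embedding'' correctly: since $\widehat{\Sigma}(I)\smallsetminus D_I$ need not be compact, we must check that once $\trop$ is a homeomorphism onto its image it is automatically closed on the relevant subgraph, which follows because $\Trop(I)$ is a closed subset of $\RR^n$ and the unbounded ends of the skeleton map to unbounded ends of $\Trop(I)$ compatibly.
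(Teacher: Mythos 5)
Your overall framing and toolkit match the paper's: multiplicity formula~\eqref{eq:tropMultVsRelMult} for edges and its vertex analogue, harmonicity/balancing, and the decomposition of $\Star_{\Trop(I)}(\ww)$ as a union of pushed-forward stars of preimages. The edge-interior analysis is fine, and case~(b) (injective but not isometric) is harmless but vacuous, since an injective morphism of finite complexes is already a closed embedding, so the hypothesis already excludes it.

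The gap is in your vertex case. You assert that if the fibre $\trop^{-1}(\ww)$ over a vertex $\ww$ is not a singleton, then the vertex analogue of~\eqref{eq:tropMultVsRelMult} forces $m_{\Trop}(\ww)\geq 2$, and hence $\ww$ is locally reducible, so alternative~(1) holds. This is false, because $m_{\rel}(\rho)>0$ only when $\rho$ lies on some edge of $\Sigma$ that maps homeomorphically onto its image; preimage points all of whose adjacent edges are contracted contribute $0$ to the sum. Concretely, a finite subtree $T\subset\widehat{\Sigma}(I)\smallsetminus D_I$ can be entirely contracted to a single vertex $v$ of $\Trop(I)$ while only one vertex $\rho\in T$ meets non-contracted edges; then $\trop^{-1}(v)=T$ is far from a singleton, yet $m_{\Trop}(v)=m_{\rel}(\rho)$ can equal $1$ and the star decomposition at $v$ is trivial, so $v$ is neither locally reducible nor of multiplicity $\geq 2$. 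Your attempted patch in the final paragraph (citing~\cite[Corollary~6.12]{BPR11} to ``exclude'' the degenerate situation) does not exclude it either: that corollary only identifies which $\rho$ have $m_{\rel}(\rho)>0$; it does not forbid additional fibre points with $m_{\rel}=0$.

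This is precisely where the paper's proof does different work. Its case~(iii) (connected, non-singleton fibre) does \emph{not} conclude~(1); it observes that if $v$ is not locally reducible then the decomposition~\eqref{eq:StarDecomposition} is trivial, identifies the unique vertex $\rho\in\trop^{-1}(v)$ whose star is not entirely contracted, and uses it to build the subgraph $\Gamma$ of alternative~(2) by iterating over all vertices. Your case~(c) (``no pathologies'') does not cover this situation because under your case split the degenerate configuration falls into~(a), where you (incorrectly) conclude~(1). To repair the argument you need to allow the vertex-fibre analysis to branch: either two preimage vertices have non-contracted stars (giving local reducibility and $m_{\Trop}(v)\geq 2$, hence~(1)), or at most one does, in which case you must explicitly construct the subgraph $\Gamma$ by discarding the contracted fibre components at each such $v$, exactly as the paper does.
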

\begin{proof} To simplify notation, write
  $\Sigma:=\widehat{\Sigma}(I)\smallsetminus D_I$. After refining the
  structure of $\Sigma$ and $\Trop(I)$, we may assume without loss of
  generality that $\Sigma$ has no loop edges and that $\trop$ is a map
  of connected 1-dimensional abstract complexes.

  Since $\trop$ is not a closed embedding, one of the following
  conditions hold:
  \begin{enumerate}[(i)]
  \item the images of several edges intersect in
  more than a point;
\item there exists a vertex $v$ of $\Trop(I)$ where the fiber
  $\trop^{-1}(v)$ in $\Sigma$ is not connected;
\item there exists a vertex $v$ of $\Trop(I)$ such that $\trop^{-1}(v)$ is connected and it is not a singleton.
  \end{enumerate}

  First, assume that (i) holds and let $e$ be a segment of an edge of
  $\Trop(I)$ where the images of several edges overlap. We conclude
  from~\eqref{eq:tropMultVsRelMult} that $e$ lies in an edge of
  $\Trop(I)$ of higher multiplicity.

We now analyze conditions (ii) and (iii).
We consider the
  stars of all vertices $\rho\in \trop^{-1}(v)$ in the abstract cell
  complex $\Sigma$.  By~\eqref{eq:balancing} we know that the images
  of all stars $\Star_{\Sigma}(\rho)$ under the tropicalization map
  are balanced at $v$. In particular, 
\begin{equation}
\Star_{\Trop(I)}(v)= \bigcup_{\rho \in V(\trop^{-1}(v))} \trop(\Star_{\Sigma}(\rho)).\label{eq:StarDecomposition}
\end{equation}
The decomposition in the right-hand side
of~\eqref{eq:StarDecomposition} contains at least one non-singleton
component. In order to show that $v$ is a locally reducible vertex, we
seek to find two vertices $\rho,\rho'\in V(\Sigma)$ where
$\trop(\Star_{\Sigma}(\rho))$ and $\trop(\Star_{\Sigma}(\rho'))$ are
both nontrivial. In this situation,~\cite[Proposition 4.24, Corollary
6.12]{BPR11} ensure that $m_{\Trop}(v)\geq
m_{\operatorname{rel}}(\rho) +m_{\operatorname{rel}}(\rho')\geq
1+1=2$.

 To simplify notation, fix $\Sigma':= \Sigma\smallsetminus
 \trop^{-1}(v)$. Assume (ii) holds, and decompose $\trop^{-1}(v)$ into
 its connected components $\{\Sigma_1,\ldots, \Sigma_r\}$, where
 $r\geq 2$. Each component is closed in $\Sigma$. Since $\Sigma$ is
 connected, we conclude that $\Sigma_i \cap
 \overline{\Sigma'} 
 \neq \emptyset$ for all $i=1,\ldots, r$.  Since $\trop$ is a morphism
 of complexes, we can pick a vertex $\rho_i$ in $\Sigma_i \cap
 \overline{\Sigma'}$ for each $i=1,\ldots,r$. By construction,
 $\trop(\Star_{\Sigma}(\rho_i))\neq v$ for all $i=1,\ldots, r$.  We
 conclude that $v$ is locally reducible and $m_{\Trop}(v)\geq r\geq
 2$.

 Finally, assume (iii) holds. Then the fiber $\trop^{-1}(v)$ in
 $\Sigma$ is a connected graph with at least two vertices. If $v$ is
 not locally reducible, then the
 decomposition~\eqref{eq:StarDecomposition} is trivial, and so there
 is a unique vertex $\rho$ of $\trop^{-1}(v)$ whose star in $\Sigma$
 does not map entirely to $v$ under $\trop$. We conclude that $
 \trop^{-1}(v)\cap \overline{\Sigma'}=\{\rho\}$ and $\{\rho\} \cup
 \Sigma'$ is connected and surjects onto $\Trop(I)$ via $\trop$.  We
 conclude that each edge $e$ in $\Star_{\Trop(I)}(v)$ is the image of
 at least one edge $e'$ in $\Star_{\Sigma'\cup \{\rho\}}(\rho)$. Thus,
 we can construct a subgraph $\Gamma$ in $\Star_{\Sigma'\cup
   \{\rho\}}(\rho)$ that is homeomorphic to $\Star_{\Trop(I)}(v)$ by
 $\trop$.  In addition, assuming condition (i) does not occur, we know
 that $\Gamma=\Star_{\Sigma'\cup \{\rho\}}(\rho)$ and $\trop$ induces
 an isometry between $\Gamma$ and $\Star_{\Trop(I)}(v)$.

As a consequence, if condition \emph{(1)} in the statement fails, by
iterating the previous construction over all vertices of $\Trop(I)$,
we can find a unique subgraph $\Gamma$ of
$\widehat{\Sigma}(I)\smallsetminus D_I$ that maps isometrically to
$\Trop(I)$ under the map $\trop$.  This concludes our proof.
\end{proof}

\begin{remark}
  From the proof of Lemma~\ref{lem:nonHomeo} we can also extract the
  following information. Assume that the images under $\trop$ of two
  adjacent edges $e$ and $e'$ of $\Sigma$ with a unique common
  endpoint $w$ are two line segments that partially overlap. Call
  $\rho$ and $\rho'$ the non-common endpoints of $e$ and $e'$, and
  assume $\trop(\rho')\in \trop(e)$. Then, the point $\trop(\rho')$
  will be a locally reducible vertex of $\Trop(I)$ and its star
  contains the straight line with direction $\trop(e)$. In the case of
  complete overlap, the vertex $\trop(\rho)=\trop(\rho')$ will also be
  locally reducible. We know that $\Star_{\Trop(I)}(\trop(\rho))$
  contains the high multiplicity edge $\trop(e)$, but we cannot
  guarantee that it contains a straight line. Finally, when the edges
  $e$ and $e'$ of $\Sigma$ have two common endpoints and do not get
  contracted by $\trop$, their image $\trop(e)$ will be contained in
  an edge of $\Trop(I)$ of multiplicity $m>1$.
\end{remark}

The following special instance of Lemma~\ref{lem:nonHomeo} will be
useful in Section~\ref{sec:repair-elliptic}, were we discuss elliptic
plane cubics with bad reduction.
\begin{corollary}\label{cor:nonHomeoCycle}
  Let $C$ be an elliptic cubic curve over $\PS$ with bad reduction,
  embedded linearly by an ideal $I$. Assume $\Trop(I)$ contains a
  cycle but $\trop\colon \widehat{\Sigma}(I)\smallsetminus D_I\to
  \Trop(I)$ is not faithful on the cycle. Then, the cycle contains a
  locally reducible vertex $v$ and $m_{\Trop}(v)= 2$.
\end{corollary}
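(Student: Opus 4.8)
The plan is to specialize Lemma~\ref{lem:nonHomeo} to the situation at hand and then use the rigidity of elliptic cubics with bad reduction to eliminate the high-multiplicity alternative. First I would recall that, since $C$ is elliptic with bad reduction, the minimal skeleton $\widehat{\Sigma}(I)$ is homeomorphic to a circle (Example~\ref{ex:EllipticSkeleta}), so the cycle of $\Trop(I)$ must be the image of this circle under $\trop$. If $\trop$ were faithful on the cycle, then the circle would map homeomorphically and isometrically onto the cycle of $\Trop(I)$; since we assume this is not the case, Lemma~\ref{lem:nonHomeo} applies and tells us that either $\Trop(I)$ has an edge of higher multiplicity, or it has a locally reducible vertex $v$ with $m_{\Trop}(v)\geq 2$, or $\Trop(I)$ faithfully represents a unique subgraph $\Gamma$ of $\widehat{\Sigma}(I)\smallsetminus D_I$. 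The third alternative would force faithfulness on the cycle whenever the cycle lies in the image of $\Gamma$, so I would argue that the failure of faithfulness localizes on the cycle and rules it out, leaving options (1).

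Next I would invoke the structural result on tropical plane cubics: every edge of a tropical plane cubic containing a cycle has multiplicity $1$. This is exactly the fact quoted in the introduction (following~\cite{KMM07} and~\cite[Theorems 6.24, 6.25]{BPR11}) and it rules out the ``edge of higher multiplicity'' half of alternative (1) for edges on or adjacent to the cycle. Hence the only remaining possibility is that the cycle of $\Trop(I)$ contains a locally reducible vertex $v$ with $m_{\Trop}(v)\geq 2$. Here I must be a bit careful: Lemma~\ref{lem:nonHomeo} is stated for the linear re-embedding defined by a linear ideal $I$, and the plane cubic lives on a surface in $(K^*)^n$; the cited multiplicity-$1$ statement is about plane cubics in $(K^*)^2$, so I would note that linear re-embeddings do not decrease edge multiplicities below the plane value and that, on the cycle, the relevant multiplicities are governed by the plane picture via the projection $\pi_{XY}$ of Lemma~\ref{lem:ModifViaProjections} iterated as in Proposition~\ref{pr:projectionsJ}.

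It remains to upgrade $m_{\Trop}(v)\geq 2$ to $m_{\Trop}(v)=2$. For this I would use the genus/degree constraint: $C$ is a cubic, so its Newton polygon is the standard triangle of size $3$, and the total tropical degree is bounded; more directly, the sum of tropical multiplicities of edges crossing a generic line is $3$, and a vertex of the cycle with $m_{\Trop}(v)\geq 3$ would force, via balancing and the fact that at least two incident edges of positive relative multiplicity leave $v$ (two of them lying on the cycle), a local picture incompatible with a tropical cubic — in particular it would make the cycle bound a region of lattice area too large, or equivalently would violate the fact that a smooth tropical plane cubic has exactly nine unbounded ends counted with multiplicity while still closing up a cycle. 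So the multiplicity at such a vertex is at most $2$, and combined with $m_{\Trop}(v)\geq 2$ we get equality.

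The main obstacle I anticipate is precisely this last step: pinning down $m_{\Trop}(v)=2$ rather than merely $\geq 2$ requires a genuine use of the cubic hypothesis and the combinatorics of the Newton subdivision, and one has to make sure the argument is insensitive to the linear re-embedding (i.e.\ that one may read off the multiplicity from the plane projection, where the Newton polygon is the size-$3$ triangle). A secondary subtlety is matching the ``locally reducible'' terminology: I need to check that the locally reducible vertex produced by Lemma~\ref{lem:nonHomeo} actually sits on the cycle and not merely somewhere on $\Trop(I)$; this follows because off the cycle $\Trop(I)$ is a tree and $\trop$ restricted to the tree part of the extended skeleton is already a closed embedding (all edges have multiplicity $1$ and the trees attached to the circle map injectively), so any failure of faithfulness must occur along the circle.
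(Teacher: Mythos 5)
Your strategy assembles the right ingredients — multiplicity-one cycle edges and formula~\eqref{eq:tropMultVsRelMult}, the unique circle in $\Sigma$ coming from bad reduction, and the need to use the Newton subdivision for the upper bound — but it deviates from the paper in a way that leaves two real gaps, both of which you flag but do not close. The first concerns localization. Lemma~\ref{lem:nonHomeo} is a global dichotomy: its alternative~\emph{(1)} produces a high-multiplicity edge or a locally reducible vertex with $m_{\Trop}\geq 2$ \emph{somewhere} in $\Trop(I)$, with no guarantee that the witness lies on the cycle. Your proposed fix — that ``off the cycle $\Trop(I)$ is a tree and $\trop$ restricted to the tree part is already a closed embedding (all edges have multiplicity $1$ \dots)'' — is not correct in general: a tropical plane cubic with a visible cycle (in particular a non-smooth one dual to a subdivision that is not a triangulation) can carry ends or tree edges of multiplicity $2$ or $3$, and Lemma~\ref{lem:nonHomeo}(1) could then be witnessed entirely off the cycle. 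The paper never uses Lemma~\ref{lem:nonHomeo} as a black box. Instead it argues directly on the unique circle $\Gamma\subset\Sigma$: since every cycle edge of $\Trop(I)$ has multiplicity $1$, formula~\eqref{eq:tropMultVsRelMult} gives each one a \emph{unique} preimage edge in $\Gamma$ mapped isometrically; non-faithfulness then forces some edge of $\Gamma$ to be contracted or mapped outside the cycle, which produces two distinct vertices $\rho,\rho'\in\Gamma$ hitting the same cycle vertex $v$ with non-contracted stars. This yields $m_{\Trop}(v)\geq 2$ by~\cite[Corollary 6.12]{BPR11} and local reducibility via~\eqref{eq:StarDecomposition}, with $v$ manifestly on the cycle. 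To repair your route you would effectively have to redo this cycle-specific analysis, at which point Lemma~\ref{lem:nonHomeo} contributes nothing.

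The second gap is the upper bound $m_{\Trop}(v)\leq 2$. Your heuristic — ``the sum of tropical multiplicities of edges crossing a generic line is $3$\dots'' and ``lattice area too large'' — conflates the vertex multiplicity $m_{\Trop}(v)$ (the number of components, with multiplicity, of $\init_v(g)$) with edge-crossing counts, and does not give a bound. The paper's argument is short and exact: a locally reducible vertex on the cycle of a tropical plane cubic is dual to one of the height-one trapezoids with a base of length $1$ classified in Figure~\ref{fig:shapesandfeeding}, and a polynomial supported on such a trapezoid has degree $1$ in one of the torus variables, hence at most two irreducible factors. You correctly observe that one must also check the bound is insensitive to the linear re-embedding; this is exactly what Proposition~\ref{pr:projectionsJ} (or the push-forward formula via $\pi_{XY}$) supplies, by showing that the multiplicity of $v$ in $\Trop(I)$ is bounded above by that of its image in the plane curve $\Trop(g)$.
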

\begin{proof}
  We write $\Sigma:=\widehat{\Sigma}(I)\smallsetminus D_I$, and assume
  that $\trop$ is a morphism of 1-dimensional complexes.  Since $C$
  has bad reduction, we know that $\widehat{\Sigma}(I)\smallsetminus
  D_I$ contains a unique cycle $\Gamma$. The tropical cycle is
  contained in $\trop(\Gamma)$, but the latter may also contain other
  edges of $\Trop(I)$.

  Next, we analyze $\trop(\Gamma)$. Since $C$ is defined by a cubic
  polynomial $g$ in the plane, all the edges in the cycle of
  $\Trop(C)$ have multiplicity 1. Given an edge $e$ of the cycle of
  $\Trop(I)$, expression~\eqref{eq:tropMultVsRelMult} ensures that
  exactly one edge $e'$ of $\Sigma$ lies in $\trop^{-1}(e)$ and,
  moreover, this edge lies in $\Gamma$ and $trop$ induces an isometry
  between $e$ and $e'$. Since $\trop$ is not faithful on the cycle of
  $\Trop(I)$, we know that $\Gamma$ contains at least one edge that
  either gets contracted by $\trop$ or that map to an edge of
  $\Trop(I)$ outside the cycle. In both cases, we can find two
  distinct vertices $\rho, \rho'$ of $\Gamma$ that map to the same
  vertex $v$ in the cycle of $\Trop(I)$ and are contained in two edges
  of $\Gamma$ that are mapped isometrically to edges in the cycle of
  $\Trop(I)$. By~\cite[Corollary 6.12]{BPR11}, $m_{\Trop}(v)\geq
  1+1=2$.  The decomposition \eqref{eq:StarDecomposition} ensures that
  $v$ is locally reducible, as desired.

For the reverse inequality, we analyze the combinatorics of the
support of $\init_{v}(g)$, i.e. of the dual cell to $v$ in the Newton subdivision of $g$. By Figure~\ref{fig:shapesandfeeding}, this
support is a trapezoid of height 1 and one of whose basis has length
1. Therefore, $\init_{v}(g)$ has at most two components, i.e.\
$m_{\Trop(v)}\leq 2$. This concludes our proof.
\end{proof}
Consider a smooth non-rational plane curve in $ (K^*)^2$ defined by an
irreducible polynomial $g(x,y)\in K[x,y]$ and its linear re-embedding
via the ideal $I_{g,f}\subset K[x,y, z]$ as in
Section~\ref{sec:trop-modif-line}.  This re-embedding alters the
skeleton of the analytic curve in a concrete way. Consider completions
of these two curves, their sets of punctures $D_g$ and $D_{I_{g,f}}$
and the corresponding extended skeleta $\widehat{\Sigma}(g)$ and
$\widehat{\Sigma}(I_{g,f})$. Notice that $D_g\subseteq
D_{I_{g,f}}$. These skeleta only differ by some additional ends that
we attach to $\widehat{\Sigma}(g)$ to obtain
$\widehat{\Sigma}(I_{g,f})$ (see
Figure~\ref{fig:OriginalEllipticModification}).  The bounded part of
$\widehat{\Sigma}(g)$ can be identified with the corresponding bounded
part of $\widehat{\Sigma}(I_{g,f})$ using the following commutative
diagram: \begin{equation} \xymatrix{C^{\an}
    \ar[rr]^-{\tau_{{\widehat{\Sigma}}(I_{g,f})}}\ar[drr]_-{\tau_{{\widehat{\Sigma}(g)}}}
    &&{\widehat{\Sigma}(I_{g,f})}\smallsetminus D_{I_{g,f}} \ar@{>>}[r]^-{\trop}\ar[d] & \Trop(I_{g,f}) \ar@{>>}[d]^{\pi_{XY}}\\
    & & {\widehat {\Sigma}(g)} \smallsetminus {D_g}
    \ar@{>>}[r]^{\trop}& \Trop(g).  }
\label{eq:contr/fold}\end{equation}
This diagram allows us to define two key notions: decontraction and unfolding of edges via linear re-embeddings.
\begin{definition} 
  If $\trop\colon \widehat{\Sigma}(g)\smallsetminus D_g\to \Trop(g)$
  contracts a fixed bounded edge but $\trop\colon
  \widehat{\Sigma}(I_{g,f})\smallsetminus D_{I_{g,f}}\to
  \Trop(I_{g,f})$ does not, we say that the linear
  re-embedding \emph{decontracts} this edge.  

  Assume next that a segment in a bounded edge $e$ of $\Trop(g)$ is
  obtained by overlapping the images of several edges of
  $\widehat{\Sigma}(g)\smallsetminus D_g$ in more than one
  point. Refine the structure of $\Trop(g)$ and let $e$ be this
  segment. If $\pi^{-1}_{XY}(e)$ is the union of images of finitely
  many edges from $\widehat{\Sigma}(I_{g,f})\smallsetminus
  D_{I_{g,f}}$ that pairwise intersect in at most one point, we say
  that the linear re-embedding \emph{unfolds} the edge $e$.
\end{definition}
The union of edges that unfolds $e$ need not be connected. Example~\ref{ex:twoStepsExample} and Figure~\ref{fig:TwoStepPart1}
show the decontraction of an edge.  Example~\ref{ex:64a} and
Figure~\ref{fig:64aTropical} illustrate the unfolding phenomenon. In
both cases, we recover the curve $\Trop(I_{g,f})$  from the drawn
projections $\pi_{XY}$ and $\pi_{ZY}$ using
Lemma~\ref{lem:ModifViaProjections}.

\subsection{$A$-discriminants}
\label{sec:discriminants}
The notion of $A$-discriminants for configurations of points in
$\ZZ^k$ was introduced and further developed by Gelfand, Kapranov and
Zelevinsky in~\cite{GKZ}. We present the theory in its original
formulation for Laurent polynomials. In our applications we only deal
with polynomials with non-negative exponents defined over
$\PS$.

Throughout this section, we let $K$ be an algebraically closed
field. We fix a configuration $A$ of $m$ points in $\ZZ^k$, and a
Laurent polynomial supported on $A$:
\[
g(\underline{x})=\sum_{a\in A} c_{a}\;\underline{x}^{a}  \quad \in\; K[x_1^{\pm},\ldots, x_k^{\pm}].
\]
We  use the multiplicative notation $\underline{x}^{a_i}:=
x_1^{a_{i1}}x_2^{a_{i2}}\ldots x_k^{a_{ik}}$ if $a_i=(a_{i1},\ldots, a_{ik})\in A$.

For generic choices of coefficients $(c_{a_i})_{i=1}^m$, the
polynomial $g$ has no singularities in the algebraic torus
$(K^*)^k$. However, for special choices of coefficients, singularities
do appear. Such special situations (and their algebraic closure) are
determined by the ideal $J_c=J\cap K[c_{a_1}, \ldots, c_{a_m}]$, where
\[
J=\operatorname{Rad}\big(\langle g(\underline{x}),\frac{\partial g}{\partial
  {x_1}}(\underline{x}),\ldots, \frac{\partial g}{\partial
  {x_k}}(\underline{x})\rangle\big) \subset K[c_{a_1},\ldots, c_{a_m}][x_1^{\pm},\ldots,x_k^{\pm}].
\]
It can be shown that whenever $J_c$ is a principal ideal, its unique
generator is irreducible and can be defined over $\ZZ$. The
\emph{$A$-discriminant} $\Delta_A$ is the unique (up to sign)
irreducible polynomial with integer coefficients in the unknowns
$(c_a)_{a\in A}$ defining $J_c$. If $J_c$ is not principal, we set $\Delta_A=1$ and
refer to $A$ as a \emph{defective} configuration. 

As an example, we compute the $A$-discriminant of the trapezoid in
Figure~\ref{fig:trapezoid}, which plays a key role in
Section~\ref{sec:repa-trop}. 
\begin{lemma}\label{lm:trapezoidDiscriminant}
  Assume $n,s\geq 1$. Then, the discriminant of the trapezoid $\cP$ in
  Figure~\ref{fig:trapezoid} equals the Sylvester resultant
  $\operatorname{Res}(h_1,h_2)$ of the univariate polynomials
  $h_1(x)=a_0+a_1x+\ldots+a_nx^n$ and $h_2(x)=b_0+b_1x+\ldots+
  b_sx^s$. In particular, when $s=1$ we obtain
\begin{equation}
 \Delta_{\cP}=a_0\,b_1^n+\ldots+(-1)^{i}b_0^i\,b_1^{n-i}\,a_i+\ldots +(-1)^n b_0^n\,a_n.\label{eq:3}
 \end{equation}

 The same formulas hold if we pick any configuration $A$ of lattice
 points in $\cP$ containing all four vertices of the trapezoid, after
 replacing the corresponding variables among $a_1,\ldots, a_{n-1}$ by
 zero.
\end{lemma}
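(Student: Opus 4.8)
The plan is to read off the discriminant locus directly from the shape of the general polynomial supported on $\cP$ and then recognize it as a Sylvester resultant hypersurface. Since $\cP$ has height $1$, all its lattice points lie on the two horizontal edges, so the general polynomial supported on $\cP\cap\ZZ^2$ is $g(x,y)=h_1(x)+y\,h_2(x)$ with $h_1,h_2$ as in the statement. First I would set up the singular-point equations $g=\partial_x g=\partial_y g=0$ on $(K^*)^2$. The crucial observation is that $\partial_y g=h_2(x)$, so a singular point $(x_0,y_0)$ forces $h_2(x_0)=0$; substituting this into $g(x_0,y_0)=0$ then forces $h_1(x_0)=0$ as well. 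Hence $x_0$ must be a common root of $h_1$ and $h_2$, and the remaining equation $h_1'(x_0)+y_0\,h_2'(x_0)=0$ merely pins down $y_0$.

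Next I would identify $V(J_c)$ with the hypersurface $\{\operatorname{Res}(h_1,h_2)=0\}$ by a double inclusion. For ``$\subseteq$'': any $c$ admitting a torus singularity gives, by the previous paragraph, a common root of $h_1$ and $h_2$, so $\operatorname{Res}(h_1,h_2)(c)=0$. For ``$\supseteq$'': on the dense open subset of $\{\operatorname{Res}(h_1,h_2)=0\}$ where $a_0a_nb_0b_s\neq0$ and $h_1,h_2$ have a single common root $x_0$, simple in each (dense open since $\operatorname{Res}$ is irreducible, hence its zero set is irreducible), one has $x_0\in K^*$ and $h_1'(x_0),h_2'(x_0)\neq0$, so $y_0:=-h_1'(x_0)/h_2'(x_0)\in K^*$ and $(x_0,y_0)$ is an honest node of $\{g=0\}$ in $(K^*)^2$; taking closures yields the reverse inclusion. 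Since $J_c$ is radical (it is a contraction of the radical ideal $J$) and the Sylvester resultant is irreducible over $\ZZ$ (a classical fact; see e.g.~\cite[Ch.~12]{GKZ}), we get $J_c=\langle\operatorname{Res}(h_1,h_2)\rangle$, so $\cP$ is not defective and $\Delta_{\cP}=\pm\operatorname{Res}(h_1,h_2)$. (One may also deduce this from the Cayley trick of~\cite{GKZ}, as $\cP$ is the Cayley polytope of the segments $[0,n]$ and $[0,s]$.)

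For $s=1$, I would evaluate the resultant at the unique root $-b_0/b_1$ of $h_2=b_0+b_1x$: up to sign, $\operatorname{Res}(h_1,h_2)=b_1^n\,h_1(-b_0/b_1)=\sum_{i=0}^{n}(-1)^i a_i b_0^i b_1^{n-i}$, which is~\eqref{eq:3}. Finally, for a configuration $A\subseteq\cP\cap\ZZ^2$ containing the four vertices, the general polynomial supported on $A$ is $\tilde h_1(x)+y\,h_2(x)$, where $\tilde h_1$ is $h_1$ with the absent coefficients $a_i$ ($1\le i\le n-1$) set to zero; the argument above applies verbatim after replacing $h_1$ by $\tilde h_1$, since the singular-locus analysis never used the interior coefficients and $\operatorname{Res}(\tilde h_1,h_2)$ remains irreducible (it still has nonzero constant and leading coefficients $a_0,a_n$, so the generic-common-root argument goes through). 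Hence $\Delta_A=\pm\operatorname{Res}(\tilde h_1,h_2)$, i.e.\ the same formulas with the corresponding $a_i$ replaced by zero.

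I expect the main obstacle to be the ``$\supseteq$'' inclusion together with the attendant reducedness and irreducibility bookkeeping: one must make sure that a generic point of the resultant hypersurface really does produce a singularity inside the torus --- not at $0$ or at infinity, and with nonvanishing $y$-coordinate --- so that $V(J_c)$ is the entire hypersurface rather than a proper subvariety, and one must invoke the irreducibility (and squarefreeness) of the Sylvester resultant in order to identify $\Delta_{\cP}$ on the nose rather than merely up to a factor or a power.
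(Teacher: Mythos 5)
Your proposal is correct and runs along essentially the same lines as the paper's proof: both hinge on the observation that $\partial_y g=h_2(x)$, so a torus singularity forces $h_2(x_0)=0$ and then $h_1(x_0)=0$, and both then invoke irreducibility of the Sylvester resultant to identify it with $\Delta_{\cP}$. Where you organize the argument as an explicit double inclusion of varieties plus radicality of $J_c$, the paper shows one inclusion (a generic point of $V(\Delta_{\cP})$ kills $\operatorname{Res}$) and concludes via divisibility between two irreducibles, establishing the converse direction instead through its concrete $s=1$ computation; these are equivalent pieces of bookkeeping. Your evaluation $\operatorname{Res}(h_1,b_0+b_1x)=\pm b_1^n h_1(-b_0/b_1)$ is cleaner than the paper's hand calculation of $(x_0,y_0)$, and your parenthetical appeal to the Cayley trick is a legitimate and arguably slicker alternative route that also disposes of the marked-configuration case in one stroke.
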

\begin{proof} 
  Since $A$-discriminants are invariant under affine transformations
  of the lattice $\ZZ^2$, we may assume that the trapezoid has
  vertices $(0,0), (p,1), (p+s,1)$ and $(0,n)$. Furthermore, $\cP$ is
  not a pyramid, so we know the planar configuration $A$ is not
  defective.  We fix a polynomial $h$ with support on the given
  trapezoid, and compute its two partial derivatives:
\begin{equation}
  h(x,y) = h_1(x) + y x^p
  h_2(x),\qquad \frac{\partial{h}}{\partial x}(x,y)= h_1'+y(h_2'+ px^{p-1} h_2), \qquad
  \frac{\partial{h}}{\partial y}(x,y)=h_2 x^p.\label{eq:1}
\end{equation}
Let $(a_0,\ldots, b_s)$ be a general point where the discriminant
vanishes. Then, $h$ admits a singular point $(x_0,y_0)$ in the torus
$(K^*)^2$. In particular, $h_2(x_0)=0$ and so
$0=h(x_0,y_0)=h_1(x_0)$. Thus, both $h_1$ and $h_2$ have a common
solution $x_0\in K^*$, so $\operatorname{Res}(h_1,h_2)=0$. We conclude
that $\operatorname{Res}(h_1,h_2)$ divides $\Delta_{\cP}$.  Since both
polynomials are irreducible over $\ZZ[c_a: a\in A]$, the result
follows.

Now, let $s=1$ and write $h_2:=b_0+b_1x$.  Assume $(x_0,y_0)$ is a
singular point of $h$ in the torus $(K^*)^2$. From~\eqref{eq:1} we
conclude that $x_0=-b_0/b_1$ and we can use the equation
$\frac{\partial{h}}{\partial x}=0$ to find the value of $y_0$. Since,
in addition, $h(x_0,y_0)=0$ we obtain
\begin{equation}
\begin{aligned}
  0&=b_1^n\big(a_0+a_1(-\frac{b_0}{b_1})+\ldots +
  a_i(-\frac{b_0}{b_1})^i+\ldots+ a_n(-\frac{b_0}{b_1})^n
  +b_0\,y_0+b_1(-\frac{b_0}{b_1})y_0\big)\\ &= a_0b_1^n
  -a_1b_0b_1^{n-1}+\ldots + (-1)^{i}a_ib_0^ib_1^{n-i}+\ldots + (-1)^n a_nb_0^n,
\end{aligned}\label{eq:2}
\end{equation}
as we wanted to show.

Conversely, if the right-hand side of~\eqref{eq:3} vanishes, then any
point $(x_0,y_0)$ constructed from the vanishing of the
partials~\eqref{eq:1} is a singularity of $\{h=0\}$. The singularity
lies in the torus if and only if $\frac{\partial{h}}{\partial
  x}(-b_0/b_1,0)\neq 0$. The latter is an open condition in the
coefficients $(a_1,\ldots, a_n,b_0,b_1)$, and it is independent on the
variable $a_0$. Since the bottom expression in~\eqref{eq:2} has degree
1 in $a_0$, we can find a unique $a_0\in K^*$ that solves the
equation~\eqref{eq:2} for a generic point $(a_1,\ldots, b_1)\in
(K^*)^{n+2}$. For this choice, the unique solution $(x_0,y_0)$ is a
singularity of $h$ in the torus.

For the third claim in the statement, it suffices to notice that all
the arguments stated above hold if we replace any of the coefficients
$a_1,\ldots, a_{n-1}$ in $h$ by zero. This concludes our proof.
\end{proof}

\begin{corollary}\label{cor:highMultAndDiscrim}
  Fix a polynomial $h$ with support contained in the trapezoid $\cP$
  in Figure~\ref{fig:trapezoid} where $a_0,a_n,b_0,b_s\in K^*$. Then,
  $h$ is reducible over $K[x^{\pm},y^{\pm}]$ if and only if
  $\Delta_{\cP}(a_0,\ldots, b_s)=0$.
\end{corollary}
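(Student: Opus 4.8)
The plan is to reduce the reducibility of $h$ to a statement about the two univariate polynomials $h_1$ and $h_2$, and then apply Lemma~\ref{lm:trapezoidDiscriminant}. Since both ``reducible over $K[x^\pm,y^\pm]$'' and ``$\Delta_{\cP}=0$'' are preserved under the affine change of lattice used in the proof of that lemma, I may assume $\cP$ is already in the normal form appearing there, so that $h(x,y)=h_1(x)+y\,x^p h_2(x)$ as in~\eqref{eq:1}, with $h_1=a_0+\dots+a_nx^n$ and $h_2=b_0+\dots+b_sx^s$. The hypotheses $a_0,a_n,b_0,b_s\in K^*$ say precisely that $h_1$ has degree exactly $n$ with nonzero constant term, and $h_2$ has degree exactly $s$ with nonzero constant term.

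The key structural observation is that $h$ has $y$-width $1$: writing $h=A(x)+B(x)\,y$ with $A=h_1$ and $B=x^p h_2$ in $K[x]$, the Newton polygon $\NP(h)$ has vertical extent $1$. Newton polygons add (Minkowski sum) under multiplication in $K[x^\pm,y^\pm]$, so in any factorization $h=h'h''$ with $h',h''$ non-units, one factor---say $h''$---has $\NP(h'')$ of vertical extent $0$, i.e.\ $h''=x^a d(x)$ for some $a\in\ZZ$ and $d\in K[x]$; as $h''$ is a non-unit, $d$ is non-constant, and after multiplying by a suitable monomial I may assume $d(0)\neq 0$. Conversely, if a non-constant $d\in K[x]$ divides both $A$ and $B$, then $h=d\cdot\bigl(A/d+(B/d)\,y\bigr)$ is a genuine factorization, since the cofactor still has $y$-width $1$ and hence is not a unit. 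Therefore $h$ is reducible over $K[x^\pm,y^\pm]$ if and only if $\gcd(A,B)$ is non-constant. Because $a_0\neq 0$ forces $x\nmid h_1=A$, we get $\gcd(A,B)=\gcd(h_1,x^p h_2)=\gcd(h_1,h_2)$, so reducibility of $h$ is equivalent to $\gcd(h_1,h_2)$ being non-constant.

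Finally, $\gcd(h_1,h_2)$ is non-constant if and only if $h_1$ and $h_2$ share a root in $K$, which---since $\deg h_1=n$ and $\deg h_2=s$ by the hypotheses $a_n,b_s\neq 0$, so the leading coefficients do not vanish---happens if and only if the Sylvester resultant $\operatorname{Res}(h_1,h_2)$ vanishes. By Lemma~\ref{lm:trapezoidDiscriminant}, $\operatorname{Res}(h_1,h_2)=\Delta_{\cP}(a_0,\dots,b_s)$ (setting $a_1,\dots,a_{n-1}$ equal to zero for whichever sub-configuration $A\subseteq\cP$ containing the four vertices is at hand), which gives the claimed equivalence.

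The argument is short, so there is no serious obstacle; the only point demanding care is the bookkeeping with units of $K[x^\pm,y^\pm]$---namely verifying that a divisor $d\in K[x]$ with $d(0)\neq 0$ of $h$ in the Laurent ring actually divides $A$ and $B$ in $K[x]$, and that the cofactor produced in the converse direction is genuinely a non-unit---together with the (routine) check that the normal form of $\cP$ from Lemma~\ref{lm:trapezoidDiscriminant} may be assumed here.
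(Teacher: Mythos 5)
Your proof is correct and takes essentially the same route as the paper: the paper's phrase ``since $h$ has degree $1$ in $y$, a simple calculation shows that $h$ factorizes over $K[x^{\pm},y^{\pm}]$ if and only if $h_1$ and $h_2$ have a common solution in $K^*$'' is precisely the Newton-polygon / $y$-width-$1$ argument you spell out, followed by the same identification $\operatorname{Res}(h_1,h_2)=\Delta_{\cP}$ from Lemma~\ref{lm:trapezoidDiscriminant}. You simply make explicit the unit bookkeeping that the paper leaves implicit.
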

\begin{proof} The result is an easy consequence of
  Lemma~\ref{lm:trapezoidDiscriminant}.  Since $h$ has degree 1 in
  $y$, a simple calculation shows that $h$ factorizes over
  $K[x^{\pm},y^{\pm}]$ if and only if $h_1$ and $h_2$ have a common
  solution in $K^*$, that is, if
  $0=\operatorname{Res}(h_1,h_2)=\Delta_{\cP}$.
\end{proof}

Plane tropical curves are dual to coherent (or regular) subdivisions of lattice polygons in
$\RR^2$. Each vertex or edge $\tau$ in the tropical curve $\Trop(g)$
is dual to a marked 2-dimensional polytope or marked edge
$\tau^{\vee}$ in the Newton subdivision of $g$. By abuse of notation, we define the discriminant of $\tau$
as the discriminant of its marked dual cell, i.e.
 \[
\Delta_{\tau}:=\Delta_{\tau^{\vee}}\in \ZZ[c_a:a\in \tau^{\vee}].\]
In Section~\ref{sec:repa-trop} we use these polynomials to measure local
faithfulness of the tropicalization map.

\section{Repairing tropicalizations}\label{sec:repa-trop}

In this section, we present our two main technical tools for repairing embedding of plane curves whose tropicalization maps are
non-closed embeddings of Berkovich skeleta, as in Lemma~\ref{lem:nonHomeo}. Theorems~\ref{thm:redVertex} and~\ref{thm:fatEdge} explain how  to locally
repair these bad behaviors by linear re-embeddings while
preserving the structure elsewhere under some restriction on the
locally reducible vertices or high multiplicity bounded edges.
Remark~\ref{rem:DisclaimerThmDecontracting} discusses possible
extensions to other types of locally reducible vertices.  In
Section~\ref{sec:repair-elliptic} we combine these two theorems to
give a symbolic algorithm to repair the cycle of a plane tropical
elliptic cubic (see Theorem~\ref{thm:repairEll} and Algorithm~\ref{alg:repairElliptic}).

Throughout this section, we assume our input to be a smooth
non-rational curve  defined by a polynomial $g\in \PS[x,y]$,
and we consider its tropicalization $\Trop(g)$ as a subvariety of the
torus $(\PS^*)^2$.  We base change the algebraic
curve to $\CC\{\!\{t^{\RR}\}\!\}$, and consider the set of punctures
$D_g$ in a smooth completion of the new curve.  We write
$\widehat{\Sigma}(g)$ for the extended skeleton of the complete
analytic curve  with respect to the set of punctures $D_g$, as
defined in Section~\ref{sec:berk-skel-curv}. Notice that the base
change operation does not affect the tropical curve $\Trop(g)$
by~\cite[Proposition 3.7]{Gubler13}.  We make the following genericity
assumption on $g$:
\begin{convention}\label{genericityassumption}
We assume $g$ is \emph{generic} in the sense that if a non-trivial linear combination of its Puiseux series coefficients does not have the expected valuation because the initial term cancels, then the valuation does not reach $\infty$, i.e.\ the linear combination does not cancel completely.
\end{convention}

For simplicity, we choose to formulate Theorems~\ref{thm:redVertex}
and~\ref{thm:fatEdge} for embedded plane curves in $(\PS^*)^2$. Since
both theorems are local in nature, they also hold if we embed
$\Trop(g)$ linearly in $\RR^r$ and the locally reducible vertex $v$ or
the edge of high multiplicity, respectively, is contained in the
interior of a top-dimensional cone of the tropical plane in
$\RR^r$. These more general versions and
Lemma~\ref{lem:ModifViaProjections} will enable us to iterate this
procedure if one linear tropical modification does not suffice to
locally repair the input tropical curve.

By refining structures, we always assume
that $\trop\colon \widehat{\Sigma}(g)\smallsetminus D_g\to \Trop(g)$
is a morphism of 1-dimensional polyhedral complexes. Our first result concerns a special class of locally reducible
vertices of $\Trop(g)$. It can be further extended by unimodular
transformations (see Remark~\ref{rem:unimodular}). 
\begin{theorem}\label{thm:redVertex}
  Let $v$ be a locally reducible vertex of $\Trop(g)$ which is dual to
  Figure~\ref{fig:trapezoid}.  We assume that $g$ satisfies the
  genericity assumption \ref{genericityassumption}.  Then, $\Delta_v$
  vanishes at $\init_v(g)$ if and only if there exist two points
  $\rho, \rho'$ of $\trop^{-1}(v)\subset
  \widehat{\Sigma}(g)\smallsetminus D_g$ satisfying the following
  conditions:
  \begin{enumerate}[(i)]
  \item the stars of $\rho$ and $\rho'$ in
    $\widehat{\Sigma}(g)\smallsetminus D_g$ are not contracted by
    $\trop$,
  \item there exists a subgraph $\Gamma$ of
    $\widehat{\Sigma}(g)\smallsetminus D_g$ containing the points
    $\rho$ and $\rho'$ that is partially contracted and/or folded by
    $\trop$.
  \end{enumerate}

  Moreover, in this situation, there is a linear re-embedding
  $I_{g,f}$ of the curve corresponding to the tropical modification
  along the vertical line $L$ through $v$ that decontracts/unfolds
  edges of $\widehat{\Sigma} (g)\smallsetminus D_g $ mapping to
  $\Trop(g)\cap L$ under $\trop$.
\end{theorem}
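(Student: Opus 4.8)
The plan is to prove the theorem by reducing everything to the combinatorics of $\Trop(\tilde g)$ via Lemma~\ref{lem:ModifViaProjections}, where $\tilde g(z,y)=g(z-\AA t^{-l},y)$ and $l$ is chosen so that the vertical line $L$ is $\{X=l\}$. The equivalence of the two conditions with the vanishing of $\Delta_v$ at $\init_v(g)$ should be established first, since it is a statement about the original curve and its skeleton and is independent of the re-embedding. By Corollary~\ref{cor:nonHomeoCycle}-style reasoning (applied locally: the dual cell of $v$ is the trapezoid of Figure~\ref{fig:trapezoid}), we know $m_{\Trop}(v)\le 2$, so $\init_v(g)$ has at most two irreducible components. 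The existence of two points $\rho,\rho'$ with non-contracted stars in $\trop^{-1}(v)$ forces, via \cite[Proposition~4.24, Corollary~6.12]{BPR11}, $m_{\Trop}(v)\ge m_{\mathrm{rel}}(\rho)+m_{\mathrm{rel}}(\rho')\ge 2$, hence $=2$, hence $\init_v(g)$ is reducible; by Corollary~\ref{cor:highMultAndDiscrim} this happens exactly when $\Delta_v(\init_v(g))=0$. Conversely, vanishing of $\Delta_v$ gives reducibility of $\init_v(g)$, hence $m_{\Trop}(v)=2$, and one then uses \eqref{eq:StarDecomposition} together with the argument in the proof of Lemma~\ref{lem:nonHomeo} to produce $\rho,\rho'$ with the stated properties (the subgraph $\Gamma$ comes from the non-trivial decomposition of $\Star_{\Trop(g)}(v)$, which is automatic once $v$ is locally reducible with $m_{\Trop}(v)=2$). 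The genericity assumption \ref{genericityassumption} is used precisely here, to guarantee that no further unexpected cancellations occur in the relevant linear combinations of Puiseux coefficients, so that reducibility of $\init_v(g)$ is governed by $\Delta_v$ alone.

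Having secured the equivalence, the second step is to construct the linear re-embedding $I_{g,f}$ with $f=x+\AA t^{-l}$. The key point is to choose the leading coefficient $\zeta$ of $\AA$ correctly. By Corollary~\ref{cor:highMultAndDiscrim}, reducibility of $\init_v(g)$ over $\CC[x^{\pm},y^{\pm}]$ means $\init_v(g)$ factors through a common root $x=-b_0/b_1$ of its $h_1$ and $h_2$ parts, where these are read off from the trapezoid using the notation of Lemma~\ref{lm:trapezoidDiscriminant}. I would set $\zeta$ equal to this common root (rescaled appropriately so that $-\val(\AA t^{-l})=l$ matches the $X$-coordinate of $L$ and the initial term of $\AA$ is $\zeta$). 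With this choice, the substitution $z\mapsto z-\AA t^{-l}$ in $g$ creates, in $\tilde g$, a Newton polygon whose relevant cell near the line $\{Z=l\}$ is no longer the trapezoid but is split: the translation by the root kills the coefficient that previously obstructed the edge $e_0$ of $\Trop(\tilde g)$ from being pushed past $Z=l$. Concretely, I would compute $\init$ of $\tilde g$ with respect to the weight corresponding to the relevant cell $\sigma_3$ of the modified plane and show that $\Trop(\tilde g)\cap\sigma_3$ now contains the missing edge(s) with multiplicity $1$ rather than a single fat vertex. Then Lemma~\ref{lem:ModifViaProjections} reassembles $\Trop(I_{g,f})$ from $\Trop(g)=\pi_{XY}$ and $\Trop(\tilde g)=\pi_{ZY}$, and shows that the vertex on $\{X=Z=l\}$ that was of multiplicity $2$ (corresponding to $v$) now splits into two vertices joined by an edge along $\{X=Z=l\}$ of multiplicity $1$ — this is exactly the decontraction/unfolding asserted.

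The final step is to identify this combinatorial change on the tropical side with the decontraction/unfolding of edges of the skeleton. For this I would invoke the commutative diagram \eqref{eq:contr/fold}: since $\widehat\Sigma(g)$ and $\widehat\Sigma(I_{g,f})$ differ only by attached ends (corresponding to the new punctures, i.e.\ the points where $z=f$ meets the added coordinate), the bounded part of the skeleton is unchanged, and the retraction $\tau$ composed with $\trop$ to $\Trop(I_{g,f})$ now separates the two points $\rho,\rho'$ that previously both mapped to $v$: their images are the two distinct vertices found above. Using Proposition~\ref{pr:projectionsJ} (in the $r=3$ case, or its iterated version) one checks that the relative multiplicities on the new edge along $\{X=Z=l\}$ are positive, so by \cite[Corollary~6.12]{BPR11} that edge maps homeomorphically onto its image; this is precisely the statement that the edges of $\widehat\Sigma(g)\smallsetminus D_g$ mapping into $\Trop(g)\cap L$ are decontracted (if they were contracted) or unfolded (if their images overlapped). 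The main obstacle I anticipate is the bookkeeping in the second step: verifying that the specific choice $\zeta=-b_0/b_1$ produces exactly the claimed Newton subdivision of $\tilde g$ near $\{Z=l\}$, ruling out that some other, unrelated coefficient cancellation creates new degeneracies — this is where Convention~\ref{genericityassumption} and a careful valuation analysis of the coefficients of $\tilde g=g(z-\AA t^{-l},y)$ (expanding $g$ and tracking which monomials contribute to the lowest-valuation terms) must be done with care, and it is also where one must confirm that a \emph{single} modification suffices for this particular shape (Figure~\ref{fig:trapezoid}), as opposed to needing the iteration alluded to after Convention~\ref{genericityassumption}.
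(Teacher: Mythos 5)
Your proposal diverges from the paper's proof in one crucial place, and that divergence creates a genuine gap. The forward implication (conditions (i), (ii) $\Rightarrow\ \Delta_v(\init_v(g))=0$) is handled the same way as the paper and is fine: two points of $\trop^{-1}(v)$ with non-contracted stars force $m_{\Trop}(v)\ge m_{\operatorname{rel}}(\rho)+m_{\operatorname{rel}}(\rho')\ge 2$, and Corollary~\ref{cor:highMultAndDiscrim} then gives the vanishing.

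The problem is your converse. You argue: $\Delta_v(\init_v(g))=0 \Rightarrow \init_v(g)$ reducible $\Rightarrow m_{\Trop}(v)=2 \Rightarrow$ two points $\rho,\rho'$ exist by the decomposition~\eqref{eq:StarDecomposition}. That last step does not follow. By \eqref{eq:tropMultVsRelMult} (extended to vertices), $2=m_{\Trop}(v)=\sum_{\rho\in\trop^{-1}(v)}m_{\operatorname{rel}}(\rho)$, and a priori this sum could consist of a \emph{single} preimage vertex with $m_{\operatorname{rel}}(\rho)=2$ rather than two vertices of relative multiplicity $1$. Nothing in Corollary~\ref{cor:highMultAndDiscrim}, the decomposition~\eqref{eq:StarDecomposition}, or the argument in Lemma~\ref{lem:nonHomeo} rules that out; indeed Lemma~\ref{lem:nonHomeo} has the \emph{opposite} logical direction (it starts from a non-closed embedding of the skeleton and concludes local reducibility). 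Your claim that the non-trivial decomposition "is automatic once $v$ is locally reducible with $m_{\Trop}(v)=2$" conflates a property of the combinatorics of $\Star_{\Trop(g)}(v)$ (that it splits abstractly into two balanced pieces) with a property of the fiber $\trop^{-1}(v)$ (that the split is realized by two distinct preimage vertices). These are not the same, and the whole point of the theorem is to establish when the second actually holds.

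The paper resolves this by making the argument constructive: it first performs the tropical modification (Lemma~\ref{lm:expectedHeights} identifies the correct initial term $\AA_0$ of the lift, Lemma~\ref{lm:VanishingDecontractsVertex} verifies the resulting decontraction or unfolding), and only then reads off $\rho$ and $\rho'$ from the combinatorics of $\Trop(I_{g,f})$ in the cell $\sigma_3$, splitting into the cases $k=k'$ (decontraction: take $\rho,\rho'$ to be the endpoints of the unique preimage of the decontracted edge) and $k<k'$ (unfolding: extract $\rho,\rho'$ from the chain $\mathscr C$). The diagram~\eqref{eq:contr/fold} then transports these points back to $\widehat{\Sigma}(g)\smallsetminus D_g$. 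In other words, the paper does \emph{not} prove the equivalence first and then construct the re-embedding; the re-embedding is the mechanism that produces $\rho,\rho'$, and your ``first step'' has no independent standing. You would need either to adopt the paper's order of operations, or to cite a precise statement from BPR giving a bijection between irreducible components of $\init_v(g)$ and preimage vertices of positive relative multiplicity --- which the paper pointedly does not invoke.

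Two smaller points. Your choice $\zeta=-b_0/b_1$ is only correct when the short base of the trapezoid has length $s=1$; for general $s$ one takes a common root of $h_1$ and $h_2$ in $\CC^*$ (as in Lemma~\ref{lm:expectedHeights}). And in step one you dismiss the possibility that $\init_v(g)$ is irreducible of multiplicity $2$; this does hold here, but only because the trapezoid has height one so $\init_v(g)$ is degree $1$ in $y$ and cannot be a proper power --- that deduction deserves a sentence, as it is not automatic for general $m_{\Trop}(v)=2$ vertices.
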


Theorem~\ref{thm:redVertex} should be interpreted as follows. Assume
$\Trop(g)$ contains a locally reducible four-valent vertex $v$ whose
star contains a line and two other non-parallel rays of multiplicity
one. Then the local faithfulness of $\Trop(g)$ at $v$ is measured by
the discriminant of $v$. If this discriminant vanishes, we can use a
linear tropical modification of the ambient space to find a subgraph
of $\widehat{\Sigma}(g)$ that is homeomorphic to the star of $v$ in
$\Trop(g)$ via the tropicalization map.

\begin{figure}[htb]
\includegraphics[scale=0.2]{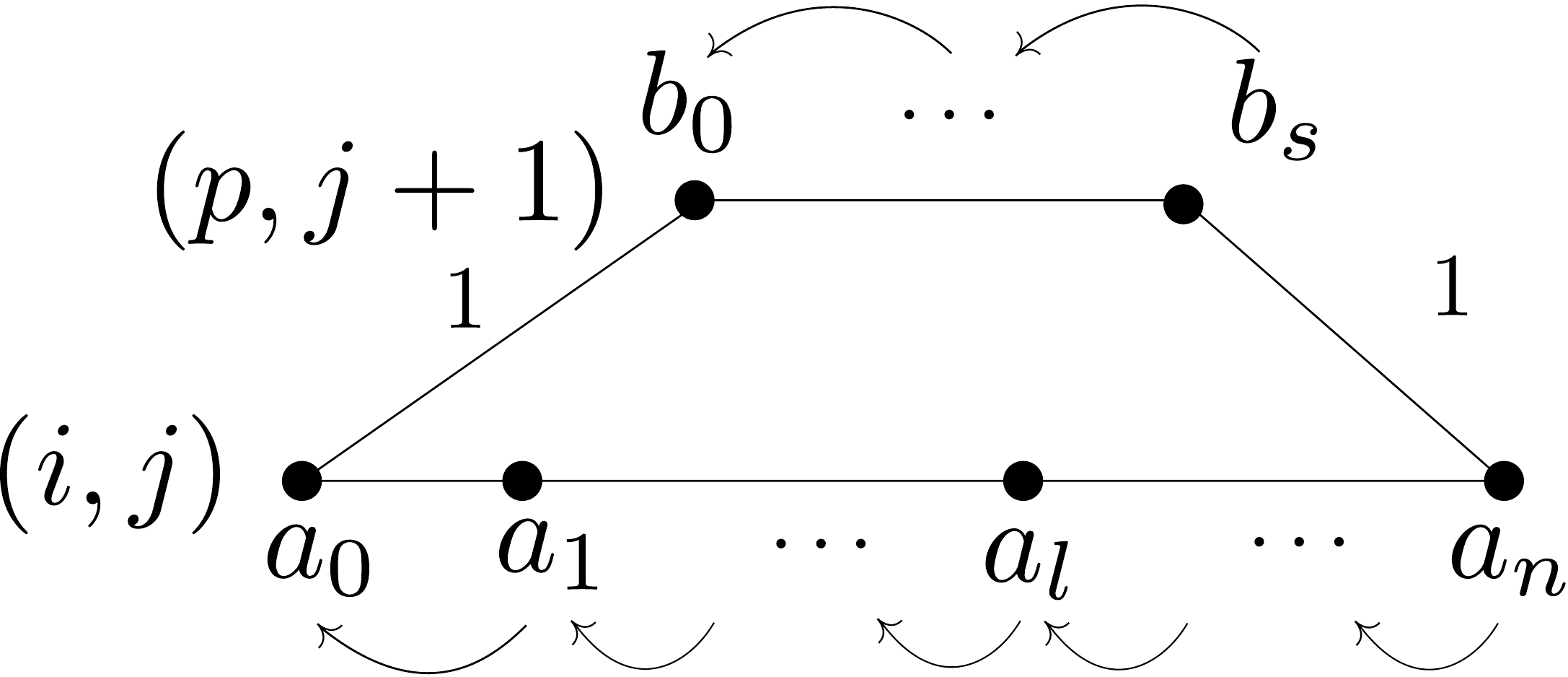}
\caption{The locally reducible vertex in $\Trop(g)$ that can be
  repaired with a linear tropical modification.  The labels
  $a_0,\ldots, a_n, b_0, \ldots, b_s \in \CC$ correspond to the
  coefficients of $\init_v(g)$.  With the exception of the four
  vertices, the remaining coefficients are allowed to be 0. The two
  bases of the trapezoid define two univariate polynomials
  $h_1=a_0+\ldots +a_n x$ and $h_2(x):=b_0+\ldots+b_s x^s$ of degrees
  $n$ and $s$, respectively. The arrows indicate the feeding process
  in the $z$ variable induced by the linear re-embedding $x\mapsto
  z-\AA$ with $\AA\in \PS$.  }\label{fig:trapezoid}
\end{figure}

\begin{remark}\label{rem:unimodular}
  A \emph{unimodular transformation} of $\ZZ^2$ is a linear
  $\ZZ$-invertible map $A\colon\ZZ^2\to \ZZ^2$ associated to a
  monomial change of coordinates $\alpha$ in $\PS[x^{\pm},y^{\pm}]$.
  Using the maps $A$ and $\alpha$ we can make other locally reducible
  vertices dual to Figure~\ref{fig:trapezoid}, and apply
  Theorem~\ref{thm:redVertex} to repair the corresponding tropical
  curve locally around $v$.  We compose the linear re-embedding and
  its lifting function $f$ and use the ideal $I_{g,f\circ
    \alpha^{-1}}:=\langle g(x,y), z-f\circ\alpha^{-1}(x,y)\rangle
  \subset \PS[x^{\pm},y^{\pm}, z^{\pm}]$ to give new coordinates to
  our curve. We can view $\Trop(g)$ inside the two charts
  $\sigma_1\cup\sigma_2$ of $\Trop(I_{g,f\circ\alpha^{-1}})$ by means
  of the inverse of the linear map $A\times \id_Z:\ZZ^3\to \ZZ^3$. For
  an illustration, see Example~\ref{ex:non-ellipticDecontraction}.

Notice that, with few exceptions, these re-embeddings are non-linear and the same outcome cannot be achieved by using only linear tropical modification. Namely, only when the line through $v$ has slopes $0$ or $1$, the corresponding monomial changes of coordinates are linear themselves, and so $I_{g,f\circ\alpha^{-1}}$ gives a linear re-embedding. Furthermore, in these two cases, we can easily adapt the techniques of Theorem~\ref{thm:redVertex} to modify along these lines directly without the need of precomposing with these monomial map  (see Figure~\ref{fig:shapesandfeeding}).   We use this strategy in  Theorem~\ref{thm:repairEll} to repair bad embeddings of plane elliptic  cubics. 
\end{remark}

Our second result concerns the unfolding of edges of high
multiplicity. As opposed to Theorem~\ref{thm:redVertex}, the
non-vanishing of the discriminant of the edge $e$ at $\init_e(g)$ 
detects non-faithfulness.
\begin{theorem}\label{thm:fatEdge}
  Let $e$ be a vertical bounded edge of $\Trop(g)$ of multiplicity
  $n\geq 2$ whose endpoints have valency 3. If the discriminant of $e$
  does not vanish at $\init_e(g)$, then the tropicalization map is not
  faithful at $e$ and we can unfold this edge with a linear
  re-embedding $I_{g,f}$ of the curve determined by a tropical
  modification. The new curve $\Trop(I_{g,f})$ contains a cycle that
  maps to $e$ via $\pi_{XY}$.
\end{theorem}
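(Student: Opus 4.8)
\textbf{Proof proposal for Theorem~\ref{thm:fatEdge}.}
The plan is to analyze the edge $e$ through its dual cell in the Newton subdivision of $g$ and then transfer the analysis to the chart $\sigma_3$ of a suitable linear tropical modification via Lemma~\ref{lem:ModifViaProjections}. First I would observe that since $e$ is a vertical bounded edge of multiplicity $n\ge 2$ with $3$-valent endpoints, its dual marked cell $e^\vee$ is a horizontal edge of lattice length $n$; because the two endpoints of $e$ are $3$-valent, the two triangles of the Newton subdivision sharing the edge $e^\vee$ together with $e^\vee$ itself assemble into a lattice polygon which, after a unimodular transformation, is exactly a trapezoid $\cP$ of height $1$ with the horizontal basis $e^\vee$ of length $n$ and the opposite basis of length $s$ for some $s\ge 1$ — the configuration of Figure~\ref{fig:trapezoid}. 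Writing $l\in\QQ$ for the $X$-coordinate of $e$ (so $e$ lies on the line $X=l$), I would set $f=x+\AA t^{-l}$ with $\AA\in\PS$ of valuation $0$, whose initial coefficient will be chosen later, and consider the linear re-embedding $I_{g,f}=\langle g,\ z-f\rangle$. The key local object is $\tilde g(z,y)=g(z-\AA t^{-l},y)$, whose features inside $\sigma_3=\{X=l,\ Z\le l\}$ are governed, by Proposition~\ref{pr:projectionsJ} (here with $r=3$), by the initial form $\init_{(l,\ww_Y)}(\tilde g)$ at the relevant weights along $Z\le l$.

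The heart of the argument is to show that the substitution $x\mapsto z-\AA t^{-l}$ ``feeds'' the $x$-dependence of $\init_e(g)$ into the $z$-variable and, precisely because $\Delta_e(\init_e(g))\ne 0$, keeps the trapezoid $\cP$ intact rather than degenerating it: Corollary~\ref{cor:highMultAndDiscrim} says that $\init_e(g)$, viewed as a polynomial with support in $\cP$ and all four corner coefficients nonzero, is \emph{irreducible} over $\CC[x^{\pm},y^{\pm}]$ exactly when $\Delta_{\cP}(\init_e(g))\ne 0$; this irreducibility is what forces the corresponding piece of $\Trop(\tilde g)$ inside $\sigma_2\cup\sigma_3$ to be a genuine bounded cycle rather than a collapsed segment. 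Concretely, I would compute the Newton subdivision of $\tilde g$ near the relevant weights: the monomials $x^iy^j$ in the part of $g$ contributing to $\init_e(g)$ become, after expanding $(z-\AA t^{-l})^i$, contributions spread over $z$-powers $0,\dots,i$, with valuations controlled by $-il$; choosing the initial coefficient of $\AA$ generically (allowed by Convention~\ref{genericityassumption}, since the only cancellations that could occur are the ones predicted by the discriminant) guarantees no unexpected valuation drops, so the subdivided dual cell of the would-be cycle is again the trapezoid $\cP$ — now sitting below the line $Z=l$ — and by the first part of Lemma~\ref{lm:trapezoidDiscriminant} (the resultant description) together with Corollary~\ref{cor:highMultAndDiscrim}, the non-vanishing of $\Delta_\cP$ at these initial coefficients makes $\init(\tilde g)$ irreducible there, producing a cycle of $\Trop(\tilde g)$ strictly inside $\sigma_3\cup\sigma_2$.

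With the cycle of $\Trop(\tilde g)$ in hand, Lemma~\ref{lem:ModifViaProjections} reconstructs $\Trop(I_{g,f})$ from the two projections $\Trop(g)$ (via $\pi_{XY}$) and $\Trop(\tilde g)$ (via $\pi_{ZY}$): the multiplicity-$n$ edge $e$ of $\Trop(g)$ is the $\pi_{XY}$-image of the new cycle, so its relative multiplicity $n$ is now distributed over $n$ distinct edges of the re-embedded skeleton, i.e.\ the edge $e$ is unfolded in the sense of the definition preceding Section~\ref{sec:discriminants}, and by \eqref{eq:tropMultVsRelMult} and~\cite[Corollary~6.12]{BPR11} the new edges map homeomorphically onto their images. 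Finally, for the ``not faithful at $e$'' assertion I would invoke Lemma~\ref{lem:nonHomeo}: an edge of multiplicity $n\ge 2$ in $\Trop(g)$ is precisely case~(1), so $\trop\colon\widehat\Sigma(g)\smallsetminus D_g\to\Trop(g)$ cannot be a closed embedding near $e$, hence not faithful there; one should note that the discriminant hypothesis is what \emph{guarantees} we are in the genuinely non-faithful, unfoldable situation rather than, say, an edge carrying a multiple cover coming from a single skeleton edge.

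\textbf{Main obstacle.} The step I expect to be delicate is the bookkeeping of valuations in the expansion $g(z-\AA t^{-l},y)$: one must verify that, after collecting terms, the Newton polytope of $\tilde g$ restricted to the cone $\{Z\le l\}$ really does reproduce the trapezoid $\cP$ with the correct corner coefficients (so that Corollary~\ref{cor:highMultAndDiscrim} applies verbatim), and that the generic choice of the initial coefficient of $\AA$ — the only remaining freedom — simultaneously avoids all the ``accidental'' cancellations forbidden by Convention~\ref{genericityassumption} while producing exactly the single new cycle and no spurious lower-dimensional cells. Handling this cleanly will likely require the push-forward/lifting machinery already packaged in Lemma~\ref{lem:ModifViaProjections} and Proposition~\ref{pr:projectionsJ}, so that the combinatorics is reduced to the two visible planar projections rather than a direct computation in $\RR^3$.
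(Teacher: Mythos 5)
Your high-level plan (feed $x\mapsto z-\AA t^{-l}$, read off $\Trop(\tilde g)$ in the attached cell $\sigma_3$ via Lemma~\ref{lem:ModifViaProjections}, then conclude unfolding) is the right frame, but the central steps as you describe them would not go through, for two connected reasons.

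First, you misidentify the combinatorial object governing the edge. The dual cell $e^{\vee}$ of a bounded vertical edge of multiplicity $n$ is a \emph{horizontal lattice segment} of length $n$, not the trapezoid $\cP$ of Figure~\ref{fig:trapezoid}. The two triangles dual to the $3$-valent endpoints $v$ and $v'$ of $e$ share $e^{\vee}$ as a common edge, but they lie on \emph{opposite} sides of it; their union is a quadrilateral with $e^{\vee}$ as a diagonal, not a height-$1$ trapezoid with $e^{\vee}$ as a base. Consequently $\Delta_e=\Delta_{e^{\vee}}$ is the discriminant of the \emph{univariate} degree-$n$ polynomial $h(x)=\init_e(g)/(x^iy^j)$, not the trapezoid discriminant $\Delta_{\cP}$ of Lemma~\ref{lm:trapezoidDiscriminant}, so Corollary~\ref{cor:highMultAndDiscrim} is not the tool to invoke here. (Also note that $\init_e(g)=x^iy^jh(x)$ is always reducible once $n\ge 2$, so an ``irreducibility produces a cycle'' argument cannot be the mechanism.)

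Second — and this is the real gap — your valuation bookkeeping is exactly inverted. You propose to choose the initial coefficient of $\AA$ \emph{generically} so that ``no unexpected valuation drops'' occur and the trapezoid structure is preserved. But a generic $\AA_0$ produces only the trivial tropical modification of $\Trop(g)$: you would get downward ends attached to $e$, not a cycle. The whole mechanism requires a \emph{specific, non-generic} choice: one takes $\AA_0=\init_t(\AA)$ to be a root of $h$ in $\CC^*$. This forces the constant term of $\tilde c_{0,j}$ (namely $(-\AA_0)^ih(-\AA_0)$) to vanish, so that this coefficient acquires strictly positive valuation and the edge $e^\vee$ effectively shortens by one lattice unit in the Newton subdivision of $\tilde g$. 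The hypothesis $\Delta_e(\init_e(g))\neq 0$ enters precisely at the next step: it guarantees the root $\AA_0$ is \emph{simple}, i.e.\ $h'(-\AA_0)\neq 0$, so the coefficient $\tilde c_{1,j}$ keeps valuation $0$ and the cancellation stops there. From this one reads off a new triangle in the Newton subdivision of $\tilde g$ (with a vertex at $(1,j)$), dual to a new vertex $\omega$ of $\Trop(\tilde g)$ in $\sigma_3^{\circ}$; the cycle through $\omega$, $v$, $v'$ is then the one that maps to $e$ under $\pi_{XY}$, and balancing gives the remaining edge on $\{X=Z=0\}$ with multiplicity $n-1$. Without the targeted cancellation at a simple root, none of this appears, so the ``generic $\AA$, no cancellations'' strategy would fail to produce the unfolding.

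Your final appeal to Lemma~\ref{lem:nonHomeo} for ``not faithful at $e$'' also needs care: a multiplicity-$n$ edge alone does not rule out a single skeleton edge covering $e$ with relative multiplicity $n$. Non-faithfulness is obtained only after the unfolding construction above exhibits two distinct edges $\omega v$ and $\omega v'$ in the re-embedded curve, each mapping to $e$ with relative multiplicity $1$, which (via \eqref{eq:tropMultVsRelMult} and \cite[Corollary~6.12]{BPR11}) forces the fiber over a generic point of $e$ to contain at least two skeleton points.
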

\noindent
It is worth pointing out that the previous statement gives no information regarding the reverse implication. We refer to Remark~\ref{rem:fatEdgeConverse} for more details.

\begin{remark}\label{rem:DisclaimerThmDecontracting}
  At first glance, the statement of Theorem~\ref{thm:redVertex} seems
  a bit restricted and it would be desirable to treat more general
  reducible vertices that those dual to Figure~\ref{fig:trapezoid}
  (possibly, after a unimodular transformation). We choose to avoid
  the general case for three concrete reasons. First, working with the
  general case will force the use of multivariate resultants rather
  than discriminants, even for those vertices traversed by a straight
  line. Secondly, Figure~\ref{fig:trapezoid} is the only one where the
  reducibility of a polynomial supported on it is equivalent to the
  vanishing of the discriminant. 

  Third, for reducible vertices of arbitrary shape whose components
  have valency strictly greater than $3$, linear modifications are no
  longer helpful to repair bad embeddings. In
  Section~\ref{sec:experiments} we present an example of a reducible
  vertex $v$ with two valency-3 components that can be repaired by a
  linear modification (see Example~\ref{ex:otherRedVertex}). The
  presence of these components forces us to modify $\RR^2$ along a
  tropical line with vertex $v$. Consequently, we have to use more
  charts to characterize the linear re-embedding. These projections
  are harder to describe in terms of a coordinate change.

  Likewise, the valency-three condition imposed in the statement of
  Theorem~\ref{thm:fatEdge} allows us to stay in the world of
  discriminants for all our computations.
  Examples~\ref{ex:CyleAppears} and~\ref{ex:UnfoldImpossible} in
  Section~\ref{sec:experiments} give two instances where we drop the
  valency-three condition and our methods have different
  outcomes. 
\end{remark}

The remainder of this section is devoted to the proofs of
Theorems~\ref{thm:redVertex} and~\ref{thm:fatEdge}. A series of lemmas
facilitate the exposition.  We make the following simplifications.
Let $v$ be a locally reducible vertex dual to the trapezoid $\cP$ from
Figure~\ref{fig:trapezoid} traversed by a vertical line $X=l$. Our
tropical linear modification has the form $F=\max\{X,l\}$ and its
lifting equals $f=x+\AA\,t^{-l}\in \PS[x]$ where
$\val(\AA)=0$. Furthermore, after rescaling the variables $x$ and $y$
by appropriate powers of $t$ and rescaling the resulting polynomial
$
g(t^ax,t^by)$ by a suitable power of $t$, we may assume that in the
Newton subdivision of $g$, the trapezoid $\cP$ has height 0 and all
other monomials have negative height. Here, the \emph{height} of a
monomial $\underline{x}^{\alpha}$ appearing in $g$ equals
$-\val(c_{\alpha})$, the negative valuation of the corresponding
coefficient. As a result, we can take $l=0$.

With a similar technique, we can assume the Newton subdivision of the
polynomial $g$ in Theorem~\ref{thm:fatEdge} has height $0$ at the dual
edge $e^{\vee}$, while all other monomials in $g$ have negative
height.  In this case, we unfold the edge $e$ by means of the linear
tropical modification along $\max\{X,0\}$ and a suitable lifting
$f=x+\AA \in \PS[x]$ with $\val(\AA)=0$.
 
In the following lemmas, we keep the previous assumptions. We use  the
notation $\sigma_3$ for the attached cell in the linear tropical
modification $\max\{X,0\}$, discussed at the end of
Section~\ref{sec:trop-modif-line}. 
The heart of these technical results lies in
Lemma~\ref{lem:ModifViaProjections}. We study the expected valuations
of the coefficients $\tilde{c}_{0,k}$ of $\tilde{g}=g(z-\AA,y)$ for appropriate values of $k$
and choose  $\AA$'s that make their valuations  higher
than expected. This  ensures that $\Trop(I_{g,f})\cap \sigmaint_3$
does not  consist only of downward ends attached to $\Trop(g)\cap (X=0)$.

\begin{lemma}\label{lm:expectedHeights} 
  Let $(i,j)$ and $(p,j+1)$ be the left vertices of the trapezoid
  $\cP$ from Figure~\ref{fig:trapezoid}, and let $h_1$ and $h_2$ be
  the univariate polynomials of degree $n$ and $s$ induced by the two
  bases of the trapezoid. Assume that the Newton subdivision of $g$
  achieves its maximum height (zero) at $\cP$.  Then, for any given
  $\AA\in \PS$ with $\val(\AA)=0$, the coefficients of all monomials
  $y^j, zy^j, \ldots, z^{i+n}y^j$ and $ y^{j+1}, zy^{j+1}, \ldots,
  z^{p+s}y^{j+1}$ in $\tilde{g}(z,y)=g(z-\AA,y)$ have expected
  valuations $\val(c_{i+n,j})$ and $\val(c_{p+s,j+1})$, respectively.
  The valuation of the coefficients of the monomials $z^{p+s}y^{j+1}$
  and $z^{i+n}y^j$ equals the expected one.  Moreover, the valuations
  of $\tilde{c}_{0,j}$ and $\tilde{c}_{0,j+1}$ are higher than
  expected if and only if $\init_t(\AA)=\AA_0\in \CC^*$ is a common
  solution of $h_1$ and $h_2$, and the discriminant of $\cP$ vanishes
  at the point $(a_0, \ldots, a_n, b_0,\ldots, b_s)\in
  \CC^{n+s+2}$. For such choice, $\tilde{c}_{1,j+1}$ has the expected
  valuation if and only if $h_2'(-\AA_0)\neq 0$.
\end{lemma}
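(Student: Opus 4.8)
The plan is to compute the coefficients of $\tilde g(z,y) = g(z - \AA, y)$ monomial by monomial, tracking which substitutions can produce cancellations in leading terms, and then to identify exactly when such cancellations happen.

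\medskip

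\noindent\textbf{Setting up the computation.} First I would write $g(x,y) = \sum_{(a,b)} c_{a,b}\, x^a y^b$, so that $\tilde g(z,y) = \sum_{(a,b)} c_{a,b} (z-\AA)^a y^b = \sum_{b}\Big(\sum_{a} c_{a,b}\sum_{e} \binom{a}{e}(-\AA)^{a-e} z^e\Big) y^b$. Hence the coefficient of $z^e y^b$ in $\tilde g$ is
\[
\tilde c_{e,b} \;=\; \sum_{a \ge e} \binom{a}{e}\, c_{a,b}\, (-\AA)^{a-e}.
\]
Now I would use the normalization fixed just before the lemma: the trapezoid $\cP$ sits at height $0$ (i.e.\ $\val(c_{a,b}) = 0$ for the four vertices and $\val(c_{a,b})\ge 0$, actually $\le 0$ in max-convention meaning the monomials of $\cP$ have valuation $0$ and all other monomials have \emph{negative} height, i.e.\ strictly positive valuation of $c_{a,b}$). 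Since $\val(\AA)=0$, the term $\binom{a}{e} c_{a,b}(-\AA)^{a-e}$ has valuation $\val(c_{a,b})$, which is $0$ exactly when $(a,b)\in\cP$ and $>0$ otherwise.

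\medskip

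\noindent\textbf{Rows $y^j$ and $y^{j+1}$.} For the row $b=j$, the monomials of $\cP$ are $x^i y^j,\dots,x^{i+n}y^j$ with coefficients (after the affine normalization of Lemma~\ref{lm:trapezoidDiscriminant}, matching $h_1$) essentially $a_0,\dots,a_n$; the substitution $x\mapsto z-\AA$ turns $h_1(x)y^j$ into $\sum_{e=0}^{i+n}\big(\sum_{a\ge e}\binom{a}{e} a_{a-i}(-\AA)^{a-e}\big)z^e y^j$ up to a monomial factor $(z-\AA)^i$. For each $e$ with $1\le e\le i+n$, the expected valuation of $\tilde c_{e,j}$ is $0 = \val(c_{i+n,j})$ because the top term $e=i+n$ always contributes $c_{i+n,j}$ with valuation $0$ and no cancellation can occur there (it is the unique term of that degree). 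For the top coefficient $\tilde c_{i+n,j}$: it equals $c_{i+n,j}$ plus contributions from monomials strictly above $\cP$ in the Newton polygon, all of which have strictly positive valuation, so $\val(\tilde c_{i+n,j}) = 0 = \val(c_{i+n,j})$, the expected value; similarly for $\tilde c_{p+s,j+1}$. For intermediate $e$ with $1 \le e < i+n$, the leading ($t^0$) part of $\tilde c_{e,j}$ is $\sum_{a\ge e}\binom{a}{e}\,(\text{leading coeff of }c_{a,j})\,(-\AA_0)^{a-e}$, which up to a nonzero binomial-free factor is the coefficient of $z^e$ in $h_1(z-\AA_0)$ — this need not vanish and in any case has valuation $\ge 0$, so the expected valuation $0$ is attained. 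The analogous statements hold for row $j+1$ and $h_2$, giving degree bound $p+s$.

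\medskip

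\noindent\textbf{The constant-in-$z$ coefficients and the discriminant.} The subtle coefficients are $\tilde c_{0,j}$ and $\tilde c_{0,j+1}$. Up to monomial normalization, $\tilde c_{0,j}$ has leading part $h_1(-\AA_0)$ and $\tilde c_{0,j+1}$ has leading part $(-\AA_0)^{?}h_2(-\AA_0)$ (the shift in $x$-degree from the $p$ offset only contributes a unit power of $\AA_0$, irrelevant to vanishing). So $\val(\tilde c_{0,j})$ exceeds the expected value iff $h_1(-\AA_0)=0$, and $\val(\tilde c_{0,j+1})$ exceeds it iff $h_2(-\AA_0)=0$; \emph{both} exceed iff $-\AA_0$ is a common root of $h_1$ and $h_2$. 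By Lemma~\ref{lm:trapezoidDiscriminant}, $h_1$ and $h_2$ have a common root in $K^*$ iff $\operatorname{Res}(h_1,h_2) = \Delta_{\cP}$ vanishes at the coefficient point $(a_0,\dots,a_n,b_0,\dots,b_s)$; combined with the normalization identifying these coefficients with $\init_v(g)$, this is exactly the stated condition. (The genericity Convention~\ref{genericityassumption} is what guarantees that "higher than expected valuation" really means a genuine rise rather than total cancellation, and also ensures the non-leading coefficients behave predictably.) Finally, for such a choice of $\AA_0$, the coefficient $\tilde c_{1,j+1}$ of $z y^{j+1}$ has leading part proportional to $\frac{d}{dz}\big(z^{?}h_2(z-\AA_0)\big)$ evaluated appropriately, whose relevant piece is $h_2'(-\AA_0)$ (the product-rule term from the monomial factor $z^p$ contributes $p\cdot 0\cdot h_2(-\AA_0)=0$ since $h_2(-\AA_0)=0$). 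Hence $\tilde c_{1,j+1}$ has the expected valuation iff $h_2'(-\AA_0)\neq 0$, as claimed.

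\medskip

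\noindent\textbf{Main obstacle.} The bookkeeping hazard is keeping the monomial-shift factors $(z-\AA)^i$ and $z^p$ straight and being careful that a cancellation in one of the genuinely relevant coefficients ($\tilde c_{0,j}$, $\tilde c_{0,j+1}$, $\tilde c_{1,j+1}$) is \emph{not} accidentally refilled by a contribution from some monomial of $g$ lying outside $\cP$. The normalization that puts $\cP$ at height $0$ and everything else at strictly negative height handles this: off-$\cP$ monomials contribute only at strictly positive valuation, so they cannot interfere with leading terms. The other delicate point is invoking Convention~\ref{genericityassumption} precisely at the moment one says "the valuation strictly rises" — one must check that the hypothesis of the convention applies to the specific linear combinations $h_1(-\AA_0)$, $h_2(-\AA_0)$ of Puiseux-series coefficients, which it does since these are nontrivial linear combinations of the $c_{a,b}$ with $\AA$-coefficients.
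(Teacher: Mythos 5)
Your proposal is correct and follows essentially the same route as the paper's proof: write out the binomial-expansion formula $\tilde c_{\ell,b}=\sum_{a\geq \ell}\binom{a}{\ell}c_{a,b}(-\AA)^{a-\ell}$, use the height normalization (trapezoid at height $0$, everything else strictly below) to see that only $\cP$-monomials contribute to leading coefficients, identify $\init_t(\tilde c_{0,j})$ and $\init_t(\tilde c_{0,j+1})$ with $h_1(-\AA_0)$ and $h_2(-\AA_0)$ up to unit monomial factors, and invoke Lemma~\ref{lm:trapezoidDiscriminant} to translate the common-root condition into the vanishing of $\Delta_{\cP}$; the derivative computation for $\tilde c_{1,j+1}$ is handled identically in both arguments, with the $h_2(-\AA_0)$ contribution dropping out once it is known to vanish. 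One minor point: the genericity Convention~\ref{genericityassumption} is not actually needed for this lemma (the paper's proof does not cite it, and total cancellation in $\tilde c_{0,j}$ would still mean ``higher than expected''); it becomes relevant downstream in Lemma~\ref{lm:VanishingDecontractsVertex}, where one needs the coefficients to be genuinely nonzero.
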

\begin{proof}
  Following the notation of Figure~\ref{fig:trapezoid}, we let $a_0,
  \ldots, a_n,b_0,\ldots,b_s \in \CC$ be the constant terms of the
  coefficients $c_{\alpha}$ from $g$, with $\alpha\in \cP$.  The
  binomial expansion of each factor $(z-\AA)$ yields:
\begin{equation}
  \tilde{c}_{l,j}=\sum_{k\geq l} c_{k,j} \binom{k}{l}(-\AA)^{k-l} \quad\text{ and }\quad
  \tilde{c}_{l,j+1}=\sum_{k\geq l} c_{k,j+1} \binom{k}{l}(-\AA)^{k-l} \; \text{ for all }l.\label{eq:expValFeeding}
\end{equation}
Since $\val(\AA)= \val(c_{i+n,j})=\val(c_{p+s,j+1})=0$ and all terms
in $g$ have height at most 0, we conclude that all the coefficients
$\tilde{c}_{l,j}$ with $0\leq l\leq n+i$, and $\tilde{c}_{l,j+1}$ with
$0\leq l\leq p+s$ have expected valuation 0. Moreover,
$\val(\tilde{c}_{i+1,j+1})=\val(\tilde{c}_{i+n,j})=0$ by construction.

In particular, we  can compute the constant terms of $\tilde{c}_{0,j}$, $\tilde{c}_{0,j+1}$ and $\tilde{c}_{1,j+1}$:
\begin{equation}
    \init_t(\tilde{c}_{0,j})\!=\! (-\AA_0)^ih_1(-\AA_0);\; 
    \init_t(\tilde{c}_{0,j+1})\!=\!(-\AA_0)^p h_2(-\AA_0);\;
    \init_t(\tilde{c}_{1,j+1})\!=\!h_2(-\AA_0) -\AA_0 h_2'(-\AA_0).
\label{eq:initials}
\end{equation}
Thus, $\tilde{c}_{0,j}$ and $\tilde{c}_{0,j+1}$ have strictly positive
valuation if and only if $\AA_0$ is a nonzero common solution of $h_1$
and $h_2$. By Lemma~\ref{lm:trapezoidDiscriminant}, such $\AA_0$
exists if and only if $\Delta_{\cP}(a_0,\ldots, b_s)=0$. 
Furthermore by~\eqref{eq:initials} we deduce that $\init_t(\tilde{c}_{1,j+1})=-\AA_0h_2'(-\AA_0)$,  so $\val(\tilde{c}_{1,j+1})=0$ if and only if $h_2'(-\AA_0)\neq 0$.
\end{proof}
\begin{remark}\label{rem:UnimodularTransformationsForEllipticCase}
  The statement of Lemma~\ref{lm:expectedHeights} also holds for any
  locally reducible vertex $v$ with a line through it of slope $0$ or
  $1$, and whose dual cell is a trapezoid $\cP$ of height one, as we
  now explain.  As usual, assume that the trapezoid $\cP$ has maximal
  height 0.  By symmetry between $x$ and $y$ we need only consider the
  case of slope one, namely, when the parallel lines containing the
  bases of $\cP$ are $L:= \{y=-x+r\}$ and $L':=\{ y=-x+r-1\}$.  Let
  $b_0,\dots, b_s$ be the initial terms of the coefficients of the
  monomials in $L$, and $a_0,\ldots, a_n$ be the ones contained in
  $L'$. We let $h_1$ and $h_2$ be the polynomials supported on the two
  bases of $\cP$.  The $A$-discriminant of the trapezoid is the same
  as the one for Figure~\ref{fig:trapezoid} since these two polygons
  are related by a unimodular transformation of $\ZZ^2$.

  We write $v=(v_1,v_2)$. The linear re-embedding induced by the
  tropical modification along $L^{\perp}$ is determined by the
  function $f=x+\AA t^{v_2-v_1}y$ with $\val(\AA)=0$. The plane curve
  $\tilde{g}=g(z-\AA t^{v_2-v_1}y, y) $ is obtain by projecting
  $I_{g,f}$ to the $ZY$-plane. In the feeding process, a monomial
  $x^iy^j$ in $g$ contributes to all monomials $z^{i-k}y^{j+k}$ in
  $\tilde{g}$ for $0\leq k\leq i$. In particular, if a point $(i,j)\in
  L\cup L'$ lies to the left of the vertices with coefficients
  $a_n,b_s$, then the coefficient $\tilde{c}_{i,j}$ has expected
  valuation 0. We are interested in increasing the valuation of the
  coefficients of $\tilde{g}$ associated to the intersection points
  $(0,r)$ and $(0,r-1)$ of the $y$-axis with the lines $L$ and $L'$,
  respectively.

  By Lemma~\ref{lm:trapezoidDiscriminant}, the vanishing of
  $\Delta_{\cP}$ ensures that $h_1$ and $h_2$ have a common solution
  $\AA_0$ in $\CC^*$. The feeding process ensures that
  $\tilde{c}_{0,r}$ and $\tilde{c}_{0,r-1}$ have negative valuation if
  and only if $\init_t(\AA)=\AA_0$.\end{remark}

\begin{lemma}\label{lm:VanishingDecontractsVertex}
  Let $v$ be a locally reducible vertex of $\Trop(g)$ dual to the
  trapezoid $\cP$ in Figure~\ref{fig:trapezoid}, contained in the line
  $\{X=l\}$. Suppose that the discriminant of $\cP$ vanishes at
  $\init_v(g)$ and let $\AA_0$ be a common solution of $h_1(x)$ and
  $h_2(x)$ in $\CC^*$.  Let $f:=x+\AA t^{-l}$ be a lifting function
  with $\val(\AA)=0$ and $\init_t(\AA)=\AA_0$.  Then, the linear
  re-embedding $I_{g,f}$ produces a decontraction/unfolding of some
  edges of $\widehat{\Sigma}(g)\smallsetminus D_g$ that map to
  $\Trop(g)\cap \{X=l\}$. Furthermore, $v\times \{l\}$ is a vertex of
  $\Trop(I_{g,f})$ and its multiplicity is strictly smaller than
  $\operatorname{mult}_{\Trop(g)}(v)$.
\end{lemma}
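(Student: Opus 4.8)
The plan is to work entirely inside the modified plane, reading off the re-embedded curve from $\Trop(g)$ and $\Trop(\tilde{g})$ by means of Lemma~\ref{lem:ModifViaProjections}, where $\tilde{g}(z,y)=g(z-\AA\, t^{-l},y)$. After the rescaling normalization recalled before Lemma~\ref{lm:expectedHeights} we may assume $l=0$, with $\cP$ at height $0$ in the Newton subdivision of $g$, every other monomial of $g$ at strictly negative height, and $v$ the origin; then $\init_v(g)$ is the polynomial supported on $\cP$ prescribed by $h_1$ and $h_2$, and by Lemma~\ref{lm:trapezoidDiscriminant} the hypothesis $\Delta_\cP(\init_v(g))=0$ is exactly what guarantees a common root of $h_1$ and $h_2$, of which $\AA_0$ is one.

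First I would feed this vanishing into the Newton subdivision of $\tilde{g}$. By Lemma~\ref{lm:expectedHeights} the coefficients $\tilde{c}_{0,j}$ and $\tilde{c}_{0,j+1}$ acquire strictly positive valuation, while $\tilde{c}_{i+n,j}$ and $\tilde{c}_{p+s,j+1}$ keep valuation $0$; hence the two leftmost lattice points of the relevant rows drop strictly below height $0$, producing a dent in the regular subdivision of $\tilde{g}$. For a generic lift the portion of $\Trop(\tilde{g})$ in $\{Z<0\}$ is a single ray in direction $(-1,0)$ and the modification merely attaches one downward end over $v$; with the special choice $\init_t(\AA)=\AA_0$ the dent forces $\Trop(\tilde{g})\cap\{Z<0\}$ to contain honest bounded edges and vertices (and a cycle precisely when $(0,j)$ and $(0,j+1)$ are cut off by one common $2$-cell). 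Transporting this through Lemma~\ref{lem:ModifViaProjections}, $\Trop(I_{g,f})\cap\sigmaint_3$ is strictly larger than a downward end. Then, using the commutative diagram~\eqref{eq:contr/fold} and the fact that $\pi_{XY}$ identifies the bounded parts of $\widehat{\Sigma}(g)$ and $\widehat{\Sigma}(I_{g,f})$, I would argue that these extra features are precisely the images of edges of $\widehat{\Sigma}(g)\smallsetminus D_g$ that $\trop$ sent into $\Trop(g)\cap\{X=0\}$ and which the re-embedding now decontracts or unfolds; that such contracted or folded edges exist at all follows from $\operatorname{mult}_{\Trop(g)}(v)=d+1\geq 2$, where $d=\deg\gcd(h_1,h_2)\geq 1$, together with~\eqref{eq:tropMultVsRelMult} and~\cite[Corollary 6.12]{BPR11}.

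To finish, I would locate $v\times\{l\}$ and compute its multiplicity via Proposition~\ref{pr:projectionsJ}: modulo its linear part, $\init_{v\times\{0\}}(I_{g,f})$ is $\init_{(0,0)}(\tilde{g})$ localized at $z-\AA_0$, and one checks that $\init_{(0,0)}(\tilde{g})$ is $\init_v(g)$ transported by the coordinate change coming from $f$; its support is the truncated trapezoid obtained from $\cP$ by deleting the leftmost column(s), still two-dimensional, so $v\times\{0\}$ is a vertex of $\Trop(I_{g,f})$. Factoring $\init_v(g)=\gcd(h_1,h_2)(x)\cdot q(x,y)$ with $q$ irreducible (Corollary~\ref{cor:highMultAndDiscrim}), the vanishing of $\tilde{c}_{0,j},\tilde{c}_{0,j+1}$ says exactly that the common root absorbed by $\AA_0$ lies, with some multiplicity $m_0\geq 1$, in $\gcd(h_1,h_2)$ and is carried to $z=0$ by this coordinate change, so $\init_{(0,0)}(\tilde{g})$ acquires a factor $z^{m_0}$ which becomes a unit after the localization; counting the remaining irreducible components then gives $\operatorname{mult}_{\Trop(I_{g,f})}(v\times\{0\})=(d-m_0)+1<d+1=\operatorname{mult}_{\Trop(g)}(v)$, as claimed.

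The hardest step will be the middle paragraph: converting the combinatorial coefficient drop of Lemma~\ref{lm:expectedHeights} into the precise statement about which edges of $\widehat{\Sigma}(g)\smallsetminus D_g$ get decontracted or unfolded, i.e.\ matching the new $2$-cells of the regular subdivision of $\tilde{g}$ inside $\{Z<0\}$ with the contracted or folded edges of the original skeleton through diagram~\eqref{eq:contr/fold}. One also has to dispatch the degenerate sub-cases cleanly — when the common root is a multiple root of $h_1$ or $h_2$, so that further coefficients drop and the truncated polygon shrinks further, and when a base of $\cP$ has length $1$, so that the truncated polygon may degenerate to a triangle — but in each case the same bookkeeping yields the stated conclusions.
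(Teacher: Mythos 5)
Your proposal follows the paper's own strategy quite closely (normalize to $l=0$, feed the vanishing of $\Delta_{\cP}$ through Lemma~\ref{lm:expectedHeights} into the Newton subdivision of $\tilde g$, reconstruct $\Trop(I_{g,f})$ via Lemma~\ref{lem:ModifViaProjections}, compare via diagram~\eqref{eq:contr/fold}, and compute the multiplicity using Proposition~\ref{pr:projectionsJ}). Your multiplicity argument is essentially right and in fact slightly sharper than the paper's, since you identify the factor $\gcd(h_1,h_2)$ explicitly and arrive at the bound $(d-m_0)+1 < d+1$; the paper simply observes that the localized initial ideal has strictly smaller length and records $\mult(v\times\{0\})\leq \mult_{\Trop(g)}(v)-1$. (Two small cosmetic remarks: in the $(z,y)$ chart the factor $z^{m_0}$ is already a unit before localizing, so the relevant point is that this factor disappears once one passes to the ambient Laurent ring, not "after the localization"; and the parenthetical claim that a cycle appears in $\Trop(\tilde g)\cap\{Z<0\}$ precisely when $(0,j),(0,j+1)$ share a $2$-cell is not correct --- those two lattice points lie on the boundary of the Newton polygon of $\tilde g$, so they are always dual to an unbounded ray; cycles are the phenomenon of Theorem~\ref{thm:fatEdge}, not of this lemma.)

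The genuine gap is exactly the one you flag: the decontraction/unfolding statement is not actually proved, only described. The paper resolves it with a concrete case analysis based on the minimal $k,k'$ for which $\tilde c_{k,j}$, $\tilde c_{k',j+1}$ still have valuation $0$. When $k=k'$ the edge $e_0^\vee$ dual to the vertical segment $[(k,j),(k,j+1)]$ lifts to an edge in direction $(0,0,-1)$ inside $\sigma_3^\circ$ which $\pi_{XY}$ contracts, so diagram~\eqref{eq:contr/fold} identifies it with a previously contracted edge $E$ of $\widehat\Sigma(g)\smallsetminus D_g$, yielding a decontraction. When $k'<k$ (after symmetry), the genericity assumption~\ref{genericityassumption} produces a chain of $2$-cells $Q_1,\dots,Q_{m+1}$ in the Newton subdivision of $\tilde g$ whose dual edges $e_1^\vee,\dots,e_{m+1}^\vee$ form a path in $\sigma_3^\circ$ that $\pi_{XY}$ folds onto $\Trop(g)\cap\{X=0\}$, yielding an unfolding. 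Without spelling out this dichotomy (and, as you note, the degenerate sub-cases when further coefficients drop or the truncated polygon degenerates), one cannot conclude the existence of the two points $\rho,\rho'$ and the decontracted/unfolded subgraph; the claim "the same bookkeeping yields the stated conclusions" is precisely the work that remains to be done.
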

\begin{proof}
  The claim follows from Lemma~\ref{lm:expectedHeights}.  As usual,
  assume that the trapezoid $\cP$ has height zero, and all
  coefficients of $g$ have non-negative valuation, so $l=0$ and
  $v=(0,0)$.  Set $\tilde{g}(z,y):=g(z-\AA,y)$.  The given hypotheses
  ensure that the coefficients of $y^j$ and $y^{j+1}$ in $\tilde{g}$
  have strictly positive valuation and all other coefficients have
  non-negative valuation.

  By our genericity condition~\ref{genericityassumption}, the
  coefficients $\tilde{c}_{0,j}$ and $\tilde{c}_{0,j+1}$ of
  $\tilde{g}$ are non-zero and have positive valuation. Let $1\leq
  k\leq i+n$ and $1\leq k'\leq p+s$ be minimal with the property that
  $\tilde{c}_{{k,j}}$ and $\tilde{c}_{k',j+1}$ have valuation zero. We
  let $e_0$ be the edge in the Newton subdivision of $\tilde{g}$
  determined by $(k,j)$ and $(k',j+1)$. Notice that $e_0^{\vee}$ has
  multiplicity 1, and so it is faithfully represented in
  $\widehat{\Sigma}(g)\smallsetminus D_g$ by a unique edge $E$ that
  maps to $e_0$ under $\trop$.

  Different values of $k$ and
  $k'$ yield different polyhedral structures on $\Trop(\tilde{g})\cap
  \{Z<0\}$ and thus in $\Trop(I_{g,x+\AA})\cap \sigmaint_3$ by
  Lemma~\ref{lem:ModifViaProjections}.  In turn, they produce the
  decontraction or unfolding of edges, as we now explain.

  By symmetry, we assume $k'\leq k$. We claim that the point $(0,0)$
  lies in $\Trop(\tilde{g})$: it is a vertex when $k<n+i$ or $k'<p+s$,
  and otherwise lies in the relative interior of the edge $e_0^{\vee}$
  in $\Trop(\tilde{g})$ (see Figures~\ref{fig:64aTropical}
  and~\ref{fig:TwoStepPart1}). In both cases, a segment of
  $e_0^{\vee}$ connects $(0,0)$ to a vertex $v_1$ in
  $\Trop(\tilde{g})\cap \{Z<0\}$ as in Figure~\ref{fig:folding}.  The
  vertex $v_1$ can be described as follows.  Our genericity
  condition~\ref{genericityassumption} ensures that there is a polygon
  $Q_1$ in the Newton subdivision of $\tilde{g}$ containing $e_0$ and
  a vertex $w_1=(q_1,r_1)$ where $r_1<(k'-k)(q_1-k)+j$, i.e.\ $w_1$
  lies below the line with direction $e_0$ passing through $(k,j)$.
  The polygon $Q_1$ is dual to a vertex $v_1$ of $\Trop(\tilde{g})\cap
  \{Z<0\}$. 

  The combinatorics of the stars of $(0,0)$ in $\Trop(g)$ and
  $\Trop(\tilde{g})$ and Lemma~\ref{lem:ModifViaProjections} ensure
  that $(0,0,0)$ is a vertex of $\Trop(I_{g,x+\AA})$. Let $S$ be the
  multiplicative closed set generated by $(x+\AA_0)$. Our choice of
  $\AA_0$ guarantees that the localized initial ideal $\langle
  \init_{v}(g)\rangle[S^{-1}]\subset \CC[x^{\pm}, y^{\pm}] [S^{-1}]$
  has length strictly smaller than the length of the ideal
  $\init_{v}(g)\subset \CC[x^{\pm},
  y^{\pm}]$. Proposition~\ref{pr:projectionsJ} implies that the
  multiplicity of $(0,0,0)$ in $\Trop(I_{g,x+\AA})$ is bounded above
  by $\operatorname{mult}_{\Trop(g)}(v)-1$.  \smallskip

  When $k=k'$, the edge $e_0^{\vee}$ has direction $(0,0,-1)$ and it
  connects the vertex $v_1$ of $\Trop(I_{g,x+\AA})\cap \sigmaint_3$ to
  the point $(0,0,0)$.  The map $\pi_{XY}\colon\Trop(I_{g,x+\AA})
  \rightarrow \Trop(g)$ contracts this bounded edge and all downward
  ends in $\sigmaint_3$. Bounded edges with other directions project
  homeomorphically to segments inside $\Trop(g)\cap\{X=0\}$. By
  Lemma~\ref{lem:ModifViaProjections} and~\eqref{eq:contr/fold} we
  conclude that the linear re-embedding $I_{g,x+\AA}$ decontracts the
  edge $e_0^{\vee}$ in $\widehat{\Sigma}(I_{g,x+\AA})\smallsetminus
  D_{I_{g,x+\AA}}$, so it decontracts the edge $E$ in
  $\widehat{\Sigma}(g)\smallsetminus
  D_g$. Example~\ref{ex:twoStepsExample} and
  Figure~\ref{fig:TwoStepPart1} exhibit this behavior.

  On the contrary, assume $k'<k$. This situation leads to an unfolding
  of edges, as sketched in Figure~\ref{fig:folding}. Assume the vertex
  $w_1$ constructed above has $r_1\geq j+1$. Then, the 2-cell $Q_1$ in
  the Newton subdivision of $\tilde{g}$ contains a vertex
  $w_{2}=(q_{2},r_{2})$ with $r_{2}\geq j+1$ and adjacent to
  $(k',j+1)$ by an edge $e_{2}$.  Notice that we allow the possibility
  that $w_{2}$ and $w_1$ agree. The edges $e_{2}^{\vee}$ and
  $e_0^{\vee}$ share the endpoint $v_1$. When viewed in
  $\Trop(I_{g,x+\AA})\cap \sigma_3^{\circ}$ using
  Lemma~\ref{lem:ModifViaProjections}, the same holds true for a
  segment of these two edges. The projection $\pi_{XY}$ overlaps these
  two edges along an edge of $\Trop(g)$ contained in the line
  $\{X=0\}$.  Diagram~\eqref{eq:contr/fold} implies that the linear
  re-embedding $I_{g,x+\AA}$ unfolds some edges of $\Trop(g)\cap
  \{X=0\}$.

  Finally, suppose $r_1\leq j$.  By convexity, we construct a
  (possibly empty) maximal collection of vertices $w_2,\ldots,w_m$
  with $w_l=(q_l,r_l)$ and $r_l\leq j$ for all $l$ such that
  \begin{enumerate}[(1)]
\item $w_l$ and $(k', j+1)$ are connected by an edge $e_l$,
\item for all $l\geq 2$, $w_{l-1},w_{l}, (k',j+1)$ are vertices of a polygon $Q_l$ in the Newton subdivision of $\tilde{g}$.
\end{enumerate}
By the genericity assumption~\ref{genericityassumption} we can find a
vertex $w_{m+1}=(q_{m+1},r_{m+1})$ with $r_{m+1}\geq j+1$ in the
Newton subdivision of $\tilde{q}$, connected to $(k',j+1)$ by an edge
$e_{m+1}$ and such that $e_{m+1}$ and $e_m$ are edges of a polygon
$Q_{m+1}$ in the subdivision. We let $v_{m+1}$ be the corresponding
vertex in $\Trop(\tilde{g})\cap \{Z<0\}$.

The edges $e_l^{\vee}$ of $\Trop(\tilde{g})$ have directions
$(j+1-q_l, r_l-k')$ for $l=1,\ldots, m+1$. They form a chain that
links the vertices $v_{m+1}$ and $v_1$, as in
Figure~\ref{fig:folding}. Using Lemma~\ref{lem:ModifViaProjections}
and the convexity of the Newton subdivision of $\tilde{g}$, we
conclude that the projection $\pi_{XY}$ maps these edges (and their
linking chain) to overlapping edges in $\{X=Z=0\}$.
Using diagram~\eqref{eq:contr/fold}, we conclude that the linear re-embedding
$I_{g,x+\AA}$ unfolds some edges of $\Trop(g)\cap
\{X=0\}$. Example~\ref{ex:64a} and Figure~\ref{fig:64aTropical}
capture this phenomenon.
\end{proof}
\begin{figure}
\includegraphics[scale=0.18]{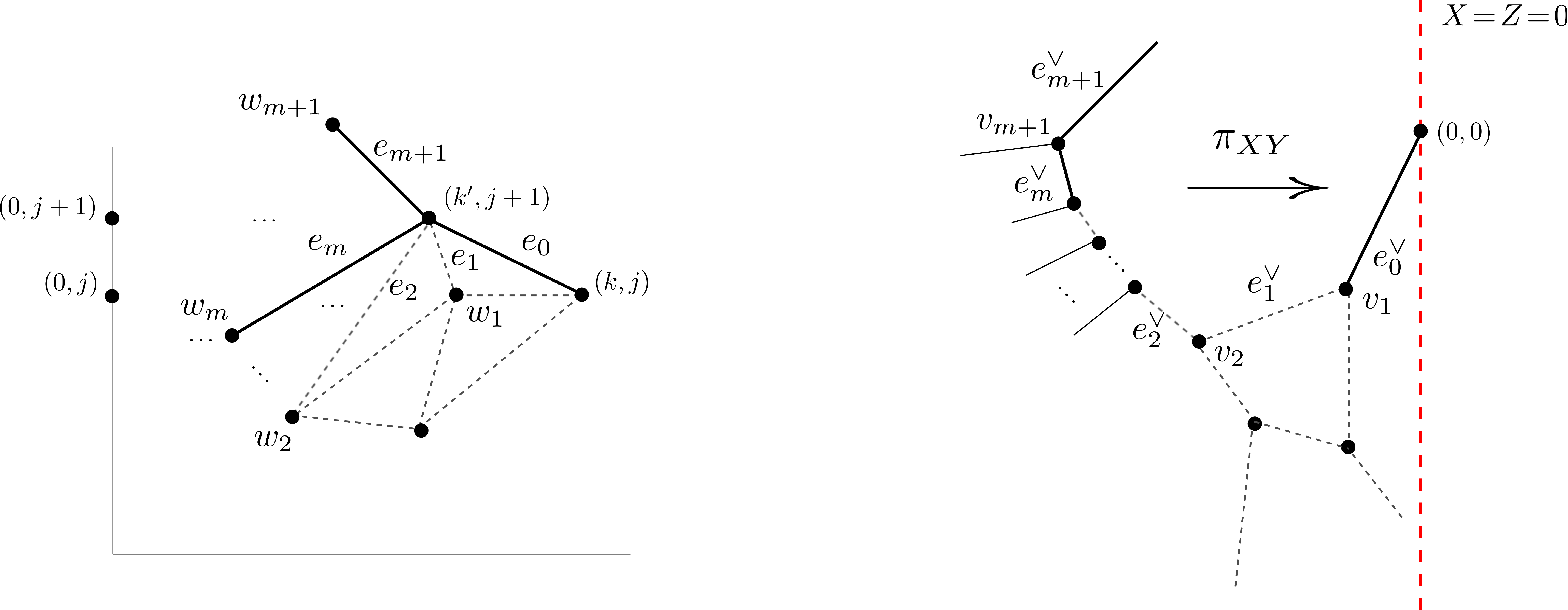}
\caption{The Newton subdivision of $\tilde{g}$ and the projection
  $\pi_{XY}$ folding edges of  $\Trop(I_{g,f})\cap \sigmaint_3$. The dashed
  segments describe a potential scheme of edges in the
  subdivision. The red dotted lines indicate the image of the
  projection.}\label{fig:folding}
\end{figure}

\begin{lemma}\label{lem:noDecontractionIfLocallySmooth}
  Let $v$ be a locally reducible vertex $v$
  in $\Trop(g)$ and assume that the discriminant $\Delta_v$ 
  does not vanish at $\init_v(g)$. Then, there exists a unique
  vertex $\rho$ in $\Sigma:=\widehat{\Sigma}(g)\smallsetminus D_g$ and a
  subgraph $\Gamma$ of $\Star_{\Sigma}(\rho)$ that maps
  homeomorphically to $\Star_{\Trop(g)}(v)$. For any other vertex
  $\rho'\in \trop^{-1}(v)$ in $\Sigma$, we have
  $\trop(\Star_{\Sigma}(\rho'))= v$.
\end{lemma}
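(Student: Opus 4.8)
The plan is to run the analysis of Lemma~\ref{lem:nonHomeo}, but in the reverse direction: there we extracted combinatorial obstructions from failures of faithfulness; here the non-vanishing of $\Delta_v$ will force that no such obstruction can be present at $v$, pinning down a unique preimage vertex $\rho$. First I would recall that, since $v$ is locally reducible, the decomposition~\eqref{eq:StarDecomposition} applies, so $\Star_{\Trop(g)}(v)$ is the union of the images $\trop(\Star_\Sigma(\rho))$ over all vertices $\rho\in V(\trop^{-1}(v))$. The key point is to show that exactly one of these images is nontrivial. Suppose instead two distinct vertices $\rho,\rho'\in\trop^{-1}(v)$ had nontrivial star-images under $\trop$. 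Then, by~\cite[Proposition 4.24, Corollary 6.12]{BPR11}, $m_{\operatorname{rel}}(\rho)\geq 1$ and $m_{\operatorname{rel}}(\rho')\geq 1$, and~\eqref{eq:tropMultVsRelMult} gives $m_{\Trop}(v)\geq m_{\operatorname{rel}}(\rho)+m_{\operatorname{rel}}(\rho')\geq 2$, so the initial degeneration $\init_v(g)$ defines a non-reduced or reducible scheme in $(\CC^*)^2$; in particular $\init_v(g)$ is reducible over $\CC[x^\pm,y^\pm]$.

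The second step is to convert this reducibility of $\init_v(g)$ into the vanishing of $\Delta_v$, contradicting the hypothesis. Here I would invoke Corollary~\ref{cor:highMultAndDiscrim}: for a polynomial supported on the trapezoid $\cP$ with nonzero vertex coefficients, reducibility over $\CC[x^\pm,y^\pm]$ is equivalent to $\Delta_\cP$ vanishing at the coefficient vector. (As in the setup preceding Lemma~\ref{lm:expectedHeights}, we may assume $v$ is dual to the trapezoid $\cP$ of Figure~\ref{fig:trapezoid}, possibly after a unimodular transformation as in Remark~\ref{rem:unimodular}; the genericity of the vertex coefficients of $\init_v(g)$ — they are the leading terms of the defining polynomial along a maximal cell — guarantees they lie in $\CC^*$.) Thus $\Delta_v(\init_v(g))=0$, contradicting our assumption. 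Therefore there is a \emph{unique} vertex $\rho\in V(\trop^{-1}(v))$ with $\trop(\Star_\Sigma(\rho))\neq\{v\}$, and for every other $\rho'\in\trop^{-1}(v)$ we have $\trop(\Star_\Sigma(\rho'))=v$.

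With uniqueness in hand, the final step is to build the subgraph $\Gamma$. Since~\eqref{eq:StarDecomposition} collapses to $\Star_{\Trop(g)}(v)=\trop(\Star_\Sigma(\rho))$, every edge $e$ of $\Star_{\Trop(g)}(v)$ is the image of at least one edge $e'$ of $\Star_\Sigma(\rho)$ that is not contracted. Choosing one such $e'$ for each $e$ (and, if two edges of $\Star_\Sigma(\rho)$ happen to map onto the same segment, discarding the redundant one) produces a subgraph $\Gamma\subseteq\Star_\Sigma(\rho)$ on which $\trop$ is a bijection onto $\Star_{\Trop(g)}(v)$ at the level of edges; since $\trop$ is a morphism of $1$-dimensional complexes and is affine with integer slope on each edge (Poincaré–Lelong), the induced map $\Gamma\to\Star_{\Trop(g)}(v)$ is a homeomorphism. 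I expect the main obstacle to be the bookkeeping in this last step: one must rule out that two \emph{distinct} edges in $\Star_\Sigma(\rho)$ map onto overlapping (but not identical) segments of a single edge of $\Trop(g)$, which would make $\trop$ fail to be a homeomorphism onto the star even from the single vertex $\rho$. This is handled exactly as in case (i) of the proof of Lemma~\ref{lem:nonHomeo}: such an overlap forces that edge of $\Trop(g)$ to have multiplicity $\geq 2$ by~\eqref{eq:tropMultVsRelMult}, hence $m_{\Trop}(v)\geq 2$ and $\init_v(g)$ is reducible, again contradicting $\Delta_v(\init_v(g))\neq 0$ via Corollary~\ref{cor:highMultAndDiscrim}. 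So in fact all edges of $\Star_\Sigma(\rho)$ that are not contracted map injectively, and we may take $\Gamma$ to be the union of precisely these edges.
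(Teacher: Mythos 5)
Your proof is correct and takes essentially the same route as the paper's: both argue that $\Delta_v(\init_v(g))\neq 0$ forces $m_{\Trop}(v)=1$ via Corollary~\ref{cor:highMultAndDiscrim}, then apply~\cite[Corollary 6.12]{BPR11} together with formula~\eqref{eq:tropMultVsRelMult} to isolate a unique vertex $\rho\in\trop^{-1}(v)$ with non-contracted star, and finally import the construction of $\Gamma$ from the proof of Lemma~\ref{lem:nonHomeo}. The only difference is cosmetic: the paper argues directly ($m_{\Trop}(v)=1$, hence all but one vertex contract), whereas you argue by contradiction ($m_{\Trop}(v)\geq 2$ would force reducibility of $\init_v(g)$, hence vanishing of $\Delta_v$); your extra remarks about nonzero vertex coefficients and the possible overlap of edge images are sound and consistent with the paper's framework.
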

\begin{proof}
  If the discriminant $\Delta_v$ does not vanish at $\init_v(g)$, then
  by Corollary~\ref{cor:highMultAndDiscrim}, we know that $\init_v(g)$
  is irreducible and generically reduced, thus
  $m_{\Trop}(v)=1$. By~\cite[Corollary 6.12]{BPR11} and formula \eqref{eq:tropMultVsRelMult}, all but one
  vertex in $\trop^{-1}(v)$ map their stars to $v$. We let  $\rho$ be this special
  vertex. The proof of Lemma~\ref{lem:nonHomeo} shows that we
  can construct $\Gamma$ in $\Star_{\Sigma}(\rho)$ satisfying
  the desired property.
\end{proof}
\begin{remark}\label{rem:LocalFaithfulnessAroundV}
  Notice that by construction, the multiplicity of two of the edges
  adjacent to a locally reducible vertex $v$ dual to
  Figure~\ref{fig:trapezoid} is one, and so the tropicalization is
  faithful on these edges by~\cite[Theorem 6.23]{BPR11}.  The only
  exceptions are the edges dual to the two bases of the trapezoid in
  Figure~\ref{fig:trapezoid}, whose multiplicities are $s$ and $n$.
  We cannot guarantee faithfulness of the tropicalization map over
  these edges unless $n=s=1$. Furthermore,
  Corollary~\ref{cor:highMultAndDiscrim} ensures that
  $m_{\Trop}(v)\geq 2$ if and only if $\Delta_{v}$ vanishes at
  $\init_v(g)$.

  When $\Delta_v$ does not vanish at $\init_v(g)$, the multiplicity
  one edges in $\Star_{\Trop(g)}(v)$ are faithfully represented as a
  subgraph $\Gamma$ of $\Star_{\Sigma}(\rho)$, where $\rho$ is the
  vertex of Lemma~\ref{lem:noDecontractionIfLocallySmooth}. This graph
  is a star tree.  If $n,s>1$, then $\trop$ maps each of the edges in
  $\Star_{\Sigma'}(\rho)$ outside $\Gamma$ either to $v$ or
  homeomorphically to the corresponding high multiplicity edge in
  $\Trop(g)$. The proof of Lemma~\ref{lem:nonHomeo} show that the
  previous statements remain true if we replace $\Sigma$ by the
  connected 1-dimensional complex $\Sigma'=(\Sigma\smallsetminus
  \trop^{-1}(v)) \cup \{\rho\}$.
\end{remark}

\begin{proof}[Proof of Theorem~\ref{thm:redVertex}] 
  To simplify notation, write
  $\Sigma(I):=\widehat{\Sigma}(I)\smallsetminus D_I$ for the embedding
  induced by an ideal $I$.  Assume the discriminant $\Delta_v$ does
  not vanish at $\init_v(g)$. The result follows by
  Lemma~\ref{lem:noDecontractionIfLocallySmooth}.  Conversely, suppose
  that the discriminant $\Delta_v$ vanishes at $\init_v(g)$. Then, by
  Lemma~\ref{lm:VanishingDecontractsVertex} we can decontract/unfold
  edges of $\Sigma(g)$ that map to $\Trop(g)\cap \{X=l\}$ using a
  linear re-embedding induced by a tropical modification along $L$. We
  analyze the different combinatorial structures that appear in the
  proof of the lemma to construct  $\rho$ and $\rho'$. We keep the notation used in the latter.

  Suppose the linear re-embedding produces a decontraction of an edge
  $e$ adjacent to $v$ in $\Trop(I_{g,f})$, i.e.\ $k=k'$. Since the trapezoid
  $v^{\vee}$ has height 1, we conclude that $e$ has a unique preimage $e'$
  in $\Sigma(I_{g,f})$. Call $\rho$ and $\rho'$ its ends and let $\Gamma= \{e'\}$. By
  construction, $\trop(\rho')=\trop(\rho)=v\in \Trop(g)$ and their
  stars in $\Sigma(I_{g,f})$ contain $e'$. Their stars are not
  contracted in $\Trop(I_{g,f})$ and each one contains at least three
  edges, since $\trop(\rho')$ and $\trop(\rho)$ are two distinct
  non-bivalent vertices. By balancing, one edge from each star in
  $\Trop(I_{g,f})$ maps to a segment in the line $L$ under the projection
  $\pi_{XY}$. By diagram~\eqref{eq:contr/fold}, we conclude that the
  stars of $\rho$ and $\rho'$ are not contracted by $\trop$, while the
  path between them is.

  On the contrary assume $k< k'$. Then,
  Lemma~\ref{lm:VanishingDecontractsVertex} produces a chain of edges
  $\mathscr{C}$ in $\Trop(I_{g,f})\cap \sigma_3^{\circ}$ containing
  $v$ as one of its vertices. We see $\mathscr{C}$ in the right of
  Figure~\ref{fig:folding}. Notice that $v$ lies on an edge $e$ of
  $\Trop(I_{g,f})\cap \sigma_3^{\circ}$, and $e$ has multiplicity 1.
  By construction $\Sigma(I_{g,f})$ contains a unique edge $e'$
  mapping to $e$ under $\trop$. In particular, it contains a unique
  point $\rho'$ mapping to $v$ under $\trop$.

  The chain $\mathscr{C}$ maps to $L$ under $\pi_{XY}$. By convexity,
  this chain contains a point $v'$ of $\Trop(I_{g,f})\cap
  \sigma_3^{\circ}$ that maps to $v$ under $\pi_{XY}$. Furthermore, we
  can choose $v'$ to be a vertex of $\Trop(I_{g,f})$ or a point in the
  interior of an edge whose direction is not $(0,0,-1)$. In both
  cases, $\pi_{XY}$ does not contract the star of $v'$ in
  $\Trop(I_{g,f})$. We define $\Gamma := \trop^{-1}(\sigma_3^{-1})$,
  so $\trop^{-1}(\mathscr{C})\subset \Gamma$.  This graph contains the
  preimage of $v'$. Since $\mult_{\Trop}(v')\geq 1$,
  \eqref{eq:tropMultVsRelMult} implies the existence of a point
  $\rho'$ in $\Gamma$ mapping to $v'$ but whose star is not contracted
  by $\trop$.

  By further refining the structure of $\Sigma(I_{g,f})$, we may
  assume $\rho$ and $\rho'$ are vertices of $\Sigma(I_{g,f})$, and
  hence of $\Gamma$. Diagram~\eqref{eq:contr/fold} guarantees that the
  stars of $\rho$ and $\rho'$ in $\Gamma$ are not contracted in
  $\Trop(g)$. Moreover, $\trop$ folds and/or contracts some edges of
  $\Gamma$: its image lies on $L$.
\end{proof}

\begin{proof}[Proof of Theorem~\ref{thm:fatEdge}] 
  For simplicity, we suppose that the dual cell to the vertical edge $e$ in
  the Newton subdivision of $g$ is the horizontal edge with endpoints
  $(i,j)$ and $(i+n,j)$. Without loss of generality, we further assume all
  coefficients of $g$ have non-negative valuation and
  $\val(c_{i,j})=\val(c_{i+n,j})=0$, so $e$ lies in the line
  $X=0$. We write $v=(0,B)$ and $v'=(0,B')$ with $B'< B$ for the trivalent endpoints of $e$.  Figure~\ref{fig:twoTriangles} depicts the dual cells to $e, v$ and $v'$.

  \begin{figure}[tb]
    \centering
\includegraphics[scale=0.17]{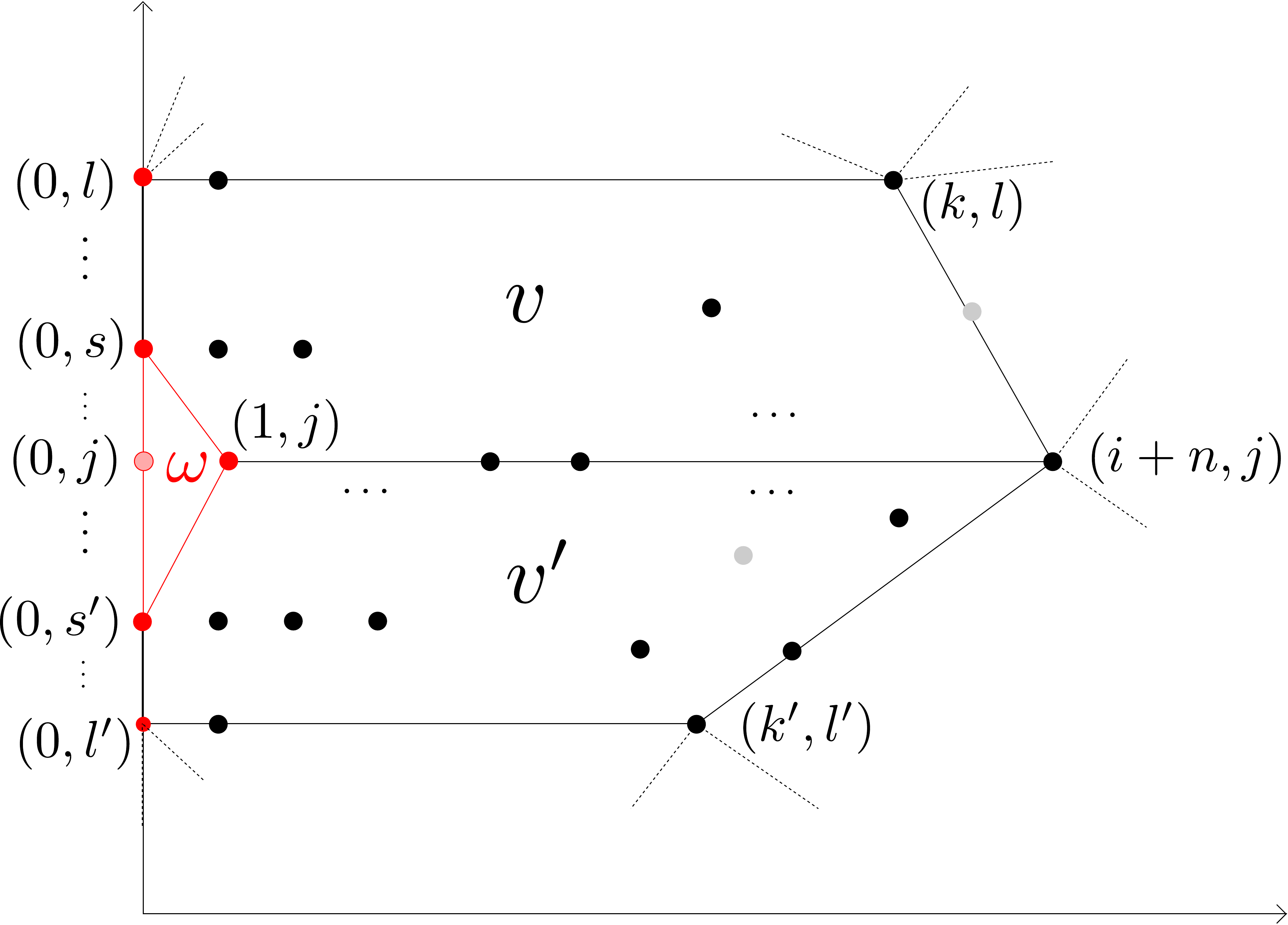}
\qquad
    \includegraphics[scale=0.17]{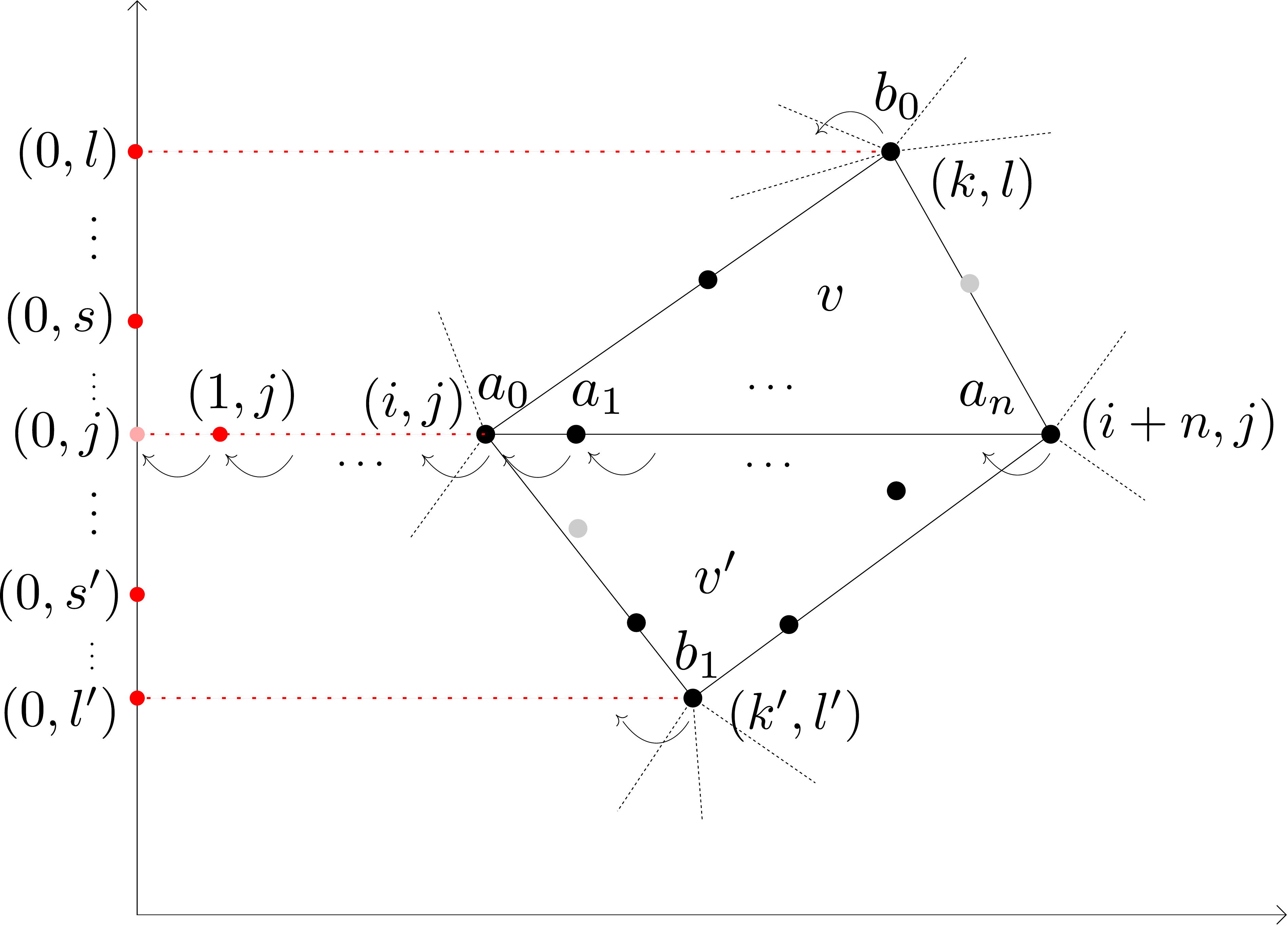}
    \caption{From right to left: The dual cells to a high-multiplicity
      vertical edge $e$ in $\Trop(g)$ and its 3-valent endpoints $v$ and
      $v'$ in the Newton subdivision of 
      $\tilde{g}=g(z-\AA,y)$ for a suitable $\AA$ with $\val(\AA)=0$ The arrows
      on the right indicate the direction of the feeding process from
      $g$ to $\tilde{g}$. The left picture shows the 2-cells dual to
      three vertices $\ww$, $v$ and $v'$ forming part of a cycle in
      $\Trop(\tilde{g})$.}
    \label{fig:twoTriangles}
  \end{figure}

  We let $a_0,\ldots, a_n$ be the initial terms of the coefficients
  of $x^iy^j,\ldots,x^{i+n}y^j$ in $g$, and $b_0, b_1\in \CC$ be the
  initial terms of the coefficients of $x^ky^{l}$ and $x^{k'}y^{l'}$
  in $g$, respectively. In particular, we know that
  $a_0,a_n,b_0,b_1\neq 0$. We define
\[h(x):=\sum_{k=0}^{n}a_k\,x^{k}=\init_e(g)/(x^iy^j) \in \CC[x].
\]
\indent  As in the proof of Lemma~\ref{lm:expectedHeights}, we work with a
  tropical modification of $\RR^2$ along $F=\max\{X,0\}$, and we let
  $\tilde{g}=g(z-\AA,y)$, where
  $\val(\AA)=0$. Write $\AA_0=\init_t(\AA)$. Expression~(\ref{eq:expValFeeding}) shows that all
  coefficients $\tilde{c}_{k,j}$ with $0\leq k\leq i+n$ have expected
  valuation 0. A similar direct calculation proves that the constant
  term of $\tilde{c}_{0,j}$ equals $(-\AA_0)^ih(-\AA_0)$, and the constant
  term of $\tilde{c}_{1,j}$ equals
\[
\init_t(\tilde{c}_{1,j})=\begin{cases}
  \sum_{k=1}^n a_{k}k(-\AA_0)^{k-1}=h'(-\AA_0)& \text{ if } i=0,\\
\sum_{k=0}^n a_k(k+i)(-\AA_0)^{k+i-1}= (-\AA_0\,h'(-\AA_0)+ i h(-\AA_0))(-\AA_0)^{i-1} &  \text{ otherwise}.
\end{cases}
\]
\indent Since $\Delta_e$ is the discriminant of the univariate polynomial $h$
of degree $n>1$, we know that $h(\AA_0)$ and $h'(\AA_0)$ have no
common solution in $\CC^*$. Thus, we can pick $\AA_0$ to be any of the
$n$ simple roots of $h(x)$ in $\CC^*$.  This choice ensures that the
valuation of $\tilde{c}_{0,j}$ is strictly positive while
$\val(\tilde{c}_{1,j})=0$. Therefore, the edge with endpoints $(1,j)$
and $(i+n,j)$ lies in the Newton subdivision of $\tilde{g}$ and at
height 0.  By construction, we have
$\val(\tilde{c}_{k,l})=\val({c}_{k,l})$ and
$\val(\tilde{c}_{k',l'})=\val({c}_{k',l'})$. Combining these facts
with the previous arguments ensures that $v$ and $v'$ are vertices of
$\Trop(\tilde{g})$.

The feeding process described above can also be applied to determine
the valuations of the coefficients $\tilde{c}_{0,l}$ and
$\tilde{c}_{0,l'}$. We argue for the point $(0,l)$.  By construction,
the highest point of the Newton subdivision of $g$ along the
horizontal line $Y=l$ is $(k,l)$.  As we illustrate with the red
dashed lines in Figure~\ref{fig:twoTriangles}, our choice of $\AA$
above and a direct calculation ensure that the height of the point
$(0,l)$ in the Newton subdivision of $\tilde{g}$ equals
$-\val(c_{k,l})$, as expected.  Similarly, the points $(0,j+1),\ldots,
(0,l-1)$ have expected height induced by the height function of the
dual cell $v^{\vee}$. Their actual heights can be lower. This ensures
that the points $(1,j), (i+n,j), (k,l)$, and $(0,l)$ are part of the
vertex set of a polygon in the Newton subdivision of $\tilde{g}$ dual
to $v$. This polygon contains at most one extra vertex $(0,s)$, with
$j+1\leq s \leq l$.

By symmetry between the endpoints $v$ and $v'$, we can also find
$l'\leq s'\leq j-1$ with the property that the polygon with vertices
$(0,s'),(0,l'), (k',l'), (i+n,j), (1,j)$ lies in the Newton
subdivision of $\tilde{g}$ and is dual to $v'\in \Trop(\tilde{g})$.
In addition, our choice of $s,s'$ ensures that the triangle with
vertices $(0,s),(0,s'),(1,j)$ is a polygon in the Newton subdivision
of $\tilde{g}$. It is dual to a vertex $\ww$ in $\Trop(\tilde{g})$. y
The points $\ww, v$ and $v'$ induce a cycle in $\Trop(\tilde{g})$, as
we see in the left of Figure~\ref{fig:twoTriangles}.

We now use Lemma~\ref{lem:ModifViaProjections} to deduce that the
linear re-embedding $\Trop(I_{g,x+\AA})$ unfolds edges mapping to $e$
by the projection $\pi_{XY}$, as in
Figure~\ref{fig:unfoldingDoubleEdge}.  By construction, we know that
$v=(0,B)$ and $v'=(0,B')$ are vertices of $\Trop(I_{g,x+\AA})$ and lie
in the line $\{X=Z=0\}$.  Furthermore, we can find $\varepsilon >0$ so
that both open segments $\{0\}\times (B,B+ \varepsilon)\times \{0\}$
and $\{0\}\times (B'-\varepsilon, B')\times \{0\}$ inside $\{X=Z=0\}$
do not meet $\Trop(I_{g,x+\AA})$. If this were not the case, the
intersection points would be part of the projection
$\pi_{XY}(\Trop(I_{g,x+\AA}))=\Trop(g)$.

We claim that the open segment $\{0\}\times (B',B)\times \{0\}$ lies
in $\Trop(I_{g,x+\AA})\cap \{X=Z=0\}$ and has multiplicity
$n-1>0$. From the projection
$\pi_{ZY}(\Trop(I_{g,x+\AA}))=\Trop(\tilde{g})$, we know that this
segment contains no vertex of $\Trop(I_{g,x+\AA})$. We certify our
claim by computing the star of $v$ in $\Trop(I_{g,x+\AA})$ on each
open cell $\sigmaint_i$, $i=1,2,3$ from Figure~\ref{fig:twoTriangles},
and using the balancing condition. On the cells $\sigma_1^{\circ} \cup
\sigma_2^{\circ}$, the star contains only two edges (with
multiplicity), with directions $(j-l,k-i,0)$ (in $\sigmaint_1$) and
$(l-j,i+n-k,l-j)$ (in $\sigmaint_2$). Similarly, the cell
$\sigma_3^{\circ}$ contains two edges of the star: their directions
are $(0,0,s-l)$ and $(0,-1,j-s)$ and. The last one is nothing but the
lifting of the edge $wv$ from $\Trop(\tilde{g})$. The union of these
four edges with multiplicities is not balanced at $v$. An edge with
direction $(0,-1,0)$ and multiplicity $n-1$ solves this issue.

By diagram~\eqref{eq:contr/fold} and
Lemma~\ref{lem:ModifViaProjections} we see that $ \trop\colon
\widehat{\Sigma}(I_{g,,x+\AA})\smallsetminus
D_{I_{g,,x+\AA}}\rightarrow \Trop(g)$ maps the bounded edges $wv$ and
$wv'$ in $\Trop(I_{g,x+\AA})\cap \sigma_3$ to the edge $e$ in
$\Trop(g)$ with relative multiplicity 1. The map keeps their images
disjoint away from the vertex $\ww$. Hence, the linear re-embedding
$I_{g,x+\AA}$ unfolds the corresponding edges, as desired.
\end{proof}

\begin{remark}\label{rem:fatEdgeConverse} Notice that the arguments in the proof
  of Theorem~\ref{thm:fatEdge} cannot be reversed. Indeed, pick any
  point $p$ in the relative interior of $e$. Then
  $\init_p(g)=\init_e(g)$ is supported on an edge of length $n>1$, so
  it is  a zero-dimensional scheme of length $n$. If
  $\Delta_e$ vanishes at $\init_w(g)$, then this scheme is non-reduced, and 
\eqref{eq:tropMultVsRelMult} provides no information to determine
  the  value of the
  relative multiplicities.
\end{remark}

\begin{remark}\label{rem:unfoldingAndTrivalentOutcome}
  From the proof of Theorem~\ref{thm:fatEdge} we see that the
  unfolding procedure improves the situation: the multiplicity of the
  vertical edge $e$ has decreased by 1 in $\Trop(I_{g,x+\AA})$.  In
  particular, if $n=2$, our method produces a cycle in
  $\Trop(I_{g,x+\AA})$ with vertices $v,v'$ and $w$ and multiplicity 1
  on all its edges.  Example~\ref{ex:unfoldingDoubleEdge} illustrates
  this phenomenon.

  When $n>2$ we would like to iterate this process and unfolds further
  the edge $e$ in $\Trop(I_{g,x+\AA})$ when
  $\Delta_e(\init_{e}(I_{g,x+\AA}))\neq 0$. For this, we require $v$
  and $v'$ to remain trivalent vertices in $\Trop(I_{g,x+\AA})$. This
  will indeed be the case when $s=l$, $s'=l'$ and $k=k'=0$ (see
  Figure~\ref{fig:twoTriangles}). This trivalent condition on $v$ and
  $v'$ need to be essential in concrete examples: the method will
  carry through whenever the special linear re-embedding induced by
  the iterated tropical modification returns a Newton subdivision as
  in the left of Figure~\ref{fig:twoTriangles}.
\end{remark}

\begin{example}\label{ex:unfoldingDoubleEdge} 
  We consider a plane elliptic cubic curve $C$ whose tropicalization
  contains a vertical double edge $e$ with trivalent endpoints in
  place of a cycle. It is given by the equation
\[
g(x,y)=t^3\,x^3+x^2y+t^3\,xy^2+t\,y^3+t^4\,x^2+(1+t^2)\,xy+t^2\,y^2+t^5\,x+(1+t)\, y+t.
\]
The tropical curve is depicted on the right of
Figure~\ref{fig:unfoldingDoubleEdge}. The dual edge to $e$ in the
Newton subdivision of $g$ has lattice length 2 and contains the
lattice points $(1,0)$, $(1,1)$ and $(1,2)$.  Its discriminant equals
$\Delta_e= c_{1,1}^2-4c_{1,2}c_{1,0}$, so $\Delta_e(\init_e(g))\neq
0$. We use Theorem~\ref{thm:fatEdge} and the vertical line $X=0$ to
unfold the double edge $e$ and produce a cycle in the re-embedded
tropical curve.  Notice that
$\val(c_{0,1})=\val(c_{1,1})=\val(c_{2,1})=0$. We pick a special
lifting $f=x-\AA$ with $\val(\AA)=0$ and $\AA_0:=\init_{t}(\AA)$,
satisfying
$\init_t(c_{0,1})-\AA_0\init_t(c_{1,1})+\AA_0^2\init_t(c_{2,1})=1-\AA_0+\AA_0^2=0$.
The function $f(x)=x-\frac{1+\sqrt{-3}}{2}$ produces the desired
unfolding as we see on the left side of Figure~\ref{fig:unfoldingDoubleEdge}. Remark~\ref{rem:UnfoldDoubleEdgeElliptic} will show that this
re-embedding induces an isometry between the cycle in 
 $\Trop(I_{g,f})$ and the circle corresponding to the minimal skeleton of the complete curve $\widehat{C}^{\an}$. 
 \begin{figure}[tb]
 \hspace{-2ex}   \begin{minipage}[l]{0.37
\linewidth}
     \includegraphics[scale=0.2]{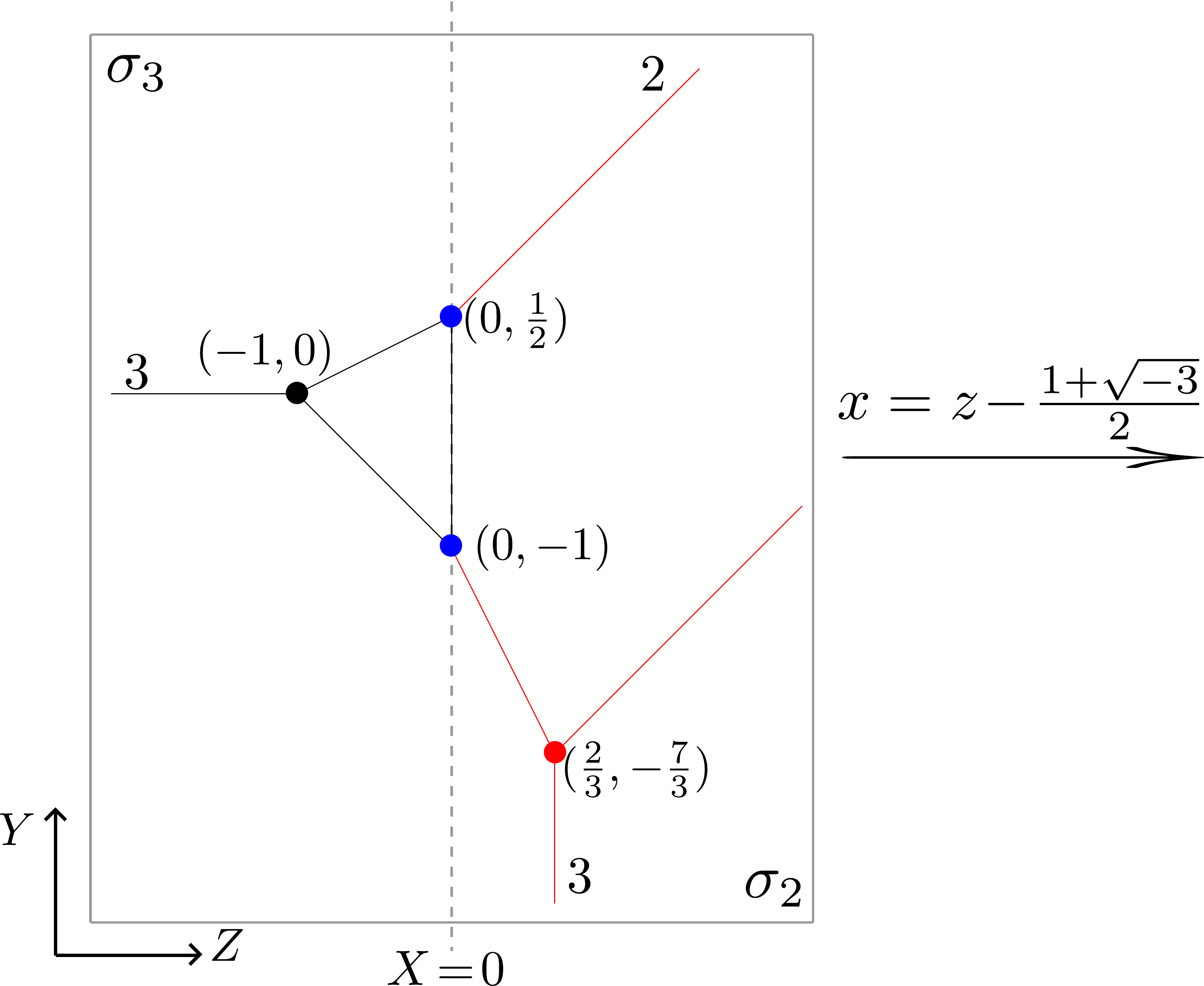}
   \end{minipage}
   \begin{minipage}[r]{0.36\linewidth}
   \includegraphics[scale=0.2]{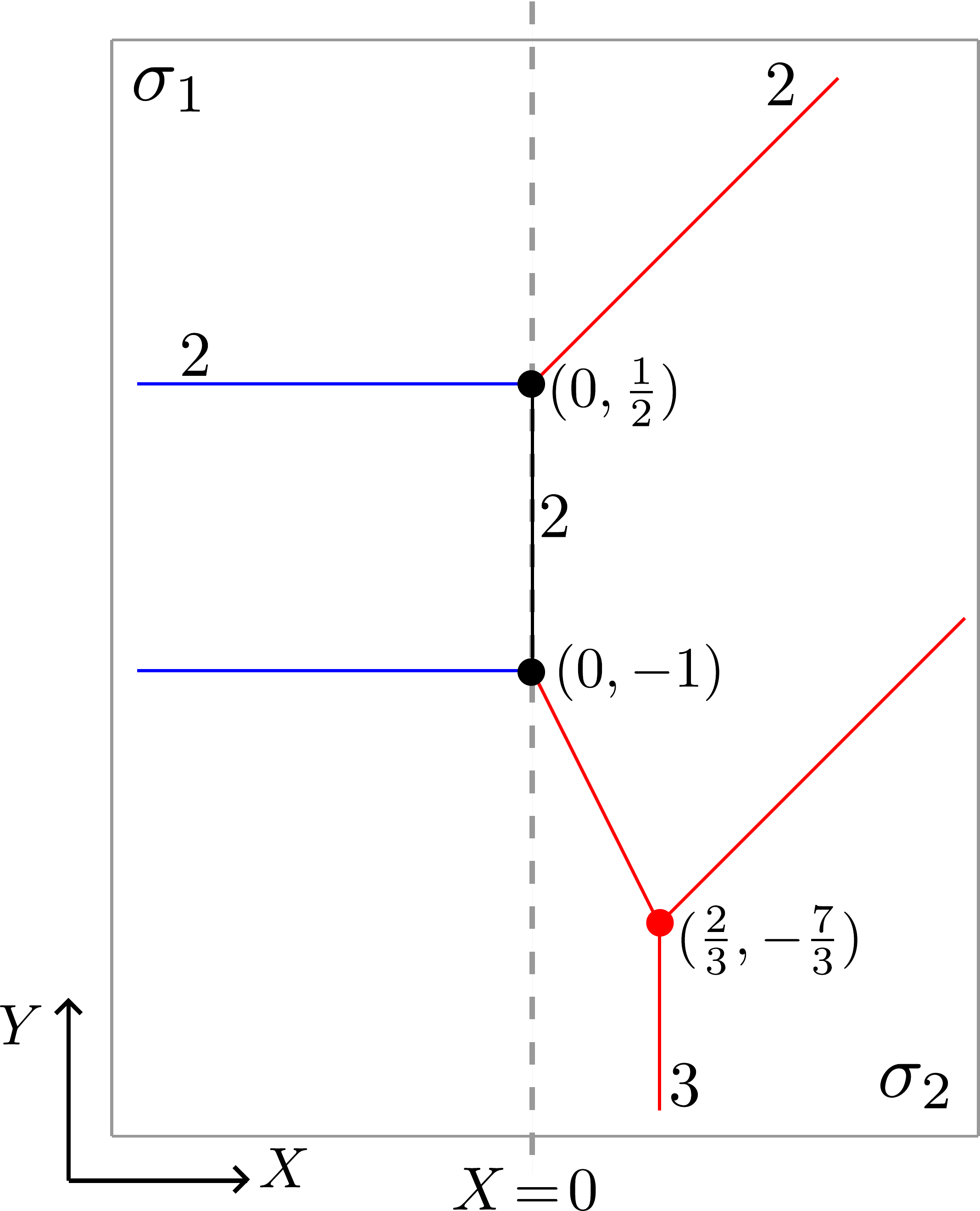}
 \end{minipage}
 \caption{From right to left: Using a linear re-embedding we unfold a
   double vertical edge on the right to produce a cycle on the left, as predicted by Theorem~\ref{thm:fatEdge}.}
\label{fig:unfoldingDoubleEdge}
 \end{figure}
\end{example}

\begin{example}\label{ex:64a} 
We consider the cubic curve  with defining equation
\[
g(x,y)=t^3\,x^3+t^5\,x^2y+t^3\,xy^2+t\,y^3+x^2+3\,xy+t^2\,y^2+(2+3t/2)x+(3+t^2)y+1.
\]
Its tropicalization is depicted in the right of
Figure~\ref{fig:64aTropical}. The 4-valent vertex $(0,0)$ lies in the cycle  in
$\Trop(g)$ and it is locally reducible. It is dual to a height 1 trapezoid and $\Delta_{(0,0)}$ 
vanishes at $\init_{(0,0)}(g)$.

Using Theorem~\ref{thm:redVertex} we unfold edges mapping to the
straight line through this reducible vertex.  We view the re-embedded
curve $\Trop(I_{g,f})$ using the projections $\pi_{XY}$ and $\pi_{ZY}$
in Figure~\ref{fig:64aTropical}, the leftmost being the
tropicalization of the plane curve $
g(z-1,y)$. 
\begin{figure}[htb]
  \centering
\includegraphics[scale=0.17]{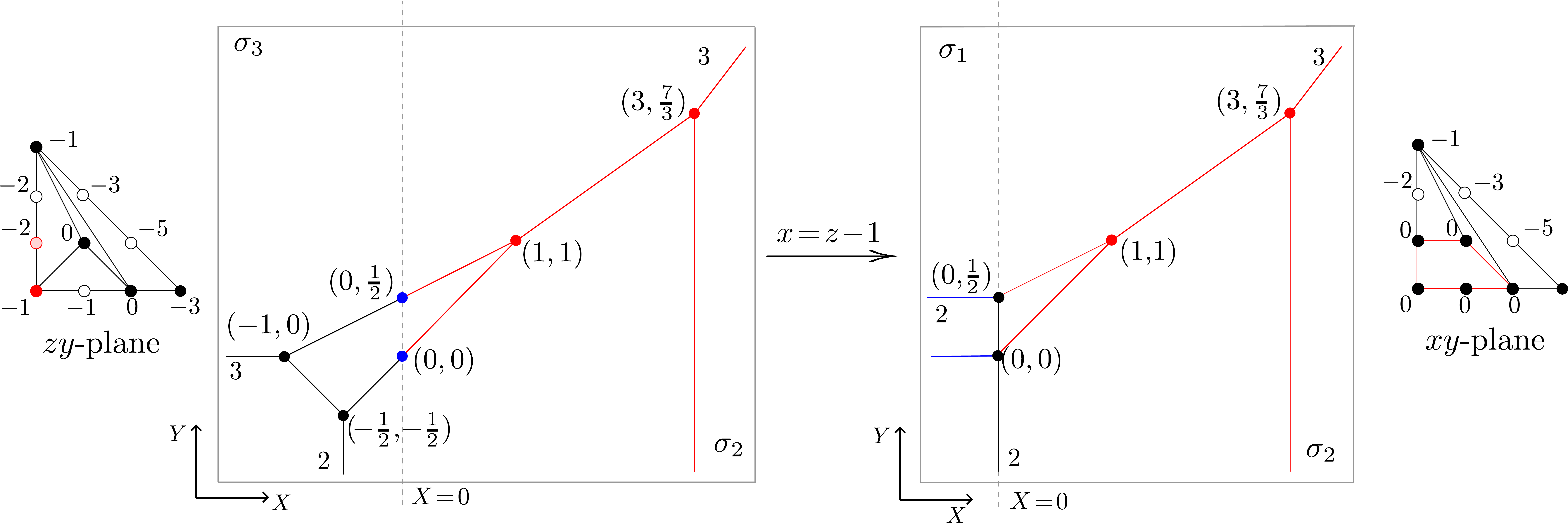}
\caption{From right to left: We use a linear re-embedding to
  unfold edges of a plane tropical curve with a
  $4$-valent reducible vertex traversed by a vertical line.  The
  corresponding image under $\pi_{ZY}$ has equation $\tilde{g}(z,y)=
  g(z-1,y)$. The endmost pictures correspond to the Newton
  subdivisions of $\tilde{g}$ and $g$, respectively.  The
    integer numbers adjacent to each lattice point indicate the
    negative valuation of the corresponding coefficients. The linear re-embedding lowers
    the red points.}
  \label{fig:64aTropical}
\end{figure}
\end{example}
\begin{example}\label{ex:non-ellipticDecontraction}
We consider a smooth degree $4$ plane curve of genus $3$  with defining equation:
\[
g=t^{13}x^4+(1+3t^4)\,x^2y^2+(1-2t^5)\,xy^3+t^{12}y^4+(1+2t^2)x^3+(1-t^3)\,x^2y+t\,xy^2+t^6\,x^2+t^3\,xy+t^{10}\,y+t^{14}.
\]
The induced Newton subdivision is depicted in the right of
Figure~\ref{fig:Mumford} (the missing height values can be taken to be
$-\infty$). The point $(0,0)$ is the unique locally reducible vertex
of the tropical curve $\Trop(g)$ in the right of
Figure~\ref{fig:Mumfordtrop}. Its star is the union of two skew lines:
$X-Y=0$ and $X-2Y=0$.  The local discriminant $\Delta_{(0,0)}$
vanishes at $\init_{(0,0)}(g)$. By Remark~\ref{rem:unimodular}, the
line $X-Y=0$ gives us two ways to locally repair the tropical curve
around $(0,0)$ by a linear re-embedding. We obtain two distinct yet
isometric tropical curves in $\RR^3$ that map to $\Trop(g)$ under the
projection $\pi_{XY}$.

First, we aim to apply the technique described in
Theorem~\ref{thm:redVertex}. In order to do so, we must perform a
unimodular transformation on the tropical curve to fall into our
standard trapezoid from Figure~\ref{fig:trapezoid}. Via a monomial
change of coordinates $\alpha$ and a translation, we make the vertex
$(0,0)$ dual to a unit square as in the right of
Figure~\ref{fig:Mumford}. The skew line $X-Y=0$ maps to the vertical
line $U=0$ in the $UV$-plane.  The corresponding linear tropical
modification induces a linear re-embedding by $z=f(u,v)=u+1$. In order
to see the effect of this change in the original coordinates $x$ and
$y$, we apply the inverse monomial map $\alpha^{-1}$ as in the left of
Figure~\ref{fig:Mumford}.  This procedure yields the desired
re-embedding by the ideal $I_{g,f\circ \alpha^{-1}}=\langle g,
z-(xy^{-1}-1)\rangle$. This result cannot solely be obtained with
linear tropical modifications.

\begin{figure}[htb]
\begin{minipage}[l]{0.5\linewidth}
   \includegraphics[scale=0.24]{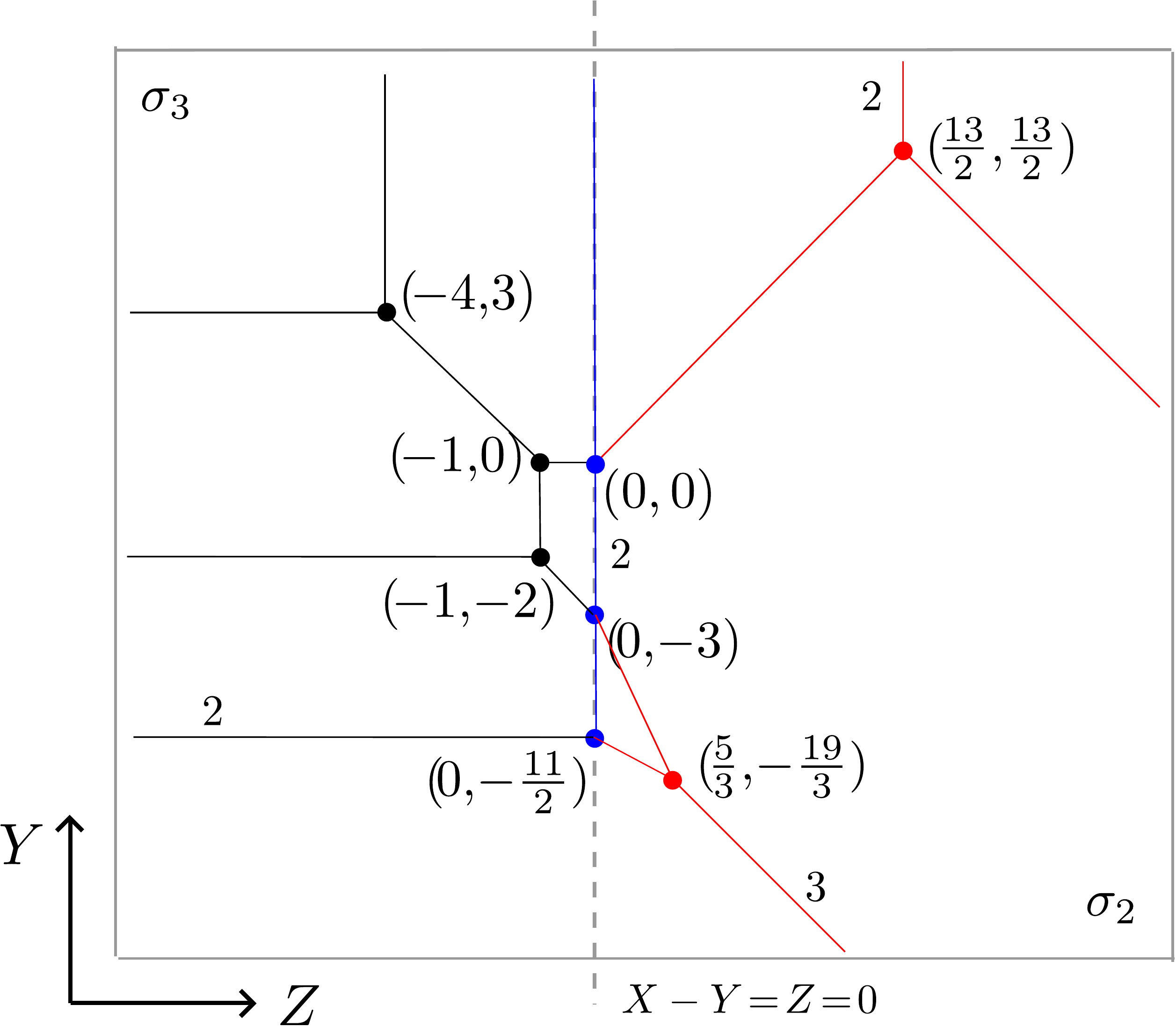}
\hfill
 \end{minipage}
 \begin{minipage}[l]{0.49\linewidth}
  \includegraphics[scale=0.24]{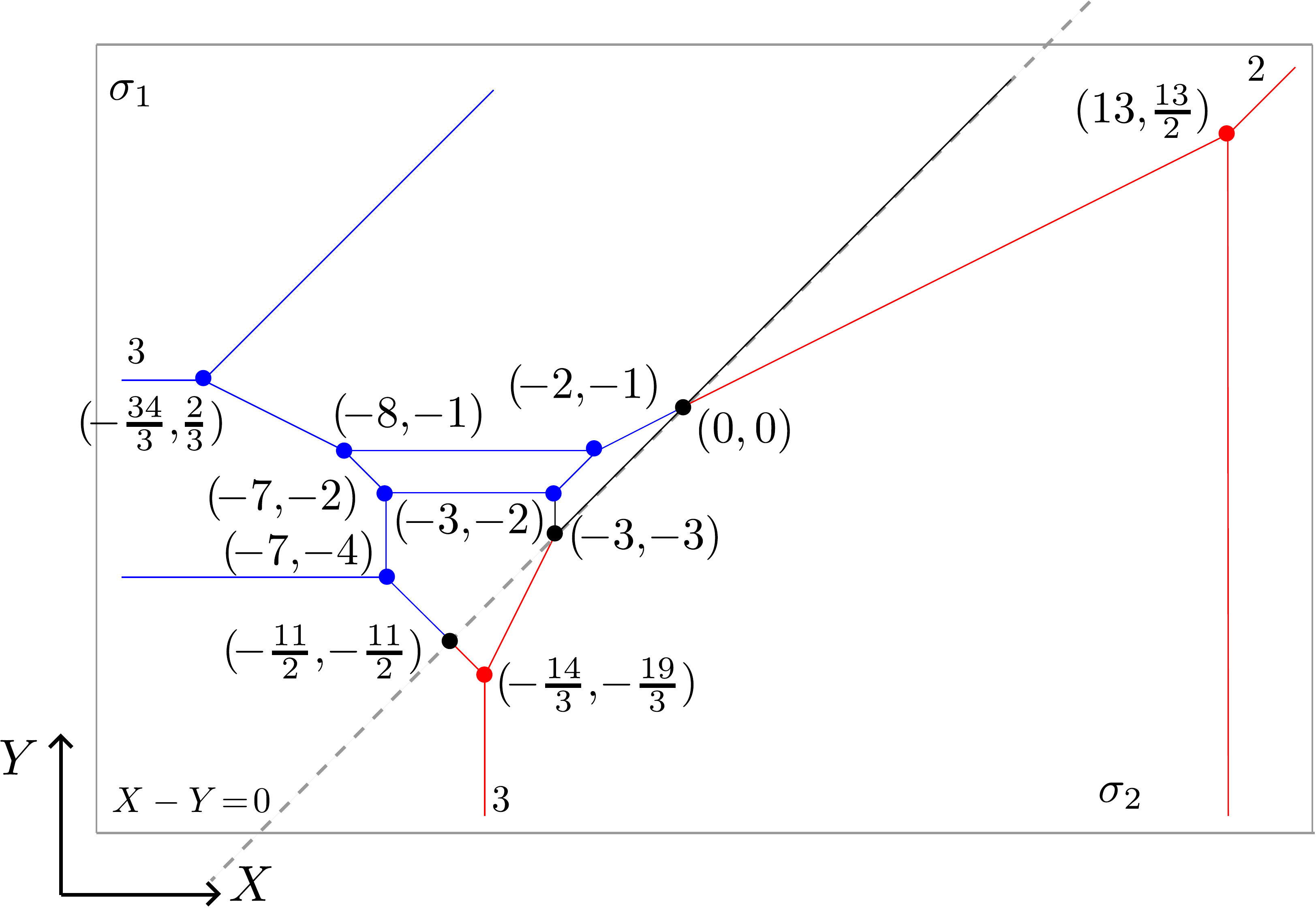}
\end{minipage}
\caption{From right to left: We conjugate a linear tropical
  modification by an affine transformation of $\RR^2$ to decontract an
  edge of a degree 4 plane tropical curve of genus 3 with a $4$-valent
  reducible vertex traversed by a skew line. The corresponding image
  under $\pi_{YZ}$ has equation $\tilde{g}(z,y)=g(y(z-1),y)$.}
  \label{fig:Mumfordtrop}
\end{figure}

The projection of $\Trop(I_{g,f\circ \alpha^{-1}})$ under $\pi_{ZY}$
is the curve $\Trop(\tilde{g})$ in the left of
Figure~\ref{fig:Mumfordtrop}, where $\tilde{g}(z,y)=g(y(z-1),y)$. We
can recover $\Trop(I_{g,f\circ \alpha^{-1}})$ by gluing
$\Trop(\tilde{g})$ and $\Trop(g)$ together with the following rules:
points on $\sigma_3$ and $\sigma_1$ are of the form $(Y,Y,Z)$ and
$(X,Y,0)$, respectively, and all points in $\sigma_2$ satisfy
$Z=X-Y$. The projection $\pi_{XY}$ contracts the edge from $(0,0,-1)$
to $(0,0,0)$ in the re-embedded tropical curve.

Notice that the tropical curve $\Trop(I_{g,f\circ \alpha^{-1}})$ has
three cycles containing only trivalent vertices and multiplicity one
edges.  By~\cite[Theorem 6.23]{BPR11}, the tropicalization map is
faithful on these three cycles. Since the original curve is smooth and
non-rational, we know that its completion admits a unique minimal
skeleton with three cycles (see
Section~\ref{sec:berk-skel-curv}). This skeleton is the complete graph
$K_4$ on 4 nodes and we can see an isometric copy of it in the
re-embedded tropical curve $\Trop(I_{g,f\circ\alpha^{-1}})$ via the
map $\trop$.

  \begin{figure}[htb]
    \centering
    \includegraphics[scale=0.28]{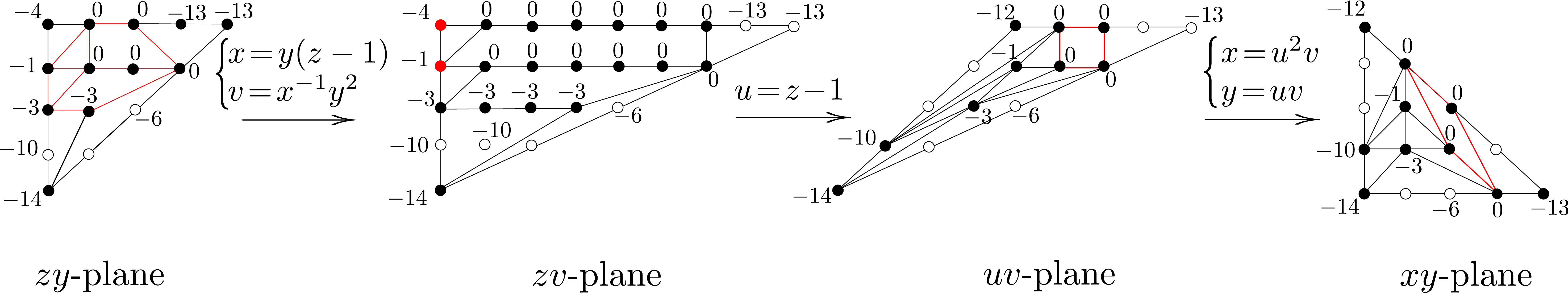}
    \caption{Decontraction of a locally reducible vertex in $\Trop(g)$
      contained in a skew line by conjugating  a linear tropical
      modification of $\RR^2$ (middle map) by monomial change of
      coordinates. The corresponding dual plane tropical curve to the
      endmost Newton subdivisions are depicted in
      Figure~\ref{fig:Mumfordtrop}.}
\label{fig:Mumford}
  \end{figure}
  Our second possibility for locally repairing $\Trop(g)$ around the
  vertex $(0,0)$ is by employing a single a linear tropical
  modification along the skew line $X=Y$ and using the special lifting
  $f(x,y)=x+y$. Figure~\ref{fig:NoMonomial} shows the image of
  $I_{g,x+y}$ under $\pi_{ZY}$ and the impact of this linear change of
  coordinates on the Newton subdivisions of $g(x,y)$ and
  $\tilde{g}(z,y)=g(z-y,y)$. By construction, the projection
  $\pi_{XY}$ sends a point $(x,y,z)$ in $\Trop(I_{g,x+y})\cap
  \sigmaint_3$ to $(y,y)$, so it contracts the edge with endpoints
  $(0,0,-1)$ and $(0,0,0)$, as well as the three ends with directions
  $(0,0,-1)$.
\begin{figure}
    \centering
    \begin{minipage}[l]{0.49\linewidth}
   \includegraphics[scale=0.24]{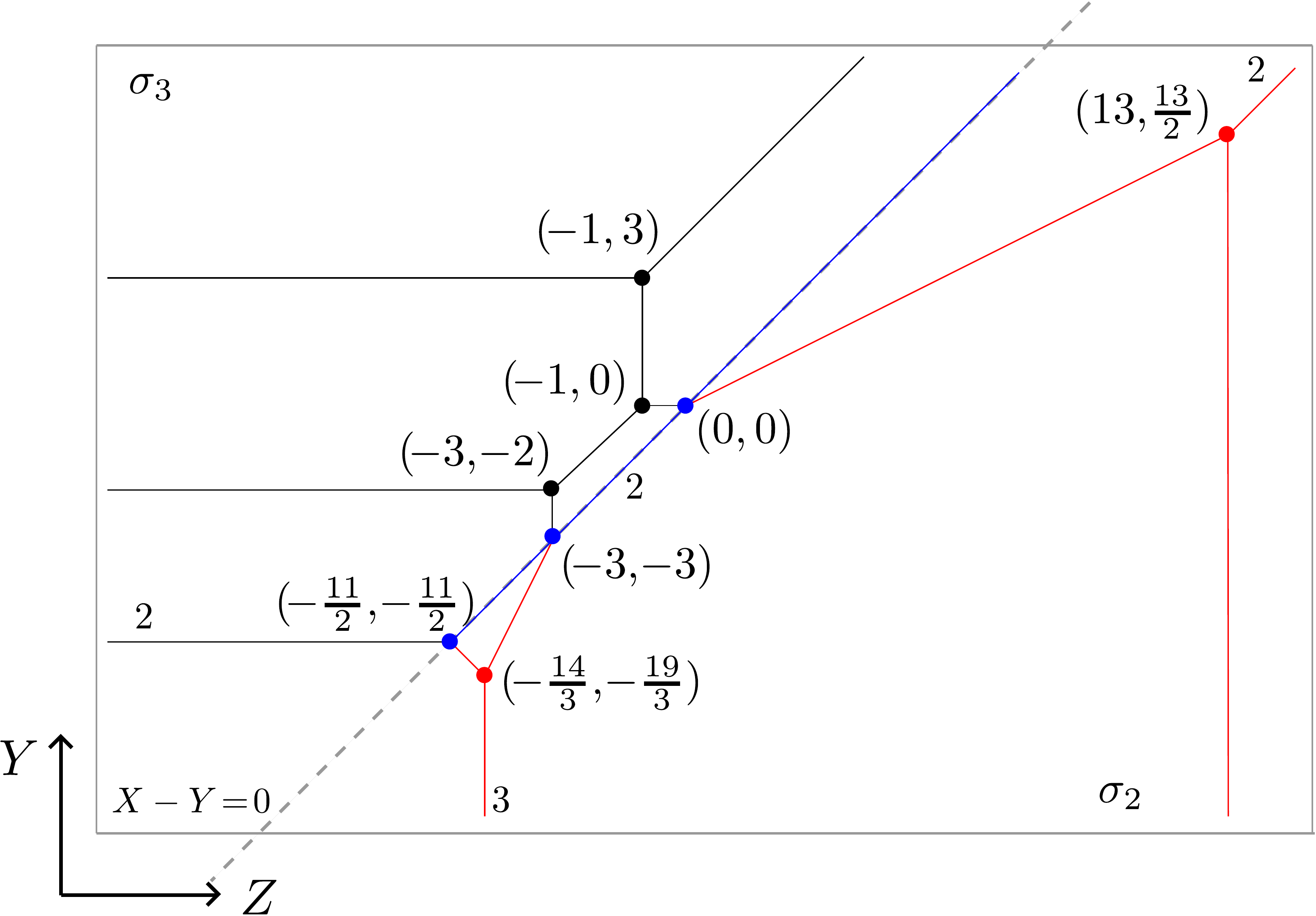}
\hfill
 \end{minipage}
    \begin{minipage}[l]{0.5\linewidth}
   \includegraphics[scale=0.32]{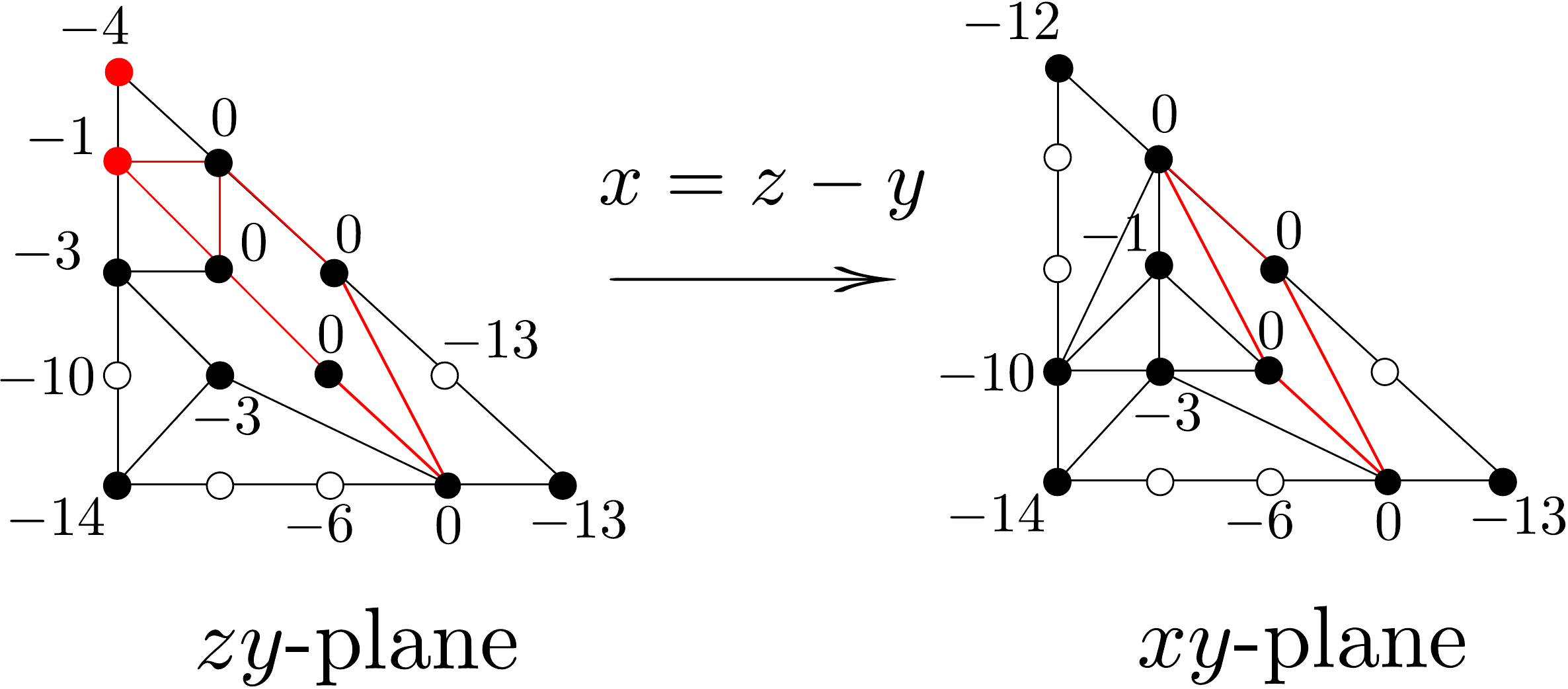}
    \end{minipage}
    \caption{From left to right: The output of a linear tropical
      modification along the skew line $X=Y$ and its effect on the Newton subdivisions of $g$ and $\tilde{g}$. The left most picture is the tropical curve $\Trop(g(z-y,y))$ and it should be compared with the leftmost tropical curve in Figure~\ref{fig:Mumford}.}
    \label{fig:NoMonomial}
  \end{figure}
\end{example}

\section{Tropical elliptic curves}\label{sec:trop-ellipt-curv}
\subsection{The $j$-invariant of a tropical elliptic curve}\label{ssec:known-results-about}
Throughout this section, we fix an equation $g\in \PS[x,y]$ defining
a plane elliptic  cubic, and we fix the valuation of its
coefficients. This data determines a unique tropical elliptic cubic
$\Trop(g)$.  This tropical curve contains a cycle only when the
interior point $(1,1)$ is visible in the Newton subdivision of $g$. It
is this case that interests us the most.  Our starting point is the
well-known formula to compute the $j$-invariant  from $g$:
\begin{equation}
  \label{eq:4}
  j(g)=\frac{A}{\Delta}\in \PS.
\end{equation}

We view $j(g)$ as a degree zero rational function in the coefficients of
$g$, defined over $\QQ$. The denominator
$\Delta$ is the discriminant of the cubic polynomial $g$.
The $j$-invariant has expected valuation
\[
\val_j(g)=\val(A)-\val(\Delta),
\] called the \emph{generic valuation} of the 
$j$-invariant.

In this situation, \cite[Theorem 11]{KMM07} ensures that $-\val_j(g)$
gives the cycle length of the tropical curve $\Trop(g)$.  Furthermore,
\cite[Lemma 23]{KMM07} shows that in this case, failure to have the
expected valuation of $j(g)$ is caused exclusively by an increment in
the valuation of $\Delta$. This means two things: first, the length of
the cycle in $\Trop(g)$ is bounded above by $-\val (j(g))$ and second,
the initial form in the $t$-expansion of $\Delta$ vanishes at $g$. In
the remainder of this section, we use these two facts to repair the
cycle of tropical plane elliptic cubic using linear re-embeddings.

\subsection{How to repair the cycle of a tropical plane elliptic  cubic}\label{sec:repair-elliptic}

The goal of this section is to proof the following theorem.  Its proof will yield a symbolic algorithm for repairing the
cycle of a tropical plane elliptic cubic in dimension 4 (see Algorithm~\ref{alg:repairElliptic}).
\begin{theorem}\label{thm:repairEll} Consider a  plane elliptic 
  cubic in $(\PS^*)^2$ defined by $g\in \PS[x_1,x_2]$. Assume
  $\val(j(g))<0$ and that $\Trop(g)$ contains a cycle whose length
  does not reflect the $j$-invariant. Then, we can recursively repair
  it with linear tropical modifications of the ambient space. The
  resulting ambient space has dimension at most 4.
\end{theorem}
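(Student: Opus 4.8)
The plan is to combine the two local repair results, Theorems~\ref{thm:redVertex} and~\ref{thm:fatEdge}, with the structure theory of tropical plane elliptic cubics from Section~\ref{ssec:known-results-about}. First I would recall that when $\Trop(g)$ contains a cycle, the interior lattice point $(1,1)$ is visible in the Newton subdivision, and by~\cite[Theorem 11]{KMM07} the cycle length equals $-\val_j(g)$ \emph{unless} the valuation of the discriminant $\Delta$ jumps; in the non-faithful case we are in, \cite[Lemma 23]{KMM07} tells us exactly that the initial form of $\Delta$ in its $t$-expansion vanishes at $g$. Applying Corollary~\ref{cor:nonHomeoCycle}, the cycle of $\Trop(g)$ must then contain a locally reducible vertex $v$ with $m_{\Trop}(v)=2$, whose dual cell in the Newton subdivision is, by the combinatorics of cubics (cf.~Figure~\ref{fig:shapesandfeeding}), a trapezoid of height $1$ with one base of lattice length $1$ --- equivalently, a high-multiplicity edge on the cycle arising as a degenerate limit of such a trapezoid. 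In either case the shape is (up to a unimodular transformation whose defining line has slope $0$ or $1$, so that by Remark~\ref{rem:unimodular} a genuine linear tropical modification suffices) exactly the configuration governed by Theorem~\ref{thm:redVertex} or, in the degenerate fat-edge case, by Theorem~\ref{thm:fatEdge}.

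Next I would run one step of the repair: choosing $\AA_0 \in \CC^*$ to be a common root of the two univariate boundary polynomials $h_1,h_2$ of the trapezoid (which exists precisely because the local discriminant vanishes, by Lemma~\ref{lm:trapezoidDiscriminant}) and re-embedding via $I_{g,f}$ with $f = x_i + \AA t^{-l}$ (or its slope-$1$ analogue). By Lemma~\ref{lm:VanishingDecontractsVertex} this decontracts or unfolds the offending edges, and the new vertex over $v$ has strictly smaller tropical multiplicity; combined with Lemma~\ref{lem:ModifViaProjections} one reads off that the repaired curve $\Trop(I_{g,f})$ again contains a cycle, now with the bad vertex either eliminated or replaced by one of lower multiplicity. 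One then iterates: after the re-embedding we are in $\RR^3$, the cycle sits in the interior of a top-dimensional cone of the tropical plane $\Trop(J)$, and the ``local'' versions of Theorems~\ref{thm:redVertex} and~\ref{thm:fatEdge} (valid for plane curves embedded linearly in $\RR^r$, as noted after Convention~\ref{genericityassumption}) apply verbatim, using Proposition~\ref{pr:projectionsJ} to transport the initial-ideal computation to the relevant $2$-cell. Each iteration strictly decreases a discrete invariant --- the sum of the excess tropical multiplicities of vertices on the cycle, or equivalently the gap $-\val_j(g) - (\text{current cycle length})$ --- so the process terminates, and at termination the cycle has length $-\val_j(g)$, i.e.\ it faithfully represents the minimal skeleton circle $\Sigma$ of $\widehat{C}^{\an}$.

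The remaining point is the dimension bound. I would argue that at most two modifications are ever needed: the cycle of a plane cubic has at most one locally reducible vertex (its dual trapezoid uses up the lattice points adjacent to the interior point $(1,1)$ in a way that leaves room for only one such cell, by the $m_{\Trop}(v)\le 2$ bound of Corollary~\ref{cor:nonHomeoCycle}), and a single modification either fully resolves it (when the trapezoid has height $1$ with both bases short, giving an honest cycle after one step) or converts it to a fat edge of multiplicity $2$ on the cycle, which a second modification --- now of the type in Theorem~\ref{thm:fatEdge} with $n=2$, by Remark~\ref{rem:unfoldingAndTrivalentOutcome} --- unfolds into a trivalent multiplicity-one cycle. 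Two linear tropical modifications take us from $\RR^2$ to $\RR^4$, giving the claimed bound. \textbf{The main obstacle} I anticipate is the bookkeeping that shows exactly \emph{two} steps always suffice and that the second step lands in the trivalent situation required by Theorem~\ref{thm:fatEdge}: one must track how the Newton subdivision of $\tilde g = g(z-\AA t^{-l}, y)$ evolves (using Lemmas~\ref{lm:expectedHeights} and~\ref{lm:VanishingDecontractsVertex} and Remark~\ref{rem:UnimodularTransformationsForEllipticCase}) and verify that the endpoints of the resulting fat edge are genuinely $3$-valent, so that the second modification is governed by a \emph{discriminant} rather than a multivariate resultant --- this is where the special combinatorics of cubic Newton polygons, rather than any general principle, does the work.
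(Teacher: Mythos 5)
Your outline of the first phase is essentially right: reduce to a locally reducible vertex with vanishing local discriminant via Corollary~\ref{cor:nonHomeoCycle} and~\cite[Lemma 23]{KMM07}, observe that the dual cell is one of the height-one trapezoids of Figure~\ref{fig:shapesandfeeding}, apply Lemma~\ref{lm:VanishingDecontractsVertex}, and use $q:=-\val_j(g)-\ell(g)\in\frac{1}{N}\ZZ_{\geq 0}$ as a terminating invariant. But your dimension argument has a genuine gap, and it is precisely where the real work of the paper's proof happens.

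First, the claim that ``the cycle of a plane cubic has at most one locally reducible vertex'' is false. The dilated $2$-simplex has three corners around the interior point $(1,1)$, and the Newton subdivision can contain two non-simplicial maximal cells with vanishing discriminant simultaneously; Remark~\ref{rem-v0v1v2} and Figure~\ref{fig:figuresAlgorithm} explicitly set up exactly two such vertices $v_1$ and $v_2$, traversed by a vertical and a horizontal line respectively, and the proof has to handle both. Second, and more seriously, even a \emph{single} problematic vertex $v_1$ is not cured by a single modification in general: after passing to $\tilde g = g(z-\AA,y)$, the new Newton subdivision can again have a locally reducible vertex with vanishing discriminant on the attached cell $\sigma_3$, so the process may require arbitrarily many modifications along parallel lines $\{X=A_1\}, \{X=A_2\}, \dots$ with $A_1>A_2>\cdots$ in $\frac{1}{N}\ZZ$. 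If each such step added a coordinate, the ambient dimension would be unbounded. The heart of the paper's proof is Lemma~\ref{lm:repairV_1InDim3}, which shows that this whole sequence of parallel modifications can be collapsed into a \emph{single} lifting function $f_{r+1} = x + \AA_1 + \AA_{11}t^{-A_1} + \cdots + \AA_{1r}t^{-A_r}$ (plus a translation $\psi$ in the other variable) by projecting away the redundant intermediate coordinates — Proposition~\ref{pr:projectionsJ} and Lemma~\ref{lem:ModifViaProjections} are used to verify the projection does not lose the cycle. One also needs Lemma~\ref{lem:atMostTwo} first, to normalize via affine changes of coordinates so that no skew line through a bad vertex remains and the cycle is not on the visible side of the relevant line. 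Only after this merging and the analogous treatment of $v_2$, followed by a final projection (see the discussion of $I\subset\PSN[x,y,z_1,z_2,z_3]$ and the pass to $\tilde I\subset\PSN[z_3,y,z_1,z_2]$ at the end of the proof), does the dimension bound of $4$ come out. Finally, Theorem~\ref{thm:fatEdge} does not enter the proof of Theorem~\ref{thm:repairEll} at all: it is relevant only when $\Trop(g)$ has a fat bounded edge \emph{instead of} a visible cycle (Remark~\ref{rem:UnfoldDoubleEdgeElliptic}), which is excluded by the hypothesis of this theorem.
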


As mentioned in the Introduction, our main tool to prove this result
will be a mild adaptation of Theorem~\ref{thm:redVertex}. A series of
lemmas simplifies the exposition. The heart of these technical
statements will allow us to bound the ambient dimension of the linear
re-embedding after suitable projections.

Our goal is to recursively unfold and decontract edges of the tropical
curve until we obtain the correct cycle length by a linear
re-embedding.  Since we aim at a recursive procedure, we abstain from
applying a monomial coordinate change to $g$ as in
Remark~\ref{rem:unimodular} to put any locally reducible vertex into
our standard trapezoid dual cell from
Figure~\ref{fig:trapezoid}. Instead, we use modification along
horizontal, vertical or slope 1 ``skew'' lines, thus ensuring that
each step of the linear re-embedding gives us back a plane cubic
equation $\tilde{g}$. We let $f$ be an algebraic lift of the tropical
polynomial defining the vertical, horizontal or skew line we use to
modify the plane. Notice that all trapezoids in
Figure~\ref{fig:trapezoid} have a basis of length one, hence the
$t$-initial coefficient of $\AA$ in the lifting $f$ producing unfold
or decontraction of edges (as in
Lemma~\ref{lm:VanishingDecontractsVertex}) will be unique.  When
considering the linearly re-embedded ideal $I_{g,f}$ and the
projections to different charts, we obtain coordinate changes of the
form $x\mapsto z-\AA$ for vertical lines, $y\mapsto z-\AA$ for
horizontal lines and $x\mapsto z-\AA y$ for skew lines, as indicated
in Figure~\ref{fig:shapesandfeeding}.

For the remaining of this section, we let $N$ be the common
denominator of all Puiseux series coefficients of $g$. Then the
coefficients of $g$ are Laurent series in $t^{1/N}$, and the
coordinates of vertices and edge lengths in $\Trop(g)$ are in
$\frac{1}{N}\ZZ$.  The proof of Theorem~\ref{thm:redVertex} shows that
we can always pick $\AA\in \CC(\!(t^{1/N})\!)$, thus the same holds
true for $\Trop(\tilde{g})$ and $\Trop(I_{g,f_1,\ldots, f_r})$.  Each
step of our recursion will increase the cycle length by a positive
number in $\frac{1}{N}\ZZ$.

\begin{figure}[tb]
\includegraphics[scale=0.1]{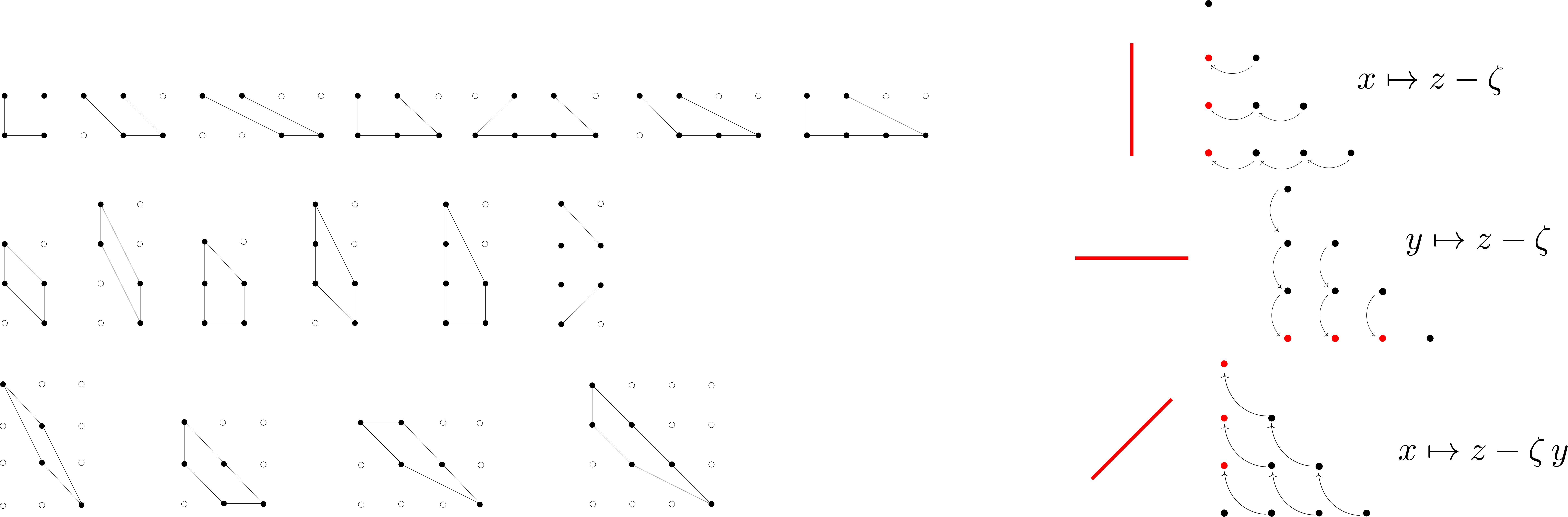}
\caption{From top to bottom: We depict all possible shapes (up to
  reflections) of dual cells to locally reducible vertices in the
  cycle of a tropical plane elliptic cubic. They are ordered according
  to the slope of the line passing through each vertex: the first row
  corresponds to the vertical lines, the second to the horizontal
  lines and the third to the skew lines $X=Y+l$ for $l\in \RR$. The
  remaining cases can be obtained by reflections along each modifying
  lines determined by each row, i.e. $x\mapsto -x$, $y\mapsto -y$ and
  $x\leftrightarrow y$, respectively.  The coordinate change needed to
  describe the linear re-embedding via projections as in Lemma
  ~\ref{lem:ModifViaProjections} and the feeding process in the
  Newton subdivision of $\tilde{g}$ are indicated on the right-hand
  side of the picture.}\label{fig:shapesandfeeding}
\end{figure}

In order to iteratively apply Theorem~\ref{thm:redVertex}, we need to
make sure that the requirements are satisfied at each step. We claim
that the cycle contains a locally reducible vertex with vanishing
local discriminant.  Indeed, since the cycle is shorter than
expected,~\cite[Theorem 11]{KMM07} ensures that the Newton subdivision
of $g$ cannot be a triangulation.  Figure~\ref{fig:shapesandfeeding}
shows all possible non-simplicial 2-cells in the Newton subdivision of
$g$, up to reflection.  All theses cells are equivalent to the
trapezoid in Figure~\ref{fig:trapezoid} by a unimodular
transformation, and $s=1$.  The corresponding dual vertices in
$\Trop(g)$ are locally reducible.  Corollaries~\ref{cor:nonHomeoCycle}
and~\ref{cor:highMultAndDiscrim} guarantee that the discriminant of
one of these locally reducible vertices vanishes. By
Remark~\ref{rem:UnimodularTransformationsForEllipticCase} the feeding
process for any of these locally reducible vertices is verbatim to the
one we considered in detail in
Theorem~\ref{thm:redVertex}. Furthermore, in the notation of
Lemma~\ref{lm:VanishingDecontractsVertex}, we conclude that $k'=1$ for
all such vertices.

The second hypothesis deals with the genericity
convention~\ref{genericityassumption}. In Section~\ref{sec:repa-trop}
we formulated a strong condition to simplify the notation and
arguments in the proof of Theorem~\ref{thm:redVertex}. However, when
restricted to the case of elliptic cubics with a visible cycle in
their tropicalizations, the arguments still carry along even when some
cancellations occur among the coefficients of $\tilde{g}$.

\begin{lemma}\label{lm:TooManyCancellations}
  Assume too many cancellations occur to prevent a decontraction
  or unfolding in the cycle of $\Trop(g)$. Then the tropical
  curve $\Trop(I_{g,f})$ breaks the visible cycle of $\Trop(g)$ and we
  conclude that $g$ defines a rational curve.
\end{lemma}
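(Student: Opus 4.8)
The plan is to exploit the fact that $\tilde g = g(z-\AA,y)$ (or the analogous substitution for horizontal/skew lines, as in Figure~\ref{fig:shapesandfeeding}) is, up to a monomial factor, still a plane cubic in the new coordinates, together with the description of $\Trop(I_{g,f})$ through the two projections $\pi_{XY}$ and $\pi_{ZY}$ from Lemma~\ref{lem:ModifViaProjections}. The phrase ``too many cancellations'' means precisely that the genericity Convention~\ref{genericityassumption} fails for $g$ at this step: some nontrivial $\CC$-linear combination of the Puiseux coefficients feeding into a coefficient $\tilde c_{0,j}$ or $\tilde c_{0,j+1}$ of $\tilde g$ does not merely lose its leading term (raising the valuation as in Lemma~\ref{lm:expectedHeights}) but vanishes identically, i.e.\ the valuation reaches $+\infty$.

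First I would set up the situation exactly as in the proof of Theorem~\ref{thm:redVertex} and Lemma~\ref{lm:VanishingDecontractsVertex}: normalize so that the locally reducible trapezoid $\cP$ sits at height $0$, $l=0$, $v=(0,0)$, and $\AA_0=\init_t(\AA)$ is the (unique, since $s=1$) common root of $h_1$ and $h_2$ guaranteed by Lemma~\ref{lm:trapezoidDiscriminant} and the vanishing of $\Delta_v$. Under Convention~\ref{genericityassumption} one gets the decontraction/unfolding of Lemma~\ref{lm:VanishingDecontractsVertex}; the hypothesis of the present lemma is that instead the cancellation is total, so that some coefficient of $\tilde g$ that controls the relevant edge of the Newton subdivision is genuinely $0$. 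The key observation is then that a total cancellation in the $y^{j}$-row or $y^{j+1}$-row of $\tilde g$ forces the Newton polygon of $\tilde g$ to shrink: the interior lattice point $(1,1)$ of the cubic triangle is no longer visible in the Newton subdivision of $\tilde g$, because the monomials that would produce a cell around it have been annihilated. By the standard dictionary between visibility of $(1,1)$ and the presence of a cycle (recalled at the start of Section~\ref{ssec:known-results-about}), $\Trop(\tilde g)$ has no cycle.

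Next I would transport this back to $\Trop(I_{g,f})$ via Lemma~\ref{lem:ModifViaProjections}: the cycle of $\Trop(g)$, which by construction passed through $v$ and lay (in part) on the line $\{X=Z=0\}$, is reconstructed from the connected components of $(\Trop(g)\times\RR)\cap(\RR\times\Trop(\tilde g))\cap\{X=Z=l\}$; since $\Trop(\tilde g)$ is now a tree-like cubic with no cycle, the gluing cannot reproduce the loop, so $\Trop(I_{g,f})$ has no cycle either — the linear re-embedding has \emph{broken} the visible cycle rather than repairing it. Finally, invoke the reverse direction of \cite[Theorem~11]{KMM07} / \cite[Lemma~23]{KMM07}: for a plane cubic, the tropicalization contains a cycle whenever the curve is elliptic (non-rational) and $(1,1)$ is visible; contrapositively, since $I_{g,f}$ is a linear re-embedding of the \emph{same} abstract curve and its tropicalization has lost the cycle, that curve cannot be a smooth genus-$1$ curve, hence $g$ defines a rational curve.

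The main obstacle I anticipate is the middle step: making rigorous that a total cancellation among the fed-in coefficients really does kill every $2$-cell surrounding $(1,1)$ in the Newton subdivision of $\tilde g$, rather than merely relocating the cycle elsewhere in $\Trop(\tilde g)$ or shifting which lattice points are visible. This requires a careful case analysis over the shapes in Figure~\ref{fig:shapesandfeeding} — using that in every such shape $s=1$, that $k'=1$, and that the feeding process is triangular in the relevant variable — to check that the only monomials capable of keeping $(1,1)$ visible are exactly those whose coefficients are forced to vanish under the total cancellation hypothesis. Once that combinatorial bookkeeping is done, the appeal to \cite{KMM07} to pass from ``no visible interior point'' to ``rational curve'' is immediate.
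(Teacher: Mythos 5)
Your overall plan matches the paper's proof up to the last step, but that last step — the passage from ``$\Trop(I_{g,f})$ has lost the cycle'' to ``$g$ is rational'' — is where you have a genuine gap. You invoke \cite[Theorem~11, Lemma~23]{KMM07}, claiming that for a plane cubic the tropicalization contains a cycle whenever the curve is elliptic and $(1,1)$ is visible, and then try to take the contrapositive. But ``$(1,1)$ visible in the Newton subdivision $\Leftrightarrow$ the tropical cubic has a cycle'' is a purely combinatorial duality, with no elliptic hypothesis involved; it tells you nothing about the genus of the curve. In fact, the implication you need — ``no tropical cycle $\Rightarrow$ rational'' — is simply false: the paper's own Example~\ref{ex:UnfoldImpossible} exhibits a smooth plane elliptic cubic with $\val(j(g)) = -10 < 0$ (bad reduction) whose tropicalization contains no cycle at all. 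So neither Theorem~11 nor Lemma~23 of \cite{KMM07}, nor their contrapositives, delivers the rationality conclusion, and the ``immediate'' appeal in your last sentence does not go through.

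What actually closes the argument in the paper is diagram~\eqref{eq:contr/fold}: under the standing assumption that $\Trop(g)$ carries a cycle, the case analysis you defer shows that blocking both decontraction and unfolding forces $\Trop(I_{g,f})\cap\sigma_3$ to be disconnected, so the cycle of $\Trop(g)$ is broken apart in the re-embedding; but if $g$ were elliptic with bad reduction, the circle in $\widehat\Sigma(g)\smallsetminus D_g = \widehat\Sigma(I_{g,f})\smallsetminus D_{I_{g,f}}$ would have to map, through $\Trop(I_{g,f})$ and then $\pi_{XY}$, onto the cycle of $\Trop(g)$ — which the disconnection makes impossible. Alternatively (and more elementarily, in the spirit of your Newton-polygon observation), the specific monomials killed in the two cases force $\tilde g$ to vanish to order $\geq 2$ at the origin: in one case the constant and both linear terms disappear, in the other all terms of total degree $< 3$ disappear. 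So the cubic $\tilde g$ (hence $g$, which differs by an affine coordinate change) has a singular point or is a cone, and therefore has geometric genus $0$ or is reducible — giving rationality directly. Either of these should replace your appeal to \cite{KMM07}. The other half of the proof — the case-by-case verification, over the shapes of Figure~\ref{fig:shapesandfeeding}, of exactly which coefficients must vanish to block both mechanisms — is the part you explicitly deferred, and it is where most of the paper's proof actually lives: for the vertical case it identifies the vanishing of $1,z,y$ (when $(1,1)=(1,j+1)$) or of all monomials of degree $\leq 2$ (when $(1,1)=(1,j)$).
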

\begin{example}\label{ex:unfoldcycle}
Consider the plane  cubic with defining equation:
\[
g=t^4\, x^2 y+5t^3 \,xy^2 +t^9\, y^3 +x^2 +3xy+t^2\, y^2 +2x+(3-t^4) y+1.
\] 
Its Newton subdivision is depicted on the right of
Figure~\ref{fig:unfoldcycle}.  We let $\cP$ be the trapezoid with
vertices $(0,0), (0,1), (1,1)$ and $(2,0)$.  We modify $\RR^2$ along a
vertical line passing through the locally reducible vertex dual to
$\cP$. By Lemma~\ref{lm:expectedHeights}, the coefficients
$\tilde{c}_{0,0}$ and $\tilde{c}_{0,1}$ in $\tilde{g}(z,y)=g(z-\AA,y)$
will have strictly positive (unexpected) valuation only when the initial term of
$\AA$ is $-1$.  A simple calculation reveals
\[
\tilde{g}=t^4\, z^2 y+5t^3 \,zy^2 +t^9\, y^3 +z^2 +(3-2t^4)zy+(t^2-5t^3) y^2 .
\] 
In particular, we obtain $\tilde{c}_{0,0}=\tilde{c}_{1,0}=0$.

\begin{figure}
\includegraphics[scale=0.18]{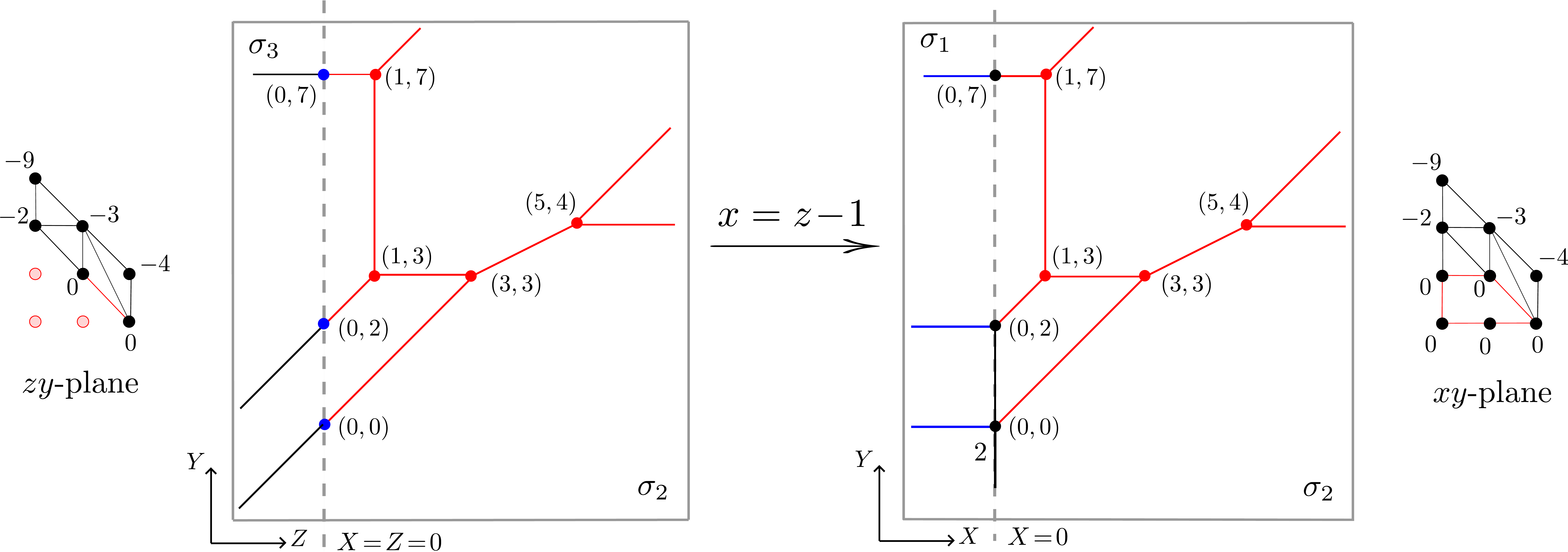}
 \caption{Unfolding a cycle of a rational cubic with a linear re-embedding whose corresponding coordinate change cancels too many monomials.}\label{fig:unfoldcycle}
\end{figure}

The projection $\pi_{ZY}(\Trop(I_{g,x-1}))=\Trop(\tilde{g})$ is shown
in the left of Figure~\ref{fig:unfoldcycle}. Using
Lemma~\ref{lem:ModifViaProjections} we conclude that
$\Trop(I_{g,x-1})\cap \{X=Z=0\}$ consists of three points: $(0,7,0),
(0,2,0)$ and $(0,0,0)$. Furthermore, $\Trop(I_{g,x-1})\cap
\sigmaint_3$ is the union of three ends, and thus $\Trop(I_{g,x-1})$
contains no cycle. The linear re-embedded curve is then rational and
this procedure unfolds the cycle in $\Trop(g)$.
 \end{example}

 \begin{proof}[Proof of Lemma~\ref{lm:TooManyCancellations}]
   We describe which cancellations can prevent a decontraction or
   unfolding of the cycle from $\Trop(g)$. Since the feeding process
   is symmetric for the three families of straight lines along which we
   modify, it is sufficient to prove our claim for a locally reducible
   vertex $v$ dual to the trapezoid $\cP$ from
   Figure~\ref{fig:trapezoid} and for tropical modifications along
   vertical lines.  We stick to the notation used in the proof of
   Lemma~\ref{lm:VanishingDecontractsVertex}. Henceforth, the interior
   point $(1,1)$ is a vertex of the Newton subdivision of $\tilde{g}$
   and it corresponds to the one of the two points $(1,j)$ or
   $(1,j+1)$ in the previous notation.

   Let us first assume it equals $(1,j+1)$. In this case, $\cP$ has
   top vertices $(0,1)$ and $(1,1)$. In the proof of
   Lemma~\ref{lm:VanishingDecontractsVertex}, we argued that the
   Newton subdivision of $\tilde{g}$ contains either the edge with
   vertices $(1,1)$ and $(1,0)$ (responsible for a decontraction of a
   bounded edge), or that the subdivision contains an edge connecting
   $(1,1)$ to $(0,k)$ for $k\leq 1$ (responsible for an
   unfolding). Notice that if certain monomials from $\tilde{g}$
   vanish, the previous two cells need not be in the subdivision.  A
   case-by-case analysis when $g$ is a cubic shows that no unfolding
   or decontraction of edges occurs if and only if none the three
   monomials $1$, $z$ and $y$ are present in $\tilde{g}$. But in this
   case, $\Trop(I_{g,f})\cap \sigma_3$ would not be connected, and we
   would unfold the cycle of $\Trop(g)$ in the linear re-embedding
   $\Trop(I_{g,f})$. By diagram~\eqref{eq:contr/fold}, this
   contradicts our hypothesis that $g$ is an elliptic cubic with bad
   reduction.

   Similarly, if $(1,j)=(1,1)$ we know that $\cP$ has top vertices
   $(1,1)$ and $(2,1)$. The only situation in which a violation of the
   genericity assumption prevents an unfolding or decontraction of
   bounded edges is when all monomials $z^iy^j$ in $\tilde{g}$ with
   $0\leq i+j\leq 2$ cancel completely. This would again lead to an
   unfolding of the cycle, so $g$ cannot be an elliptic cubic.
\end{proof}
\smallskip The following definition allows us to simplify our
repairing techniques:
\begin{definition}\label{def:visibleSide}
  Assume the tropical plane elliptic cubic $\Trop(g)$ contains a cycle
  and a locally reducible vertex $v$ with vanishing discriminant, and
  let $L$ be a straight line in $\Star_{\Trop(g)}(v)$. If $L$ is
  vertical (resp., horizontal) with equation $\{X=l\}$ (resp., $\{Y=l\}$), we say that
  the cycle of $\Trop(g)$ is \emph{on the visible side} of $L$ if it
  lies on the halfspace $L^+:=\{X\geq l\}$ (resp., $L^+:=\{Y\geq l\}$).
\end{definition}
By convexity, we know that the cycle lies entirely on one of the
halfspaces induced by $L$. The definition is motivated by
Lemma~\ref{lem:ModifViaProjections}: under the given conditions, the
cycle of $\Trop(I_{g,f})$ is visible in $\Trop(\tilde{g})$, and its
length is strictly larger than the length of the cycle of
$\Trop(g)$. The equation $\tilde{g}$ gives a planar linear
re-embedding of the elliptic cubic that improves the embedding
induced by $g$.

\begin{remark}\label{rem:skewsAreSpecial}
  Notice that if $L$ is the skew line $L:=\{Y=X+l\}$, we can always
  exchange $Y$ and $X$ so that the cycle of $\Trop(g)$ lies in
  $L^+:=\{Y\geq X+l\}$. In that case, the cycle of $\Trop(I_{g,f})$ is
  visible in $\Trop(\tilde{g}(x,z))$ and we improve the embedding of
  the curve by replacing $g$ with $\tilde{g}(x,z)$. In this sense,
  skew lines are special: the cycle of $\Trop(g)$ is always on the
  visible side of $L$.
\end{remark}

Our next result shows that the combinatorics of the
cycle in $\Trop(g)$ can be simplified by means of affine changes of coordinates 
constructed from tropical modifications of $\RR^2$ along
straight lines.
\begin{lemma} \label{lem:atMostTwo} Consider a plane elliptic cubic in
  $(\PS^*)^2$ with bad reduction defined by $g\in
  \PSN[x_1,x_2]$. Assume that $\Trop(g)$ contains a cycle. Then, there
  exists an affine change of coordinates $A\colon \PSN[z_1,z_2]\to
  \PSN[x_1,x_2]$ such that $\Trop(g\circ A)$ contains a cycle and the
  following conditions hold:
  \begin{enumerate}[(i)]
\item the length of the cycle of $\Trop(g\circ A)$ is bounded below by the length of the cycle of $\Trop(g)$;
\item if $v$ is a locally reducible vertex on the cycle of
  $\Trop(g\circ A)$ with vanishing discriminant, then no skew lines
  traverses $v$ and no horizontal or vertical line through $v$
  contains the cycle of $\Trop(g\circ A)$ on its    {visible side}.
  \end{enumerate}
The affine map  $A$ is constructed by means of tropical modifications
of $\RR^2$ along straight lines.
\end{lemma}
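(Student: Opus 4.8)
The plan is to establish the lemma by running a finite iteration: at each step we perform one tropical modification of $\RR^2$ along a vertical, horizontal, or skew line through a \emph{problematic} vertex of the current cycle (i.e.\ a locally reducible vertex $v$ with $\Delta_v(\init_v g)=0$), replace $g$ by the $\pi_{ZY}$-projection of the re-embedded cubic, and stop exactly when condition (ii) holds. First I would fix terminology: by the classification of non-simplicial cells in the Newton subdivision of a plane cubic recorded in Figure~\ref{fig:shapesandfeeding}, the star of every locally reducible vertex on the cycle contains a unique straight line which, up to the affine reflections $x\mapsto -x$, $y\mapsto -y$, $x\leftrightarrow y$, is vertical, horizontal, or the skew line of slope $1$, and in every case $s=1$ in the notation of Figure~\ref{fig:trapezoid}. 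In particular every problematic vertex is of the type governed by Theorem~\ref{thm:redVertex} and Remark~\ref{rem:UnimodularTransformationsForEllipticCase}.

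For the iteration step, suppose the current cubic $g$ has a problematic vertex $v$ on its cycle violating (ii). If a skew line traverses $v$, then after possibly applying $x\mapsto -x$ (turning slope $-1$ into slope $1$) and $x\leftrightarrow y$ (which by Remark~\ref{rem:skewsAreSpecial} places the cycle on the visible side), I would modify along that skew line; otherwise the violation comes from a vertical or horizontal line through $v$ with the cycle on its visible side, and I would modify along that line. By Lemma~\ref{lem:ModifViaProjections} and the coordinate changes of Figure~\ref{fig:shapesandfeeding}, the $\pi_{ZY}$-projection of $I_{g,f}$ is again a plane cubic $g'=g\circ A'$, where $A'$ is an affine substitution of the form $x\mapsto z-\AA$, $y\mapsto z-\AA$, or $x\mapsto z-\AA y$ (pre-composed with affine reflections), with $\AA\in\PSN$ whose $t$-initial term is the unique common root of the two univariate polynomials $h_1$ and $h_2$ supplied by Lemma~\ref{lm:trapezoidDiscriminant} and the vanishing of $\Delta_v$. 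By Lemma~\ref{lm:VanishingDecontractsVertex} (vertical/horizontal case, using the symmetry between $x$ and $y$) and Remark~\ref{rem:UnimodularTransformationsForEllipticCase} (skew case), together with the discussion following Definition~\ref{def:visibleSide}, the cycle of $\Trop(g')$ has strictly larger lattice length than the cycle of $\Trop(g)$, and by the paragraph preceding Figure~\ref{fig:shapesandfeeding} the increase lies in $\tfrac1N\ZZ_{>0}$. Should a modification fail to produce a decontraction or an unfolding because of cancellations among the coefficients of $g'$, Lemma~\ref{lm:TooManyCancellations} would force $g'$, hence the original curve, to be rational, contradicting bad reduction; so the cycle is never destroyed.

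For termination and the conclusion, note that $g'=g\circ A'$ defines the same elliptic curve as $g$ (they differ by a linear change of coordinates of $\pr^2$), so $\val(j(g'))=\val(j(g))<0$, and by \cite[Theorem~11, Lemma~23]{KMM07} the cycle length of any tropicalization of this curve is at most $-\val(j(g))$. Thus the cycle length is a number in $\tfrac1N\ZZ$ that strictly increases at each step while staying bounded above, so the iteration halts after finitely many steps; letting $A$ be the composite of all the affine substitutions $A'$ used, we obtain an affine change of coordinates built from tropical modifications along straight lines with $\Trop(g\circ A)$ still containing a cycle of length at least that of $\Trop(g)$ — this is (i). By construction the iteration only stops when no problematic vertex on the cycle of $\Trop(g\circ A)$ is traversed by a skew line and none is traversed by a horizontal or vertical line carrying the cycle on its visible side, which is exactly (ii). The step I expect to be the real obstacle is confirming that each admissible modification genuinely lengthens the cycle of the projected cubic: this means working through the feeding process of Figure~\ref{fig:shapesandfeeding} case by case, invoking the decontraction/unfolding analysis of Lemma~\ref{lm:VanishingDecontractsVertex} and its skew variant, and checking that in the elliptic setting the exceptional cancellation regime of Lemma~\ref{lm:TooManyCancellations} occurs only when the curve is rational. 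The remaining bookkeeping — affineness of the substitutions, their definition over $\PSN$, the reflections reducing slope $-1$ and the non-visible-side skew case to the standard situation, and the invariance of $-\val(j)$ — is routine once this core step is secured.
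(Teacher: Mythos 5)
Your proposal is correct and follows essentially the same approach as the paper. You package the argument as a terminating iteration with cycle length strictly increasing in $\tfrac{1}{N}\ZZ$ and bounded above by $-\val(j(g))$, whereas the paper phrases the identical argument as a strong induction on $q := -\val(j(g)) - \ell(g) \in \tfrac{1}{N}\ZZ_{\geq 0}$; both use the same case split (skew line first, then horizontal/vertical with the cycle on the visible side), the same projections from Lemma~\ref{lem:ModifViaProjections} and Figure~\ref{fig:shapesandfeeding}, and the same decontraction/unfolding input from Lemma~\ref{lm:VanishingDecontractsVertex}, Remark~\ref{rem:UnimodularTransformationsForEllipticCase} and Lemma~\ref{lm:TooManyCancellations}.
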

\begin{proof}
  We proceed by induction on $q=-\val(j(g))-\ell(g)\in
  \frac{1}{N}\ZZ_{\geq 0}$, where $\ell(g)$ denotes the length of the
  cycle in $\Trop(g)$.  If $q=0$, the cycle in $\Trop(g)$ contains no
  locally reducible vertex with vanishing discriminant by
  Corollary~\ref{cor:highMultAndDiscrim}. We take
  $A:=\id_{\PSN[x_1,x_2]}$.

  Assume $q>0$ and that the result is true for all $r<q$ with $r\in
  \frac{1}{N}\ZZ_{\geq 0}$.  Since $q>0$ we know that $\Trop(g)$ does
  not reflect the $j$-invariant of $g$, and so the cycle of $g$
  contains a locally reducible vertex. If $g$ satisfies \emph{(ii)},
  we take $A=\id_{\PSN[x_1,x_2]}$. If condition \emph{(ii)} fails,
  there are two reasons for this. We analyze each case separately.

  First, assume the cycle of $\Trop(g)$ contains a locally reducible
  vertex $v$ traversed by a skew line $L:=\{Y=X+A\}$. By symmetry we
  may assume that the cycle of $\Trop(g)$ lies in $L^+$. Let $f=y+\AA
  t^{-A} x$ be as in Lemma~\ref{lm:VanishingDecontractsVertex} and
  Remark~\ref{rem:UnimodularTransformationsForEllipticCase} with
  $\val(\AA)=0$, and write $\tilde{g}(z,y):=g(\AA^{-1}t^{A}(z-y),
  y)$. Remark~\ref{rem:skewsAreSpecial} ensures that the cycle of
  $\Trop(I_{g,f})$ is visible in $\Trop(\tilde{g})$. By construction,
  $\val(j(g))=\val(j(\tilde{g}))$ and $\ell(g)<\ell(\tilde{g})\in
  \frac{1}{N}\ZZ$ thus $r:=-\val(j(\tilde{g}))-\ell(\tilde{g})<q$ and
  $r\in \frac{1}{N}\ZZ$.  By the inductive hypothesis, we can find an
  affine transformation $A'\colon \PS[z_1,z_2]\to \PS[z,y]$ satisfying
  conditions \emph{(i)} and \emph{(ii)} for $\tilde{g}$. The map $A=
  (\AA^{-1}t^{A}(z-y), y) \circ A'
  $ verifies the desired requirements.

  Finally, suppose no skew line traverses any locally reducible vertex
  in $\Trop(g)$ with vanishing discriminant, but the cycle of
  $\Trop(g)$ is on the visible side of a horizontal or vertical line
  $L$ through one of these vertices. By symmetry, we can assume
  $L:=\{X=A\}$.  Let $f=x+\AA t^{-A}$ be as in
  Lemma~\ref{lm:VanishingDecontractsVertex} with $\AA\in
  \CC(\!(t^{1/N})\!)$ and $\val(\AA)=0$. Write $\tilde{g}(z,y)=g(z-\AA
  t^{-A},y)$. As in the previous case, our choice ensures that
  $\val(j(g))=\val(j(\tilde{g}))$ and
  $r:=-\val(j(\tilde{g}))-\ell(\tilde{g})<q$ with $r\in
  \frac{1}{N}\ZZ_{\geq 0}$. We use the inductive hypothesis to
  construct an affine map $A'$ for $\tilde{g}$. The map
  $A=(z-\AA\,t^{-A},y)\circ A'$ satisfies both conditions in the
  statement.
\end{proof}

\begin{remark}\label{rem-v0v1v2}
  Assume the cubic polynomial $g$ satisfies the conditions of
  Lemma~\ref{lem:atMostTwo}. A simple calculation shows that the cycle
  in $\Trop(g)$ admits only seven possible shapes for locally
  reducible vertices and their dual cells in the Newton subdivision of
  $g$. They are depicted in Figure~\ref{fig:shapesandfeeding}. We
  conclude that at most two of these vertices will have vanishing
  discriminants. We call them $v_1$ and $v_2$. Furthermore, we assume
  that $v_1$ is traversed by the horizontal line $L_1:=\{X=0\}$, and
  $v_2$ (if it exists) is traversed by the vertical line
  $L_2:=\{Y=a\}$ with $a>0$. The cycle of $\Trop(g)$ is not on the
  visible side of $L_1$ nor $L_2$.

  \begin{figure}[tb]
    \centering
    \includegraphics[scale=0.1]{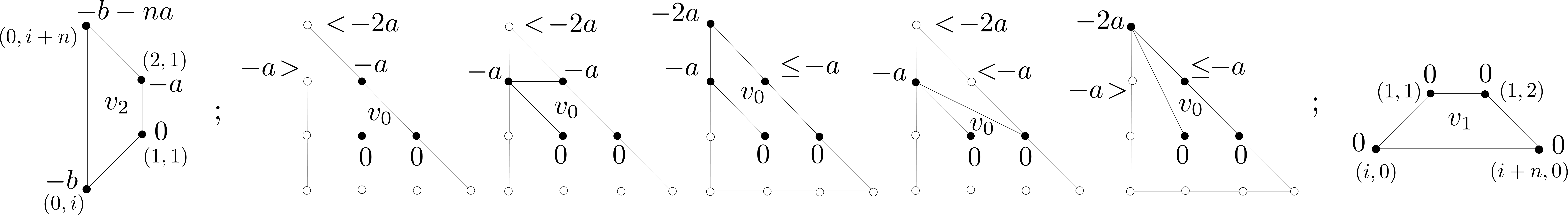}
    \caption{Combinatorics and heights of the dual cells to the
      distinguished vertices $v_2,v_0$ and $v_1$. Here, $i=0,1$ and we
      disallow the combination $i=n=1$.}
    \label{fig:figuresAlgorithm}
  \end{figure}

  Let $v_0$ be the vertex in the cycle of $\Trop(g)$ adjacent to $v_1$
  and lying on $L_1$. Figure~\ref{fig:figuresAlgorithm} shows the
  combinatorics of the distinguished dual cells
  $v_0^{\vee},v_1^{\vee}$ and $v_2^{\vee}$ in the Newton subdivision
  of $g$ and the corresponding heights, where $a,b>0$.  Notice that
  condition \emph{(ii)} from Lemma~\ref{lem:atMostTwo} ensures that
  $i=0,1$ and for both $v_1^{\vee}$ and $v_2^{\vee}$. We disallow the
  combination $i=n=1$. Furthermore, the cycle of $\Trop(g)$ lies on
  the halfspace $\{Y \geq 0\}$.

\end{remark}

Our proof strategy for Theorem~\ref{thm:repairEll} will consist on
repairing the vertices $v_1$ and $v_2$ separately. The following
result allows us to repair the cycle of the curve locally around $v_1$
by a linear re-embedding in dimension 3. We will use a variant of this
result when dealing with $v_2$.
\begin{lemma}\label{lm:repairV_1InDim3}
  Let $g\in \PSN[x,y]$ define a plane elliptic cubic with bad
  reduction, satisfying the conditions of Lemma~\ref{lem:atMostTwo}
  for $\varphi=\id_{\PSN[x,y]}$. Assume $\Trop$ contains a locally
  reducible vertex $v_1$ with vanishing discriminant, traversed by the
  line $L:=\{X=0\}$. Then, there exists an affine map $\psi\colon
  \PSN[x,y]\to \PSN[x,y]$ with $\psi(x)=x$, $\psi(y)=y+\alpha$ and
  $\val(\alpha)\geq 0$, and a polynomial $f_{r+1}:=x+\AA_1 + \AA_{11}
  t^{-A_1}+\ldots + \AA_{1r}t^{-A_r}\in \PSN[x]$ with $\val(\AA_1)=\val(\AA_{11})=\ldots =\val(A_{1r})=0$ 
  and  $0>A_1>\ldots>A_r$
  satisfying the following conditions:
\begin{enumerate}[(1)]
\item $I_{g\circ \psi, f_{r+1}}\subset \PSN[x,y,z_{1r}]$ is a
  linear re-embedding of $g\circ \psi$ constructed from a linear
  tropical modification of $\RR^2$ adapted to $g\circ \psi$ as in
  Lemma~\ref{lm:VanishingDecontractsVertex};
\item the weighted sets $\Trop(g)\cap \sigma_1^{\circ}\cap\{Y\geq 0\}$ and $\Trop(g\circ \psi)\cap \sigma_1^{\circ}\cap \{Y\geq 0\}$ agree;
\item $I_{g\circ \psi,f_{r+1}}$ contains no locally reducible
  vertex with vanishing discriminant in $\sigma_3$.
\end{enumerate}
Furthermore, assume $g$ contains a second locally reducible vertex
$v_2$ with vanishing discriminant, traversed by the $L_2:=\{Y=a\}$
with $a>0$. Then, the cycle of $\Trop(I_{g\circ \psi,f_{r+1}})$ lies
on the halfspace $L_2^-:=\{Y\leq a\}$ and it meets the hyperplane
$(Y=a)$ only along the edge joining $v_0$ and $v_2$ in $\sigma_1$.
\end{lemma}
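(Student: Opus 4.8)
The plan is to realize $f_{r+1}$ as the algebraic lift of a \emph{single} linear tropical modification of $\RR^2$ along the vertical line $L=\{X=0\}$ through $v_1$, but with a carefully chosen multi-term lift $f_{r+1}(x)=x+C$ where $C=\AA_1+\AA_{11}t^{-A_1}+\dots+\AA_{1r}t^{-A_r}\in\PSN$ satisfies $\val(C)=0$ and whose successive terms I would pin down recursively. As a preliminary step, replace $g$ by $g\circ\psi$ for a suitably generic translation $\psi(y)=y+\alpha$ with $\val(\alpha)\ge0$; a valuation estimate shows that $\Trop(g)$ and $\Trop(g\circ\psi)$ agree over $\sigmaint_1\cap\{Y\ge0\}$, which is condition~(2), while the genericity of $\alpha$ will be used below to avoid the unexpected cancellations that could stall the recursion.

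I would then determine $C$ term by term. By Remark~\ref{rem-v0v1v2} and Figure~\ref{fig:figuresAlgorithm}, the dual cell $v_1^{\vee}$ is a trapezoid with a basis of lattice length $1$ traversed by $L$, and $\Delta_{v_1}$ vanishes at $\init_{v_1}(g)$, so Lemma~\ref{lm:VanishingDecontractsVertex} applies: $\init_t(\AA_1)$ is forced to be the unique common root in $\CC^{*}$ of the two basis polynomials $h_1,h_2$ (Lemma~\ref{lm:trapezoidDiscriminant}), and with this choice the re-embedding decontracts/unfolds the edges of $\widehat{\Sigma}(g)\smallsetminus D_g$ over $\Trop(g)\cap L$, keeps $v_1$ a vertex of strictly smaller multiplicity, and prolongs the cycle — visible in $\Trop(g(z-\AA_1,y))$ by Lemma~\ref{lem:ModifViaProjections} — past $X=0$ to a vertical line $\{Z=A_1\}$ with $A_1<0$ (the cycle lying on the non-visible side of $L$ by Lemma~\ref{lem:atMostTwo}(ii)). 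If the prolonged cycle still fails to reflect $\val(j)=\val(j(g))$, then Corollaries~\ref{cor:nonHomeoCycle} and~\ref{cor:highMultAndDiscrim}, together with the feeding combinatorics of the vertical dual cells of Figure~\ref{fig:shapesandfeeding}, exhibit a new locally reducible vertex on $\{Z=A_1\}$, again dual to a length-$1$-basis trapezoid with the cycle on its non-visible side; the next term $\AA_{11}t^{-A_1}$ of $C$ is then forced by Lemma~\ref{lm:VanishingDecontractsVertex}, and one continues.

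Each stage strictly increases the length of the re-embedded cycle by a positive element of $\frac{1}{N}\ZZ$ while that length stays bounded above by $-\val(j(g))$ (\cite[Theorem~11]{KMM07}); hence the recursion terminates after finitely many, say $r$, steps with $0>A_1>\dots>A_r$, and no locally reducible vertex with vanishing discriminant then remains in $\sigma_3$, which is condition~(3). The composite of the substitutions $z^{(k-1)}\mapsto z^{(k)}-\AA_{1k}t^{-A_k}$ is precisely $x\mapsto z-C$, so $f_{r+1}=x+C$ and $I_{g\circ\psi,f_{r+1}}$ is a linear re-embedding of the type described in Lemma~\ref{lm:VanishingDecontractsVertex}, which is condition~(1). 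For the last assertion, each feeding step $x\mapsto z-\AA t^{-A}$ moves monomials only in the $x$/$z$-direction and never alters a $y$-degree (Figure~\ref{fig:shapesandfeeding}), so no part of the cycle can be pushed above $Y=a$; since the cycle of $\Trop(g)$ already lies in $L_2^-=\{Y\le a\}$ and meets $\{Y=a\}$ only along the edge $v_0v_2$, which lies in $\sigma_1$, projects identically under $\pi_{XY}$, and is untouched by the modification, the cycle of $\Trop(I_{g\circ\psi,f_{r+1}})$ stays in $\{Y\le a\}$ and meets $\{Y=a\}$ exactly along that edge.

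The hard part will be making the recursion well defined: proving that at each stage the newly exposed leftmost locally reducible vertex is again dual to a trapezoid with a length-$1$ basis traversed by a vertical line, with the cycle on its non-visible side, so that Lemma~\ref{lm:VanishingDecontractsVertex} keeps applying and no horizontal or skew modification — which neither $f_{r+1}$ nor $\psi$ can encode — is ever required. I expect this to follow from a finite case analysis of how $x\mapsto z-\AA t^{-A}$ acts on the possible dual-cell shapes of Figures~\ref{fig:shapesandfeeding} and~\ref{fig:figuresAlgorithm}, combined with the verification that one generic choice of $\psi$ with $\val(\alpha)\ge0$ suffices to rule out all the intermediate degeneracies at once while preserving conditions~(2) and~(3).
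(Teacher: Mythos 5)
There is a genuine gap in your argument, and it lies exactly in the place you flag as ``the hard part.'' You assert that $\psi$ can only be a single, generically chosen pre-translation of $y$, and that consequently the recursion must produce only vertical locally reducible vertices; you then plan to verify this by a case analysis you have not done. But the claim is false as stated, and the paper's proof does not need it. After the first modification $x\mapsto z_1-\AA_1$, the new locally reducible vertex $v_{11}\subset\sigma_3^{\circ}$ with vanishing discriminant can be traversed either by a vertical line $\{Z_1=A_1\}$ \emph{or} by a horizontal line $\{Y=B_1\}$ with $B_1\le 0$; both possibilities genuinely occur, and the paper handles them separately. In the vertical case the new lift is absorbed into $f$, giving the telescoping $f_2=x+\AA_1+\AA_{11}t^{-A_1}$ exactly as you describe. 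In the horizontal case, however, the new lift $y+\AA_{11}t^{-B_1}$ is absorbed into $\psi$: one sets $\psi_1(x)=x$, $\psi_1(y)=y+\AA_{11}t^{-B_1}$, keeps $f_2=f_1$, and continues the recursion on $g\circ\psi_1$. Thus $\psi$ is \emph{not} a generic translation chosen at the outset; it is a specific composite of $y$-translations built step by step, each one dictated by a forced horizontal modification, and its role is to keep the ambient dimension at $3$ by eliminating one intermediate coordinate at a time via Proposition~\ref{pr:projectionsJ}.

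This also means your parenthetical ``which neither $f_{r+1}$ nor $\psi$ can encode'' is incorrect: $\psi(y)=y+\alpha$ is precisely the coordinate change induced by a horizontal tropical modification (it is the $y$-analogue of $x\mapsto z-\AA t^{-l}$ from Lemma~\ref{lem:ModifViaProjections}), and the proof exploits this. Likewise, a single generic choice of $\alpha$ cannot be expected to pre-empt all the horizontal modifications the recursion may demand, because the heights $B_1>B_2>\dots$ at which those modifications occur are determined only \emph{during} the recursion, and the corresponding initial coefficients are forced (by Lemma~\ref{lm:VanishingDecontractsVertex} and the length-one basis of the trapezoid) rather than chosen generically. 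Condition~(2) is then a consequence of the fact that each forced $y$-translation has $\val\ge0$ and so does not disturb $\Trop(g)$ on $\sigma_1^{\circ}\cap\{Y\ge 0\}$; it is not, as you treat it, a constraint under which one picks $\alpha$. What you do have right is the termination argument (each step increases the cycle length by a positive element of $\tfrac1N\ZZ$, bounded above by $-\val(j(g))$) and the observation that all feeding is in the $x$/$z$-direction once $v_1$ is being treated, which underlies the final claim about the cycle staying in $\{Y\le a\}$; but the core recursive construction of $(\psi,f_{r+1})$ is missing the entire horizontal branch.
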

\begin{proof}   
  We pick a suitable function $f_1:=x+\AA_1$ where $\val(\AA_1)=0$ 
as
  in Lemma~\ref{lm:VanishingDecontractsVertex}. We set
  $\tilde{g}(z_1,y)=g(z_1-\AA,y)\in \PSN$.  The induced linear
  re-embedding $I_{g,f_1}$ decontract/unfolds edges of
  $\widehat{\Sigma}(g)\smallsetminus D_g$ mapping to $L_1$ in
  $\Trop(g)$. In particular, $0<\ell(I_{g,f_1})-\ell(g)\in
  \frac{1}{N}\ZZ$.  By Lemma~\ref{lem:ModifViaProjections}, we know
  that the cycle of $I_{g,f_1}$ lies in the cells $\sigma_1\cup
  \sigma_3$. The point $v_0$ is a vertex of $\Trop(\tilde{g})$ and its
  dual cell has four possible shapes, namely all polygons in the
  center of Figure~\ref{fig:figuresAlgorithm} except for the triangle
  with vertices $(1,1), (1,2)$ and $(2,1)$. 

  Notice that $v_1$ is also a vertex of $\Trop(\tilde{g})$. It can be
  trivalent or locally reducible, with or without vanishing
  discriminant. Lemma~\ref{lm:VanishingDecontractsVertex} ensures that
  when viewed in $\Trop(I_{g,f_1})$, the multiplicity of $v_1\times
  \{0\}$ is 1, even though its multiplicity on $\Trop(\tilde{g})$ can
  be 2. Similarly, the vertex $v_0\times \{0\}$ in
  $\Trop(I_{g,f_{r+1}})$ has multiplicity 1.

  By construction, the curve $\Trop(\tilde{g})$ contains at most one
  locally reducible vertex $v_{11}$ with vanishing discriminant in the
  cell $\sigma_3^{\circ}$. Furthermore, such vertex is traversed by a
  horizontal or vertical line $L_{11}$, with equations $Z_1=A_1$ or
  $Y=B_1$, and $A_1,B_1 \in \frac{1}{N}\ZZ$ satisfy $A_1<0$ and
  $B_1\leq 0$. In both cases, the cycle of $\Trop(I_{g,f})$ lies on
  the hyperplane $L_{11}^+$.

  We construct both elements $\psi$ and $f_{r+1}$ in a recursive
  fashion, by performing tropical modifications of $\RR^2_{Z_1,Y}$
  along $r$ horizontal and $s$ vertical lines of the form $Z_1=A_i$ or
  $Y=B_i$ for suitable $A_1>A_2>\ldots>A_r$ in $\frac{1}{N}\ZZ$ and
  $B_1>B_2>\ldots>B_s$.  We proceed by induction on
  $q:=-\val(j(g))-\ell(I_{g,f_1}) \in \frac{1}{N}\ZZ_{\geq 0}$.
  Notice that $0\leq q<-\val(j(g))$.  If $q=0$, \cite[Theorem
  11]{KMM07} ensures that the vertex $v_{11}\in \sigma_3^{\circ}$ does
  not exist. Our statement is satisfied by the choice
  $\psi=\id_{\PSN[x,y]}$ and $r=0$.

  Next, we assume the statement holds for all $0\leq k<q$ in
  $\frac{1}{N}\ZZ$ and that the problematic vertex $v_{11}$ does
  indeed exist. We use $L_{11}$ and
  Lemma~\ref{lm:VanishingDecontractsVertex} to repair the embedding
  around $v_{11}$, by working with the plane with coordinates
  $(Z_1,Y)$. In principle, this produces a linear re-embedding
  $I_{\tilde{g}, f_{11}}\subset \PSN[z_1,y,z_{11}]$ of the curve
  $\tilde{g}$, and in turn, a new linear re-embedding
  $I_{g,f_1,f_{11}}:=\langle g,z_1-f_1,z_{11}-f_{11}\rangle \subset
  \PSN[x,y,z_1,z_{11}]$ of the input curve $g$. We claim that we can
  simplify the situation by a linear projection, and produce a linear
  re-embedding of $g$ in dimension $3$ that preserves the cycle of
  $\Trop(I_{g,f_1,f_{11}})$. The projection depends on the nature of
  $L_{11}$.

  If $L_{11}:=\{Z_1=A_1\}$, our lifting function has the form
  $f_{11}:=z_1+\AA_{11}t^{-A_1}$ with $\val(\AA_{11})=0$.
  We collect $f_1$ and $f_{11}$, and define
  \[f_2:=x+\AA_1+\AA_{11}t^{-A_1}\quad\text{ and }\quad
  \tilde{g}'(z_{11},y):=\tilde{g}(z_{11}-f_{11},y)= g(z_{11}-\AA_1-
  \AA_{11}t^{-A_1},y).\] The function $z_{11}-f_2$ belongs to the ideal
  $J:=\langle z_1-f_1,z_{11}-f_{11}\rangle$. 

  The given two generators of $J$ form a tropical basis. The tropical
  plane $\Trop(J)$ contains $2$ edges and 5 two-dimensional cells
  $\sigma_{12},\sigma_{22}, \sigma_{32}, \sigma_{31}$ and $\sigma_{33}$.
  Each $\sigma_{ij}$ is obtained by intersecting the 2-cell $\sigma_i$
  corresponding to $\Trop(z_1-f_1)$ and the 2-cell $\sigma_j$ of
  $\Trop(z_{11}-f_{11})$ (each viewed in $\RR^4$). For example,
  $\sigma_{12}:=\{X\leq 0,Z_{11}=Z_{1}=0\}$.

  By Proposition~\ref{pr:projectionsJ}, we recover
  $\Trop(I_{g,f_1,f_{11}})\subset \Trop(J)$ from the three projections
  $\Trop(g)$, $\Trop(\tilde{g}(z_1,y))$ and
  $\Trop(\tilde{g}'(z_{11},y))$.  Notice that $\Trop(\tilde{g})$ lies
  in the visible side of $L_{11}\subset \RR^2_{Z_1,Y}$, so the cycle
  of $\Trop(I_{\tilde{g},f_{11}})$ is visible on $\Trop(\tilde{g}')$.
  Thus, the cycle of $\Trop(I_{g,f_1,f_{11}})$ lies on the union
  $\sigma_{32}\cup \sigma_{33}\cup\sigma_{12}$. These cells are
  parametrized by the pairs $(Z_{11},Y)$, $(Z_{11},Y)$ and $(X,Y)$,
  respectively. The linear re-embedding $I_{g,f_{2}}\subset
  \PSN[x,y,z_{11}]$ contains a cycle isometric to the cycle in
  $I_{g,f_1,f_{11}}$. It is induced by the lifting $z_{11}-f_2$, i.e.\
  by the linear tropical modification of $\RR^2$ (with coordinates
  $(X,Y)$) along $X=0$. We take $\psi_1:=\id_{\PSN[x,y]}$.

  On the other hand, assume $L_{11}:=\{Y=B_1\}$. We proceed in a
  similar fashion to the previous case. We work with the curve
  $\Trop(\tilde{g})$, a linear tropical modification of
  $\RR^2_{Z_1,Y}$ along $L_{11}$ and a lifting
  $f_{11}:=y+\AA_{11}t^{-B_1}$ with $\AA_{11}$ as in
  Lemma~\ref{lm:VanishingDecontractsVertex} that prolongs the cycle of
  $\Trop(\tilde{g})$ in $\Trop(I_{\tilde{g},f_{11}})$.  As before, the
  plane $J:=\langle z_1-f_1,z_{11}-f_{11}\rangle$ is generated by a
  tropical basis, and the cycle of $\Trop(I_{g,f_1,f_{11}})$ lies in
  the union of the cells $\sigma_{12}$, $\sigma_{32}$ and
  $\sigma_{33}$. Furthermore, it lies on the hyperplanes
  $L_2^-:=\{Y\leq a\}$ and $L_{11}^+:=\{Y\geq B_1\}$.  We define
  $f_2:=f_1\in \PSN[y]$.  The lifting $f_{11}$ defines an affine map
  $\psi_1\colon \PSN[x,y]\to \PSN[x,y]$ with $\psi_1(x)=x$ and
  $\psi_1(y)=f_{11}$.

  In both cases, the points $v_0,v_2$ are vertices on the cycle of
  $\Trop(I_{g\circ \psi_1,f_2})$. Furthermore, our hypotheses ensure
  that the cycles of $\Trop(g\circ \psi_1)$ and $\Trop(g)$ agree, and
  $I_{g\circ \psi_1, f_2}\subset \PSN[x,y,z_{11}]$ is obtained by a
  tropical modification of $\RR^2$ along $X=0$ adapted to the curve
  $g\circ \psi_1$ as in Lemma~\ref{lm:VanishingDecontractsVertex}. In
  particular, $\ell(I_{g,f_1})\leq \ell(I_{g\circ \psi_1,f_2}) \in
  \frac{1}{N}\ZZ$.  In addition to the potential vertex $v_2$, the
  cycle of $\Trop(I_{g,f_2})$ contains at most one other locally
  reducible vertex $v_{12}$ with vanishing discriminant. This vertex
  lies in $\sigma_3^{\circ}$ and it is traversed by a straight line of
  the form $\{Z_{11}=A_2\}$ or $\{Y=B_2\}$ where $A_2,B_2\in
  \frac{1}{N}\ZZ$ satisfy $A_1>A_2$ and $B_1>B_2$, whenever
  applicable.

  If no such vertex $v_{12}$ exists, the functions $\psi_1$ and $f_2$
  satisfy the requirements of the statement. In the presence of the
  problematic vertex $v_{12}$, we define
  $k:=-\val(j(g\circ\psi_1))-\ell(I_{g\circ \psi_1,f_2})$. By the
  inductive hypothesis, there exists $r>0$, an affine map
  $\psi_2\colon \PSN[x,y]\to \PSN[x,y]$ with $\psi_2(x)=x$ and
  $\psi_2(y)=y+\alpha$ with $\val(\alpha)\geq 0$, and a polynomial
  $f_{r+1}$ of the form
  $f_{r+1}=x+\AA_1+\AA_{11}t^{-A_1}+A_{12}t^{-A_2}+\ldots +
  A_{1r}t^{-A_r}$ satisfying the conditions of the statement for the
  curve $g\circ \psi_1$. The function $\psi:=\psi_1\circ \psi_2$ and
  the polynomial $f_{r+1}$ verify the result.

  Finally, assume $\Trop(g)$ contains the problematic vertex
  $v_2$. Write $\widetilde{g\circ \psi}(z_{1r},y):=
  I_{g\circ\psi,f_{r+1}}\cap \PSN[z_{1r},y]$.  An easy convexity
  argument shows that the dual cell to $v_0$ in the Newton subdivision
  of $\widetilde{g\circ \psi}$ is the parallelogram or the trapezoid
  in the center of Figure~\ref{fig:figuresAlgorithm}.  It follows that
  $\{Y=a\}\cap \Trop(\widetilde{g \circ \psi}) = \{v_0\}$. The last
  claim in the statement follows from
  Lemma~\ref{lem:ModifViaProjections} and condition \emph{(2)}.  This
  concludes our proof.
\end{proof}

\begin{proof} [Proof of Theorem~\ref{thm:repairEll}] Fix $N$ such that
  $g\in \PSN[x,y]$.  Since the cycle of $\Trop(g)$ does not reflect
  the $j$-invariant of $g$, by Theorem~\ref{thm:redVertex} we know
  that the cycle of $\Trop(g)$ contains a locally reducible vertex
  with vanishing discriminant. After applying an affine change of
  coordinates in $\PSN^2$ as in Lemma~\ref{lem:atMostTwo}, we may
  assume that $g$ satisfies the conditions of the lemma for
  $A=\Idd_{\PSN[x,y]}$. Furthermore, by Remark~\ref{rem-v0v1v2}, we
  know that $\Trop(g)$ contains a vertex $v_1$ with vanishing
  discriminant traversed by the vertical line $\{X=0\}$ and a
  potential vertex $v_2$ with vanishing discriminant traversed by the
  horizontal line $\{Y=a\}$, where $a>0$.

  Following earlier notation, we let $\ell(I)\in \frac{1}{N}\ZZ$ be
  the length of the cycle of $\Trop(I)$ induced by a linear
  re-embedding $I\subset \PSN[x,y,z_1,\ldots,z_r]$ of the input plane
  elliptic cubic $g$. The ideal $I$ will be constructed by iterative
  applications of Theorem~\ref{thm:redVertex} and  Lemma~\ref{lm:VanishingDecontractsVertex}.

  Using Lemma~\ref{lm:repairV_1InDim3}, we repair the cycle locally
  around $v_1$ via the linear re-embedding $I_{g\circ
    \psi,f_{1}}\subset \PSN[x,y,z_{1}]$ in dimension 3.
  Lemma~\ref{lem:ModifViaProjections} ensures that the points
  $v_0\times \{0\}$ and $v_1\times \{0\}$ are vertices of
  $\Trop(I_{g\circ \psi,f_1})$ and their multiplicity is 1.

  First, suppose that the potential problematic vertex $v_2$ does not
  exist. Proposition~\ref{pr:projectionsJ} then ensures that all
  remaining vertices in the cycle of $I_{g\circ \psi,f_{1}}$ have
  multiplicity 1. Indeed, the vertices of the cycle in in the relative
  interior of each chart $\sigma_i$ ($i=1,2,3$) are either irreducible
  or locally reducible and with non-vanishing discriminant. All edges
  in the cycle have multiplicity 1. The map $\trop$ is faithful on the
  cycle of $\Trop(I_{g\circ \psi, f_{1}})$. Thus, the cycle has the
  expected length and witnesses the prolongation of the cycle in
  $\Trop(g)$.  \smallskip

  Finally, assume the vertex $v_2$ does exist. Write
  $\widetilde{g\circ \psi}(z_{1},y):= I_{g\circ\psi,f_{1}}\cap
  \PSN[z_{1},y]$. The vertex $v_2$ is contained in the cycle of $\Trop(g)$,
  thus also in $\Trop(g\circ \psi)$ and in $\Trop(I_{g\circ \psi,
    f_{1}})$. Figure~\ref{fig:figuresAlgorithm} ensures that the cycle
  of $\Trop(g)$ lies on $\{a\geq Y\geq 0\}$.  In order to repair the
  embedding locally around $v_2$ we must work with two charts of
  $\Trop(I_{g\circ \psi, f_{1}})$: the one containing the cycle in $\Trop(g\circ
  \psi(x,y))$ and the one including the cycle in $\Trop(\widetilde{g\circ
    \psi})$. We claim that we can disregard the latter.
  Indeed, by Lemma~\ref{lm:repairV_1InDim3}, the
 vertex $v_0$ in $\Trop( \widetilde{g\circ \psi})$ is not
 traversed by the line $\{Y=a\}$. Suppose we perform a tropical
 modification of $\RR^2_{Z_{1},Y}$ along this line. Any choice
 of lifting function $h_2:=z_2-\AA_2 t^{-a}$ with $\val(\AA_2)=0$  
has  the same effect on the curve $\Trop(\widetilde{g\circ
   \psi})$:  it induces a tropical modification of the curve. The cycle  remains unchanged in
 $\Trop(I_{\widetilde{g\circ
     \psi},h_2})$, 
we see in  the right most picture in Figure~\ref{fig:DualExampleAlgorithm}.
 In conclusion, we can disregard the charts
 with coordinates $(Z_1,Y)$  when repairing the curve
 $\Trop(I_{g\circ \psi,f_{1}})$ locally around $v_2$. 

 Using the notation of Figure~\ref{fig:figuresAlgorithm}, we write
 $v_2=(-b+i a ,a)$, and $-b+ia<0$. We work with the line
 $L_2:=\{Y=a\}$, the vertex $v_2$ and the input curve $\Trop(g\circ
 \psi)$.  By Lemma~\ref{lm:repairV_1InDim3} we can find $f_2:=y+\AA_2
 t^{-a}$ and a map $\psi'\colon \PSN[x,y]\to \PSN[x,y]$ with
 $\psi'(x)=x+\beta t^{b-ia}$ and $\psi'(y)=y$, where $\AA_2,\beta\in
 \PSN$, $\val(\AA_2)=0$ and $\val(\beta)\geq 0$ such that the ideal
 $I_{g\circ \psi\circ \psi', f_2}$ satisfies the conditions of
 Lemma~\ref{lm:repairV_1InDim3}.

 If $\psi'=\Idd_{\PSN[x,y]}$, we conclude that the ideal
 $I_{g(x,y+\alpha), f_{1}, f_2}\subset \PSN[x,y,z_{1},z_2]$ induces a
 faithful linear re-embedding of the input curve in dimension 4 and
 its tropicalization has the expected cycle length.  On the contrary,
 assume $\psi'\neq \Idd_{\PSN[x,y]}$.  The proof of
 Lemma~\ref{lm:repairV_1InDim3} shows that $\psi'$ is constructed from
 a vertical modification along a line $\{X=B\}$ with $B\leq -b+ia$.
 We write $f_3:=x+\beta t^{b-ia}$ and consider the ideal
 $I:=I_{g(x,y+\alpha), f_{1}, f_2,f_3}\subset
 \PSN[x,y,z_{1},z_2,z_3]$. The variables $x,z_{1}$ and $z_3$ are
 related by the linear forms $z_1=f_{1}(x)$ and $z_3=x+\beta
 t^{b-ia}$. They are liftings of linear tropical modifications along
 two parallel hyperplanes: $\{X=0\}$ and $\{X=-b+ia\}$. We think of
 them as two vertical modifications that can be merged together. More
 precisely, the cycle of $\Trop(I)$ is contained in the cells
 $\sigma_{133}, \sigma_{132}, \sigma_{111}, \sigma_{312}$, which can
 be parametrized without using the coordinate $X$. Therefore, we can
 project the ideal to the variables $\{z_3,y,z_{1},z_2\}$ and obtain a
 new linear re-embedding of the curve in dimension 4 by the ideal
 $\tilde{I}:=I_{g(z_3-\beta t^{b-ia},y+\alpha), {f}_{1}(z_3-\beta
   t^{b-ia}),
   f_2}
 \subset \PSN[z_3,y,z_{1},z_2]$. Example~\ref{ex:Dim4Example} shows an
 instance of such projection and the resulting linear re-embedding of
 $g$.

The corresponding tropical curve in $\RR^4$ contains a cycle which is
isometric to the cycle of $\Trop(I)\subset \RR^5$, and all its
vertices and edges have multiplicity 1 by
Proposition~\ref{pr:projectionsJ}. Therefore, it prolongs the cycle in
$\Trop(g)$ and it has the expected cycle length. This concludes our
proof.

\end{proof}

\begin{algorithm}[htb]
  \KwIn{A polynomial $g(x_1,x_2)\in \PSN[x_1,x_2]$ defining a plane elliptic
    cubic with bad reduction, and $\Trop(g)$ contains a cycle of length $\ell<-\val{j(g)}$}
  \KwOut{A linear re-embedding $I_{g\circ \psi,f_3,f_4}\subset \PSN[x_1,x_2,x_3,x_4]$ of the curve defined
    by $g$ where $\psi$ is an affine map defined on $\PSN[x_1,x_2]$ and 
    the cycle of $\Trop(I_{g\circ \psi,f_3,f_4})$ has length $-\val(j(g))$.}  
  \smallskip  
$I \leftarrow \langle g \rangle\subset \PSN[x_1,x_2]$; \hspace{1ex} $J \leftarrow 0\subset\PSN[x_1,x_2]$ ; \hspace{1ex} $f_3\leftarrow x_1$\;
 $\psi\leftarrow \Idd$ on $\PSN[x_1,x_2]$;  \hspace{1ex} 
$\ell \leftarrow $ length of the cycle in $\Trop(I)$\;
	\While{there exists a locally reducible vertex $v$ in the cycle of $\Trop(g)$  with vanishing local discriminant  and a vertical, horizontal or skew line $L$ through $v$ containing the cycle of $\Trop(g)$ on the visible side of $L$ (as in Lemma~\ref{lem:atMostTwo})}{
 $f \leftarrow $ lifting of the linear tropical modification of $\RR^2$ along $L$ that repairs $v$ as in Lemma~\ref{lm:VanishingDecontractsVertex}\; 
 $\psi\leftarrow$ affine transformation of $\PSN[x_1,x_2]$ induced by $f$ that keeps the cycle of $\Trop(g)$ visible on $\Trop(g\circ \psi)$\;
$g\leftarrow g\circ \psi$; \quad
$I\leftarrow \langle g \rangle \subset \PSN[x_1,x_2]$.
}
\If{$\Trop(g)$ has a locally reducible vertex $v_0=(A_1,A_2)$ with vanishing discriminant and a vertical line $L_1=\{X_1=A_1\}$ through it}
{
$f_3 \leftarrow $lifting of the  tropical modification of $\RR^2$ along $L_1$ adapted to $g$\;
$\tilde{g}\leftarrow (I + \langle x_3-f_3\rangle) \cap \PSN[x_2,x_3]$\;
\If{$\tilde{g}$ contains a locally reducible vertex $\ww$ with vanishing discriminant in $(L_1^+)^{\circ}:=\{X_3<A_1\}$
}
{Construct an affine map $\psi$ on $\CC[x_3,x_2]$ with $\psi(x_3)=x_3$ and a polynomial $f_{1r}(x_3)$ adapted to $\tilde{g}$ using Lemma~\ref{lm:repairV_1InDim3}. We can construct it from a slight variant of the first subroutine using only horizontal and vertical lines.\\
  $f_3\leftarrow f_{1r}(f_3(x_1))$; \hspace{1ex} $g\leftarrow g\circ \psi$\;}}
  $J \leftarrow \langle x_3-f_3\rangle \subset \PSN[x_1,x_2,x_3]$;\hspace{1ex} $I\leftarrow \langle g \rangle +J \subset \PSN[x_1,x_2,x_3]$.\\
\If{$\Trop(g)$ contains a locally reducible vertex $v_2=(B_1,B_2)$ with vanishing discriminant and a horizontal line $L_2=\{X_2=B_2\}$ through it such that the cycle of $\Trop(g)$ is not on the visible side of $L_2$}
{$f_4\leftarrow $lifting of the  tropical modification of $\RR^2$ along $L_2$ adapted to $g$\;
$J\leftarrow J + \langle x_4-f_4\rangle $; $I\leftarrow \langle g \rangle +J \subset \PSN[x_1,x_2,x_3,x_4]$;
$\tilde{g}\leftarrow I \cap \PSN[x_1,x_4]$\; 
\If{$\tilde{g}$ contains a locally reducible vertex $\ww$ with vanishing discriminant in $(L_2^+)^{\circ}:=\{X_4<B_2\}$
}
{Construct an affine map $\psi$ on $\CC[x_1,x_4]$ with $\psi(x_4)=x_4$ and a polynomial $h_{2r}(x_4)$ adapted to $\tilde{g}$ using Lemma~\ref{lm:repairV_1InDim3}.\\
  $f_4\leftarrow h_{2r}(f_4(x_2))$; \hspace{1ex} $g\leftarrow g\circ \psi$;\hspace{1ex}
$f_3 \leftarrow f_3(x_1-\psi(0))$\;
$J \leftarrow \langle x_3-f_3, x_4-f_4\rangle \subset \PSN[x_1,x_2,x_3,x_4]$;\hspace{1ex} $I\leftarrow \langle g \rangle +J \subset \PSN[x_1,x_2,x_3,x_4]$.\\}
}\Return $I$.
  \caption{Repairing the cycle of a  tropical plane elliptic cubic using
   linear tropical modifications and special linear re-embeddings.}    \label{alg:repairElliptic}
\end{algorithm}

The following two examples illustrate the two key steps involved in the proof of Theorem~\ref{thm:repairEll}:
\begin{example}~\label{ex:twoStepsExample} 
Consider the plane elliptic cubic with defining equation:
\[
g=-t^3\,x^3+(t^4+t^5)x^2y+(-t^5+t^6)xy^2+t^3\,y^3+(t^2-t^3)x^2+4xy+(2t^2+3t^3)y^2+2x+(2+2t)y+(1+t).
\] 
Its $j$-invariant has valuation -8. The tropical curve is depicted on
the right of Figure~\ref{fig:TwoStepPart1}. It contains a cycle of
length 6 that needs to be prolonged. We do so in two steps.  The
bottom picture shows the Newton subdivision of the input curve and the
output linear re-embedding on each iteration.

In the first step, we modify the plane $\RR^2$ along the vertical line
$X=0$, corresponding to the tropical function $\max\{X,0\}$. The
vertex $(0,0)$ has valency four and lies on this vertical line. Its
discriminant equals $\Delta_{(0,0)}=c_{1,1}c_{0,0}-c_{1,0}c_{0,1}$ and
it vanishes at $\init_{(0,0)}(g)$. By choosing the special lifting
$f_1=x+1/2$ and the curve $g_1(z,y)=g(z-1/2,y)$, we prolong our cycle
by decontracting a bounded edge in $\sigmaint_3$, as we see in the
center of Figure~\ref{fig:TwoStepPart1}.  Viewed in the projection
$\Trop(g_1)= \pi_{YZ}(\Trop(I_{g,x+1/2}))$, the new cycle has length
seven, so it is still too short.

\begin{figure}[htb]
   \begin{minipage}[l]{0.35\linewidth}
     \includegraphics[scale=0.2]{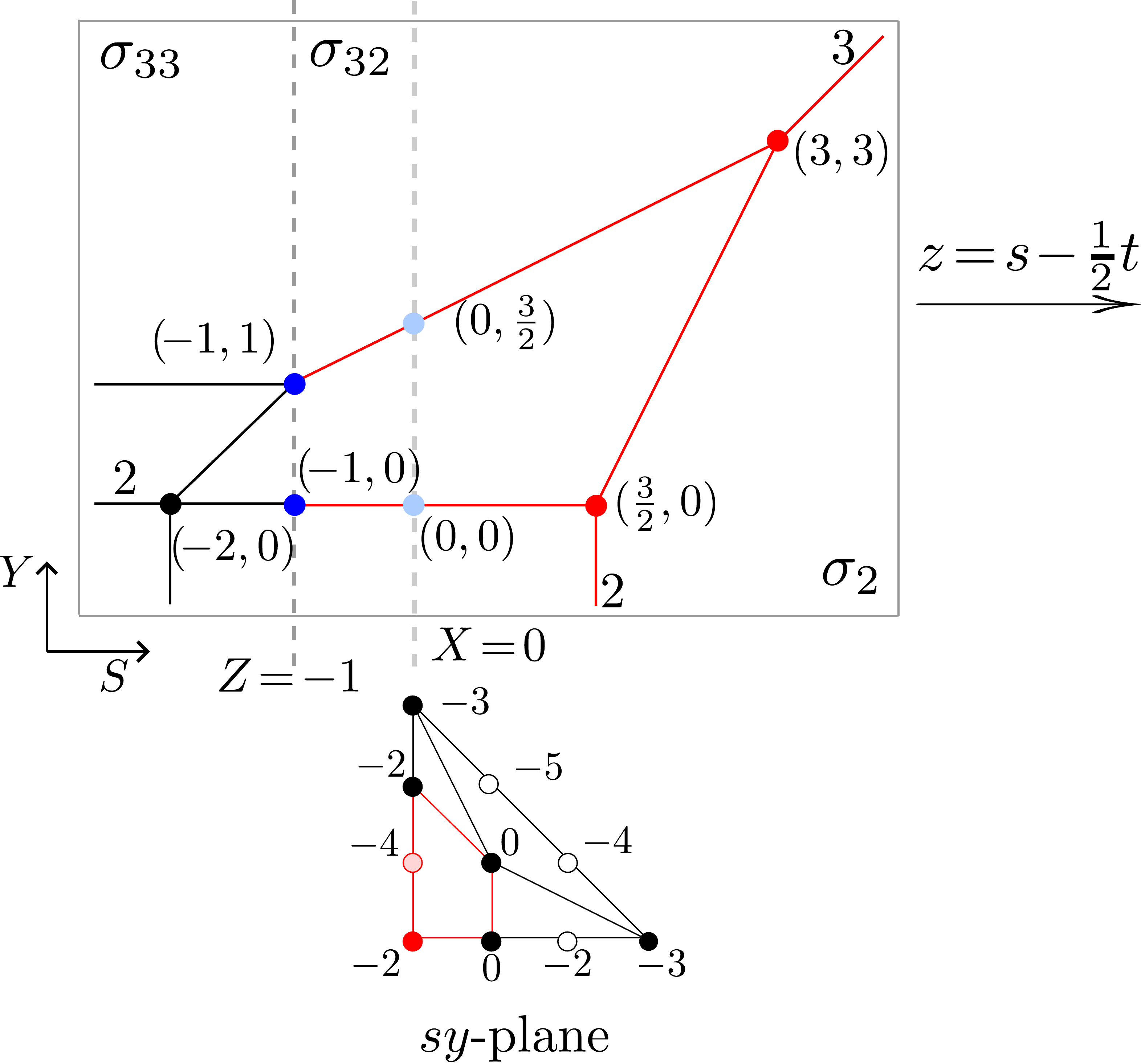}
   \end{minipage}
   \begin{minipage}[r]{0.35\linewidth}
\hspace{0.5ex}
   \includegraphics[scale=0.2]{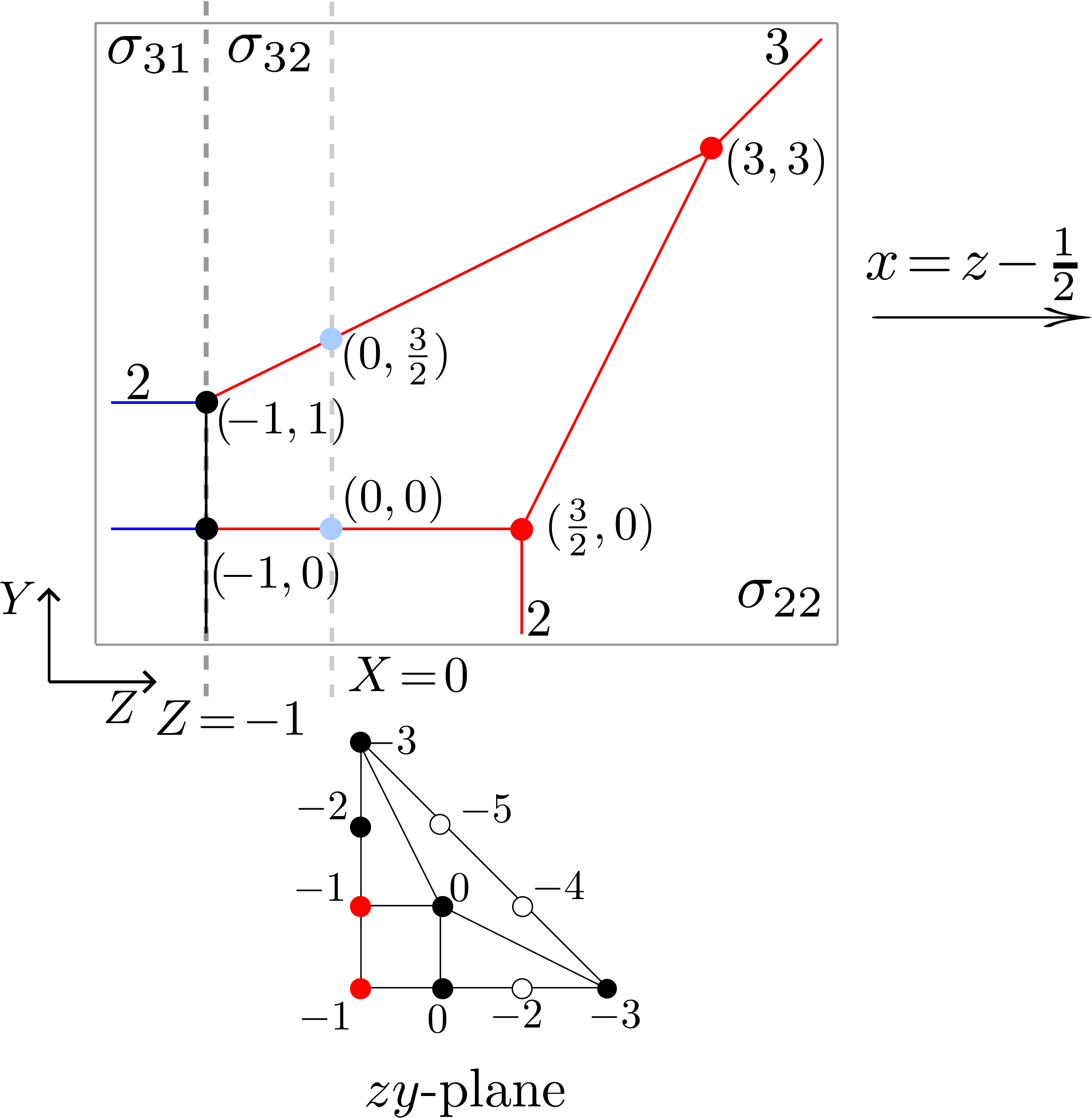}
 \end{minipage}
   \begin{minipage}[c]{0.28\linewidth}
     \includegraphics[scale=0.2]{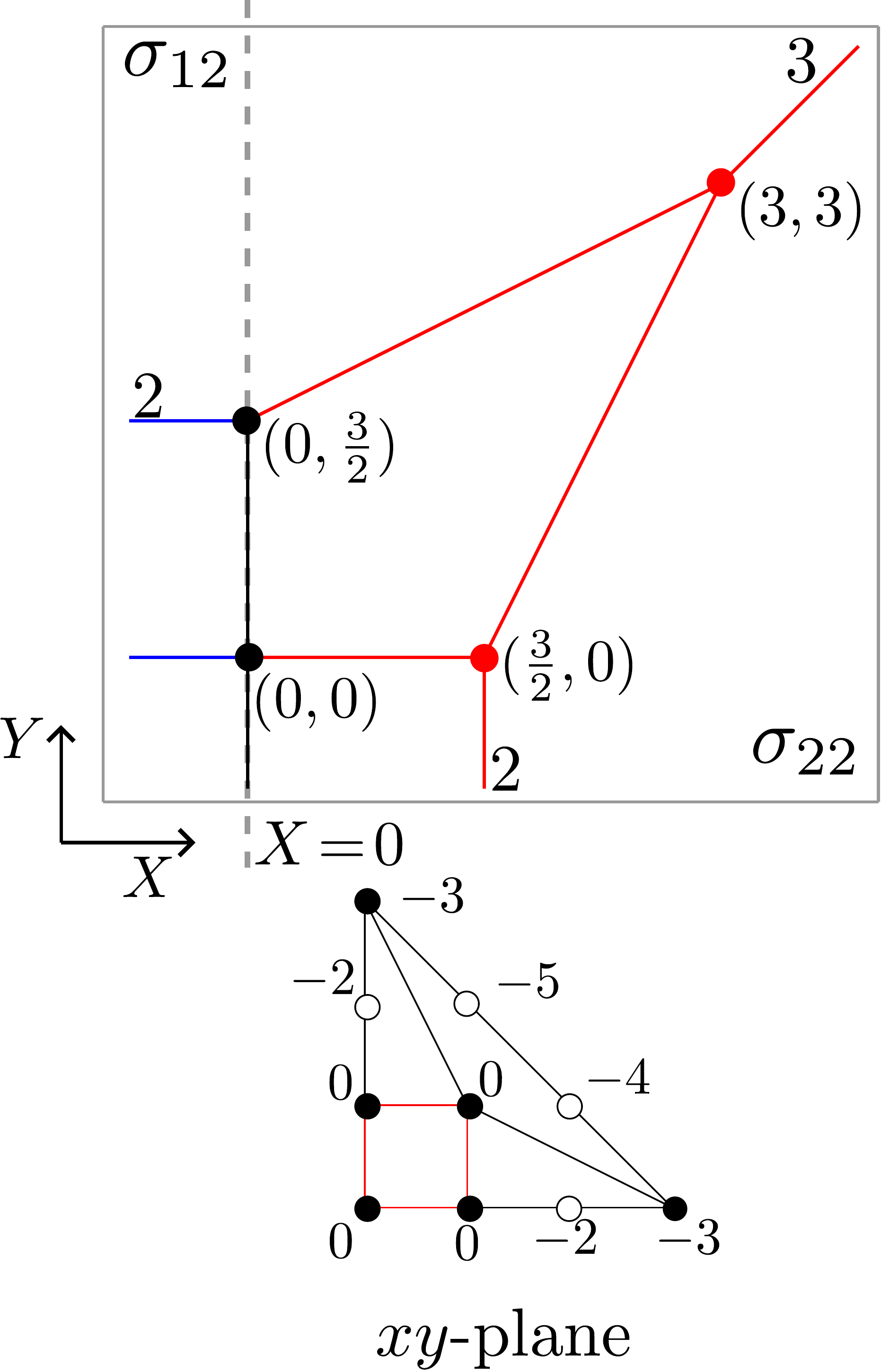}
 \end{minipage}
  \caption{Two iterations of the repairing algorithm for a plane elliptic cubic. The bottom row depicts the dual subdivisions, induced by
    the indicated heights. The heights of the red points change by the two
    linear tropical modifications.}
  \label{fig:TwoStepPart1}
\end{figure}

For our second iteration, we start with the tropical curve
$\Trop(g_1)$ in the $ZY$-plane and the locally reducible vertex
$(-1,0)$, whose discriminant vanishes on $\init_{(-1,0)}(g_1)$. The
$YZ$-plane is given as the union of the cells $\sigma_2\cup
\sigma_3$. We modify it along the vertical line $Z=-1$ corresponding
to the tropical polynomial $\max\{Z,-1\}$, depicted on the center of
Figure~\ref{fig:TwoStepPart1}. We choose the lifting $f_2=z+t/2$, a
new variable $s$ and tropicalize the ideal $I_{g_1,f_2}=\langle
g_1,s-f_2\rangle$. Write $g_2(s,y)=g_1(s-t/2,y)=I_{g,f_1,f_2}\cap
\PS[s,y]$. When projected to the $SY$-plane we observe the curve
$\Trop(g_2)$, living in the union of the cells $\sigma_{32},
\sigma_{33}$ and $\sigma_{22}$ in the modified plane, as in the left
of Figure~\ref{fig:TwoStepPart1}. The re-embedded and projected curve
$\Trop(g_1)$ from our first iteration lies in the cells $\sigma_{31},
\sigma_{32}$ and $\sigma_{22}$, as in the center of
Figure~\ref{fig:TwoStepPart1}.

We combine the two modifications in one step and make the
corresponding affine coordinate change to give the desired special
linear re-embedding of $C$ in dimension 2, namely $\langle
g(s-(1/2+t/2),y)\rangle$. The cycle in the new tropical curve has
length 8, as desired.
\end{example}

The next example illustrates the behavior of
Algorithm~\ref{alg:repairElliptic} in the presence of two locally
reducible vertices with vanishing discriminants on the top left and
bottom right of the cycle of $\Trop(g)$, and when more than two linear
tropical modifications are required to achieve the desired cycle
length.
\begin{example}\label{ex:Dim4Example}
 Consider the plane elliptic cubic with bad reduction defined by the equation
\begin{align*}
  g(x,y)=&
  x^3+(1-9t^2)x^2y+2t^4xy^2+t^{20}y^3+(1-24t^9-t^{40})x^2+(1+5t-16t^9+144t^{11})xy\\ & +8t^{67}y^2+(1-16t^9+t^{15}+192t^{18})x+(2t^4+64t^{18}-576t^{20})y+(1-8t^9+64t^{18}-8t^{24}).
\end{align*}
Its $j$-invariant has valuation -15.  The corresponding tropical curve
has a cycle of lattice length $12$ and is depicted at the curve of
Figure~\ref{fig:AlgorithmExample}. It contains two locally reducible
vertices with vanishing discriminant, namely $(0,0)$ and $(-4,4)$. As
in the proof of Theorem~\ref{thm:repairEll}, we repair the embeddings
by treating these two vertices independently. We will need three
linear tropical modifications, namely along the lines $X=0$, $Y=4$ and
$X=-4$. Their 2-dimensional charts are depicted in
Figure~\ref{fig:AlgorithmExample}. We choose the special liftings
induced by the functions $f_1=x+1$, $f_2=y+t^{-4}/2$ and
$f_3=x-2t^4$. Figure~\ref{fig:DualExampleAlgorithm} shows the Newton
subdivision of five planar projections. Notice that $(1,1)$ is not a
vertex of the leftmost subdivision. Even though the vertex $(-5,3)$ in
the projection of the curve to the $(Z_3,Z_2)$-coordinates is locally
reducible, its local discriminant does not vanish, so its multiplicity
in $\sigma_{133}^{\circ}$ equals 1. By
Proposition~\ref{pr:projectionsJ}, its multiplicity in
$\Trop(I_{g,f_1,f_2,f_3})$ is also 1.

We reconstruct the tropical curve $\Trop(I_{g,f_1,f_2,f_3})$ by
looking at the tropical plane curves $\Trop(g)$,
$\Trop(\tilde{g}(z_1,y))$ and $\Trop(\tilde{g}(x,z_2))$ and using
Lemma~\ref{lem:ModifViaProjections}. The cycle of this curve in
$\RR^5$ is on the visible side of the hyperplane $X=-4$. Thus, we can
project the curve and the ambient plane to the space corresponding to
the variables $y,z_1,z_2,z_3$, and still repair the embedding of the
original plane elliptic curve.  The resulting ideal is $\langle
g(z_3-2t^4, y), z_2-y-t^{-4}/2, z_3-z_1+1+2t^4\rangle$. Its
tropicalization in $\RR^4$ is shown in
Figure~\ref{fig:MakeDualGraphAlgorithm}. Its cycle reflects the
$j$-invariant of the input curve.
\end{example}

\begin{figure}[tb]
  \centering
 \includegraphics[scale=0.175]{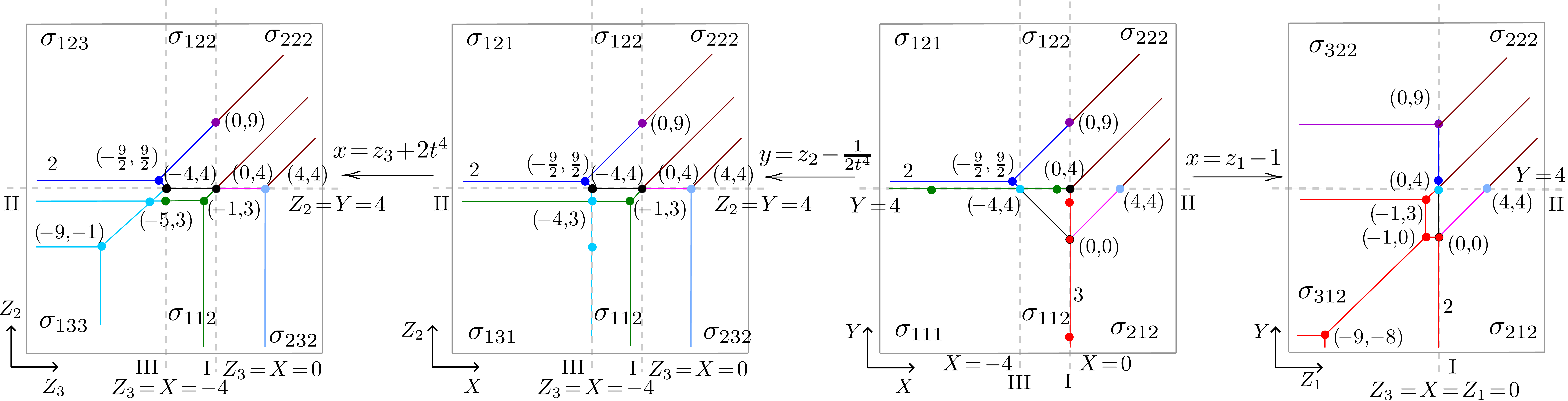}
   \caption{Three iterations of
     Algorithm~\ref{alg:repairElliptic} repair the middle tropical
     elliptic cubic. We draw the relevant four projections in
     $\Trop(I_{g,f_1,f_2,f_3})$, indicating the corresponding images of
     all the vertices of $\Trop(I_{g,f_1,f_2,f_3}\cap \PSN[y,z_1,z_2,z_3])$.  
   }
  \label{fig:AlgorithmExample}
\end{figure}
\begin{figure}[tb]
  \centering
  \includegraphics[scale=0.3]{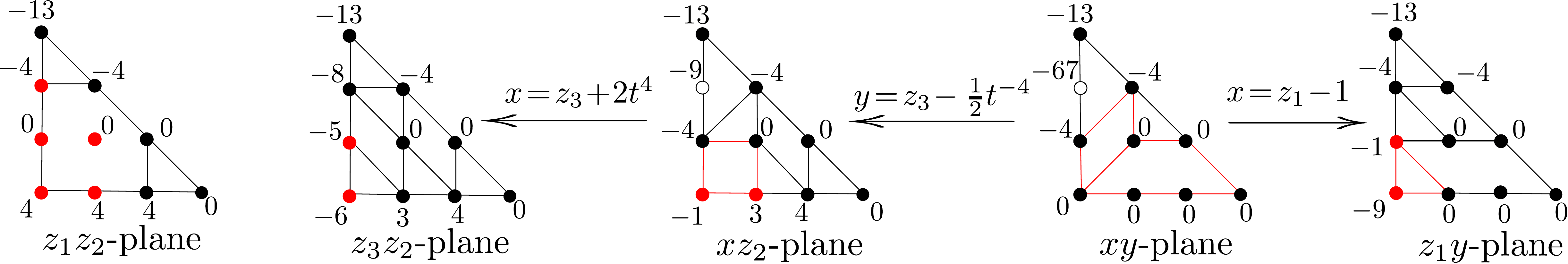}
  \caption{Dual subdivisions (and heights) to the plane tropical
    curves obtained from five coordinate projections of
    $I_{g,f_1,f_2,f_3}$.}
  \label{fig:DualExampleAlgorithm}
\end{figure}
\begin{figure}[tb]
  \centering
  \includegraphics[scale=0.1]{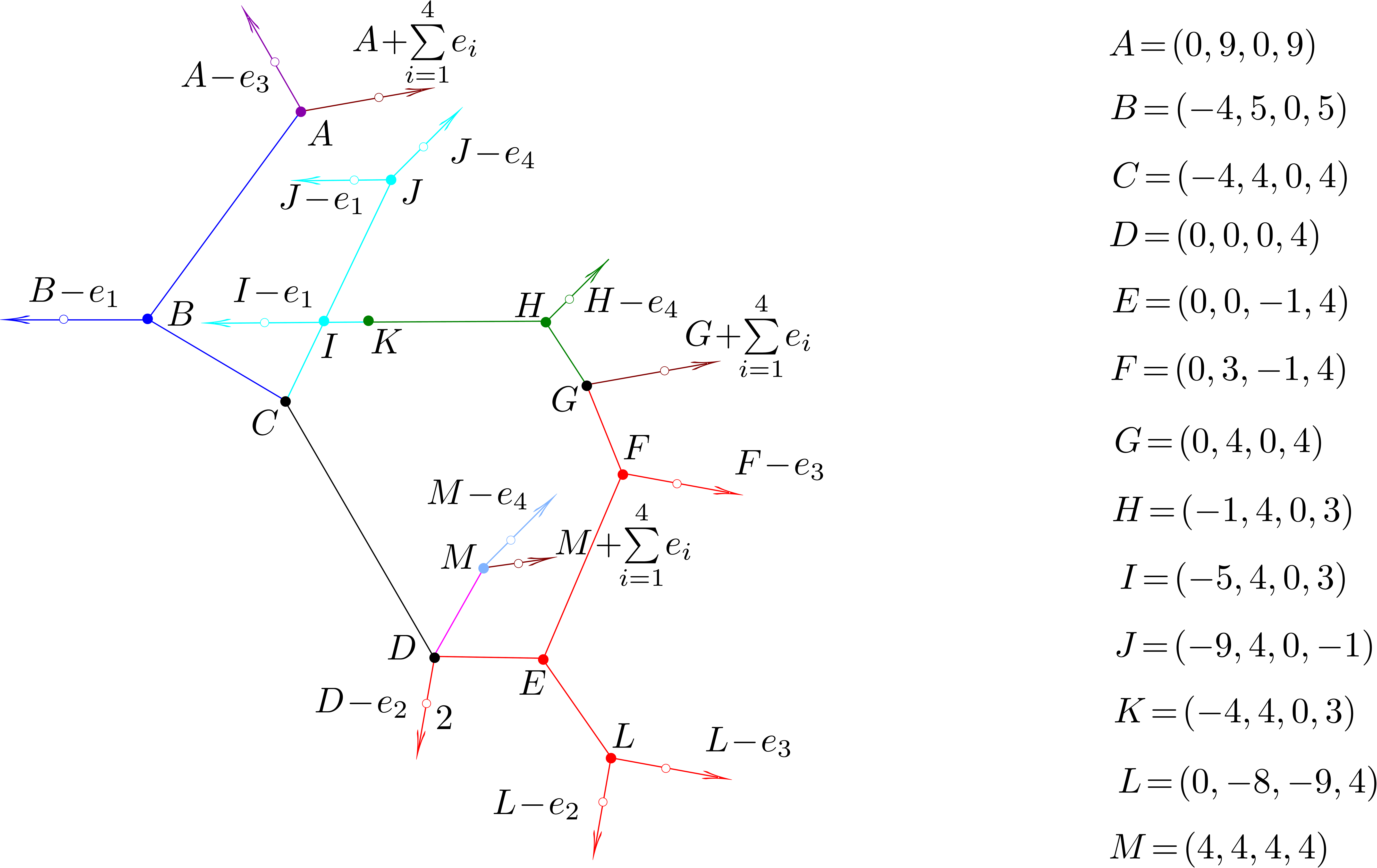}
  \caption{Tropical elliptic cubic repaired in $\RR^4$ using
    Algorithm~\ref{alg:repairElliptic}. The coordinates are labeled
    $(Z_3,Y,Z_1,Z_2)$ and the color coding agrees with
    Figure~\ref{fig:AlgorithmExample}.}
  \label{fig:MakeDualGraphAlgorithm}
\end{figure}

\begin{remark} \label{rem:UnfoldDoubleEdgeElliptic} Assume $C$ is a
  plane elliptic cubic $C$ with bad reduction defined by $g$. Suppose
  that its tropicalization $\Trop(g)$ contains no cycle but one of its
  bounded edges $e$ has tropical multiplicity $m_{\Trop}(e)>1$ and
  non-vanishing discriminant. Then the combinatorics of the Newton
  polygon of $g$ ensure that
  $m_{\Trop}(e)=2$. Theorem~\ref{thm:fatEdge} guarantees that we can
  unfold this double edge and produce a cycle by a special linear
  re-embedding.

  For simplicity, assume that the double edge $e$ of $\Trop(g)$ is
  vertical. Then the push-forward formula for
  multiplicities~\cite[Theorem 8.4]{BPR11} applied to the projection
  $\pi_{ZY}$ ensures that the cycle in the re-embedded curve contains
  only multiplicity one edges, and so does its projection
  $\Trop(\tilde{g})$. If, in addition, the cycle in $\Trop(\tilde{g})$
  is trivalent, we conclude by~\cite[Theorems 6.23 and 6.24]{BPR11}
  that this cycle is isometric to the minimal skeleton of the complete
  analytic elliptic curve $\widehat{C}^{\an}$ under the
  tropicalization
  map~\eqref{eq:tropMap}. Example~\ref{ex:unfoldingDoubleEdge}
  illustrates this phenomenon.
\end{remark}

We end this section with an easy characterization of tropical faithfulness on cycles of tropical plane elliptic cubics whose $j$-invariant has negative valuation. As expected,  this result also follows from~\cite[Section 6]{BPR11}, but our approach makes it easier to verify in concrete examples.
\begin{corollary} Let $C$ be a plane elliptic cubic with defining equation $g$ whose $j$-invariant has negative valuation. Then its tropicalization faithfully represents the minimal skeleton of the complete curve $\widehat{C}^{\an}$ if and only if it contains a cycle  and when restricted to this cycle the following hold:
  \begin{enumerate}[(i)]
  \item all edges have multiplicity one,
\item all vertices are either locally irreducible or they are locally reducible and with non-vanishing discriminant.
  \end{enumerate}
Furthermore, the cycle in $\Trop(g)$ has length $-\val(j(g))$.
\end{corollary}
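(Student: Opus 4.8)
The plan is to prove the corollary by combining the characterization of non-faithfulness from Corollary~\ref{cor:nonHomeoCycle} with the structural results on elliptic cubics developed in this section. First I would observe that since $\val(j(g))<0$, the interior lattice point $(1,1)$ is visible in the Newton subdivision of $g$, so by \cite[Theorem 11]{KMM07} the tropical curve $\Trop(g)$ contains a cycle; this establishes the necessity of the cycle condition and reduces the statement to characterizing when $\trop$ is faithful \emph{on} a cycle that is already present. For the forward implication, suppose $\trop$ faithfully represents the minimal skeleton $\Sigma$ of $\widehat{C}^{\an}$, which is homeomorphic to a circle. If some edge of the cycle of $\Trop(g)$ had multiplicity $m>1$, then by \eqref{eq:tropMultVsRelMult} its preimage would either fail to be a single edge mapped isometrically or would force overlapping, contradicting faithfulness; similarly, if some vertex $v$ in the cycle were locally reducible with vanishing discriminant, then by Corollary~\ref{cor:highMultAndDiscrim} we would have $m_{\Trop}(v)\geq 2$, and Corollary~\ref{cor:nonHomeoCycle} (or rather its proof, via the decomposition \eqref{eq:StarDecomposition}) shows that two distinct vertices of $\Sigma$ map to $v$, so $\trop$ is not injective on the cycle — again a contradiction. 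This gives conditions (i) and (ii).

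For the reverse implication, assume (i) and (ii) hold on the cycle. Let $\Gamma$ be the unique cycle of $\Sigma = \widehat{\Sigma}(g)\smallsetminus D_g$; I would show $\trop$ maps $\Gamma$ homeomorphically and isometrically onto the cycle of $\Trop(g)$. Since every edge $e'$ of the tropical cycle has $m_{\Trop}(e')=1$, formula \eqref{eq:tropMultVsRelMult} forces exactly one edge of $\Sigma$ to lie in $\trop^{-1}(e')$, mapped with relative multiplicity $1$, hence isometrically; moreover \cite[Corollary 6.12]{BPR11} shows this edge lies on $\Gamma$. At each vertex $w$ of the tropical cycle, condition (ii) together with Corollary~\ref{cor:highMultAndDiscrim} gives $m_{\Trop}(w)=1$, so by \cite[Corollary 6.12]{BPR11} exactly one vertex $\rho$ of $\trop^{-1}(w)$ has non-contracted star, and the two cycle-edges at $w$ must both be images of edges emanating from $\rho$ (if the two incoming cycle edges came from distinct vertices of $\Sigma$, both would have positive relative multiplicity, giving $m_{\Trop}(w)\geq 2$). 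Thus the cycle of $\Sigma$ maps bijectively, continuously, and isometrically edge-by-edge onto the cycle of $\Trop(g)$, with no two points identified and no contraction, which is precisely faithful representation of the circle; since $\Sigma$ is the minimal skeleton of $\widehat{C}^{\an}$, this yields the claim. Finally, the length statement follows because an isometry preserves total length, and the length of the circle in $\Sigma$ equals $-\val(j(g))$ by \cite[Theorem 11]{KMM07} (equivalently by \cite[Section 6]{BPR11}).

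The main obstacle I anticipate is the careful bookkeeping in the reverse direction to rule out \emph{all} ways that $\trop$ restricted to the cycle could fail to be a homeomorphism: beyond edge-contraction and folding along the cycle itself, one must exclude the possibility that an edge of $\Sigma$ outside $\Gamma$ maps into the image of the cycle (causing an overlap that would violate injectivity). This is handled by noting that such an overlap would again produce an edge of $\Trop(g)$ of multiplicity $\geq 2$ via \eqref{eq:tropMultVsRelMult}, contradicting (i); but phrasing this cleanly requires first refining both $\Sigma$ and $\Trop(g)$ so that $\trop$ is a morphism of $1$-dimensional complexes, exactly as in the proof of Lemma~\ref{lem:nonHomeo}. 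Once that refinement is in place, the argument is essentially a specialization of Lemma~\ref{lem:nonHomeo} and Corollary~\ref{cor:nonHomeoCycle} to the cycle, and the remaining work is routine.
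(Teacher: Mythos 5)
Your overall strategy — reducing the statement to a comparison of tropical and relative multiplicities via formula~\eqref{eq:tropMultVsRelMult}, Corollary~\ref{cor:highMultAndDiscrim} and the machinery of Lemma~\ref{lem:nonHomeo} — matches the toolkit of the paper, which however packages nearly all of the work into a one-line appeal to Theorem~\ref{thm:repairEll} and Lemma~\ref{lem:nonHomeo}. Your version re-derives the reverse implication directly from the push-forward multiplicity formula, which is more self-contained but more delicate.

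There is one genuine error. You open by asserting that since $\val(j(g))<0$, the interior point $(1,1)$ is visible in the Newton subdivision and hence $\Trop(g)$ already contains a cycle, and you infer that the clause ``contains a cycle'' in the corollary is automatic. This implication is false: the paper's own Examples~\ref{ex:CyleAppears} and~\ref{ex:UnfoldImpossible} exhibit plane elliptic cubics with $\val(j(g))=-10<0$ whose tropicalizations contain no cycle (indeed, producing the missing cycle is the entire point of those examples). \cite[Theorem 11]{KMM07} only gives the cycle length when a cycle is present; it does not force one to exist. Fortunately, your proof does not actually need this claim: in the forward direction, faithfulness of $\trop$ on a circle forces the existence of a cycle in $\Trop(g)$ by elementary topology, and in the reverse direction the cycle is a hypothesis. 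You should simply delete the visibility claim and note that ``contains a cycle'' is a nontrivial part of the characterization.

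Two smaller points. First, when you argue $m_{\Trop}(w)=1$ from condition (ii) plus Corollary~\ref{cor:highMultAndDiscrim}, you are implicitly using that every locally reducible vertex on the cycle of a tropical plane cubic has dual cell a height-one trapezoid as in Figure~\ref{fig:trapezoid}; this is exactly the observation the paper makes at the start of its proof (``dual to those in Figure~\ref{fig:shapesandfeeding} up to reflections'') and you should state it, since Corollary~\ref{cor:highMultAndDiscrim} applies only to that shape. Second, in the forward direction, the phrase ``would force overlapping, contradicting faithfulness'' is too quick: two preimage edges of a multiplicity-$2$ cycle edge could a~priori lie one in $\Gamma$ and one outside $\Gamma$, in which case $\trop|_\Gamma$ is not immediately seen to fail to be injective. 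For cubics this is ruled out — because $\Gamma$ is the unique cycle of $\Sigma$ and its image must be the unique cycle of $\Trop(g)$, and because the relative-multiplicity-$2$ alternative destroys the isometry — but the case distinction should be spelled out. Once these repairs are made, your argument is essentially a hands-on unpacking of what the paper obtains more economically from Theorem~\ref{thm:repairEll}.
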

\begin{proof}
  Since the Newton polygon of $g$ equals 3 times the unit two-simplex,
  and $\Trop(g)$ contains a cycle, we know that the only locally
  reducible vertices in its cycle are dual to those in
  Figure~\ref{fig:shapesandfeeding} (up to reflections). By
  Corollary~\ref{cor:highMultAndDiscrim}, any such locally reducible
  vertex with $m_{\Trop}(v)=2$ must have a vanishing
  discriminant. The result follows from Theorem~\ref{thm:repairEll}
  and Lemma~\ref{lem:nonHomeo}.
\end{proof}

\subsection{Repairing smooth plane elliptic cubics with $A$-discriminants}\label{sec:repa-ellipt-cubics}
In this section, we show how we can repair bad embeddings of plane
elliptic cubics in an elementary way, without relying on their
Berkovich skeleta. Our goal is to reprove Theorem~\ref{thm:repairEll}
by combinatorial means, and in particular to show how we can use
linear re-embeddings to ensure that the length of the cycle of the
re-embedded tropical curve equals the negative valuation of the
$j$-invariant (see~\cite{KMM07}). Our main tool will be the theory of
$A$-discriminants, introduced in Section~\ref{sec:discriminants}. The
results presented here hold for arbitrary planar configurations and
might be of independent interest.

As a motivation, we start with a configuration in the dilated
2-simplex corresponding to a plane elliptic cubic. We keep the
notation from Section~\ref{ssec:known-results-about}, and fix a
defining equation $g$ for $C$ with prescribed valuations of
coefficients inducing a tropical plane elliptic curve $\Trop(g)$
containing a cycle. We assume the cycle is shorter than expected. As
we saw in Section~\ref{ssec:known-results-about}, this is caused by
the cancellation of an initial form of the discriminant polynomial
$\Delta$, when evaluated at the coefficients of $g$.
Algorithm~\ref{alg:repairElliptic} uses the vanishing of a local
discriminant to repair the
cycle. Corollary~\ref{cor:initialDiscriminant} explains the connection
between these two facts through a factorization formula to describe
initial forms of $A$-discriminants in terms of discriminants of an
induced marked coherent subdivision of the convex hull of $A$, i.e.\
the Newton subdivision of $g$.

Let us fix a full-dimensional point configuration $A$ in $\ZZ^k$, and
let $Q$ be the convex hull of $A$. Given any subset $B$ of $A$ we
denote by $\ZZ\tightcdot B$, respectively $\RR\tightcdot B$, the
linear span of $B$ over $\ZZ$, respectively over $\RR$. In addition,
we define the lattice index
\begin{equation}
i(B,A)=[ \ZZ\tightcdot A\cap \RR\tightcdot B:\ZZ\tightcdot B].\label{eq:latticeIndex}
\end{equation}
We write $i(B)=i(B,\ZZ^k)$.  We work with both 
discriminant cycles and principal determinants of the configuration $A$ as defined
in~\cite[Chapter 10]{GKZ} and discussed further in~\cite{CCDDRS11}. Rather than defining the principal
determinant $E_A\in \ZZ[c_a: a\in A]$ and the corresponding cycle
$\tilde{E}_A:=E_A^{i(A)}$ for a  configuration $A\subset \ZZ^k$ on its own,
we choose to present it in terms of the prime factorization formula
from~\cite[Theorem 1.2, $\S$~10.1.B]{GKZ} adapted to the case when the
condition $i(A)=1$ is not required (as in Esterov~\cite[Proposition 3.10]{Est10}): 
\begin{equation}\label{eq:ProdFormulaGKZ}
  \tilde{E}_A:=   \pm\, \Delta_A^{i(A)}\prod_{F\prec Q} {\Delta}_{F\cap A}^{i(F\cap A,\ZZ^2)u(F\cap A,A)}.
\end{equation}
Here, $F\prec Q$ denotes a marked proper face of the polytope $Q$ and
$g$ represents a bivariate Laurent polynomial supported on $A$. The
lattice indices $i(A)$ and $i(F\cap A,A)$ are defined as
in~\eqref{eq:latticeIndex}. The exponents $u(F\cap A,A)$ refer to the
\emph{generalized subdiagram volume} associated to $F$ and $A$ viewed
in the ambient lattice $\ZZ^k$. They are computed as follows. We fix
the linear projection $\pi\colon \RR\tightcdot A\to \RR\tightcdot
A/\RR\tightcdot F$ and let $\Omega$ be the normalized volume form on
$\RR\tightcdot A/\RR\tightcdot F$. The normalization is performed with
respect to the lattice $\pi(\ZZ^k)$, so that the fundamental domain $
\RR\tightcdot A/(\RR\tightcdot F +\ZZ^k)$ has volume
$(\dim(\RR\tightcdot A)-\dim(\RR\tightcdot F))!$. We set
\begin{equation}
  \label{eq:subdiagramVol}
  u(F\cap A,A):=\Omega\big(\conv(\pi(A))\smallsetminus \conv(\pi(A\smallsetminus F\cap A))\big).
\end{equation}

\begin{remark}\label{rem:GKZvsEsterov}
  The positive integers $u(F\cap A, A)$ are denoted by $c^{F\cap A,
    A}$ in~\cite[$\S 2.5$]{Est10}. When $i(A)=1$, the definition of
  $u(F\cap A, A)$ from~\eqref{eq:subdiagramVol} agrees with the
  subdiagram volume form $u(\ZZ\tightcdot A/(F\cap A))$ defined
  in~\cite[Chapter 5, Theorem~2.8]{GKZ}. For arbitrary $A$ we recover
  the latter by renormalizing the volume form in $\RR\tightcdot
  A/\RR\tightcdot F$ with respect to the lattice $(\ZZ\tightcdot
  A)/(\ZZ\tightcdot A\cap \RR\tightcdot F)$ and replacing $\Omega$
  in~\eqref{eq:subdiagramVol} by this new volume form.
\end{remark}

The subdiagram volume forms $u(F\cap A, A)$
from~\eqref{eq:subdiagramVol} and $u(\ZZ\tightcdot A/(F\cap A))$
from~\cite[Chapter 5]{GKZ} are related by the following identity over $\ZZ$.
\begin{lemma}\label{lm:SubdiagramVolIdentities} Let $A\subset \ZZ^k$
  be a full-dimensional point configuration and $F$ be a face of the
  polytope $Q=\conv(A)$. Then:
  \begin{equation*}
    \label{eq:VolIdentities}
    i(F\cap A, \ZZ^k)\, u(F\cap A, A) =i(A)\, i(F\cap A, \ZZ\tightcdot A) \, u(\ZZ\tightcdot A/ (F\cap A)).
  \end{equation*}
\end{lemma}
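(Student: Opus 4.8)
The plan is to prove Lemma~\ref{lm:SubdiagramVolIdentities} by a careful bookkeeping of the three lattices that enter the two normalizations of the subdiagram volume. Write $V=\RR\tightcdot A$ and $W=\RR\tightcdot F$, let $\pi\colon V\to V/W$ be the quotient map, and set $d=\dim V-\dim W$. The two volume forms $u(F\cap A,A)$ and $u(\ZZ\tightcdot A/(F\cap A))$ measure the \emph{same} convex body $\Sigma:=\conv(\pi(A))\smallsetminus\conv(\pi(A\smallsetminus(F\cap A)))$ inside $V/W$, but with respect to two different lattices: the former normalizes so that $V/(W+\ZZ^k)$ has volume $d!$, i.e.\ it uses the lattice $\Lambda_1:=\pi(\ZZ^k)$; the latter, per Remark~\ref{rem:GKZvsEsterov}, uses the lattice $\Lambda_2:=(\ZZ\tightcdot A)/(\ZZ\tightcdot A\cap W)=\pi(\ZZ\tightcdot A)$. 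Since $\Lambda_2\subseteq\Lambda_1$ (as $\ZZ\tightcdot A\subseteq\ZZ^k$), the two normalized volumes of $\Sigma$ differ exactly by the index $[\Lambda_1:\Lambda_2]$; concretely $u(\ZZ\tightcdot A/(F\cap A))=[\Lambda_1:\Lambda_2]\cdot u(F\cap A,A)$. So the crux is to identify this index.

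Next I would compute $[\pi(\ZZ^k):\pi(\ZZ\tightcdot A)]$ via a multiplicativity-of-index argument. Consider the chain of lattices inside $V$: $\ZZ\tightcdot A\subseteq \ZZ^k\cap V$, and also the sublattices $\ZZ\tightcdot(F\cap A)\subseteq \ZZ^k\cap W$ sitting inside the kernels. Applying the standard formula relating the index downstairs, the index upstairs, and the indices on the two pieces of a short exact sequence of lattices, one gets
\begin{equation*}
[\pi(\ZZ^k):\pi(\ZZ\tightcdot A)]=\frac{[\ZZ^k\cap V:\ZZ\tightcdot A]\cdot[\ZZ^k\cap W:\ZZ\tightcdot(F\cap A)]}{[\,(\ZZ^k\cap W):(\ZZ\tightcdot A\cap W)\,]}.
\end{equation*}
Now unwind each index in terms of the notation of the statement: $[\ZZ^k\cap V:\ZZ\tightcdot A]=[\ZZ^k\cap\RR\tightcdot A:\ZZ\tightcdot A]=i(A)$ by~\eqref{eq:latticeIndex}; $[\ZZ^k\cap W:\ZZ\tightcdot(F\cap A)]=[\ZZ^k\cap\RR\tightcdot(F\cap A):\ZZ\tightcdot(F\cap A)]=i(F\cap A,\ZZ^k)$; and $[\ZZ^k\cap W:\ZZ\tightcdot A\cap W]$ combined with the previous line gives, after cancelling, $[\ZZ\tightcdot A\cap W:\ZZ\tightcdot(F\cap A)]=[\ZZ\tightcdot A\cap\RR\tightcdot F:\ZZ\tightcdot(F\cap A)]=i(F\cap A,\ZZ\tightcdot A)$. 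Substituting these back yields
\begin{equation*}
[\Lambda_1:\Lambda_2]=\frac{i(A)\,i(F\cap A,\ZZ\tightcdot A)}{i(F\cap A,\ZZ^k)},
\end{equation*}
and multiplying the relation $u(\ZZ\tightcdot A/(F\cap A))=[\Lambda_1:\Lambda_2]\,u(F\cap A,A)$ through by $i(F\cap A,\ZZ^k)$ gives exactly the claimed identity.

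The step I expect to require the most care is the middle one: correctly setting up the short exact sequences of finitely generated free abelian groups so that the index cancellations are valid. One must check that all the lattices in sight really are lattices of the right rank (e.g.\ that $\ZZ\tightcdot(F\cap A)$ has full rank in $W$, which holds because $F\cap A$ contains an affine basis of the face — or, after translating $A$ so that $0\in F\cap A$, a linear basis of $W$), and that $\pi$ restricted to the relevant lattices has the kernels one expects, namely $\ker(\pi)\cap\ZZ^k=\ZZ^k\cap W$ and $\ker(\pi)\cap\ZZ\tightcdot A=\ZZ\tightcdot A\cap W$. A clean way to organize this is to pick a basis of $\ZZ^k$ adapted to the flag $\ZZ^k\cap W\subseteq\ZZ^k\cap V\subseteq\ZZ^k$ and express every index as a determinant of an integer matrix; then the identity becomes an equality of products of elementary divisors. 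I would also note at the outset that we may translate so that $0\in F\cap A$, which turns all the affine spans into linear spans and makes the exact-sequence arguments routine; the volumes $u(\cdot)$ are translation-invariant so this loses no generality.
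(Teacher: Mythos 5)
Your strategy mirrors the paper's: reduce the lemma to computing $[\Lambda_1:\Lambda_2]:=[\pi(\ZZ^k):\pi(\ZZ\tightcdot A)]$, and the closed form you ultimately write down, $[\Lambda_1:\Lambda_2]=i(A)\,i(F\cap A,\ZZ\tightcdot A)/i(F\cap A,\ZZ^k)$, is correct---it is precisely the ``identity'' the paper's one-line proof cites alongside Remark~\ref{rem:GKZvsEsterov}. The gap is in how you translate this index into a comparison of normalized volumes: you assert $u(\ZZ\tightcdot A/(F\cap A))=[\Lambda_1:\Lambda_2]\cdot u(F\cap A,A)$, but this is backwards. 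Recall that $u(F\cap A,A)$ is normalized against the \emph{finer} lattice $\Lambda_1=\pi(\ZZ^k)$ while $u(\ZZ\tightcdot A/(F\cap A))$ uses the coarser sublattice $\Lambda_2=\pi(\ZZ\tightcdot A)$; a finer lattice has a smaller fundamental cell, so the same region $\Sigma$ gets the \emph{larger} normalized volume with respect to the finer lattice. If $m=[\Lambda_1:\Lambda_2]$ then the covolume of $\Lambda_2$ is $m$ times that of $\Lambda_1$, hence $\Omega_{\Lambda_1}(\Sigma)=m\,\Omega_{\Lambda_2}(\Sigma)$, i.e.\ $u(F\cap A,A)=[\Lambda_1:\Lambda_2]\cdot u(\ZZ\tightcdot A/(F\cap A))$. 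A one-dimensional sanity check: with $\Lambda_1=\ZZ\supset 2\ZZ=\Lambda_2$ and $\Sigma=[0,2]$, the two normalized volumes are $2$ and $1$, the finer one being the larger.

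This inversion is not harmless: feeding your relation into the last line, ``multiplying through by $i(F\cap A,\ZZ^k)$,'' gives $i(F\cap A,\ZZ^k)\,u(\ZZ\tightcdot A/(F\cap A))=i(A)\,i(F\cap A,\ZZ\tightcdot A)\,u(F\cap A,A)$, which is \emph{not} the statement of the lemma---the two subdiagram volumes sit on the wrong sides. With the corrected comparison, the same algebra does yield the lemma. As a secondary point, your displayed intermediate formula for $[\pi(\ZZ^k):\pi(\ZZ\tightcdot A)]$ carries a spurious factor $[\ZZ^k\cap W:\ZZ\tightcdot(F\cap A)]$ in the numerator: the multiplicativity relation arising from the two short exact sequences is simply $[\ZZ^k:\ZZ\tightcdot A]=[\ZZ^k\cap W:\ZZ\tightcdot A\cap W]\cdot[\pi(\ZZ^k):\pi(\ZZ\tightcdot A)]$, and this (together with $[\ZZ^k\cap W:\ZZ\tightcdot A\cap W]=i(F\cap A,\ZZ^k)/i(F\cap A,\ZZ\tightcdot A)$) gives your correct final formula for $[\Lambda_1:\Lambda_2]$ directly, without the extra factor.
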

\begin{proof} Since all vertices of $Q$ lie in $A$, we know that
  $\RR\tightcdot F= \RR\tightcdot (F\cap A)$ for every face $F$ of
  $Q$.  The result follows from Remark~\ref{rem:GKZvsEsterov} and the
  identity \[ i(A)={i(F\cap A, \ZZ^k)} i(F\cap A, \ZZ\tightcdot
  A)^{-1} [\ZZ^k/(\RR\tightcdot F\cap \ZZ^k): \ZZ\tightcdot
  A/(\RR\tightcdot F\cap \ZZ\tightcdot A)].\qedhere\]
\end{proof}

From now on, we fix a full-dimensional planar configuration $A\subset
\ZZ^2$ with $m$ points.  We pick a tuple $\ww\in \RR^m$ giving a
height for every point in $A$. The tuple $\ww$ induces a marked
subdivision $\cP:=\{(Q_i, A_i): i\in I\}$ of the marked pair
$(Q,A)$. Here, $Q_i=\conv(A_i)$ for all $i\in I$ are the maximal cells
in the subdivision $\cP$.  We aim to compute the initial form of
$\Delta_A$ with respect to $\ww$.

We let $\mathcal{E}_i$ denote the set of edges of $Q_i$ for each $i\in
I$, and let $\mathcal{E}$ be the set of edges of $Q$. Let
$\mathcal{E}_i^{\outerEdge}$ be the set of edges of $Q_i$ that lie
entirely in the boundary of the polytope $Q$ and set
$\mathcal{E}_i^{\intEdge}:=\mathcal{E}_i\smallsetminus
\mathcal{E}_i^{\outerEdge}$ to be the complementary set of internal
edges in $\mathcal{E}_i$.  For $i\neq j$ we set
$\mathcal{E}^{\intEdge}_{i,j}=\mathcal{E}^{\intEdge}_{i} \cap
\mathcal{E}^{\intEdge}_{j}$.

The following result is reminiscent of the combinatorial formula
from~\cite[Theorem 4.1]{Stu94}. that computes initial forms of
resultants as products of powers of smaller resultants.
\begin{theorem}\label{thm:factorizationFormula} Let $A$ be a full-dimensional configuration in $\ZZ^2$. Then,
  \begin{equation*}
    \init_{\ww}(\Delta_A)
    \!=\!\lambda \,\mu\prod_{j\in I} \Delta_{A_j}^{[\ZZ\cdot\! A:\ZZ\cdot\! A_j]}\!\!\! 
    \prod_{\substack{j\in I\\e\in
        \mathcal{E}^{\outerEdge}_j}}\!\!\!\Delta_{e\cap A_j}^{i(e\cap
      A_j)
      (u(e\cap A_j,A_j)-u(\RR\cdot e\cap A, A))/i(A)}
    \!\!\!\prod_{\substack{j<l\\e\in
        \mathcal{E}^{\intEdge}_{j,l}\!}}\!\!\!\Delta_{e\cap A_j}^{i(e\cap
      A_j)
      (u(e\cap A_j, A_j)+u(e\cap A_l, A_l))/i(A)}.
\label{eq:factorization}
\end{equation*}
All exponents are nonnegative integers, $\mu$ is a Laurent monomial
and the constant $\lambda \in \ZZ$ and can be computed as 
\[
\lambda=\pm \prod_{j\in I}[\ZZ\tightcdot A:\ZZ\tightcdot
A_j]^{\vol_{\ZZ\cdot\! A}(Q_j)}\big(\prod_{\substack{e\in\mathcal{E}\\ \dim(e\cap Q_j)=1}} [\ZZ\tightcdot(e\cap A),
\ZZ\tightcdot(e\cap A_j)]^{u(e\cap A, A)\vol_{\ZZ^2\cap \RR 
    \cdot e}(e\cap Q_j)/i(A)}\big)^{-1},
\]
where the volume forms are normalized with respect to the indicated lattices.
\end{theorem}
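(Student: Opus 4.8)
The plan is to derive the factorization formula for $\init_{\ww}(\Delta_A)$ from the Gelfand--Kapranov--Zelevinsky product formula~\eqref{eq:ProdFormulaGKZ} for the principal determinant cycle $\tilde E_A$. The key observation is that $\init_{\ww}$ is a multiplicative operation on polynomials, so applying it to both sides of~\eqref{eq:ProdFormulaGKZ} reduces the problem to understanding the initial form of the full $A$-principal determinant $E_A$ (equivalently $\tilde E_A$). For the latter one has a well-known combinatorial description: by the theory of secondary polytopes and the GKZ product formula applied cell by cell, $\init_{\ww}(\tilde E_A)$ is (up to a Laurent monomial and a sign) a product of principal determinants $\tilde E_{A_j}$ over the maximal cells $Q_j$ of the induced subdivision $\cP$, each raised to an appropriate power. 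I would take this cell decomposition of $\init_{\ww}(\tilde E_A)$ as the starting point and then plug in~\eqref{eq:ProdFormulaGKZ} a second time, now applied to each subconfiguration $A_j$, to expand every factor $\tilde E_{A_j}$ back into $\Delta_{A_j}^{i(A_j)}$ times boundary-face discriminants of $A_j$.

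\textbf{The bookkeeping of exponents.}
After this double application of~\eqref{eq:ProdFormulaGKZ}, every discriminant appearing is either $\Delta_{A_j}$ for a maximal cell $Q_j$, or $\Delta_{e\cap A_j}$ for an edge $e$ of some $Q_j$. The crucial cancellation is this: on the left-hand side, $\tilde E_A = \Delta_A^{i(A)}\prod_{F\prec Q}\Delta_{F\cap A}^{\bullet}$, so each outer face discriminant $\Delta_{F\cap A}$ of the \emph{global} polytope $Q$ occurs with a fixed exponent governed by the subdiagram volume $u(\RR\cdot e\cap A, A)$; meanwhile, on the right-hand side, the same edge-configuration discriminant $\Delta_{e\cap A_j}$ shows up from the cell-by-cell expansion with exponent governed by $u(e\cap A_j, A_j)$. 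For an edge $e$ lying on the boundary of $Q$, these two contributions must be subtracted (giving the $u(e\cap A_j,A_j)-u(\RR\cdot e\cap A,A)$ term), while for an interior edge $e$ shared by two cells $Q_j$ and $Q_l$, the contributions from the two sides add up (giving $u(e\cap A_j,A_j)+u(e\cap A_l,A_l)$). The lattice-index prefactors $i(e\cap A_j)$ and the normalization by $i(A)$ come from converting between the two normalizations of subdiagram volume, which is exactly the content of Lemma~\ref{lm:SubdiagramVolIdentities}; I would invoke that lemma to rewrite $i(F\cap A,\ZZ^2)u(F\cap A,A)$ in terms of $i(A)$, $i(F\cap A,\ZZ\cdot A)$ and $u(\ZZ\cdot A/(F\cap A))$, and likewise at the level of each $A_j$, so that all exponents land in the stated form and are manifestly integral. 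The positivity of these exponents follows because subdiagram volumes are monotone under passing to a coarser polytope: $u(e\cap A_j,A_j)\geq u(\RR\cdot e\cap A,A)$ when $e\subset\partial Q$, since $\conv(\pi(A_j))\smallsetminus\conv(\pi(A_j\smallsetminus e))$ contains the corresponding region for the global configuration.

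\textbf{The scalar $\lambda$ and the monomial $\mu$.}
The monomial $\mu$ absorbs the Laurent-monomial ambiguity already present in each application of~\eqref{eq:ProdFormulaGKZ}; it requires no computation beyond noting that products and quotients of Laurent monomials are Laurent monomials. The scalar $\lambda\in\ZZ$ is the delicate part: it collects the integer constants that appear when one expands $\tilde E_{A_j}$ via~\eqref{eq:ProdFormulaGKZ} and then reassembles. Concretely, each cell contributes a factor $[\ZZ\cdot A:\ZZ\cdot A_j]^{\vol_{\ZZ\cdot A}(Q_j)}$ coming from the change of lattice $\ZZ\cdot A_j\subset\ZZ\cdot A$ in the leading coefficient of the cell principal determinant, and each truncation of an edge along $\partial Q$ contributes a lattice-index power with exponent $u(e\cap A,A)\vol_{\ZZ^2\cap\RR\cdot e}(e\cap Q_j)/i(A)$; these are placed in the denominator because $\Delta_A$ is a factor of, rather than equal to, $\tilde E_A$. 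I expect this scalar computation to be the main obstacle: one must track every normalization constant carefully through both invocations of the GKZ formula, reconcile the $i(A)$-twists coming from the non-unimodular setting (following Esterov's~\cite{Est10} generalization), and verify that the resulting quotient is genuinely an integer and not merely a rational number. The rest of the argument is structurally routine once the multiplicativity of $\init_{\ww}$ and the two-level GKZ expansion are in place. As a consistency check I would specialize to $A$ unimodular ($i(A)=1$) and to the case where $\cP$ is a triangulation, where the formula should reduce to the statement that $\init_{\ww}(\Delta_A)$ is (up to sign and monomial) a product of edge discriminants, recovering the classical picture and, in the elliptic-cubic case, Corollary~\ref{cor:initialDiscriminant}.
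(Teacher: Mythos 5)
Your overall strategy is the same as the paper's: apply $\init_{\ww}$ to the GKZ product formula~\eqref{eq:ProdFormulaGKZ}, invoke the cell-by-cell initial-form identity~\eqref{eq:initialE_A} for the principal determinant, expand each $\tilde E_{A_j}$ via a second application of~\eqref{eq:ProdFormulaGKZ}, then match exponents using Lemma~\ref{lm:SubdiagramVolIdentities}. The bookkeeping you describe — subtraction of subdiagram volumes on outer edges, addition on interior edges, and the $\lambda$ coming from lattice-index powers placed in the denominator — agrees with the paper's result.

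There is, however, a genuine gap in your handling of the boundary terms. You write that ``the same edge-configuration discriminant $\Delta_{e\cap A_j}$ shows up'' on both sides, so that the two exponents may simply be subtracted. This tacitly assumes that a global face $F\prec Q$ coincides with a single cell edge $e\in\mathcal{E}_j$. In general, though, the $\ww$-induced subdivision restricts to a nontrivial subdivision of the boundary edges of $Q$, so a face $F$ breaks into several smaller edges $e$ belonging to different cells, and $\Delta_{F\cap A}$ and $\Delta_{e\cap A_j}$ are genuinely different irreducible polynomials. Before any exponent cancellation can be performed factor by factor, you must first express each $\init_{\ww}(\Delta_{F\cap A})$ as a product of the cell-edge discriminants $\Delta_{e\cap A_j}$, with the correct lattice-index coefficients. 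The paper does exactly this in Lemma~\ref{lm:valueEdges}, which is a one-dimensional instance of the same initial-form identity applied recursively to the configuration $e\cap A$ with the subdivision $\{(e, e\cap A_i)\}$. Without this step, equation~\eqref{eq:StartingEqn} cannot be rewritten so that both sides are products of the \emph{same} family of irreducible polynomials, and the unique-factorization argument that underlies the exponent comparison breaks down. Once Lemma~\ref{lm:valueEdges} is inserted, your double-expansion argument goes through as in the paper.
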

Theorem~\ref{thm:factorizationFormula} is the main result in this
section. We shall derive it by means of the following characterization from~\cite[Chapter 10.1.E,
Theorem $12'$]{GKZ}
of initial forms of principal determinants of full-dimensional configurations $A\subset \ZZ^k$, adapted to the case when $i(A)=1$ is not
required. 
\begin{equation}
  \label{eq:initialE_A}
  \init_{\ww}(E_A)^{i(A)}=\pm \, \mu \prod_{j\in I}\;[\ZZ\tightcdot A: \ZZ\tightcdot A_j]^{\vol_{\ZZ^k}(Q_j)} E_{A_j}^{i(A_j)}.
\end{equation}
Here, $\mu$ is a Laurent monomial and the principal determinant
$E_{A_j}$ is evaluated at the restriction of $g$ to those monomials supported on $A_j$.
Combining~\eqref{eq:initialE_A} and the product formula~\eqref{eq:ProdFormulaGKZ} for each $A_j$ ($j\in I$) gives the identity
\begin{equation}
  \label{eq:StartingEqn}
  \begin{split}
    \init_{\ww}(\Delta_A)^{i(A)}\prod_{F\prec
      Q}\init_{\ww}(\Delta_{F\cap A})^{i(F\cap A, \ZZ^2)u(F\cap A,A)}=
    \pm \,\mu \prod_{j\in I} [\ZZ\tightcdot A: \ZZ\tightcdot A_j]^{\vol_{\ZZ^k}(Q_j)} \prod_{j\in I}\Delta_{A_j}^{i(A_j)}\\
    \prod_{j\in I}\prod_{F_j\prec Q_j} \Delta_{F_j\cap A_j}^{i(F_j\cap
      A_j, \ZZ^k)u(F_j\cap A_j,A_j)},
  \end{split}
\end{equation}
where the product on the right-hand side runs over all proper faces $F_j$ of $Q_j$. 

We shall prove Theorem~\ref{thm:factorizationFormula} by studying each
initial form $\init_{\ww}(\Delta_{F\cap A})$ on the left-hand side
of~\eqref{eq:StartingEqn} when $k=2$, one dimension at a time. By
definition, we know that $\Delta_{F}=c_F$ whenever $F$ is a vertex of
a polytope and $a_F$ is its corresponding coefficient. Thus, we can
incorporate all these Laurent monomials to $\mu$ and assume that $k=2$
and all faces $F$ and $F_j$ in~\eqref{eq:StartingEqn} are edges of $Q$
and $Q_j$, respectively. The following lemmas simplify the exposition.

\begin{lemma}\label{lm:valueEdges}
  Let $e
  $ be an edge of $Q$. Then
\[
\init_{\ww}(\Delta_{e\cap A})^{i(e\cap A,\ZZ^2)} = \pm \,\mu_e\!\!\!\!\! \prod_{\substack{j\in I\\\dim(e\cap Q_j)=1}}[\ZZ\tightcdot(e\cap A): \ZZ\tightcdot(e \cap A_j)]^{\vol_{\ZZ^2\cap \RR\cdot e}(e\cap Q_j)} \Delta_{ e\cap A_j}^{i(e \cap A_j , \ZZ^2)},
\]
where $\mu_e$ is a Laurent monomial with support contained in $e\cap A$.
\end{lemma}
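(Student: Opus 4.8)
\textbf{Proof plan for Lemma~\ref{lm:valueEdges}.}

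The plan is to specialize the identity~\eqref{eq:StartingEqn} to the one-dimensional configuration $e \cap A$, and to exploit the fact that for a configuration supported on a segment the prime factorization~\eqref{eq:ProdFormulaGKZ} of the principal determinant becomes essentially trivial. Concretely, I would first observe that the height tuple $\ww$ restricts to a height function on $e\cap A$, and the marked subdivision $\cP$ of $(Q,A)$ induces a marked subdivision of the segment $e\cap Q$ whose maximal cells are exactly the segments $e\cap Q_j$ for those $j\in I$ with $\dim(e\cap Q_j)=1$, marked by $e\cap A_j$. This is the standard compatibility of coherent subdivisions with taking faces. So one is reduced to a purely one-dimensional instance of the GKZ machinery.

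Next I would apply formula~\eqref{eq:initialE_A} to the configuration $A' := e\cap A \subset \ZZ^2$ (which spans a rank-one sublattice of $\ZZ^2$), together with its induced subdivision $\{(e\cap Q_j, e\cap A_j)\}$. This yields
\[
\init_{\ww}(E_{e\cap A})^{i(e\cap A)} = \pm\,\mu_e \prod_{\substack{j\in I\\ \dim(e\cap Q_j)=1}} [\ZZ\tightcdot(e\cap A):\ZZ\tightcdot(e\cap A_j)]^{\vol_{\ZZ^2}(e\cap Q_j)} E_{e\cap A_j}^{i(e\cap A_j)},
\]
where the normalized volume form $\vol_{\ZZ^2}$ on the line $\RR\tightcdot e$ is taken with respect to $\ZZ^2\cap \RR\tightcdot e$, i.e.\ it is the same as $\vol_{\ZZ^2\cap\RR\cdot e}$ appearing in the statement. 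Then I would invoke~\eqref{eq:ProdFormulaGKZ} for each one-dimensional configuration: for a segment, the only proper faces are its two endpoints, which are vertices, so $\Delta_F = c_F$ is a monomial in the coefficients and can be absorbed into the Laurent monomial. Hence $\tilde E_{A'} = \pm\,\Delta_{A'}^{i(A')}\cdot(\text{monomial})$ for every configuration $A'$ supported on a segment, both for $e\cap A$ itself and for each $e\cap A_j$. Substituting these into the displayed relation, and collecting all the spurious monomial factors into a single Laurent monomial $\mu_e$ supported on $e\cap A$, produces exactly the claimed identity after matching the lattice indices via $i(e\cap A) = i(e\cap A,\ZZ^2)$ and $i(e\cap A_j)=i(e\cap A_j,\ZZ^2)$ (these equalities hold because $e$ is genuinely one-dimensional and all vertices of $Q$ lie in $A$, as used in Lemma~\ref{lm:SubdiagramVolIdentities}).

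The main point to be careful about, and the step I expect to be the real obstacle, is the bookkeeping of lattice normalizations: formula~\eqref{eq:initialE_A} as stated is for full-dimensional configurations $A\subset\ZZ^k$ with the volume normalized against $\ZZ^k$, whereas here $e\cap A$ is not full-dimensional in $\ZZ^2$. The correct move is to apply the \emph{adapted} version of~\eqref{eq:initialE_A} (the one valid without the hypothesis $i(A)=1$, cited from \cite[Chapter~10.1.E, Theorem~$12'$]{GKZ}) intrinsically to the rank-one lattice $\ZZ^2\cap\RR\tightcdot e$, so that all volumes and indices are computed inside $\RR\tightcdot e$; this is exactly the renormalization discussed in Remark~\ref{rem:GKZvsEsterov} and quantified by Lemma~\ref{lm:SubdiagramVolIdentities}. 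Once one commits to computing everything relative to $\ZZ^2\cap\RR\tightcdot e$, the exponents in the statement ($\vol_{\ZZ^2\cap\RR\cdot e}(e\cap Q_j)$, $i(e\cap A_j,\ZZ^2)$) fall out directly, and the remaining content is just the observation that one-dimensional principal determinants are discriminants up to monomials. I would also note that irreducibility of $\Delta_{e\cap A}$ over $\ZZ[c_a : a\in e\cap A]$ (a segment is never a pyramid) guarantees that the identity is genuine and not merely an equality up to lower-order terms, so that $\init_\ww$ can indeed be pulled through.
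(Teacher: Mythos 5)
Your proof is correct and coincides with the paper's approach: the paper's proof is simply the remark that the statement follows by applying~\eqref{eq:StartingEqn} to the configuration $e\cap A$ with the induced subdivision $\cP_e=\{(e,\,e\cap A_i): i\in I,\ \dim(e\cap Q_i)=1\}$, and your proposal unpacks exactly that specialization (splitting~\eqref{eq:StartingEqn} back into its ingredients~\eqref{eq:initialE_A} and~\eqref{eq:ProdFormulaGKZ}, noting that for a segment the face discriminants are monomials absorbed into $\mu_e$). You are more explicit about the lattice normalization for the non-full-dimensional configuration $e\cap A$, which the paper leaves implicit, but the argument is the same.
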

\begin{proof}
  The result is an immediate consequence of~\eqref{eq:StartingEqn}
  where we replace the starting configuration $A$ by $e\cap A$ and the
  induced subdivision ${\cP}$ by $\cP_{e}:= \{(e,e\cap A_i): i\in I,
  \dim(e\cap Q_i)=1\}$.
\end{proof}

\begin{lemma}\label{lm:exponents}
  For every $j\in I$ and every $e\in \mathcal{E}_j$, the quantities
  $i(e\cap A_j,\ZZ^2)u(e\cap A_j,A_j)$ and $i(A_j)$ are integer
  multiples of $i(A)$. If $e\in \mathcal{E}_j^{\outerEdge}$, then
  $i(e\cap A_j,\ZZ^2)u(\RR\tightcdot e\cap A,A)$ also lies in
  $i(A)\tightcdot \ZZ$.
\end{lemma}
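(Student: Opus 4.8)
The plan is to deduce all three divisibility statements directly from the subdiagram-volume identity of Lemma~\ref{lm:SubdiagramVolIdentities} combined with the multiplicativity of lattice indices. First I would fix notation: since $A$ is full-dimensional in $\ZZ^2$ and each $Q_j=\conv(A_j)$ is a maximal (hence $2$-dimensional) cell of the subdivision $\cP$, every $A_j$ is full-dimensional as well, so $\ZZ\tightcdot A_j \subseteq \ZZ\tightcdot A \subseteq \ZZ^2$ are rank-$2$ sublattices of finite index, and $i(A)=[\ZZ^2:\ZZ\tightcdot A]$, $i(A_j)=[\ZZ^2:\ZZ\tightcdot A_j]$. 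I would also recall from~\cite[Chapter~5]{GKZ} that the lattice indices $i(\,\cdot\,,\,\cdot\,)$ and subdiagram volumes $u(\,\cdot\,,\,\cdot\,)$, $u(\ZZ\tightcdot B/(\,\cdot\,))$ appearing here are positive integers. The divisibility $i(A)\mid i(A_j)$ is then immediate from $i(A_j)=[\ZZ^2:\ZZ\tightcdot A]\,[\ZZ\tightcdot A:\ZZ\tightcdot A_j]=i(A)\,[\ZZ\tightcdot A:\ZZ\tightcdot A_j]$.

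For the quantity $i(e\cap A_j,\ZZ^2)\,u(e\cap A_j,A_j)$ I would apply Lemma~\ref{lm:SubdiagramVolIdentities} with $A$ replaced by $A_j$ and the face $F$ replaced by the edge $e$ of $Q_j$, obtaining
\[
i(e\cap A_j,\ZZ^2)\,u(e\cap A_j,A_j)=i(A_j)\,i(e\cap A_j,\ZZ\tightcdot A_j)\,u(\ZZ\tightcdot A_j/(e\cap A_j)).
\]
The right-hand side is a positive-integer multiple of $i(A_j)$, which is in turn a multiple of $i(A)$ by the previous step, so $i(A)\mid i(e\cap A_j,\ZZ^2)\,u(e\cap A_j,A_j)$. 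For the last assertion, let $e\in\mathcal{E}_j^{\outerEdge}$ and let $\sigma$ be the unique edge of $Q$ containing $e$, so that $\RR\tightcdot e\cap A=\sigma\cap A$ and $u(\RR\tightcdot e\cap A,A)=u(\sigma\cap A,A)$. Applying Lemma~\ref{lm:SubdiagramVolIdentities} to $A$ itself with $F=\sigma$ gives $i(\sigma\cap A,\ZZ^2)\,u(\sigma\cap A,A)\in i(A)\ZZ$. Since $e\cap A_j\subseteq\sigma\cap A$ are collinear configurations spanning the same line direction, their difference lattices satisfy $\ZZ\tightcdot(e\cap A_j)\subseteq\ZZ\tightcdot(\sigma\cap A)\subseteq\ZZ^2\cap\RR\tightcdot e$, whence $i(e\cap A_j,\ZZ^2)=i(\sigma\cap A,\ZZ^2)\,[\ZZ\tightcdot(\sigma\cap A):\ZZ\tightcdot(e\cap A_j)]$ is a positive-integer multiple of $i(\sigma\cap A,\ZZ^2)$; multiplying by $u(\sigma\cap A,A)$ then yields $i(e\cap A_j,\ZZ^2)\,u(\RR\tightcdot e\cap A,A)\in i(A)\ZZ$.

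The steps above are all short, and I do not expect a genuine obstacle: Lemma~\ref{lm:SubdiagramVolIdentities} does the real work, and the rest is multiplicativity of indices in towers of finite-index sublattices. The only point requiring care is the bookkeeping of which ambient lattice each index $i(\,\cdot\,,\,\cdot\,)$ refers to, together with the observation that shrinking a boundary edge $\sigma$ of $Q$ to a sub-edge $e\cap A_j\subseteq\sigma\cap A$ multiplies the corresponding index $i(\,\cdot\,,\ZZ^2)$ by a positive integer, so that divisibility by $i(A)$ is preserved throughout.
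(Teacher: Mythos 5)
Your proof is correct and follows essentially the same route as the paper: the divisibility $i(A)\mid i(A_j)$ from $i(A_j)=i(A)[\ZZ\tightcdot A:\ZZ\tightcdot A_j]$, then Lemma~\ref{lm:SubdiagramVolIdentities} applied with ambient configuration $A_j$ and face $e$, and finally Lemma~\ref{lm:SubdiagramVolIdentities} applied to $A$ with $F=\sigma$ together with the lattice-index multiplicativity $i(e\cap A_j,\ZZ^2)=i(\sigma\cap A,\ZZ^2)[\ZZ\tightcdot(\sigma\cap A):\ZZ\tightcdot(e\cap A_j)]$. The paper's proof is identical in structure, merely writing $F$ where you write $\sigma$ and noting $F\cap A_j=e\cap A_j$, which you use implicitly.
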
 
\begin{proof}
  A simple calculation shows that $i(A_j)=i(A) [\ZZ\tightcdot
  A:\ZZ\tightcdot A_j]$ for all $j\in I$.  Our claim follows from
  Lemma~\ref{lm:SubdiagramVolIdentities}. Indeed, if $e\in
  \mathcal{E}_j$, then
\[
i(e\cap A_j,\ZZ^2)u(e\cap A_j,A_j)=i(e\cap A_j,A_j) \,i(A_j)\, u(\ZZ\tightcdot A_j/(A_j\cap e)).
\]
 Thus, $i(A_j)$ divides  $i(e\cap A_j,\ZZ^2)u(e\cap A_j,A_j)$ over $\ZZ$, and
 hence so
 does $i(A)$. 

 Finally, if $e\in \mathcal{E}_j^{\outerEdge}$, then $e=F\cap Q_j$ for
 a unique edge $F$ of $Q$. Notice that $F\cap A_j=e\cap
 A_j$. Lemma~\ref{lm:SubdiagramVolIdentities} implies that $i(A)$
 divides $i(F\cap A,\ZZ^2)u(F\cap A,A)$ over $\ZZ$. Since $i(F\cap
 A_j,\ZZ^2)=i(F\cap A, \ZZ^2)[\ZZ\tightcdot(F\cap A):
 \ZZ\tightcdot(F\cap A_j)]$, we conclude that $i(e\cap
 A_j,\ZZ^2)u(\RR\tightcdot e\cap A,A) \in i(A)\tightcdot \ZZ$.
\end{proof}

\begin{proof}[Proof of Theorem~\ref{thm:factorizationFormula}]
  By combining Lemma~\ref{lm:valueEdges} with
  expression~\eqref{eq:StartingEqn} we conclude that
  \begin{equation}
    \label{eq:StartingEqn2}
    \begin{split}
      \init_{\ww}(\Delta_A)^{i(A)}\!\!\!\!\!\!  \prod_{\substack{e\in
          \mathcal{E},j\in I\\ \dim(Q_j\cap e)=1}}
      \!\!\![\ZZ\tightcdot(e\cap A): \ZZ\tightcdot(e \cap
      A_j)]^{\vol_{\ZZ^2\cap \RR\tightcdot e}(Q_j\cap F)u(e\cap A,A)}
      \Delta_{e\cap A_j}^{i(e\cap A_j, \ZZ^2) u(e\cap A, A)}\\
      = \pm \,\mu'\,\prod_{j\in I}[\ZZ\tightcdot A: \ZZ\tightcdot
      A_j]^{\vol_{\ZZ^2}(Q_j)} \prod_{j\in I}\Delta_{A_j}^{i(A_j)}
      \prod_{j\in I} \prod_{e\in \mathcal{E}_j} \Delta_{e\cap
        A_j}^{i(e \cap A_j, \ZZ^2) u(e\cap A_j,A_j)},
    \end{split}
  \end{equation}
  where ${\mu'}=\mu(\prod_{e\in \mathcal{E}} \mu_e^{u(e\cap A,
    A)})^{-1}$ is a Laurent monomial.  All polynomials in
  \eqref{eq:StartingEqn2} are irreducible over $\ZZ$. Rearranging the
  terms, we obtain the desired factorization of
  $\init_{\ww}(\Delta_A)$. This follows by analyzing the edges $e\in
  \mathcal{E}_j$. If $e\in \mathcal{E}_j^{\intEdge}$, then $e$ lies in
  the boundary of exactly one other polytope, say $Q_l$, so $e\in
  \mathcal{E}^{\intEdge}_{j,l}$ and $e\cap A_j=e\cap A_l$. The
  polynomial $\Delta_{e\cap A_j}$ appears only on the right-hand side
  of~\eqref{eq:StartingEqn2} and its exponent equals $i(e\cap
  A_j,\ZZ^2)\big(u(e\cap A_j,A_j)+u(e\cap A_l,A_l)\big)$.  On the
  contrary, if $e\in \mathcal{E}_j^{\outerEdge}$, then $e\notin
  \mathcal{E}_l$ for any $l\neq j$. Rearranging terms
  in~\eqref{eq:StartingEqn2}, we conclude that the exponent of
  $\Delta_{e\cap A_j}$ in the factorization of
  $\init_{\ww}(\Delta_A)^{i(A)}$ equals $i(e\cap
  A_j,\ZZ^2)\big(u(e\cap A_j,A_j)-u(e\cap A,A)\big)$. Since
  $A_j\subset A$, this quantity is non-negative by
  \eqref{eq:subdiagramVol}.

Lemma~\ref{lm:exponents} ensures that all exponents
in~\eqref{eq:StartingEqn2} are non-negative integers divisible by
$i(A)$. Since all discriminants in~\eqref{eq:StartingEqn2} have
content one, we know that the rational number 
\[
 \prod_{j\in I}[\ZZ\tightcdot A: \ZZ\tightcdot
      A_j]^{\vol_{\ZZ^2}(Q_j)} \big( \prod_{\substack{e\in \mathcal{E},j\in I\\ \dim(Q_j\cap e)=1}}
      \!\!\![\ZZ\tightcdot(e\cap A): \ZZ\tightcdot(e \cap
      A_j)]^{\vol_{\ZZ^2\cap \RR\tightcdot e}(Q_j\cap F)u(e\cap A,A)}\big)^{-1}
\]
is in fact an integer number and its $i(A)$-th root also lies in
$\ZZ$. The latter is precisely the quantity $\lambda$ in the
statement.This concludes our proof.
\end{proof}

Theorem~\ref{thm:factorizationFormula} is particularly enlightening
when $A$ defines a cubic equation $g$, as in the case of plane elliptic
cubics. As usual, we write $\{c_a: a\in A\}$ for the coefficients of $g$.
 \begin{corollary}\label{cor:initialDiscriminant}
   Let $g$ be a cubic bivariate polynomial and let $A$ be the
   configuration of points that supports $g$. Let $\ww$ be the weight
   vector corresponding to the valuation of all coefficient of
   $g$. Let $\cP=\{(Q_j,A_j): j\in I\}$ be the maximal cells in the
   Newton subdivision of $g$. Assume $(1,1)$ is a vertex of this
   subdivision. Then
\begin{equation}
  \init_{\ww}(\Delta_A)=\lambda \,\underline{c_{a}}^{\alpha}\prod_{j\in I} \Delta_{A_j}^{i(A_j)},\label{eq:7}
\end{equation}
where $\lambda\in \ZZ$, $\underline{c_a}^{\alpha}$ is a Laurent
monomial, and the product runs over all non-defective $A_j$'s. The
discriminant $\Delta_{A_j}$ is evaluated on the restriction of $g$ to
those monomials supported on $A_j$.
 \end{corollary}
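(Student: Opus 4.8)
The plan is to specialize the general factorization formula of Theorem~\ref{thm:factorizationFormula} to the case where $A$ supports a cubic, so that $Q = \conv(A)$ is (a lattice polytope contained in) three times the standard $2$-simplex, and where $(1,1)$ is a vertex of the Newton subdivision $\cP$. The main point is to show that, under these hypotheses, \emph{all} the edge-discriminant factors $\Delta_{e\cap A_j}$ appearing in Theorem~\ref{thm:factorizationFormula} reduce to monomials, so that they may be absorbed into the Laurent monomial $\underline{c_a}^\alpha$. First I would recall that $i(A)=1$ here: the configuration of a cubic contains the three vertices $(0,0),(3,0),(0,3)$ of the big simplex, hence $\ZZ\tightcdot A = \ZZ^2$. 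This already simplifies the exponents in Theorem~\ref{thm:factorizationFormula}, since the $1/i(A)$ factors disappear and $i(A_j) = [\ZZ\tightcdot A : \ZZ\tightcdot A_j]$ by the computation in the proof of Lemma~\ref{lm:exponents}; thus the exponent of $\Delta_{A_j}$ becomes $[\ZZ\tightcdot A:\ZZ\tightcdot A_j] = i(A_j)$, matching the claimed formula~\eqref{eq:7}.

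The heart of the argument is the claim that every edge $e$ of $Q$ or of a cell $Q_j$ that can support a non-constant polynomial $\Delta_{e\cap A_j}$ must in fact give a constant (i.e.\ $A_j$ meets that edge in at most two points). For this I would use the combinatorial geometry of the cubic triangle together with the visibility of $(1,1)$. Since $(1,1)$ is a vertex of $\cP$, it is an interior lattice point used by the subdivision, and the only interior lattice point of $3\cdot\Delta_2$ is $(1,1)$ itself. A direct inspection then shows: (a) each \emph{boundary} edge $e$ of $Q$ has lattice length at most $3$, and the induced subdivision of $e$ by the cells $Q_j$ meeting it has each piece $e\cap Q_j$ of lattice length $\le 2$ precisely because no cell can contain three collinear boundary points and still be part of a subdivision in which $(1,1)$ is a vertex; (b) each \emph{internal} edge $e$ of the subdivision that does not touch $(1,1)$ separates two cells $Q_j, Q_l$, and again its lattice length is forced to be $\le 2$. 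In every such case $e\cap A_j$ has at most two points, so $\Delta_{e\cap A_j} = c_a$ is the coefficient of the unique interior-free monomial, i.e.\ a unit. Consequently all the edge-factor products in Theorem~\ref{thm:factorizationFormula} collapse into a single Laurent monomial, which I would fold into $\underline{c_a}^\alpha$ along with the Laurent monomial $\mu$ already present in that theorem. The constant $\lambda$ is exactly the one from Theorem~\ref{thm:factorizationFormula}; with $i(A)=1$ its $i(A)$-th root issue is vacuous and $\lambda\in\ZZ$.

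I expect the main obstacle to be a clean case analysis in step (a)–(b): one must rule out, using only the hypothesis that $(1,1)$ is a vertex of $\cP$ (equivalently, that $\Trop(g)$ has a cycle), that any maximal cell $Q_j$ of the Newton subdivision contains three or more collinear lattice points on a single edge in a way that would produce a genuine (non-monomial) one-variable discriminant factor. The cleanest route is probably to note that any cell $Q_j$ in a subdivision with $(1,1)$ as a vertex is contained in one of finitely many ``corner'' or ``band'' regions of $3\cdot\Delta_2$ cut out by $(1,1)$, enumerate the possible $Q_j$'s (they are exactly the shapes appearing in Figures~\ref{fig:shapesandfeeding} and~\ref{fig:figuresAlgorithm} together with the unimodular triangles), and observe that in each shape every edge has lattice length at most two except possibly the two parallel bases of the trapezoids of Figure~\ref{fig:trapezoid}; but those bases are internal edges shared between two cells whose union is again one of these small polygons, and Lemma~\ref{lm:trapezoidDiscriminant} with $s=1$ shows the relevant discriminant is a linear form, which can still be absorbed only if its extremal coefficients are units — which they are, since they are coefficients of $g$ supported at vertices of the subdivision. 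The remaining bookkeeping — tracking which $A_j$ are defective (these contribute $\Delta_{A_j}=1$ and drop out) and checking the exponent arithmetic $i(A_j)=[\ZZ\tightcdot A:\ZZ\tightcdot A_j]$ — is routine given Lemmas~\ref{lm:SubdiagramVolIdentities} and~\ref{lm:exponents}.
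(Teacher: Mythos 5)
Your high-level strategy --- specialize Theorem~\ref{thm:factorizationFormula}, note that $i(A)=1$ since $A$ contains the vertices $(0,0),(3,0),(0,3)$, and check that the exponent $[\ZZ\tightcdot A:\ZZ\tightcdot A_j]$ then equals $i(A_j)$ --- is correct. But the heart of your argument, that every outer-edge factor $\Delta_{e\cap A_j}$ reduces to a monomial because $e\cap A_j$ has at most two points, is false, and the supporting subclaim that ``no cell can contain three collinear boundary points and still be part of a subdivision in which $(1,1)$ is a vertex'' is simply wrong. The trapezoid $Q_j$ with vertices $(0,0),(2,0),(1,1),(0,1)$ --- the dual cell from Figure~\ref{fig:trapezoid} that is central to the whole paper --- is a valid cell of such a subdivision, and its bottom edge $e$ meets $A$ in three collinear points $(0,0),(1,0),(2,0)$; the corresponding factor $\Delta_{e\cap A_j}=c_{1,0}^2-4c_{0,0}c_{2,0}$ is a genuine quadratic discriminant, not a monomial. (The pyramid cell $(0,0),(3,0),(1,1)$ even has four collinear boundary points.) There is also an internal inconsistency in your steps (a)--(b): you deduce ``$e\cap A_j$ has at most two points'' from a lattice-length bound of $\le 2$, but a segment of lattice length $2$ contains three lattice points.

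The mechanism that actually makes the outer-edge factors disappear is not that their discriminants are trivial but that their \emph{exponents} vanish. The exponent of $\Delta_{e\cap A_j}$ for $e\in\mathcal{E}_j^{\outerEdge}$ in Theorem~\ref{thm:factorizationFormula} is $i(e\cap A_j)\bigl(u(e\cap A_j,A_j)-u(\RR\tightcdot e\cap A,A)\bigr)/i(A)$. Because $(1,1)$ --- the unique interior lattice point of the Newton polygon of a cubic, and by hypothesis a vertex of the subdivision --- sits at lattice height $1$ above every boundary edge of $Q$, one checks that each non-pyramid cell $Q_j$ with an outer edge $e$ also has a point of $A_j$ at height $1$ above $e$; hence $u(e\cap A_j,A_j)=u(\RR\tightcdot e\cap A,A)=1$ and the exponent is zero. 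For the internal edges, the correct (and sharper) observation is that they all have lattice length exactly $1$: an internal lattice segment of length $\ge 2$ in the cubic's Newton polygon would contain $(1,1)$ as an interior lattice point, contradicting that $(1,1)$ is a vertex, so $\Delta_{e\cap A_j}=1$ there. Together with the observation that pyramids are defective (so $\Delta_{A_j}=1$ for pyramidal cells), this gives the paper's proof.
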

 \begin{proof}
   The result follows from Theorem~\ref{thm:factorizationFormula}
   after the following observations. First, note that any pyramid is a
   defective configuration. All internal edges in the Newton
   subdivision of $g$ have lattice length one, hence $\Delta_{e\cap
     A_j}=1$ for all $e\in \mathcal{E}_j^{\intEdge}$.  Finally, assume
   $e\in \mathcal{E}_j^{\outerEdge}$ is not a pyramid. An easy
   calculation shows that $u(e\cap A_j,A_j)=u(\RR\tightcdot e\cap
   A,A)=1$ because $(1,1)$ is an internal edge of $g$.
 \end{proof}
 As a consequence, we give an alternative elementary proof of
 Theorem~\ref{thm:repairEll}. Indeed, write $\Delta$ for the
 discriminant of the elliptic cubic equation evaluated at the
 coefficients of $g$. We view $\Delta$ as an element of $\PS$. By
 Corollary~\ref{cor:initialDiscriminant}, the initial form of $\Delta$
 factors as the product~\eqref{eq:7}. We write $g_{|_{A_j}}$ for the
 restriction of $g$ to those monomials supported on $A_j$.

 Notice that our first proof of Theorem~\ref{thm:repairEll} relied on
 an iterative usage Theorem~\ref{thm:redVertex}, where both
 implications are needed. Going back to Theorem~\ref{thm:redVertex},
 we see that one implication, namely the one proved in
 Lemma~\ref{lem:noDecontractionIfLocallySmooth}, relies on Berkovich
 theory. The other, Lemma~\ref{lm:VanishingDecontractsVertex}, is
 purely combinatorial, but requires a vanishing local discriminant as
 input, in order to repair a problematic vertex. In our first proof,
 we have deduced the local vanishing discriminant again using
 arguments from Berkovich theory, more precisely, Corollaries
 \ref{cor:nonHomeoCycle} and \ref{cor:highMultAndDiscrim}. In the
 elliptic cubic case, we can guarantee the vanishing of the local
 discriminant by means of Corollary~\ref{cor:initialDiscriminant},
 without relying on Lemma~\ref{lem:noDecontractionIfLocallySmooth}.

\begin{proof}[Proof of Theorem~\ref{thm:repairEll}]
  Assume $\Trop(g)$ has a visible cycle but the $j$-invariant of $g$
  does not have the expected valuation. Then, $(1,1)$ is a vertex in
  the Newton subdivision of $g$ and \cite[Lemma 23]{KMM07} ensures
  that the expected initial form of $\Delta$ vanishes at $g$. We
  conclude that $\Delta_{A_j}(g_{|_{A_j}})=0$ for some $j\in I$. In
  particular, $A_j$ is non-defective so $Q_j$ cannot be a pyramid. The
  cubic condition implies that $Q_j$ is a trapezoid of height 1 with a
  base of length 1. The corresponding vertex $v$ dual to $A_j$ lies in
  the cycle of $\Trop(g)$ and is locally reducible. Using
  Lemma~\ref{lm:expectedHeights} and the proof of
  Lemma~\ref{lm:VanishingDecontractsVertex} we can prolong the cycle
  of the tropical elliptic cubic by a linear re-embedding induced by a
  linear tropical modification of the ambient space.

Conversely, assume none of the local discriminants $\Delta_{A_j}$
vanish when evaluated at $g_{|_{A_j}}$.
Corollary~\ref{cor:initialDiscriminant} ensures that $\Delta$ has the
expected initial form. We conclude that the $j$-invariant of $g$ has
the expected valuation and so the cycle in the tropical elliptic cubic
has the expected length.
\end{proof}
\section{Experimental Results}\label{sec:experiments}
The polyhedral nature of tropical plane curves allows for many
experimentations to devise algorithms to locally repair non-faithful
tropicalizations by means of tropical modifications. In this section we
provide three examples that shed light on some of the open questions
discussed earlier in this paper. We view them as starting points for further
investigations in this area.

Our first example extends the conclusion of Theorem
\ref{thm:redVertex} to a local reducible vertex of valency 6, which locally is the union of a tropical line with a reflected tropical line.
\begin{example}\label{ex:otherRedVertex}
Consider the plane curve in $(\PS^*)^2$ with defining equation
\[
g(x,y)=(t+2t^2)x^3y^3+(1-t^3)x^2y^2+(1+t)xy^3+(1-t^4)x^3+(1+3t^2)x^2+(1+6t)y^2+(1+t)y.
\]
The corresponding tropical plane curve is depicted in the left of
Figure~\ref{fig:projectionsSpecialRedVertex}. The vertex $v:=(0,0)$ is
locally reducible. One of its components is the tropical line
$F=\max\{X,Y,0\}$. We perform a linear tropical modification of
$\RR^2$ along $F$. We pick a lifting $f=x+ay+bz$ determined by special
choices of $a,b\in \PS$ with valuation 0 adapted to the curve
$\Trop(g)$. This modification produces six two-dimensional cones in
$\RR^3$, spanned by the rays of $F$ and $-e_3$. We label them
$\sigma_1,\ldots, \sigma_6$ as in
Figure~\ref{fig:SpecialRedVertex}. The cells $\sigma_4,\sigma_5$ and
$\sigma_6$ are the ones attached to the tropical line $F$. For
example, $\sigma_1$ is defined by the system $X,Y\leq 0$ and $Z=0$,
whereas $\sigma_4$ is determined by $X=0$ and $Y,Z\leq 0$.

As Figure~\ref{fig:SpecialRedVertex} illustrates, we assign different
colors to the intersection of $\Trop(I_{g,f})$ with each cone
$\sigma_i$ for $i=1,\ldots, 6$. This helps us see the image of each
piece under the three projections $\pi_{XY}, \pi_{XZ}$ and $\pi_{ZY}$,
given in the top row of
Figure~\ref{fig:projectionsSpecialRedVertex}. From left to right,
these projections are defined by the polynomials $g(x,y)$,
$g_1:=g(z-ay-b,y)$ and $g_2:=g(x,(z-x-b)/a)$, respectively.  We need
all three projections in order to reconstruction $\Trop(I_{g,f})$. As
in the proof of Lemma~\ref{lem:ModifViaProjections}, the
multiplicities on the edges of $\Trop(I_{g,f})$ mapping to the
tropical line defined by $F$ on each of the three projections are
determined by the push-forward formula for multiplicities.
 \begin{figure}[tb]
   \centering
     \includegraphics[scale=0.13]{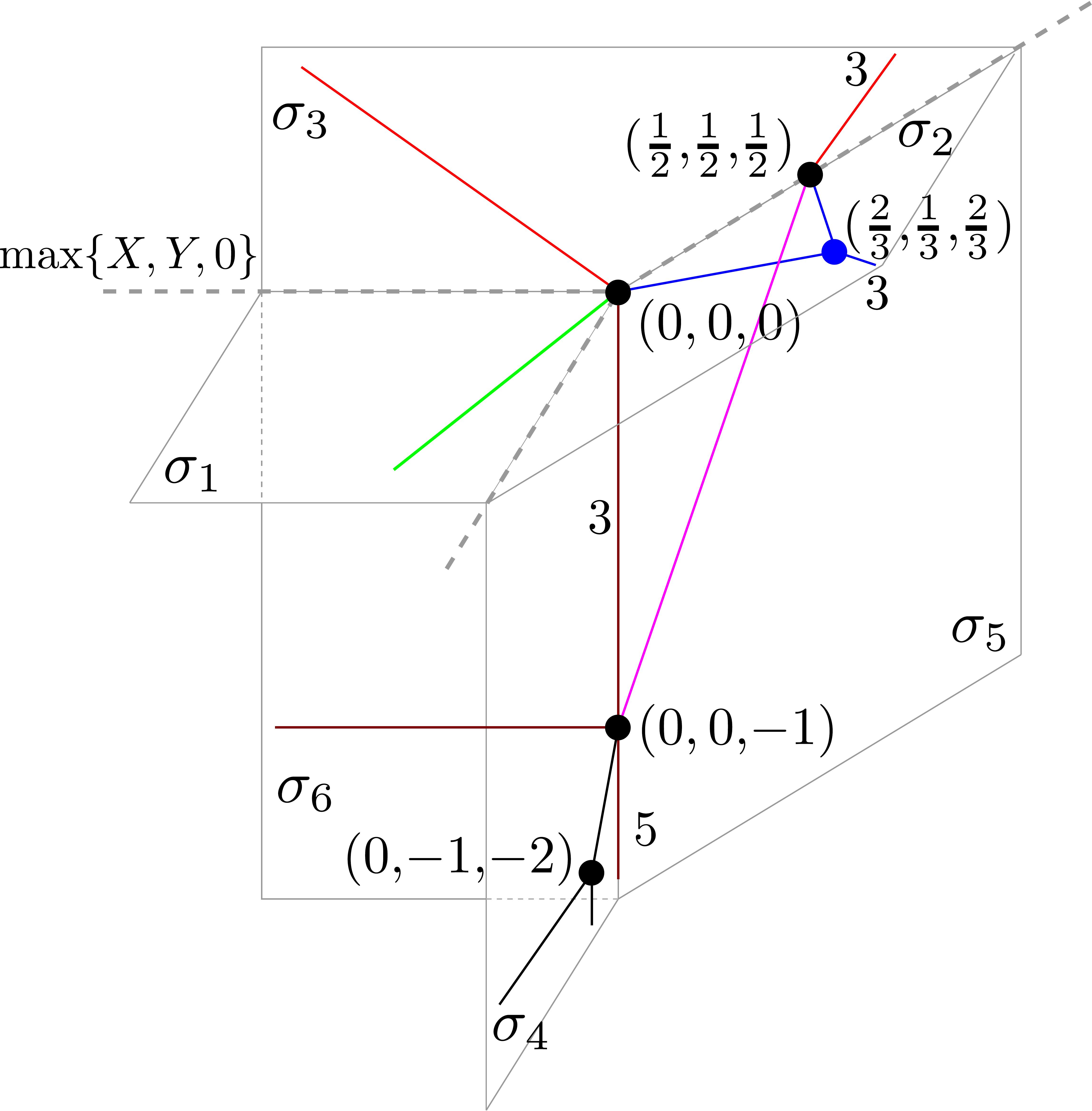}
     \caption{A special linear tropical modification of a locally
       reducible vertex along the tropical line defined by
       $F=\max\{X,Y,0\}$ (indicated with a dashed line), its lifting
       $f=x+y+1$ and its effect on a given tropical plane curve.}
   \label{fig:SpecialRedVertex}
 \end{figure}

 The choice of scalars $a,b$ is done to emulate the conclusions of
 Lemmas~\ref{lm:expectedHeights}
 and~\ref{lm:VanishingDecontractsVertex}.  We analyze the contribution
 of the terms $c_{ij}(z-ay-b)^iy^j$ coming from all marked points in
 the the cell dual to $v$, in all the monomials $y^k$ of $g_1$, for
 $k=j,\ldots, i+j$. We proceed analogously with the curve defined by
 $g_2$ and see the contribution of $c_{ij}x^i(y-x-b)^j/a^j$ to the
 monomials $z^k$ with $k=i,\ldots, i+j$. The coefficients of these
 monomials have expected valuation 0. Our choice of $a,b$ must be such
 that some of them have strictly positive valuation. To achieve this,
 we use the local discriminant $\Delta_{v^{\vee}}$.

 An easy calculation shows that this local discriminant vanishes at
 $\init_{v}(g)$, as in the situations covered in
 Section~\ref{sec:repa-trop}. Therefore, we have a chance of having a
 non-faithful tropicalization locally around $(0,0)$. Indeed, by
 choosing $a=b=1$ we see that this is the case: we manage to make the
 initial term of all monomials $y^k$ ($k=j,\ldots, i+j$) in $g_1$ and
 $z^k$ ($k=i,\ldots, j$) in $g_2$ drop together. These monomials are
 the red points in Figure~\ref{fig:projectionsSpecialRedVertex}.  As a
 consequence, a bounded edge in $\Trop(I_{g,f})$ with direction $-e_3$
 maps to $(0,0)$ under the projection $\pi_{XY}$. We see this
 phenomenon in Figure~\ref{fig:SpecialRedVertex}.
 \begin{figure}[htb]
 \begin{minipage}[r]{0.27\linewidth}
\includegraphics[scale=0.2]{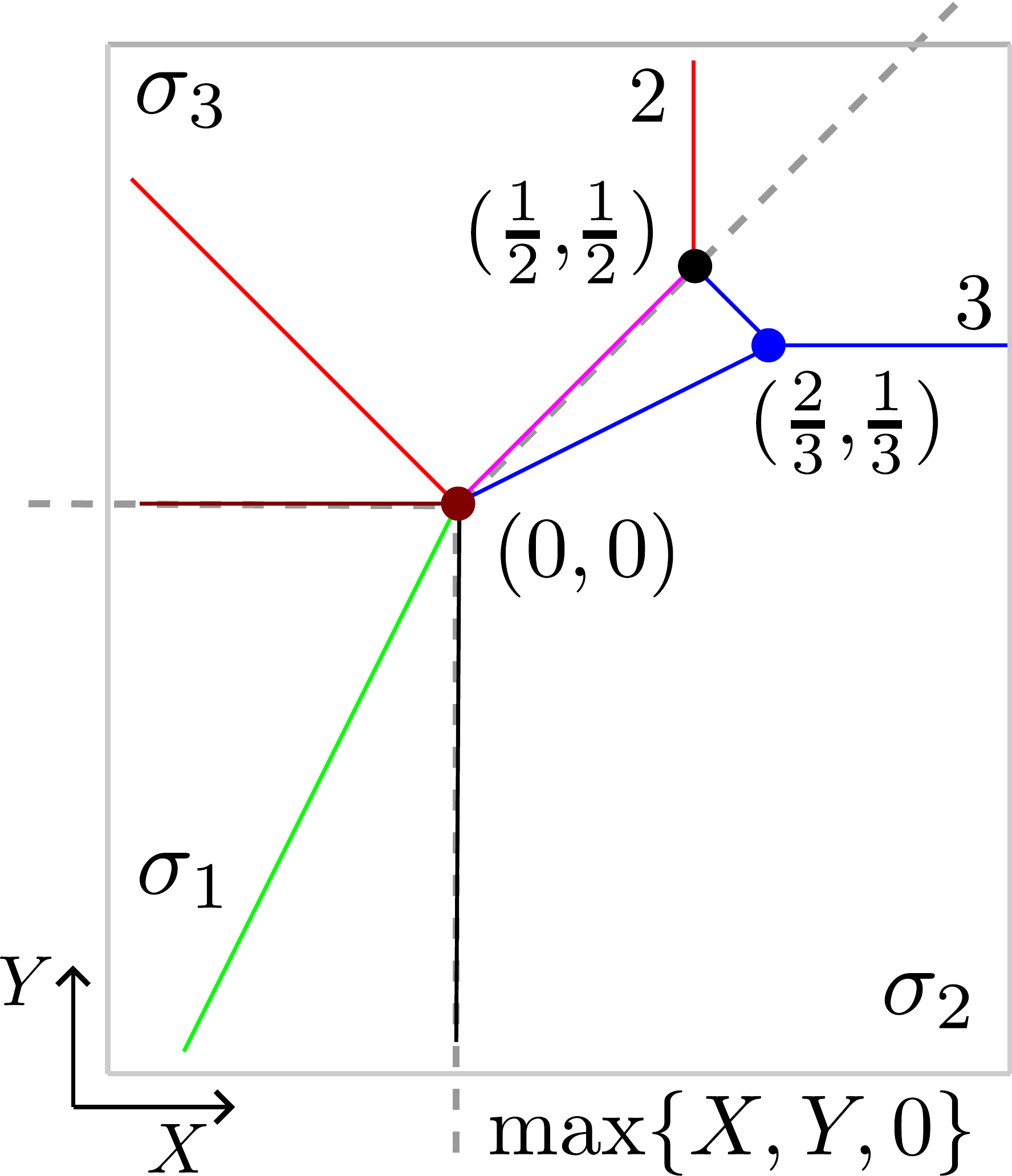}
\end{minipage}
  \begin{minipage}[c]{0.27\linewidth}
  \includegraphics[scale=0.2]{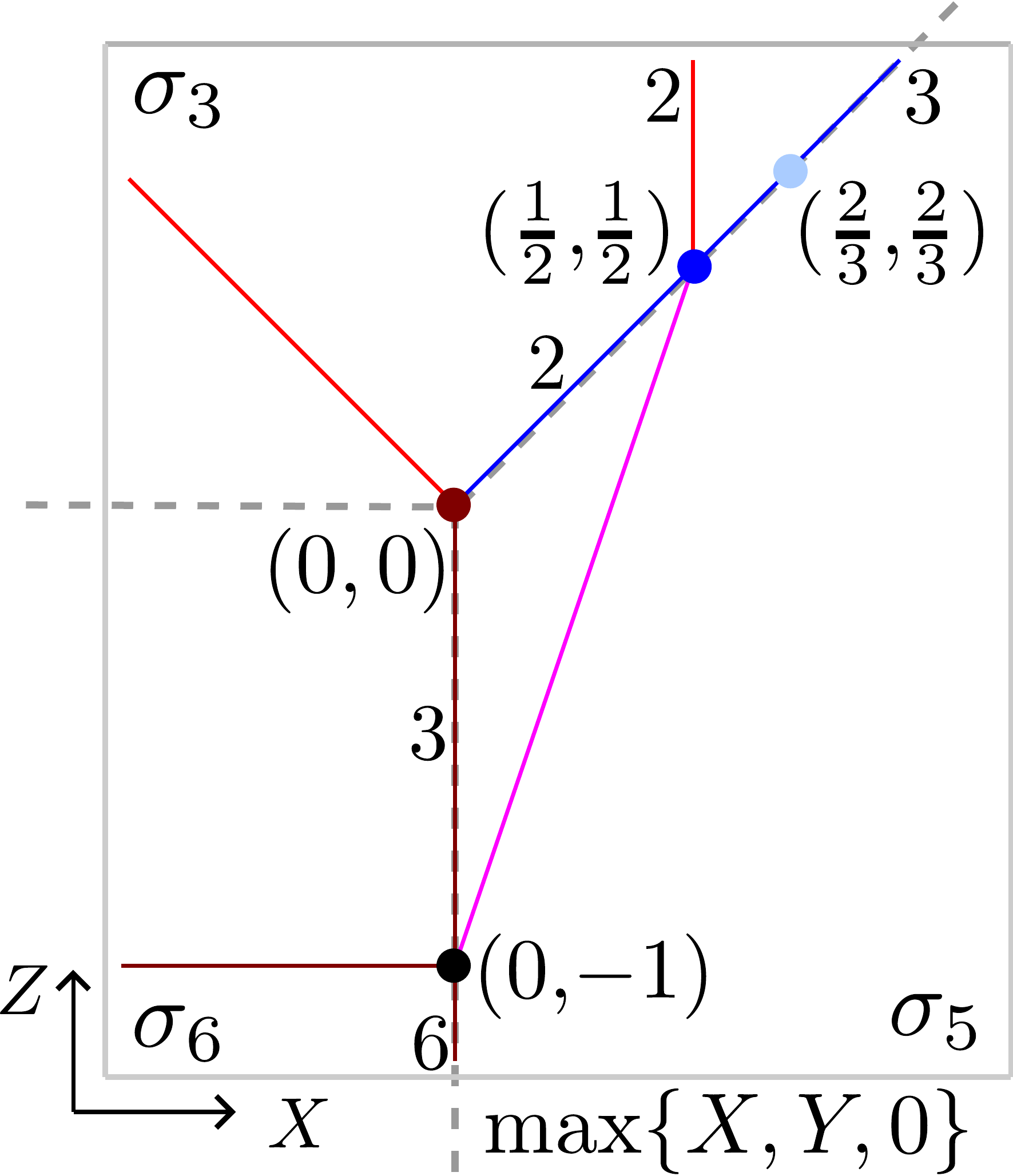}
\end{minipage}
  \begin{minipage}[r]{0.44\linewidth}
  \includegraphics[scale=0.2]{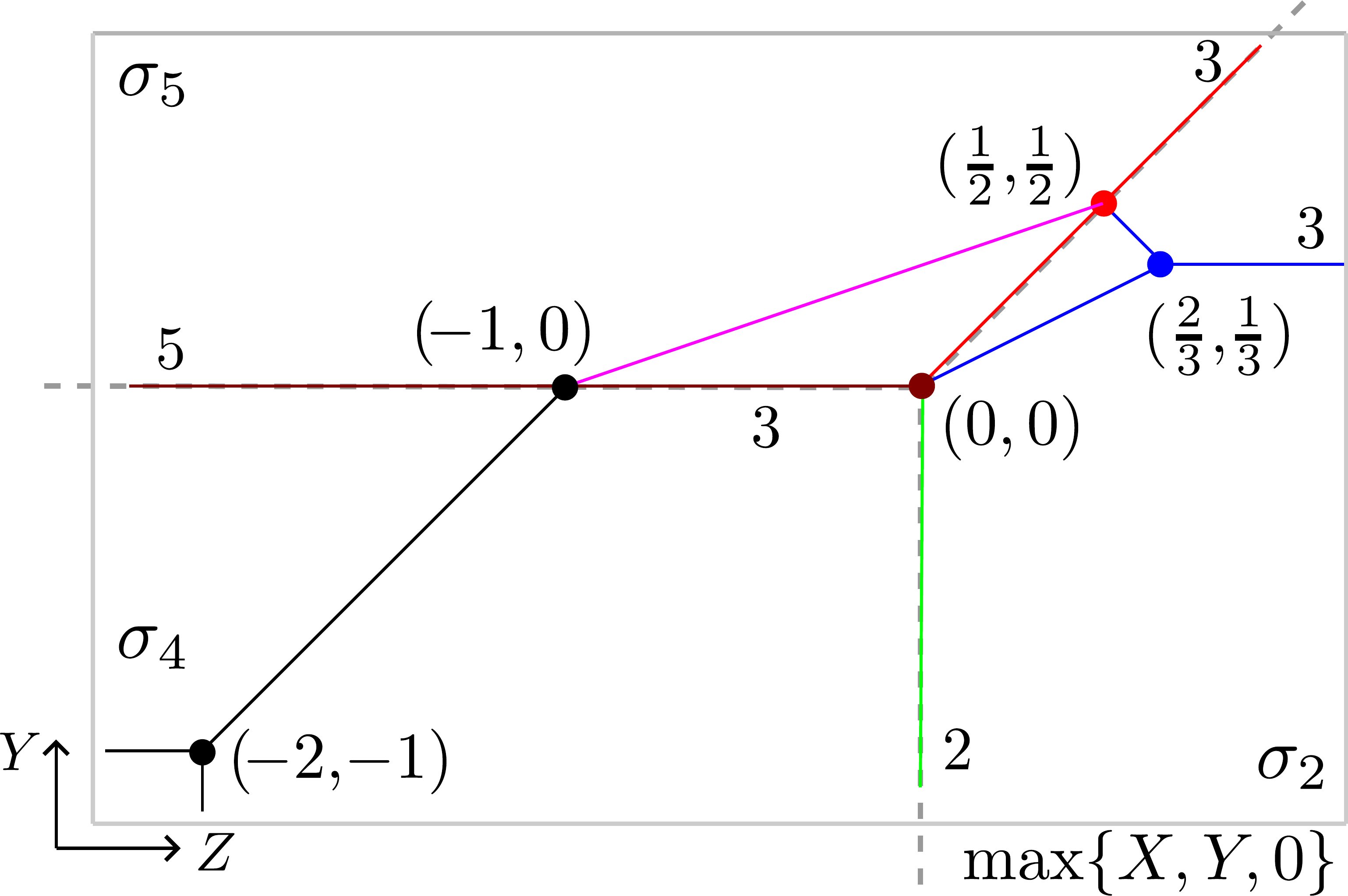}
\end{minipage}

\begin{minipage}[r]{0.27\linewidth}
\includegraphics[scale=0.25]{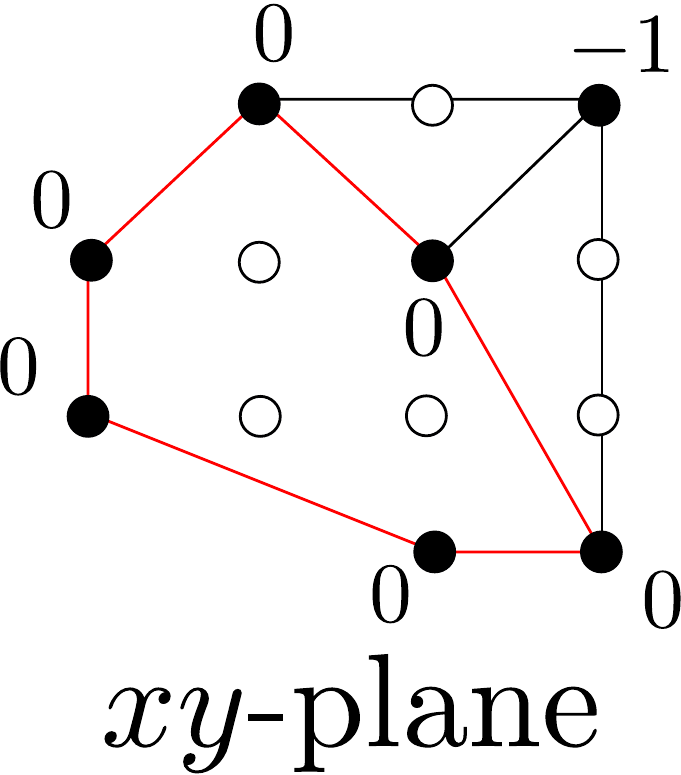}
\end{minipage}
  \begin{minipage}[c]{0.27\linewidth}
 \includegraphics[scale=0.25]{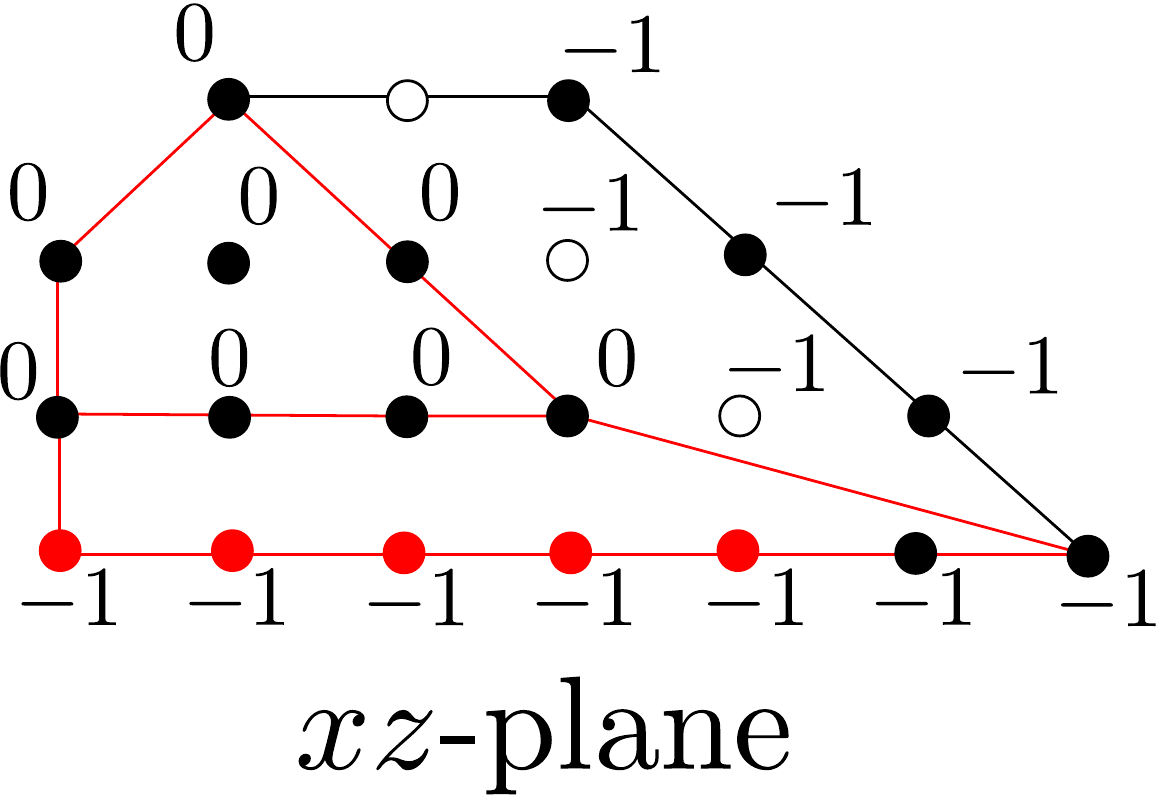}
\end{minipage}
  \begin{minipage}[c]{0.32\linewidth}
\hspace{10ex} \includegraphics[scale=0.25]{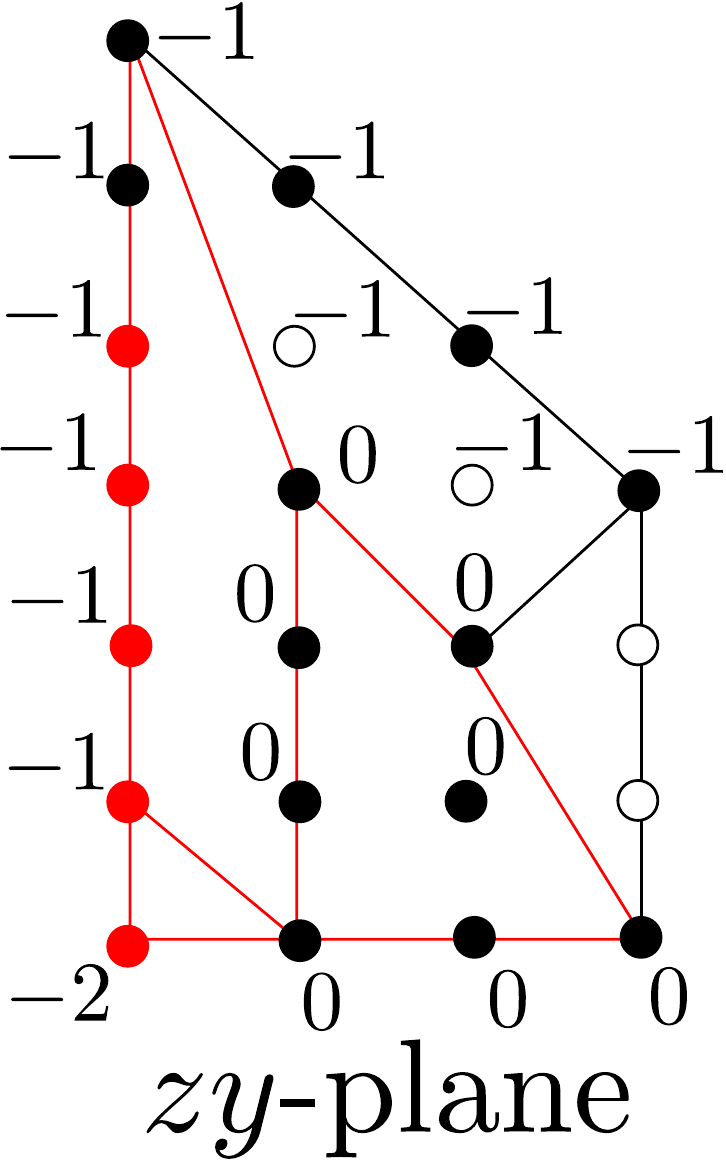}
\end{minipage}
\caption{From left to right: Projections of the tropical curve (with
  dual Newton subdivisions) from Figure~\ref{fig:SpecialRedVertex} to
  the planes $XY$, $XZ$ and $ZY$-respectively.}
   \label{fig:projectionsSpecialRedVertex}
 \end{figure}
\end{example}

A second natural question that arises from our results from
Section~\ref{sec:repa-trop} is the following. Consider a plane
elliptic cubic with bad reduction defined by $g$, but whose
tropicalization $\Trop(g)$ contains no cycle. Can we use linear
tropical modifications to make this cycle appear via a special linear
re-embedding? If so, does this method generalize to other Mumford
curves? A positive answer to this question would provide a
combinatorial effective way to test if a plane curve is Mumford.

As observed in Remark~\ref{rem:unfoldingAndTrivalentOutcome},
Theorem~\ref{thm:fatEdge} gives a positive answer to the above
question when $\Trop(g)$ contains a bounded edge of multiplicity $2$
with non-vanishing discriminant (as we saw in
Example~\ref{ex:unfoldingDoubleEdge}). Our next example produces a
cycle by modifying $\RR^2$ along a high-multiplicity
end. 
\begin{example}\label{ex:CyleAppears} 
We consider the smooth plane elliptic cubic $C$ with defining equation
\[
g(x,y)=t^{10}\,x^3+x^2y+xy^2+t^{11}\,y^3+3xy-1.
\]
Its $j$-invariant has valuation $\val_j(g)=-10<0$. The tropical curve
$\Trop(g)$ is depicted in the top-right corner of
Figure~\ref{fig:MakeVisible}. It contains no cycle and no bounded edge
of high multiplicity.

The induced Newton subdivision of $g$ appears on the top-right of
Figure~\ref{fig:MakeVisible}. The triangle containing $(1,1)$ in
its interior is dual to the vertex $(0,0)$ of $\Trop(g)$. Since the
discriminant $\Delta_{(0,0)^{\vee}}$ vanishes at $\init_{(0,0)}(g)$,
we know the tropicalization map may not be faithful locally around
$(0,0)$.  Indeed, using three consecutive linear tropical
modifications we can find a linear re-embedding of $C$ where 
the curve $\Trop(I_{g,f_1,f_2,f_3})$ in the bottom-left of
Figure~\ref{fig:SkeletonMakeCycleVisible} contains a  cycle of length $10$.

 The three projections used to reconstruct $\Trop(I_{g,f_1,f_2,f_3})$
 are described in Figure~\ref{fig:MakeVisible}, with the following
 convention. Each cell $\sigma_{ijk}$ with $i,j,k\in \{1,2,3\}$
 encodes the intersection of the cells $\sigma_i(1)$, $\sigma_j(2)$
 and $\sigma_k(3)$ corresponding to each one of the three linear
 tropical modifications.

 The three tropical modifications are chosen as follows. First, we
 attempt to add a downward end $e$ of multiplicity 2 to the vertex
 $(0,0)$ in the cell $\sigmaint_3$. We do so by means of a
 modification along $X=0$, picking a lifting function $f_1=x+\AA_1$
 with $\val(\AA_1)=0$. Our choice of $\AA_1$ must be such that the
 discriminant of the attached edge $e$ vanishes when evaluated at
 $\init_{\pi_{ZY}(e)}(g(z-\AA_1,y))$, to have a chance for the map
 $\trop$ to be non-faithful on $e$. Indeed, if this were not the case,
 the end $e$ will be the image of two ends in $\Sigma(\widehat{C})$
 by~\cite[Proposition 4.24]{BPR11} and the procedure will not yield a
 bounded edge of higher multiplicity.  We find
 $\AA_1':=\init_t(\AA_1)$ using the well-known quadratic formula
\begin{equation}
\AA_1'(\init_t(c_{1,1}) + \init_t(c_{2,1})\AA_1')^2-4 \init_t(c_{0,0})\init_t(c_{1,2}) = \AA_1'(3+\AA_1')^2+4=(\AA_1'+1)^2(\AA_1'+4)=0.\label{eq:5}
\end{equation}
We choose $\AA_1=\AA_1'=-1$.  As a result, the Newton subdivision of
$g_1(z,y):=g(z-1,y)$ contains a quadrilateral with $(1,1)$ as its
unique interior point. This quadrilateral is dual to the vertex
$(0,0)$ in $\Trop(g_1)$.  For this lifting $f_1$ (and no other), both
discriminants $\Delta_{(0,0)^{\vee}}$ and $\Delta_{e^{\vee}}$ vanish
at $\init_{(0,0)}(g_1)$ and $\init_{e}(g_1)$, respectively. The curve
$\Trop(g_1)$ is depicted in the top-left of
Figure~\ref{fig:MakeVisible}.

Next, we aim to transform our horizontal multiplicity 2 end $e$ in
$\Trop(g_1)$ into a bounded edge of multiplicity 2, and use
Theorem~\ref{thm:fatEdge} to unfold it.  We modify the $ZY$-plane
along the horizontal line $Y=0$, picking a new variable $v$ and a
lifting $f_2= y +{\AA}_2$, with $\val({\AA}_2)=0$. Our choice of
${\AA}_2$ is subject to the constraint that the coefficients of $1$,
$z$ and $v$ in $g_2(z,v):=g_1(z,v-{\AA}_2)$ have higher valuation than
expected.

Our choice of $\AA_1$ imposed by condition~\eqref{eq:5} allows us to
find the desired value for ${\AA_2}$. In this particular example, the
value $\AA_2=-1$ satisfies all three requirements. Its initial term is completely
determined by the initial term of the constant coefficient of $g_2$,
namely $(\init_t(\AA_2)+1)^2$.  The tropical curve $\Trop(g_2)$ is depicted
on the lower-right of Figure~\ref{fig:MakeVisible}. It contains a
bounded, multiplicity 2 edge $e'$ inside the line $Z=V$.

In our final step, we look at the discriminant associated to the edge
$e'$. By construction, $\Delta_{e'^{\vee}}$ does not vanish at
$\init_{e'}(g_2)$. Even though the vertex $(0,0)$ has degree 4 in
$g_2$, we can mimic the proof of Theorem~\ref{thm:fatEdge} to conclude
that we can unfold $e'$ via a tropical modification. Indeed, the
linear tropical modification along the line $Z=V$ with lifting
$f_3=z+(1+\sqrt{3})/2\,v$ unfolds the edge $e'$ and yields a faithful
embedding on the visible cycle of $\Trop(g,f_1,f_2,f_3)$. The
projection to the $UV$-plane contains this cycle and is depicted in
the bottom-left of Figure~\ref{fig:MakeVisible}.


 \begin{figure}[tb]
    \centering
   \includegraphics[scale=0.2]{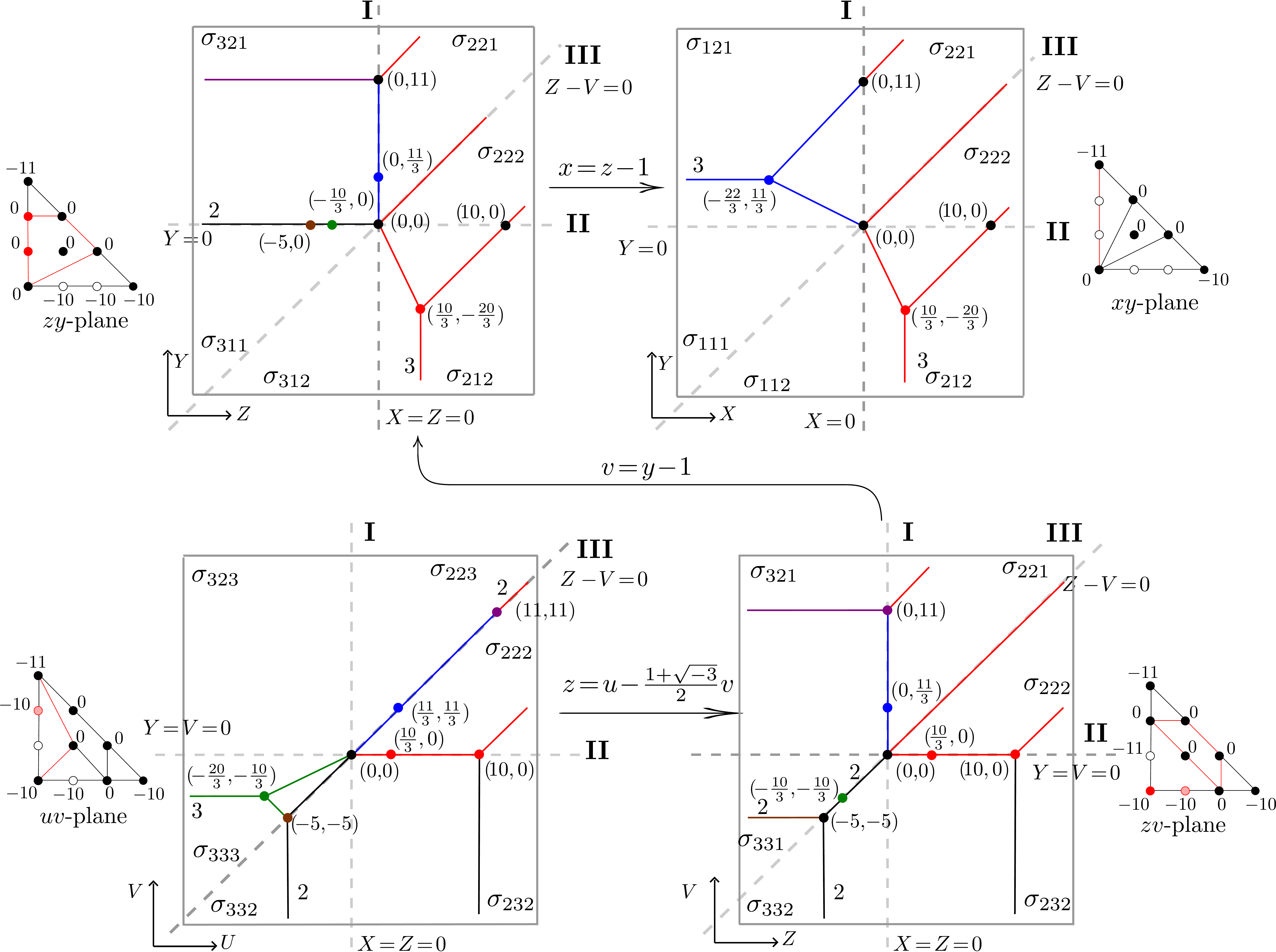}
    \caption{From right to left and top to bottom: Using 3 linear tropical modifications we make the cycle of a tropical plane elliptic cubic visible. 
 The red lines and dots on the side pictures indicate the cells of the  Newton subdivisions of 
 $g,g_1,g_2$ and $g_3$, and the monomials affected by each coordinate change.}
    \label{fig:MakeVisible}
  \end{figure}

  \begin{figure}[tb]
    \centering
    \includegraphics[scale=0.11]{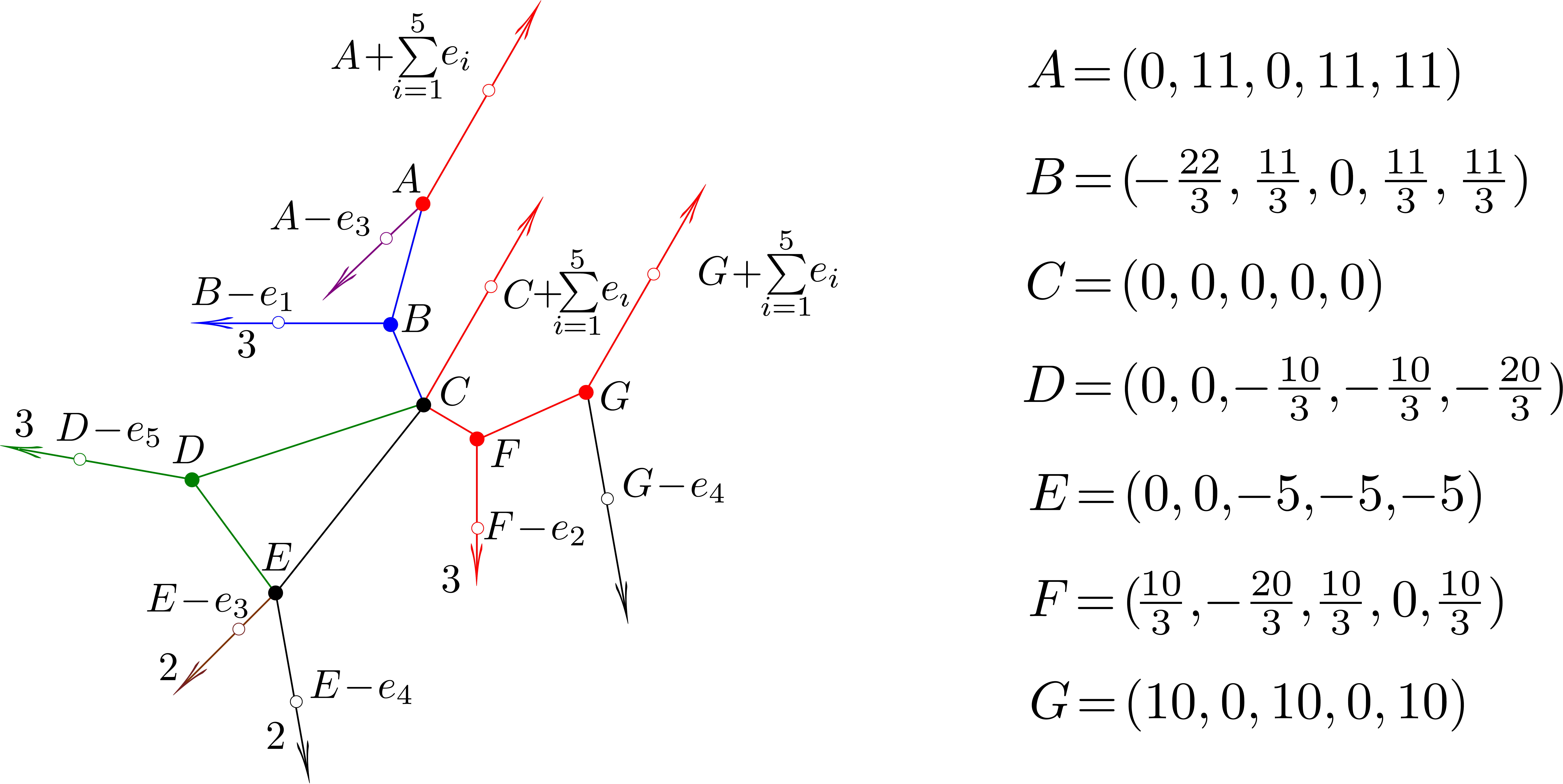}
    \caption{The tropicalization of $I_{g,f_1,f_2,f_3}$ repairs the
      tropical plane elliptic cubic by three linear tropical
      modifications of $\RR^2$ (adding variables $z,v,u$,
      respectively). Here, $f_1=x+1$, $f_2=y+1$ and
      $f_3=z+\frac{1+\sqrt{-3}}{2}v$. We label the coordinates in
      $\RR^5$ by $(X,Y,Z,V,U)$. The color coding matches that of
      Figure~\ref{fig:MakeVisible}.}
\label{fig:SkeletonMakeCycleVisible}
  \end{figure}

  It is worth noticing that no other choice of $\AA_1'$ in our lifting
  function $f_1$ for our first modification allows us to produce a
  bounded multiplicity 2 edge in the tropical curve $\Trop(g_1)$ with
  a second linear tropical modification. As we mentioned earlier, if
  we choose $\AA_1'=-4$, any subsequent linear tropical modification
  will split the multiplicity 2 end on the top-left of
  Figure~\ref{fig:MakeVisible} into two multiplicity 1 ends after
  shifting the endpoint $(0,0)$ in the southwest direction.
\end{example}

As we saw in the previous example, linear tropical modifications can
help us draw some conclusions about the tropicalization
map~\eqref{eq:tropMap}. In Figure~\ref{fig:SkeletonMakeCycleVisible},
the extended skeleton
$\widehat{\Sigma}(I_{g,f_1,f_2,f_3})\smallsetminus
D_{I_{g,f_1,f_2,f_3}}$ contains a subgraph homeomorphic to
$\Trop(I_{g,f_1,f_2,f_3})$ via the map $\trop$. Furthermore, the map
$\trop$ is an isometry over the bounded part of
$\Trop(I_{g,f_1,f_2,f_3})$.  Unfortunately, this procedure does not
always yield a complete description of these skeleta.

Our last shows that linear tropical modifications may not
suffice to unfold an end of a tropical plane elliptic cubic to produce
a cycle.
\begin{example}\label{ex:UnfoldImpossible}
  We consider the plane elliptic cubic $C$ defined by the equation
\[
g(x,y)=t^{10}x^3+xy^2+t^{11}y^3+x^2+4xy+2x+1.
\]
This cubic satisfies $\val(j(g))= -10<0$. As we can see from the right
side of Figure~\ref{fig:UnfoldImpossible}, the tropical curve
$\Trop(g)$ contains no cycle and has no bounded edge of higher
multiplicity. The triangle dual to the vertex $(0,0)$ has discriminant
$\Delta_{(0,0)^{\vee}}= d^4-8bd^2q+16b^2q^2-64acq^2$, which vanishes
at $\init_{(0,0)}(g)$. Likewise, the discriminant of the end $e$
adjacent to $(0,0)$ equal $\Delta_{e}=b^2-4ac$ and also vanishes at
$\init_{e}(g)$. Thus, $\init_e(g)$ has a unique component of
multiplicity 2. Since the end $e$ has multiplicity two,
\cite[Proposition 4.24]{BPR11} and~\eqref{eq:tropMultVsRelMult} imply
that the fiber of $\trop$ over a generic point in $e$ has either size
2 (with relative multiplicities $1$) or size 1 (with relative
multiplicity 2).

We attempt to unfold the edge $e$ by a linear tropical modification
along the vertical line $X=0$.  We want the coefficient
$\tilde{c}_{0,0}$ to have strictly positive valuation.  Therefore, our
lifting function $f=x+\AA$ must satisfy $\val(\AA)=0$ and
$1+2\AA_0+\AA_0^2=0$ for $\AA_0=\init_t(\AA)$. There is a unique
choice for such $\AA_0$, namely $\AA_0=-1$. Unfortunately, the
coefficient $\tilde{c}_{1,0}$ also has strictly positive valuation and
the method fails to produce a bounded weight two edge, as we see on
the left of Figure~\ref{fig:UnfoldImpossible}.
\end{example}
 \begin{figure}[tb]
   \begin{minipage}[c]{.47\linewidth}
      \includegraphics[scale=0.2]{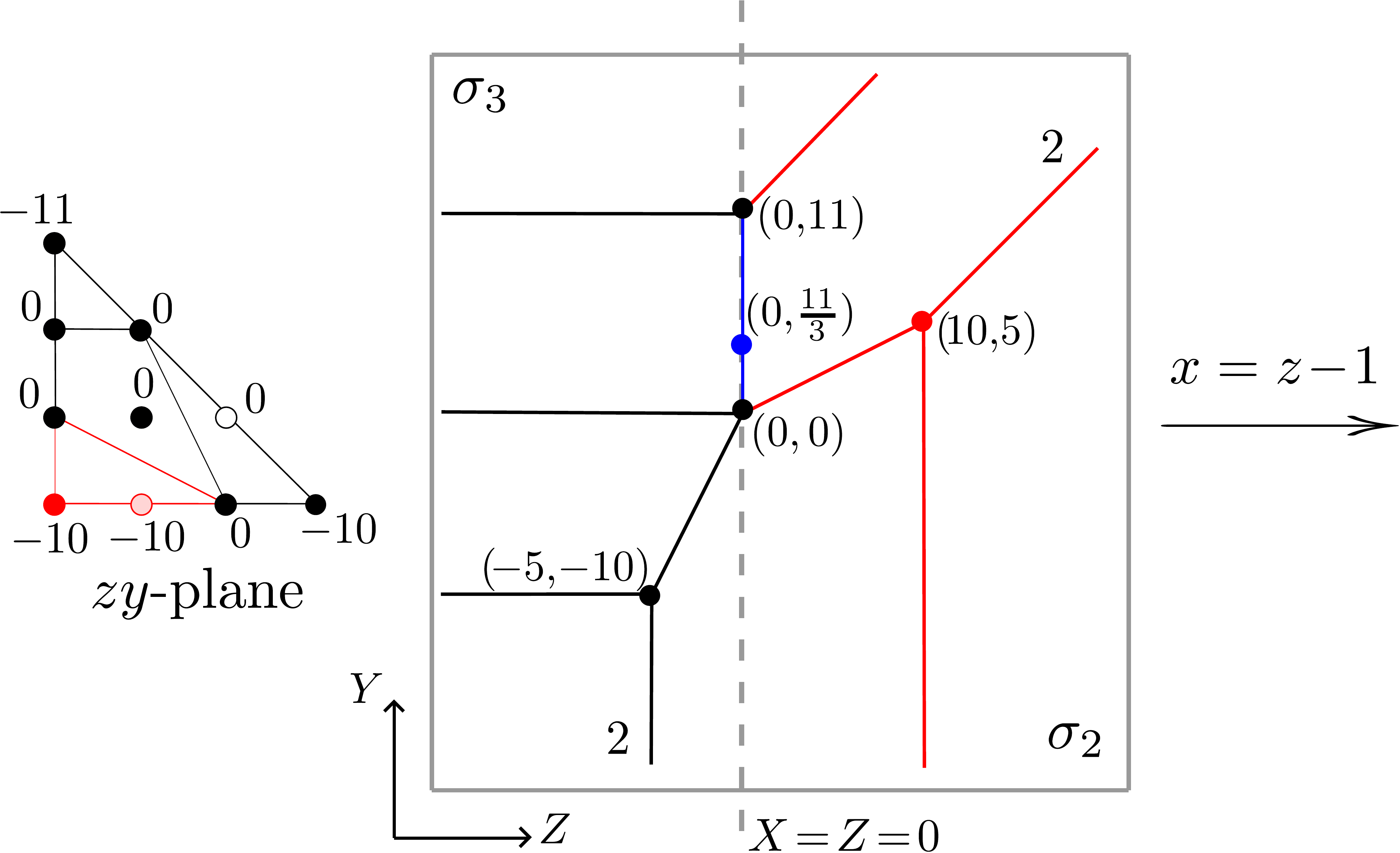}
    \end{minipage}
     \begin{minipage}[c]{.47\linewidth}      \includegraphics[scale=0.2]{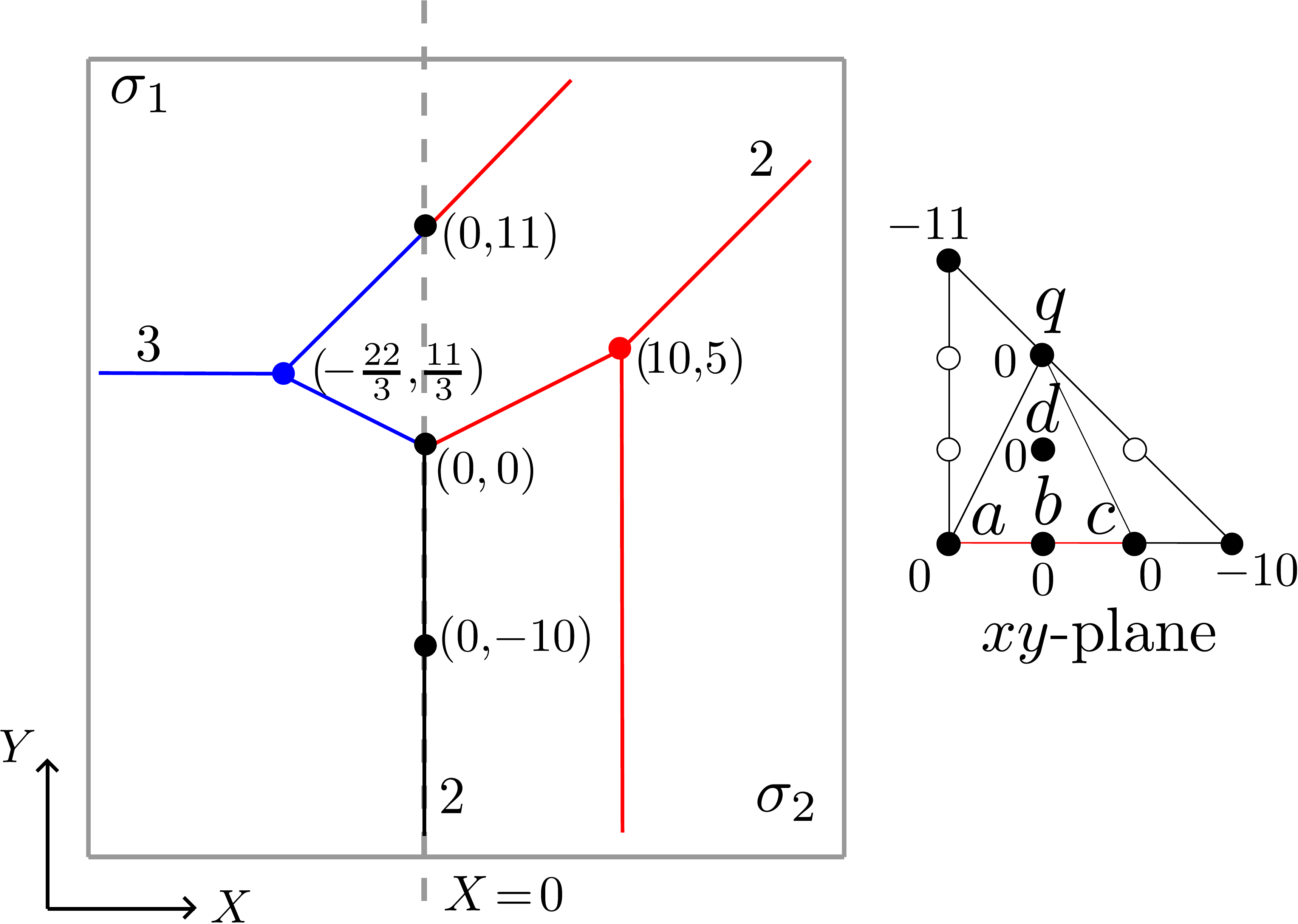}
    \end{minipage}
    \caption{We attempt to unfold a high multiplicity end with a
      linear tropical modification to produce a cycle.  The endmost
      figures represent the effect on the Newton subdivisions of $g$
      and $\tilde{g}$, respectively.  The coefficients $a,b,c,d,q$ on
      the rightmost polytope give the unknowns in the $A$-discriminant
      of the interior triangle.}
\label{fig:UnfoldImpossible}
 \end{figure}
\vspace{-2ex}

\section*{Acknowledgments}
We wish to thank Erwan Brugall\'{e}, Arne Buchholz, Ilia Itenberg,
Diane Maclagan, Thomas Markwig, Ralph Morrison, Bernd
Sturmfels, Till Wagner and Annette Werner for very fruitful
conversations. All the computations in this paper were done using the
\texttt{tropical.lib} library for \texttt{Singular}~\cite{JMM07a}.
The first author was supported by an Alexander von Humboldt
Postdoctoral Research Fellowship (Germany) and by an NSF postdoctoral
fellowship DMS-1103857 (USA). The second author was supported by
DFG-grant 4797/5-1 and by GIF-grant 1174-197.6/2011.

Part of this project was carried out during the 2013 program on \emph{Tropical Geometry and Topology} at the
Max-Planck Institut f\"ur Mathematik in Bonn, where the second author was in
residence. We thank MPI for their hospitality.

 \bibliographystyle{abbrv}



\end{document}